\documentclass[11pt]{article}

\usepackage[utf8]{inputenc}
\usepackage[T1]{fontenc}
\usepackage[letterpaper,margin=1in]{geometry}

\usepackage{mathptmx}      % Times-like text/math
\usepackage{amsmath}
\usepackage{amssymb}
\usepackage{amsthm}
\usepackage{mathtools}
\usepackage{bm}
\usepackage{bbm}
\usepackage{mathrsfs}
\usepackage{centernot}
\usepackage{blkarray}

\DeclareMathAlphabet{\mathcal}{OMS}{cmsy}{m}{n}

\usepackage{graphicx}
\graphicspath{{FIGURES/}}

\usepackage{float}
\usepackage{placeins}
\usepackage{afterpage}
\usepackage{pdflscape}
\usepackage{rotating}

\usepackage{array}
\usepackage{booktabs}
\usepackage{tabularx}
\usepackage{longtable}
\usepackage{adjustbox}
\usepackage{threeparttable}
\usepackage{makecell}

\usepackage{soul}
\usepackage{cancel}
\usepackage{ulem}
\usepackage[font=bf,justification=centering]{caption}
\usepackage{subcaption}

\usepackage{enumitem}
\usepackage{setspace}
\usepackage[autostyle,english=american]{csquotes}
\MakeOuterQuote{"}

\usepackage{epigraph}

\usepackage{etoolbox}
\makeatletter
\newlength\epitextskip
\apptocmd{\@epitext}{\em}{}{}
\patchcmd{\epigraph}{\@epitext{#1}\\}{\@epitext{#1}\\[\epitextskip]}{}{}
\makeatother
\usepackage{minitoc}
\usepackage[toc,page,header]{appendix}
\usepackage{titletoc}

\usepackage{tikz}
\usepackage{tikz-cd}  
\usetikzlibrary{calc,tikzmark}

\usepackage[natbibapa]{apacite}

\numberwithin{equation}{section}

\newcommand{\indep}{\perp\!\!\!\!\perp}

\makeatletter
\newcommand{\vast}{\bBigg@{4}}
\newcommand{\Vast}{\bBigg@{5}}
\newcommand{\neutralize}[1]{\expandafter\let\csname c@#1\endcsname\count@}
\makeatother

\newtheorem{theorem}{Theorem}

\newtheorem{lemma}{Lemma}
\newtheorem{proposition}{Proposition}[section]
\newtheorem{assumption}{Assumption}
\newtheorem{corollary}{Corollary}[theorem]

\theoremstyle{definition}

\theoremstyle{remark}
\newtheorem{remark}{Remark}

\providecommand{\coloneq}{\mathrel{\mathop:}=}

\newcolumntype{L}[1]{>{\raggedright\arraybackslash}m{#1}}
\newcolumntype{C}[1]{>{\centering\arraybackslash}m{#1}}
\newcommand{\wideformula}[1]{%
\makebox[\linewidth][c]{\(\displaystyle #1\)}%
}
\newcommand{\tablesep}{\addlinespace[0.75em]}
\theoremstyle{definition}
\newtheorem{example}{Example}

\usepackage[
  colorlinks=true,
  urlcolor=blue,
  citecolor=blue,
  linkcolor=.
]{hyperref}

\begin{document}

\begin{titlepage}
\title{Tweedie Calculus\thanks{The author thanks Alberto Abadie, Isaiah Andrews, Victor Chernozhukov, Anna Mikusheva, and Whitney Newey for guidance and comments.}}
\author{Santiago Torres\thanks{Department of Economics, Massachusetts Institute of Technology. \href{mailto:storresp@mit.edu}{storresp@mit.edu}}}
\date{\today}

\maketitle
\begin{abstract}
\noindent
Tweedie's formula, which under Gaussian noise expresses the posterior mean of a latent variable directly from the observed-data density, is a cornerstone of empirical Bayes and measurement-error analysis. No general theory, however, explains when analogous identities hold, how they are structured, or how to derive them for non-Gaussian noise and for posterior functionals other than the mean. This paper develops
such a framework for additive-noise models. I characterize when conditional
expectations of an unobserved latent variable, given the observed signal, admit
direct expressions in terms of the observed density---identities I call
\emph{Tweedie representations}---and show that they are governed by a linear
map, the \emph{Tweedie functional}.  Under general conditions, I prove that this functional exists, is unique, and is continuous. I provide a constructive method for its computation based on Fourier analysis: the functional is obtained by extending the inverse Fourier transform of an explicit tempered distribution. The theory yields posterior-mean formulas for non-Gaussian noise and provides new representations for nonlinear posterior functionals. Applications include Laplace mechanisms in differential privacy and heteroskedastic Gaussian sequence models in compound decision problems.
\end{abstract}

\vspace{1em}
\noindent\textbf{MSC classification:} Primary 62C12; Secondary 62C25, 62Q05, 46F10.\par
\vspace{0.5em}
\noindent\textbf{Keywords:} Tweedie's formula; empirical Bayes; deconvolution; tempered distributions.

\setcounter{page}{0}
\thispagestyle{empty}
\end{titlepage}
\pagebreak \newpage

\doparttoc % Tell minitoc to generate a toc for the parts
\faketableofcontents % Run a fake tableofcontents command for the partocs

\section{Introduction}

Many statistical problems are naturally formulated in terms of latent effects observed through noise. In compound decision, measurement error, and empirical Bayes problems, estimands such as posterior means depend not only on the noise model but also on the unknown distribution of those latent effects. This naturally shifts attention to recovering that distribution from the data, a task known as deconvolution. Deconvolution, however, is notoriously difficult: it is an ill-posed inverse problem, and the object to be recovered is itself infinite-dimensional.

Tweedie's formula shows that one can estimate posterior means without recovering the latent distribution. Suppose that
$Y \sim \mathcal{N}(X, \sigma^2)$, where $X \sim P_X$ is an unknown latent signal. \cite{robbins1956Tweedie}
credits Maurice Tweedie with the identity
\begin{equation}\label{Eq: Tweedie}
  \mathbb{E}[X \mid Y = y] = y + \sigma^2 \frac{f_Y'(y)}{f_Y(y)},
\end{equation}
\noindent where \(f_Y\) is the marginal density of the observable \(Y\). The identity represents the posterior mean directly in terms of the observable density, which can be estimated without recovering the latent distribution \(P_X\).

Accordingly, Tweedie's formula provides a route to estimation and inference that avoids direct estimation of the latent signal distribution. Rather than estimate the latent law, one can express posterior functionals as transformations of the observed marginal density and then estimate that marginal density from the data.\footnote{In empirical Bayes terminology, the former approach is \(g\)-modeling and the latter is \(f\)-modeling \citep{efron2014two,efron2019bayes}.} This perspective is useful beyond computation. For the posterior mean, Tweedie’s formula gives a shrinkage interpretation: the posterior mean equals the observation plus a correction determined by the observed marginal density. In the Gaussian location model, this correction moves \(y\) toward regions of higher observed density, with magnitude governed by the noise variance and the score \(f_Y'(y)/f_Y(y)\).\footnote{This shrinkage view includes the James--Stein estimator \citep{james1961estimation} as an instance: in the Gaussian sequence model with a Gaussian latent distribution, the empirical Bayes plug-in version of Tweedie's formula yields James--Stein shrinkage up to degrees-of-freedom corrections \citep{efron2016empirical}.} In some models, Tweedie-type representations also imply shape restrictions, such as monotonicity, that can improve estimation \citep{houwelingen1983monotone}. They also identify the source of statistical difficulty. In the Gaussian posterior-mean case, estimation reduces to estimating pointwise derivatives of the observed marginal density, which has polynomial minimax rates under standard smoothness conditions \citep{stone1983optimal,pinkse2023estimates}. This contrasts with full nonparametric recovery of the latent law, which can have only logarithmic minimax rates under Gaussian noise \citep{fan1991optimal}.

Although formulas of this form are available for particular models and particular estimands, there is no general framework that explains when they exist, what structure they share, or how they can be derived. This paper provides such a framework. I consider general additive-noise models: triples of \(\mathbb R^d\)-valued random variables \((Y,X,V)\) satisfying \(Y=X+V\) with \(X\indep V\). For a broad class of functions \(g\) and noise laws \(V\), I study when posterior expectations\footnote{I use the term ``posterior'' rather than ``conditional'' following a Bayesian or empirical Bayes interpretation: \(X\) is the latent signal, \(Y\) is the observation, and \(\mathbb E[g(X)\mid Y=y]\) is the posterior expectation of \(g(X)\) after observing \(Y=y\).} admit a representation of the form
\begin{equation}\label{Eq: TweedieRep}
  \theta_V^g(y)=\mathbb{E}[g(X)\mid Y=y]
  = \frac{\mathcal{T}_{g,V,y}[f_Y]}{f_Y(y)},
\end{equation}
\noindent where \(\mathcal{T}_{g,V,y}\) is a linear functional on a function space containing all possible observed densities. It depends on the estimand \(g\), the noise law of \(V\), and the evaluation point \(y\), but not on the latent distribution \(P_X\). For example, in the one-dimensional Gaussian case with \(g(x)=x\), the functional in question is
\[
\mathcal{T}_{g,V,y}[f_Y] = y f_Y(y) + \sigma^2 f_Y'(y),
\]
which recovers \eqref{Eq: Tweedie}. I call \(\mathcal{T}_{g,V,y}\) the \emph{Tweedie functional}, and I call any expression of the form \eqref{Eq: TweedieRep} a \emph{Tweedie representation}. I characterize when such functionals exist and provide a general strategy to compute them in closed form. 

Several functionals fit into this notation. The examples below are common in the literature and will be used throughout the paper.

\begin{itemize}
       \item \textit{Posterior moments.} For a scalar latent variable, posterior moments correspond to polynomial choices of \(g\). The most used is the posterior mean, which is the Bayes rule for estimating the latent variable under squared-error loss. The conditional risk of this rule is the posterior variance. Since the posterior variance is determined by the first two posterior moments, it can also be expressed as a function of the marginal density of the data. More generally, posterior risks under polynomial loss functions can be written in terms of finitely many posterior moments, so they can likewise be expressed through the observed marginal density.

       \item \textit{Posterior cumulative distribution function.} For \(t\in\mathbb R\), the posterior cumulative distribution function corresponds to the indicator \(g_t(x)=\mathbbm{1}\{x\le t\}\). Thus \(F_{X\mid Y}(t\mid y)=\mathbb E[\mathbbm{1}\{X\le t\}\mid Y=y]\). This functional encompasses several quantities of interest. In large-scale studies, a common target is the posterior probability that an effect is positive, \(\Pr(X>0\mid Y=y)=1-F_{X\mid Y}(0\mid y)\). Such sign probabilities arise when assessing directional evidence across many units and have been studied in both theoretical and applied work; see \cite{es2005asymptotic,dattner2011deconvolution,efron2016empirical,greenshtein2018application,brennan2020estimating}. A related quantity is the local false sign rate \citep{stephens2017false}, \(\min\{\text{Pr}(X \leq 0 \mid Y=y),\text{Pr}(X \geq 0 \mid Y=y)\}\), the posterior probability of a sign error when reporting the more probable sign.

       \item \textit{Posterior moment generating function.} For \(t\in\mathbb R^d\), the posterior moment generating function corresponds to \(g_t(x)=\exp(t^\top x)\). When finite in a neighborhood of zero, the posterior moment generating function characterizes the posterior distribution and can be used to recover posterior moments and other distributional features. Moment generating functions also provide a standard tool for deriving tail bounds; see \cite{wainwright2019high}, Chapter 2.
    
\end{itemize}

The preceding examples motivate three questions about Tweedie representations that organize the paper:
\begin{enumerate}[label=(\roman*)]
  \item \textit{Existence and regularity of the Tweedie functional.}  Section \ref{S: Existence} asks when a posterior functional admits a Tweedie representation of the form \eqref{Eq: TweedieRep}. The main result, Theorem \ref{T: Representation}, gives sufficient conditions under which the Tweedie functional exists, is unique, and is continuous in a particular topology. The theorem applies to a broad class of additive-noise models, including cases where the law of \(V\) need not be symmetric or have full support. Although the result is not constructive, it gives a general structural characterization of such representations.

  \item \textit{Strategy for computing Tweedie representations.} Section \ref{S: Computation} builds on the existence result and develops a general method for deriving Tweedie representations through Fourier analysis. The main tool is the theory of tempered distributions, developed by \cite{schwartz1966theorie} and reviewed in Section \ref{S: Setup}. This language is needed because the relevant inverse Fourier transforms need not exist as ordinary functions. More importantly, it also provides a Fourier theory for linear functionals: with it, Fourier inversion applies not only to functions, but also to linear maps, delivering a calculus for functionals.

 The main result, Theorem \ref{T:master-fourier-identification}, shows that, under suitable conditions,  the Tweedie functional can be computed from the inverse Fourier transform of a known, computable linear functional. A major practical advantage of this viewpoint is that it reduces the derivation of Tweedie representations to the computation of inverse Fourier transforms. In this sense, the problem becomes one of calculus. As such, the relevant inverse transform can be computed directly or identified from standard transform tables, much as Laplace-transform tables are used in the study of differential equations. Useful references for this purpose include \cite{bateman_1954} and the Appendix of  \cite{kammler2007first}.

\item \textit{Applications.} Section \ref{S: Applications} illustrates the scope of the framework through several applications. First, it uses the Fourier-based strategy of Section \ref{S: Computation} to derive posterior-mean representations for a broad class of noise distributions. Table~\ref{tab:noise-families} summarizes these formulas. Except for the Gaussian and Laplace cases, these representations appear to be new. 

Second, Section~\ref{SS: Laplace} applies the framework to the conventional Laplace mechanism from differential privacy. The paper derives Tweedie representations for several posterior functionals under this noise structure. Table~\ref{tab:laplace-tweedie-functionals} summarizes selected cases.

Section \ref{S: Heteroskedastic} studies the heteroskedastic Gaussian sequence model. This model is especially important in economics and the social sciences, where researchers often observe noisy estimates of means or effects across many units or populations.\footnote{\cite{walters2024empirical} provides a review of common applications of this model in economics.} The Gaussian specification formalizes the normal approximation already implicit in many standard inference procedures, often justified by large-sample central limit arguments. In this model, \(Y\) is a noisy estimate of a latent parameter \(X\), and the noise covariance matrix \(\Sigma\) may vary across units. It may also be random and correlated with \(X\). The key observation is that, after conditioning on \(\Sigma=\Sigma_0\) and standardizing, the model reduces to an additive Gaussian model with fixed covariance. The general framework therefore applies conditionally on \(\Sigma\), yielding Tweedie representations in terms of the conditional density \(f_{Y\mid\Sigma}(\cdot\mid\Sigma_0)\). This conditional formulation gives explicit representations for posterior means and for other empirical Bayes functionals, including posterior covariances, moments, risk functions, distribution functions, and moment generating functions.

\end{enumerate}

\subsection{Relation to the literature.} 

The paper by \cite{raphan2011least} is closest to my approach. They show that, for several additive-noise models, the posterior mean can be written as the ratio of a linear operator applied to the observed marginal density and the density itself. They further note that, once such an operator is available, iterating it recovers posterior moments and, more generally, posterior expectations of polynomial functions. This paper formalizes and extends that observation. Rather than build outward from the posterior-mean case, I study when, for a general measurable function \(g\), the conditional expectation \(\mathbb E[g(X)\mid Y=y]\) admits an analogous representation. I also establish formal properties of the representing map, including existence, uniqueness, and continuity under the relevant topology. The framework also extends beyond the basic additive-noise setting, for example to heteroskedastic Gaussian sequence models, where the additive-noise structure holds only conditionally.

On the computational side, the paper's main innovation is to introduce tempered distributions as a tool for deriving Tweedie representations. This serves two purposes. First, it makes rigorous the Fourier calculations that underlie many Tweedie-type identities for posterior moments, including cases in which the relevant inverse Fourier transforms exist only as distributions.  Second, it yields a systematic procedure for deriving Tweedie representations for posterior functionals beyond moments, including nonsmooth and non-polynomial targets. Closest in this respect is \cite{calmon2025debiasing}, which uses tempered distributions to derive unbiased estimators under Laplace noise. This approach complements derivations based on Stein identities and exponential-family integration by parts. For example, in exponential families, \cite{efron2011tweedie} observes that posterior-moment representations can be obtained by differentiating the log ratio of the marginal density to the base-measure density defining the family. Related identities, including Stein’s lemma and Stein’s unbiased risk estimate, are often derived by integration by parts \citep{stein1981estimation}.

The paper also provides several new Tweedie representations. For instance, it derives a Tweedie representation for the posterior cumulative distribution function under Gaussian noise. It also derives analogous representations for posterior means under several non-Gaussian noises, including Cauchy, Gumbel, and Gamma noise. In this respect, the paper is related to the literature on deriving new Tweedie-type identities in Bayesian, empirical Bayes, and measurement-error problems. Historically, the Gaussian identity appears in print in \cite{dyson1926method}, where Dyson attributes it to Arthur Eddington. Later, \cite{robbins1956Tweedie} reported the formula and credited Maurice Kenneth Tweedie. Much of the subsequent literature develops model-specific analogues or extensions of this identity. \cite{west1982aspects} derives a scale-parameter analogue of Tweedie's formula for models with known location and unknown scale. \cite{pericchi1992exact} and \cite{pericchi1993posterior} derive exact relationships for posterior moments and cumulants in normal-location and exponential-family settings.  \cite{Polson1991} obtains an exact representation of the posterior mean for location models with normal scale-mixture priors, and \cite{polson2019bayesian} gives a Gaussian linear-regression analogue. \cite{robbins1982estimating} derives a chi-square analogue in the empirical Bayes problem of estimating many variances. \cite{shi2015nonlinear} and \cite{hillebrand2023unbiased} develop Laplace-noise-based Tweedie-type formulae for the posterior mean and for polynomial functionals, respectively. 

Finally, the paper relates to applications of Tweedie's formula. These methods are widely used in empirical Bayes problems \citep{efron2011tweedie,efron2014two}, including large-scale hypothesis testing \citep{efron2012large}. More recently, Tweedie formulas have been used in diffusion-based sampling and score-based generative modeling \citep{tang2026tweedie,montanari2023sampling,song2020score,sohl2015deep}. Related denoising problems also arise in differential privacy, where Gaussian or Laplace noise is often added to statistics to limit the information revealed about any single data point. For Laplace noise, \cite{hong2003simple} propose a correction based on the Tweedie representer for nonlinear method-of-moments models with measurement error, and \cite{shi2019estimation} apply Laplace-based Tweedie-type formulas to measurement-error and regression problems. Tweedie's formula has also been used for forecasting in dynamic panel models \citep{Liu2020}. Recent work by \cite{liang2025distributional,liang2025distributionalII} uses identities closely related to Tweedie's formula to construct shrinkage maps for recovering the latent signal distribution. More broadly, the framework is connected to the deconvolution literature \citep{fan1991optimal,butucea2009adaptive,pensky2017minimax,hall2007ridge} and to additive measurement-error models \citep{carroll2006measurement,meister2009,schennach2004estimation,schennach2004nonparametric,schennach_2013}.

\section{Setup and basic notions in Fourier Analysis} \label{S: Setup}

This section introduces the additive-noise model studied throughout the paper, states the standing assumptions under which the theory is developed, and records several immediate consequences. The framework is intentionally broad: it nests the classical Gaussian model underlying Tweedie's formula and accommodates more general deconvolution problems in arbitrary dimensions.

\subsection{Setup}

Let $\mathcal{P}(\mathbb{R}^d)$ denote the set of probability measures on
$(\mathbb{R}^d,\mathcal{B}(\mathbb{R}^d))$, where
$\mathcal{B}(\mathbb{R}^d)$ is the Borel $\sigma$-algebra on
$\mathbb{R}^d$. For a random vector \(Z\), let \(P_Z\) denote its law. Consider a triple of $d-$dimensional random vectors $(Y,X,V)$ defined on the same probability space $(\Omega,\mathscr{A}, \mathbb{P})$. Here $Y \sim P_Y$ is the observed vector, $X \sim P_X$ is the latent signal of
interest, and $V \sim P_V$ is the noise.

For any $P \in \mathcal{P}(\mathbb{R}^d)$, let
$\varphi_P : \mathbb{R}^d \to \mathbb{C}$ denote its characteristic function,
defined by
\[
\varphi_P(\omega)
\coloneq
\int_{\mathbb{R}^d} e^{i\omega^\top x}\,dP(x)
=
\mathbb{E}_{Z\sim P}\!\left[e^{i\omega^\top Z}\right],
\qquad \omega \in \mathbb{R}^d.
\]
When $Z \sim P_Z$, I use the shorthand notation $\varphi_{Z} \coloneq \varphi_{P_Z}$. I now state the main assumptions used throughout the paper.

\begin{assumption}[Additivity and independence] \label{A: independence}
The triple  $(Y,X,V)$ is related by addition $Y=X+V$, and \(X\) and \(V\) are independent.
\end{assumption}

Under this assumption, the observed characteristic function factors as
\(\varphi_Y(\omega)=\varphi_X(\omega)\varphi_V(\omega)\) for all
\(\omega\in\mathbb R^d\).\footnote{\cite{schennach2019convolution} emphasizes that such factorization can hold even without independence of \(X\) and \(V\). In this paper, however, full independence is needed to connect deconvolution to conditional expectations. It is also the standard assumption in additive-noise applications.} This factorization is what makes closed-form Tweedie
representations possible: it allows terms involving the unknown latent law \(P_X\), encoded by \(\varphi_X\), to be rewritten in terms of the observed marginal law and the known noise law, characterized by \(\varphi_Y\) and \(\varphi_V\).

\begin{assumption}[Identification] \label{A: Identification}
The noise characteristic function is nonzero almost everywhere:
\[
\varphi_V(\omega)\neq 0
\quad\text{for Lebesgue-a.e. }\omega\in\mathbb{R}^d.
\]
\end{assumption}

Assumption \ref{A: Identification} is the basic nondegeneracy condition behind
Fourier deconvolution. Informally, recovering $P_X$ from the relation $\varphi_Y(\omega)=\varphi_X(\omega)\varphi_V(\omega)$
requires division by $\varphi_V(\omega)$. Therefore, this assumption rules out,
except on a null set, frequency regions at which inversion breaks down.  It is satisfied by the standard Gaussian, Laplace, logistic, and many
other common noise laws. It also holds, for example, when \(V\) has compact
support.\footnote{A simple sufficient condition is the existence of an
exponential moment: \(\mathbb E \left[\exp(r\|V\|_2)\right]<\infty\) for some \(r>0\). Then
\(\varphi_V\) extends holomorphically to a complex neighborhood of
\(\mathbb R^d\). Since \(\mathfrak{Re}(\varphi_V)(0)=1\), its real part is a
nonzero real-analytic function. Accordingly, the zero set of
\(\mathfrak{Re}(\varphi_V)\) has Lebesgue measure zero, and so does the
zero set of \(\varphi_V\) \citep{mityagin2020zero}.}

\begin{assumption}[Noise regularity] \label{A: Density}
The noise distribution $P_V$ admits a Lebesgue density $f_V$.
\end{assumption}

ssumption \ref{A: Density}, together with the convolution structure imposed by Assumption \ref{A: independence}, has two immediate consequences. First, the observed law \(P_Y\) is absolutely continuous with respect to Lebesgue measure, so the observed density \(f_Y\) appearing in Tweedie representations is well defined. Second, Proposition \ref{P: convolution} shows that one version of its density is obtained by convolving the latent law \(P_X\) with the fixed version of the noise density \(f_V\) in Assumption~\ref{A: Density}. Throughout the paper, \(f_Y\) denotes this convolution version. This convolution formula will be essential for characterizing the Tweedie functional.

\begin{proposition}\label{P: convolution}
Under assumptions \ref{A: independence} and \ref{A: Density}, the law $P_Y$
is absolutely continuous with respect to Lebesgue measure. A version of its
density is
\[
f_Y(y)=\int_{\mathbb{R}^d} f_V(y-x)\,dP_X(x), \qquad y\in\mathbb{R}^d.
\]

\end{proposition}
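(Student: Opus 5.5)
The plan is to compute $P_Y(B)$ for an arbitrary Borel set $B\subseteq\mathbb{R}^d$ directly from the joint law of $(X,V)$. By Assumption~\ref{A: independence}, $X\indep V$, so the joint law of $(X,V)$ on $\mathbb{R}^d\times\mathbb{R}^d$ is the product measure $P_X\otimes P_V$. Since $Y=X+V$,
\[
P_Y(B)=\mathbb{P}(X+V\in B)=\int_{\mathbb{R}^d}\int_{\mathbb{R}^d}\mathbbm{1}\{x+v\in B\}\,dP_V(v)\,dP_X(x).
\]
This uses Tonelli's theorem for the nonnegative measurable function $(x,v)\mapsto \mathbbm{1}_B(x+v)$. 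Measurability holds because addition is continuous, hence Borel.

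Next I will use Assumption~\ref{A: Density} to replace $dP_V(v)$ by $f_V(v)\,dv$. For each fixed $x$, I then apply the translation $y=x+v$, which preserves Lebesgue measure. This turns the inner integral into $\int_B f_V(y-x)\,dy$.

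A second application of Tonelli swaps the order of integration. This requires joint measurability of $(x,y)\mapsto f_V(y-x)\mathbbm{1}_B(y)$, which follows because $f_V$ is Borel and $(x,y)\mapsto y-x$ is continuous. The swap gives
\[
P_Y(B)=\int_B\Big(\int_{\mathbb{R}^d} f_V(y-x)\,dP_X(x)\Big)dy .
\]

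The function $f_Y(y):=\int f_V(y-x)\,dP_X(x)$ is a nonnegative measurable function of $y$, taking values in $[0,\infty]$; measurability is again by Tonelli. Taking $B=\mathbb{R}^d$ shows $\int f_Y=1$, so $f_Y<\infty$ almost everywhere. Hence $f_Y$ is a version of the density, and $P_Y\ll$ Lebesgue: any null $B$ gives $P_Y(B)=0$.

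The only delicate point is measurability. The fixed version $f_V$ may be merely Lebesgue measurable, and then the composition $f_V(y-x)$ need not be jointly Borel. I would handle this first by replacing $f_V$ with a Borel version that agrees with it almost everywhere. I would then check that the convolution formula is unchanged for every $y$: for each fixed $x$, the two versions differ only on a Lebesgue null set of $y$'s, so by Tonelli they differ only for $(P_X\otimes\text{Leb})$-null pairs. Consequently, for a.e.\ $y$ the integrals coincide.

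Alternatively, I can avoid this issue by assuming, as is standard, that $f_V$ is Borel measurable. With that convention every step is routine Fubini–Tonelli, and the formula holds pointwise for all $y$, possibly with the value $+\infty$ on a null set.
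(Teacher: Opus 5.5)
Your proposal is correct and follows the paper's proof essentially step for step. Both arguments write $P_Y(B)$ as an iterated integral against $P_X\otimes P_V$ via Tonelli, substitute $f_V(v)\,dv$, translate $y=x+v$, and swap the order of integration to read off the density.

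Your additional care about measurability and about $f_Y<\infty$ a.e.\ goes beyond the paper, which tacitly treats $f_V$ as Borel. One correction there: passing to a Borel version of $f_V$ leaves the convolution integral unchanged only for Lebesgue-a.e.\ $y$, not for every $y$ as your sentence first claims. That a.e.\ statement is all a \emph{version} of the density requires.
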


\subsection{Some important notions in Fourier analysis}

This subsection fixes the Fourier analysis conventions used throughout the paper. It first defines the ordinary Fourier transform for integrable functions and then extends the transform to tempered distributions.

For \(1\le p\le\infty\) and \(\mathbb K\in\{\mathbb R,\mathbb C\}\), let
\(L^p(\mathbb R^d;\mathbb K)\) denote the usual Lebesgue space, with norm
\[
\|f\|_p =
\begin{cases}
\left(\displaystyle\int_{\mathbb R^d}|f(x)|^p\,dx\right)^{1/p},
& 1\le p<\infty,\\[1ex]
\operatorname*{ess\,sup} \limits_{x\in\mathbb R^d}|f(x)|,
& p=\infty.
\end{cases}
\]
When \(\mathbb K\) is omitted, it is understood to be \(\mathbb R\). The symbol
\(i\) denotes the imaginary unit, and \(\overline z\) denotes the complex
conjugate of \(z\in\mathbb C\). Let \(C_0(\mathbb R^d;\mathbb K)\) denote the
space of continuous \(\mathbb K\)-valued functions on \(\mathbb R^d\) that
vanish at infinity.

For \(\psi\in L^1(\mathbb R^d;\mathbb C)\), define its Fourier transform by
\[
\widetilde\psi(\omega)
\coloneq
\int_{\mathbb R^d} e^{i\omega^\top x}\psi(x)\,dx,
\qquad
\omega\in\mathbb R^d.
\]
For \(\psi\in L^1(\mathbb R^d;\mathbb C)\), let \(\psi^\sharp\) denote the inverse Fourier integral
\[
\psi^\sharp(x)
\coloneq
\frac{1}{(2\pi)^d}
\int_{\mathbb R^d} e^{-i\omega^\top x}\psi(\omega)\,d\omega,
\qquad
x\in\mathbb R^d,
\]
whenever this integral is well defined. Thus \(\psi\mapsto\psi^\sharp\) denotes the inverse Fourier operation under the convention above. If
\(\psi,\widetilde\psi\in L^1(\mathbb R^d;\mathbb C)\), Fourier inversion gives
\[
\psi(x)
=
\widetilde\psi^\sharp(x)
=
\frac{1}{(2\pi)^d}
\int_{\mathbb R^d} e^{-i\omega^\top x}\widetilde\psi(\omega)\,d\omega
\]
for Lebesgue-a.e. \(x\in\mathbb R^d\).

For the present paper, this classical framework is too narrow. Many objects that arise in the computation of Tweedie representations are not integrable, so their inverse Fourier transforms need not exist as functions. More importantly, Tweedie functionals are linear maps acting on the observed density. Consequently, it is advantageous to use a Fourier theory that can transform and invert linear functionals, rather than one limited to ordinary functions. I use the framework of tempered distributions, introduced by \cite{schwartz1966theorie}, for this purpose. This framework extends Fourier analysis from functions to continuous linear functionals and thus provides a natural language for deconvolution problems.

The starting point is the Schwartz space on \(\mathbb R^d\), denoted by
\(\mathscr S(\mathbb R^d)\). It is the space of smooth test functions against
which tempered distributions will be evaluated. Formally,
\[
\mathscr{S}(\mathbb{R}^d)
\coloneq
\left\{
\psi\in C^\infty(\mathbb{R}^d;\mathbb{C})
:
\sup_{x\in\mathbb{R}^d}
\bigl|x^{\alpha} \partial^{\beta} \psi(x)\bigr|<\infty
\text{ for all multi-indices }\alpha,\beta\in\mathbb{N}_0^d
\right\}.
\]
Here \(x^{\alpha} \coloneq x_1^{\alpha_1}\cdots x_d^{\alpha_d}\),
\(\partial^{\beta} \coloneq \partial^{|\beta|}/
\partial x_1^{\beta_1}\cdots \partial x_d^{\beta_d}\), and
\(|\beta|=\beta_1+\cdots+\beta_d\). Thus \(\mathscr S(\mathbb R^d)\) consists
of smooth functions whose derivatives decay faster than any inverse polynomial.
It contains, for example, compactly supported smooth functions and Gaussian
densities.

The Fourier transform is especially well behaved on \(\mathscr S(\mathbb R^d)\).
The map \(\psi\mapsto\widetilde\psi\) is a continuous automorphism of
\(\mathscr S(\mathbb R^d)\), with inverse \(\psi\mapsto\psi^\sharp\). Thus every
Schwartz function has a Fourier transform, and every Schwartz function
is the inverse Fourier transform of another Schwartz function. Continuity
is understood with respect to the standard Schwartz topology, under which
\(\mathscr S(\mathbb R^d)\) is a Fr\'echet space.\footnote{That is, a complete
metrizable locally convex vector space. For a textbook construction of the
topology, see \cite{treves2006topological}, Chapter 10, Example IV.}
Appendix~\ref{A:schwartz-tempered-topology} provides additional details on this construction.

A tempered distribution is a continuous linear functional on \(\mathscr S(\mathbb R^d)\). I write $\mathscr S'(\mathbb R^d)
\coloneq
\bigl(\mathscr S(\mathbb R^d)\bigr)'$ 
for the space of tempered distributions. Critically, many ordinary functions define tempered
distributions when their integration functionals are continuous on \(\mathscr S(\mathbb R^d)\). Specifically, if \(Q:\mathbb R^d\to\mathbb C\) is measurable and
\[
\mathcal I_Q[\psi]
\coloneq
\int_{\mathbb R^d}Q(x)\psi(x)\,dx,
\qquad
\psi\in\mathscr S(\mathbb R^d),
\]
defines a continuous linear functional on \(\mathscr S(\mathbb R^d)\), then
\(\mathcal I_Q\in\mathscr S'(\mathbb R^d)\).\footnote{Lemma~\ref{L:weighted-integrability-temperedness}
gives a general sufficient condition on $Q$ such that it defines a tempered distribution.}  The space \(\mathscr S'(\mathbb R^d)\) also contains singular distributions that are not induced by ordinary functions. For example, evaluation at a point \(a\in\mathbb R^d\),
\[
\operatorname{ev}_a[\psi]\coloneq \psi(a),
\qquad
\psi\in\mathscr S(\mathbb R^d),
\]
is a tempered distribution.

The Fourier transform extends from Schwartz functions to tempered distributions by duality. With the convention above, define operators
\(\mathcal F,\mathcal F^{-1}:\mathscr S'(\mathbb R^d)\to \mathscr S'(\mathbb R^d)\) by
\[
\mathcal F\{T\}[\psi]
\coloneq
T[\widetilde\psi],
\qquad
\mathcal F^{-1}\{T\}[\psi]
\coloneq
T[\psi^\sharp],
\qquad
\psi\in\mathscr S(\mathbb R^d).
\]
These definitions are well posed because the maps
\(\psi\mapsto\widetilde\psi\) and \(\psi\mapsto\psi^\sharp\) map \(\mathscr S(\mathbb R^d)\) continuously to itself. Under the weak-\(\star\) topology
on \(\mathscr S'(\mathbb R^d)\), the operators \(\mathcal F\) and
\(\mathcal F^{-1}\) are continuous automorphisms; see
Appendix~\ref{A:schwartz-tempered-topology}.

\section{Existence of the Tweedie functional} \label{S: Existence}

This section develops the existence theory for Tweedie representations. In the additive-noise model, the problem reduces to the convolution operator induced by the noise. I characterize when this operator is invertible on its range and use the resulting inverse to equip the class of admissible observed densities with a natural topology.

To formalize the representation problem, let \(\lambda:\mathbb{R}^d\to\mathbb{C}\) be measurable and define
\[
\Lambda_\lambda(P_X)
\coloneq
\int_{\mathbb{R}^d} \lambda(x)\,dP_X(x),
\]
whenever the integral is well defined. The question is whether this latent functional depends on \(P_X\) only through the observed marginal density \(f_Y\). Given a triple \((Y,X,V)\) satisfying Assumptions~\ref{A: independence}--\ref{A: Density}, I identify a class of integrands \(\lambda\) for which there exists a unique linear functional \(\mathcal{T}_{\lambda,V}\) satisfying $\Lambda_\lambda(P_X)=\mathcal{T}_{\lambda,V}[f_Y]$. For these integrands, the latent integral is therefore representable in terms of the observed density alone.

To state this result, let $\mathcal{M}(\mathbb{R}^d)$ denote the space of finite complex Borel
measures on $\mathbb{R}^d$. For $\mu\in\mathcal{M}(\mathbb{R}^d)$, define its
total variation norm by
\[
\|\mu\|_{TV}
\coloneq
\sup\left\{
\left|\int_{\mathbb{R}^d} h(x)\,d\mu(x)\right|
:
h\in C_0(\mathbb{R}^d;\mathbb{C}),\ \|h\|_\infty\le 1
\right\}.
\]
It is well known that $\bigl(\mathcal{M}(\mathbb{R}^d),\|\cdot\|_{TV}\bigr)$ is a complex Banach space \citep[Theorem 6.19]{rudin1987real}. Since $\mathcal{P}(\mathbb{R}^d)\subseteq \mathcal{M}(\mathbb{R}^d)$, the results below apply in particular to probability measures. I nevertheless work on $\mathcal{M}(\mathbb{R}^d)$ because its vector space structure will be useful throughout.

Given the noise law $P_V$ with density $f_V$, define the convolution operator
$\mathcal{K}_V:\mathcal{M}(\mathbb{R}^d)\to L^1(\mathbb{R}^d;\mathbb{C})$ by
\[
\mathcal{K}_V[\mu](y)
\coloneq
(f_V * \mu)(y)
=
\int_{\mathbb{R}^d} f_V(y-x)\,d\mu(x),
\qquad y\in\mathbb{R}^d.
\]
The operator $\mathcal{K}_V$ is linear and continuous from
$\bigl(\mathcal{M}(\mathbb{R}^d),\|\cdot\|_{TV}\bigr)$ into
$\bigl(L^1(\mathbb{R}^d;\mathbb{C}),\|\cdot\|_1\bigr)$. Indeed, it is immediate from the definition that $\|\mathcal{K}_V[\mu]\|_1
\le
\|\mu\|_{TV}$.

The range of $\mathcal{K}_V$ consists of all $L^1$ functions obtainable by convolving the noise density $f_V$ against a finite complex Borel measure. I call this range the class of $V$-admissible mixtures and write
\[
\mathcal{A}_V(\mathbb{R}^d)
\coloneq
\{f=\mathcal{K}_V[\mu]=f_V * \mu : \mu\in\mathcal{M}(\mathbb{R}^d)\}
\subseteq L^1(\mathbb{R}^d;\mathbb{C}).
\]
By Proposition \ref{P: convolution}, whenever the observed density $f_Y$ exists, it belongs to $\mathcal{A}_V(\mathbb{R}^d)$.

I next establish that $\mathcal{K}_V$ is a bijection between latent measures and $V$-admissible mixtures. Surjectivity is immediate from the definition of $\mathcal{A}_V(\mathbb{R}^d)$. The remaining issue is injectivity. Assumption \ref{A: Identification} provides it by ruling out distinct latent measures that generate the same mixture.

\begin{proposition}\label{P: bijection}
Under Assumption \ref{A: Identification}, the operator $\mathcal{K}_V$ is a bijection between $\mathcal{M}(\mathbb{R}^d)$ and $\mathcal{A}_V(\mathbb{R}^d)$.     
\end{proposition}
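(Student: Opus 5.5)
Surjectivity onto $\mathcal{A}_V(\mathbb{R}^d)$ holds by the definition of $\mathcal{A}_V(\mathbb{R}^d)$ as the range of $\mathcal{K}_V$, so the plan reduces to injectivity. Since $\mathcal{K}_V$ is linear, it suffices to show that its kernel is trivial: if $\mu\in\mathcal{M}(\mathbb{R}^d)$ satisfies $f_V*\mu=0$ in $L^1(\mathbb{R}^d;\mathbb{C})$, then $\mu=0$. I will argue on the Fourier side, where convolution becomes multiplication.

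First, for $\mu\in\mathcal{M}(\mathbb{R}^d)$ define its Fourier--Stieltjes transform $\widehat\mu(\omega)\coloneq\int e^{i\omega^\top x}\,d\mu(x)$, which extends the characteristic function to complex measures. It is bounded by $\|\mu\|_{TV}$ and continuous by dominated convergence. Next I verify the convolution identity
\[
\widetilde{f_V*\mu}(\omega)=\varphi_V(\omega)\,\widehat\mu(\omega),\qquad \omega\in\mathbb{R}^d.
\]
This is a Fubini computation. The function $(y,x)\mapsto f_V(y-x)$ is integrable with respect to $dy\,d|\mu|(x)$, with integral at most $\|\mu\|_{TV}$, so I can swap the integrals and substitute $v=y-x$ to get $\int\int e^{i\omega^\top(v+x)}f_V(v)\,dv\,d\mu(x)=\varphi_V(\omega)\widehat\mu(\omega)$. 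Here $\widetilde{f_V}=\varphi_V$ because $f_V$ is the density of $P_V$ by Assumption~\ref{A: Density}.

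Now suppose $f_V*\mu=0$ a.e. Then its Fourier transform vanishes identically, so $\varphi_V(\omega)\widehat\mu(\omega)=0$ for every $\omega$. By Assumption~\ref{A: Identification}, $\widehat\mu(\omega)=0$ for Lebesgue-a.e.\ $\omega$. Since $\widehat\mu$ is continuous and the complement of a null set is dense, $\widehat\mu\equiv 0$ on all of $\mathbb{R}^d$.

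The remaining and main step is the uniqueness theorem for Fourier--Stieltjes transforms: $\widehat\mu\equiv0$ implies $\mu=0$ for complex measures. I would prove it using the tempered-distribution machinery of Section~\ref{S: Setup}. For $\psi\in\mathscr{S}(\mathbb{R}^d)$, Fubini gives
\[
\int \widetilde\psi\,d\mu=\int \psi(\omega)\widehat\mu(\omega)\,d\omega=0 .
\]
Since $\psi\mapsto\widetilde\psi$ is an automorphism of $\mathscr{S}(\mathbb{R}^d)$, it follows that $\int\phi\,d\mu=0$ for every $\phi\in\mathscr{S}(\mathbb{R}^d)$. Compactly supported smooth functions lie in $\mathscr{S}(\mathbb{R}^d)$ and are dense in $C_0(\mathbb{R}^d;\mathbb{C})$ under the sup norm. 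Because $h\mapsto\int h\,d\mu$ is bounded on $C_0$ by $\|\mu\|_{TV}$, the integral vanishes on all of $C_0$, so $\|\mu\|_{TV}=0$ by the definition of the total variation norm, i.e.\ $\mu=0$. This density-and-duality step is where care is needed, namely the Riesz-type identification implicit in the definition of $\|\cdot\|_{TV}$; everything else is routine.
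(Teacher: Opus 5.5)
Your proposal is correct and follows essentially the same route as the paper: surjectivity by definition of $\mathcal{A}_V(\mathbb{R}^d)$, then injectivity via $\widetilde{f_V*\mu}=\varphi_V\varphi_\mu$, density of the nonvanishing set of $\varphi_V$ together with continuity of $\varphi_\mu$, and uniqueness of Fourier--Stieltjes transforms. The only difference is that you prove the convolution identity and the uniqueness theorem yourself (via Schwartz duality and density of $C_c^\infty$ in $C_0$), whereas the paper simply invokes them.
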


Proposition~\ref{P: bijection} implies the inverse
$\mathcal{K}_V^{-1} : \mathcal{A}_V(\mathbb{R}^d) \to \mathcal{M}(\mathbb{R}^d)$ is well defined. I use this inverse to transfer the total variation norm from
$\mathcal{M}(\mathbb{R}^d)$ to $\mathcal{A}_V(\mathbb{R}^d)$. Specifically, if
$f \in \mathcal{A}_V(\mathbb{R}^d)$ and $f = f_V \ast \mu$ for some
$\mu \in \mathcal{M}(\mathbb{R}^d)$, define
\[
\|f\|_{\mathcal{A}_V}
\coloneq
\|\mathcal{K}_V^{-1}[f]\|_{TV}
=
\|\mu\|_{TV}.
\]

This definition is unambiguous because Proposition~\ref{P: bijection} gives a unique representing measure $\mu$. Furthermore, with this norm, $\mathcal{K}_V :
\bigl(\mathcal{M}(\mathbb{R}^d),\|\cdot\|_{TV}\bigr)
\to
\bigl(\mathcal{A}_V(\mathbb{R}^d),\|\cdot\|_{\mathcal{A}_V}\bigr)$ is an isometric isomorphism. Hence $\mathcal{A}_V(\mathbb{R}^d)$ inherits the Banach space structure of $\mathcal{M}(\mathbb{R}^d)$. Let $\mathcal{A}_V'(\mathbb{R}^d)
\coloneq
\bigl(\mathcal{A}_V(\mathbb{R}^d)\bigr)'$ denote the Banach dual of $\mathcal{A}_V(\mathbb{R}^d)$, that is, the space of continuous linear functionals on $\mathcal{A}_V(\mathbb{R}^d)$. The next theorem shows that the density-based representation maps sought here are precisely elements of this dual space.

\begin{theorem}[Existence and uniqueness of the representing functional] \label{T: Representation}
Let $(Y,X,V)$ satisfy Assumptions \ref{A: Identification} and \ref{A: Density}. For
every $\lambda\in C_0(\mathbb{R}^d;\mathbb{C})$, there exists a unique functional
$\mathcal{T}_{\lambda,V}\in \mathcal{A}'_V(\mathbb{R}^d)$ such that
\[
\mathcal{T}_{\lambda,V}\!\left[\mathcal{K}_V[\mu]\right]
=
\int_{\mathbb{R}^d}\lambda(x)\,d\mu(x)
\qquad
\text{for all }\mu\in\mathcal{M}(\mathbb{R}^d).
\]
In particular, under Assumption~\ref{A: independence}, $f_Y = \mathcal{K}_V[P_X]$, and so
\[
  \mathcal{T}_{\lambda,V}[f_Y] = \int_{\mathbb{R}^d} \lambda(x)\, dP_X(x).
\]
\end{theorem}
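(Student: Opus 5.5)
The plan is to define the functional by transporting the latent integral through the inverse convolution map, and then check continuity and uniqueness using the isometric structure built just before the statement. Fix $\lambda\in C_0(\mathbb{R}^d;\mathbb{C})$. By Proposition~\ref{P: bijection}, which uses Assumption~\ref{A: Identification}, the inverse $\mathcal{K}_V^{-1}:\mathcal{A}_V(\mathbb{R}^d)\to\mathcal{M}(\mathbb{R}^d)$ is well defined. Assumption~\ref{A: Density} ensures that $\mathcal{K}_V$ itself is defined. I will set
\[
\mathcal{T}_{\lambda,V}[f]\coloneq \int_{\mathbb{R}^d}\lambda(x)\,d\bigl(\mathcal{K}_V^{-1}[f]\bigr)(x),\qquad f\in\mathcal{A}_V(\mathbb{R}^d).
\]
The integral is finite: $\lambda$ is bounded and continuous, and $\mathcal{K}_V^{-1}[f]$ is a finite complex measure.

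\textbf{Linearity.} $\mathcal{K}_V$ is linear, so its inverse on the range is linear. The map $\mu\mapsto\int\lambda\,d\mu$ is also linear, so the composition $\mathcal{T}_{\lambda,V}$ is linear.

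\textbf{Continuity.} This is the only point needing care. By the definition of the total variation norm via duality with $C_0$, and since $\lambda\in C_0$, we have
\[
\Bigl|\int\lambda\,d\mu\Bigr|\le\|\lambda\|_\infty\|\mu\|_{TV}.
\]
For $\lambda\neq0$, apply the supremum definition to $h=\lambda/\|\lambda\|_\infty$; this is exactly where the hypothesis $\lambda\in C_0$ rather than merely bounded measurable enters. It then follows that
\[
|\mathcal{T}_{\lambda,V}[f]|\le\|\lambda\|_\infty\,\|\mathcal{K}_V^{-1}[f]\|_{TV}=\|\lambda\|_\infty\,\|f\|_{\mathcal{A}_V},
\]
so $\mathcal{T}_{\lambda,V}\in\mathcal{A}_V'(\mathbb{R}^d)$ with norm at most $\|\lambda\|_\infty$. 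The representation identity is immediate: $\mathcal{T}_{\lambda,V}[\mathcal{K}_V[\mu]]=\int\lambda\,d\mu$, because $\mathcal{K}_V^{-1}\mathcal{K}_V=\mathrm{id}$.

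\textbf{Uniqueness.} Suppose $\mathcal{T}$ and $\mathcal{T}'$ both satisfy the identity. Then they agree on $\mathcal{K}_V[\mu]$ for every $\mu\in\mathcal{M}(\mathbb{R}^d)$. These elements exhaust $\mathcal{A}_V(\mathbb{R}^d)$ by the definition of the range, so $\mathcal{T}=\mathcal{T}'$. Uniqueness needs neither continuity nor density arguments, only surjectivity onto the admissible class.

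\textbf{Final claim.} Under Assumption~\ref{A: independence}, Proposition~\ref{P: convolution} gives $f_Y=f_V*P_X=\mathcal{K}_V[P_X]$, with $P_X\in\mathcal{P}(\mathbb{R}^d)\subseteq\mathcal{M}(\mathbb{R}^d)$. Substituting $\mu=P_X$ in the identity yields the displayed formula.

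I expect no serious obstacle. The substantive work, namely injectivity, is already contained in Proposition~\ref{P: bijection}. The only step to state carefully is the norm bound, which relies on the $C_0$ duality definition of $\|\cdot\|_{TV}$.
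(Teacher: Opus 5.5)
Your proposal is correct and follows the paper's proof essentially step for step. You define $\mathcal T_{\lambda,V}=I_\lambda\circ\mathcal K_V^{-1}$, bound it by $\|\lambda\|_\infty\|f\|_{\mathcal A_V}$ through the isometry, obtain uniqueness from surjectivity of $\mathcal K_V$ onto $\mathcal A_V(\mathbb R^d)$, and specialize to $\mu=P_X$ via Proposition~\ref{P: convolution}, exactly as the paper does.

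One small aside: you say the bound $|\int\lambda\,d\mu|\le\|\lambda\|_\infty\|\mu\|_{TV}$ is where the $C_0$ hypothesis enters, but that overstates its role. By the Riesz representation theorem the $C_0$-dual norm equals $|\mu|(\mathbb R^d)$, so the bound holds for any bounded Borel $\lambda$, and the paper itself uses it that way in Proposition~\ref{P:tweedie-approximation}.
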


Theorem \ref{T: Representation} establishes existence and uniqueness of the representation map. Specifically, for any $\lambda\in C_0(\mathbb{R}^d)$, the theorem gives a unique continuous linear functional $\mathcal{T}_{\lambda,V}\in \mathcal{A}_V'(\mathbb{R}^d)$ such that $\mathcal{T}_{\lambda,V}[\mathcal{K}_V[\mu]]
=
\int_{\mathbb{R}^d}\lambda(x)\,d\mu(x)$ for every $\mu\in\mathcal{M}(\mathbb{R}^d)$. Applying this identity to the observed density $f_Y=\mathcal{K}_V[P_X]$ shows that the latent integral $\Lambda_\lambda(P_X)$ can be recovered as the value of a continuous linear functional of $f_Y$.

An immediate consequence of Theorem \ref{T: Representation} is the existence and uniqueness of the Tweedie representation. The key point is that, after applying Bayes' formula to $\theta_V^g$, the numerator is an integral of a known function with respect to the latent law $P_X$. Indeed, under Assumptions \ref{A: independence} and \ref{A: Density}, for any $y$ with $f_Y(y)>0$,
\begin{equation}\label{Eq: ratio}
\theta_V^g(y)
=
\frac{1}{f_Y(y)}
\int_{\mathbb{R}^d} \lambda_{g,V,y}(x)\,dP_X(x)
=
\frac{\Lambda_{\lambda_{g,V,y}}(P_X)}{f_Y(y)},
\quad
\lambda_{g,V,y}(x)\coloneq g(x)f_V(y-x).
\end{equation}

Thus, whenever $\lambda_{g,V,y}\in C_0(\mathbb{R}^d)$, the numerator in \eqref{Eq: ratio} falls under Theorem \ref{T: Representation}. This gives the following corollary.

\begin{corollary}[Existence and uniqueness of the Tweedie functional]
\label{C: TweedieExistence}
Fix \(y\in\mathbb R^d\) and let \(g:\mathbb R^d\to\mathbb R\) be measurable.
Under Assumptions~\ref{A: independence}, \ref{A: Identification}, and~\ref{A: Density}, if $ x \mapsto \lambda_{g,V,y}(x)=g(x)f_V(y-x) \in C_0(\mathbb{R}^d;\mathbb{C})$, then there exists a unique
\(\mathcal T_{g,V,y}\in\mathcal A_V'(\mathbb R^d)\) such that
\[
\mathcal T_{g,V,y}\!\left[\mathcal K_V[\mu]\right]
=
\int_{\mathbb R^d} g(x)f_V(y-x)\,d\mu(x),
\qquad
\forall \mu\in\mathcal M(\mathbb R^d).
\]
Consequently, if \(X\sim P_X\), \(f_Y=\mathcal K_V[P_X]\in\mathcal A_V(\mathbb R^d)\),
and \(f_Y(y)>0\), then the posterior expectation of \(g(X)\) given \(Y=y\) is finite and satisfies
\[
\theta_V^g(y)
=
\mathbb E[g(X)\mid Y=y]
=
\frac{\mathcal T_{g,V,y}[f_Y]}{f_Y(y)}.
\]
\end{corollary}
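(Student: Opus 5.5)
The corollary follows by specializing Theorem~\ref{T: Representation} to $\lambda=\lambda_{g,V,y}$ and then checking Bayes' formula \eqref{Eq: ratio} with care. The plan has three steps.

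\emph{Step 1: existence and uniqueness.} Fix $y$ and set $\lambda\coloneq\lambda_{g,V,y}$. By hypothesis $\lambda\in C_0(\mathbb R^d;\mathbb C)$, and Assumptions~\ref{A: Identification} and~\ref{A: Density} hold. Theorem~\ref{T: Representation} then gives a unique $\mathcal T_{\lambda,V}\in\mathcal A_V'(\mathbb R^d)$ with $\mathcal T_{\lambda,V}[\mathcal K_V[\mu]]=\int\lambda\,d\mu$ for every $\mu\in\mathcal M(\mathbb R^d)$. Define $\mathcal T_{g,V,y}\coloneq\mathcal T_{\lambda,V}$; this proves the first display. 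Uniqueness is inherited directly. Indeed, $\mathcal K_V$ maps $\mathcal M(\mathbb R^d)$ onto $\mathcal A_V(\mathbb R^d)$ (Proposition~\ref{P: bijection}), so any two functionals satisfying the identity agree on all of $\mathcal A_V(\mathbb R^d)$.

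\emph{Step 2: the latent integral.} Take $\mu=P_X\in\mathcal P(\mathbb R^d)\subseteq\mathcal M(\mathbb R^d)$. By Proposition~\ref{P: convolution}, $f_Y=\mathcal K_V[P_X]$, and therefore
\[
\mathcal T_{g,V,y}[f_Y]=\int g(x)f_V(y-x)\,dP_X(x)=\mathbb E\bigl[g(X)f_V(y-X)\bigr].
\]
This integral is absolutely convergent, because $\lambda$ is bounded and $P_X$ is a probability measure.

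\emph{Step 3: identifying the posterior expectation.} Step~2 and $f_Y(y)>0$ show that $\mathbb E[g(X)f_V(y-X)]/f_Y(y)$ is a finite number. It remains to show that this number is a version of $\mathbb E[g(X)\mid Y=y]$. Under Assumptions~\ref{A: independence} and~\ref{A: Density}, the joint law of $(X,Y)$ has density $f_V(y-x)$ with respect to $P_X\otimes\mathrm{Leb}$. Hence the conditional law of $X$ given $Y=y$ is
\[
dP_{X\mid Y=y}(x)=\frac{f_V(y-x)}{f_Y(y)}\,dP_X(x)\qquad\text{on }\{f_Y>0\}.
\]
To verify this, I would use Fubini to check that for every bounded measurable $h$ and Borel set $B$,
\[
\mathbb E\bigl[h(X)\mathbbm 1\{Y\in B\}\bigr]=\int_B\int h(x)f_V(y-x)\,dP_X(x)\,dy.
\]
Finiteness of $\mathbb E[g(X)\mid Y=y]$ is exactly the statement that $\int|g(x)|f_V(y-x)\,dP_X(x)<\infty$, which follows from the boundedness of $|\lambda|$. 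Combining this with Step~2 gives $\theta_V^g(y)=\mathcal T_{g,V,y}[f_Y]/f_Y(y)$.

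The main obstacle is this last step. Conditional expectations are defined only almost everywhere, whereas the statement is pointwise in $y$. I would handle this by interpreting $\theta_V^g(y)$ as the canonical Bayes-formula version defined through the fixed convolution version of $f_Y$ from Proposition~\ref{P: convolution}, which is the convention the paper adopts in \eqref{Eq: ratio}. For general $g$ (not bounded, but with $g f_V(y-\cdot)$ bounded), I would justify integrability for a.e.\ $y'$ by a truncation argument, or by restricting attention to this kernel version.
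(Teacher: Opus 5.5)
Your proposal is correct and follows the paper's route: the corollary is Theorem~\ref{T: Representation} applied to $\lambda=\lambda_{g,V,y}$, combined with the Bayes ratio formula \eqref{Eq: ratio}, which the paper simply asserts and which you verify via the density $f_V(y-x)$ of $(X,Y)$ with respect to $P_X\otimes\mathrm{Leb}$. Reading $\theta_V^g(y)$ as the kernel (Bayes-formula) version built from the convolution version of $f_Y$ is the paper's convention, so your suggested truncation argument for a.e.\ $y'$ is unnecessary; it also could not work in general, because the hypothesis only controls $g(\cdot)f_V(y-\cdot)$ at the single fixed $y$.
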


\begin{remark}
The representation is defined only at points $y$ such that $f_Y(y)>0$.
This restriction is harmless for most  statements about $Y$, since the representation is defined $P_Y$-almost surely:
\[
P_Y(\{y:f_Y(y)=0\})
=
\int_{\{y:f_Y(y)=0\}} f_Y(y)\,dy
=
0.
\]
\end{remark}

\begin{remark}
A limitation of the preceding corollary is that it applies only when the integrand \(\lambda_{g,V,y}\) is continuous and vanishes at infinity. This excludes important cases, such as indicator functions \(g\), which are needed to represent posterior cumulative distribution functions. Section~\ref{SS: Approximation} later removes this restriction by showing that the corresponding Tweedie functionals arise as limits of Tweedie functionals associated with continuous approximations to \(g\).
\end{remark}

\begin{remark}
Corollary~\ref{C: TweedieExistence} provides a pointwise identity for each
fixed \(y\in\mathbb{R}^d\). In many applications, however, the object of
interest is the conditional mean as a function of the observation, namely
\(\mathbb{E}[g(X)\mid Y]\) viewed as an integrable
\(\sigma(Y)\)-measurable random variable. Proposition~\ref{P:operator-Tweedie}
in Appendix~\ref{A:TweedieOperator} gives an operator-level version of the
pointwise representation. Concretely, it establishes under further regularity that the pointwise maps \(\mathcal{T}_{g,V,y}\) are obtained
by evaluating a single linear operator \(\mathscr{T}_{g,V}\):
\[
    \mathscr{T}_{g,V}[f_Y](y)
    =
    \mathcal{T}_{g,V,y}[f_Y]
    \qquad \text{for every } y \text{ with } f_Y(y)>0.
\]
Consequently, the conditional mean admits the global representation
\[
    \mathbb{E}[g(X)\mid Y]
    =
    \frac{\mathscr{T}_{g,V}[f_Y](Y)}{f_Y(Y)}
    \qquad \mathbb{P}\text{-almost surely}.
\]
I call \(\mathscr{T}_{g,V}\) the \emph{Tweedie operator}. This formulation
separates the dependence on the observed density from the dependence on the
evaluation point: the density \(f_Y\) is first mapped to the function
\(\mathscr{T}_{g,V}[f_Y]\), and the pointwise numerator is then obtained by
evaluation. Equivalently, it factors the Tweedie functional as
\(\mathcal{T}_{g,V,y}=\operatorname{ev}_y \circ \mathscr{T}_{g,V}\)
on the common domain where both sides are defined.  Because the image of \(\mathscr{T}_{g,V}\) can be characterized, this
operator-level representation provides a way to study global regularity
properties of the posterior mean. In particular, Corollary~\ref{C:PosteriorMeanSmoothness} uses this structure to establish
differentiability of \(\mathbb{E}[g(X)\mid Y]\). A related operator-level
formulation for posterior means appears in \cite{raphan2011least}, Eq.~3.5.
\end{remark}

\section{Computing Tweedie representations} \label{S: Computation}
For each \(y\in\mathbb R^d\), Section~\ref{S: Existence} constructed a continuous linear functional \(\mathcal T_{g,V,y}\in\mathcal A'_V(\mathbb R^d)\) such that, under the additive-noise model, whenever \(f_Y(y)>0\), 
\[ \theta_V^g(y) = \frac{1}{f_Y(y)}\,\mathcal T_{g,V,y}[f_Y], \quad \mathcal T_{g,V,y}[f_Y] = \int_{\mathbb R^d}\lambda_{g,V,y}(x)\,dP_X(x), \quad \mathcal T_{g,V,y}\in\mathcal A'_V(\mathbb R^d). \]
To carry out this strategy, however, I need to impose additional regularity on the noise
density. This regularity is inherited by \(V\)-mixtures: if the noise density
has \(k\) continuous derivatives that vanish at infinity, then every
mixture generated by this noise has the same degree of smoothness. This
assumption seems natural in the present problem. Indeed, in the Gaussian case, Tweedie's formula already requires differentiability
of the observed marginal density. 

The smoothness assumption lets us work in an ambient Banach space whose topology
is weaker than the original \(\mathcal A_V\)-topology used for existence.
Crucially, Schwartz functions are dense in this weaker topology. Since the 
Schwartz space is the natural setting for Fourier inversion of tempered
distributions, we can perform the deconvolution and inverse Fourier calculation
there, and then extend the resulting functional continuously to all
\(V\)-mixtures. This is how the abstract functional from
Section~\ref{S: Existence} becomes computable.

I now make this construction explicit. For \(k\in\mathbb N_0\), define
\[
C_0^k(\mathbb R^d;\mathbb C)
\coloneq
\left\{
\psi\in C^k(\mathbb R^d;\mathbb C)
:
\partial^\alpha \psi\in C_0(\mathbb R^d;\mathbb C)
\text{ for all } |\alpha|\le k
\right\},
\]
Thus \(C_0^k(\mathbb R^d;\mathbb C)\) is the space of \(k\)-times continuously differentiable functions whose derivatives up to order \(k\) vanish at infinity. Now define
\[
\Xi^k(\mathbb R^d;\mathbb C)
\coloneq
C_0^k(\mathbb R^d;\mathbb C)\cap L^1(\mathbb R^d;\mathbb C),
\qquad
\|\psi\|_{\Xi^k}
\coloneq
\max_{|\alpha|\le k}\|\partial^\alpha \psi\|_\infty+\|\psi\|_1.
\]

\begin{assumption}[Noise smoothness of order \(k\)]
\label{A:SmoothBoundedNoise}
For some \(k\in\mathbb N_0\), the noise density \(f_V\) belongs to
\(C_0^k(\mathbb R^d;\mathbb R)\).
\end{assumption}

Assumption~\ref{A:SmoothBoundedNoise} says that the noise density has
continuous derivatives up to order \(k\), all of which vanish at infinity.
Since \(f_V\) is a density, the assumption naturally implies that 
\(f_V\in\Xi^k(\mathbb R^d;\mathbb C)\). This assumption  encompasses many common
noise laws, for suitable choices of \(k \geq 0\), such as Gaussian, logistic, Gumbel,
Laplace, and Cauchy distributions.

The next lemma records the basic facts about this space used to establish the
main theorem. The space \(\Xi^k\) is a Banach space; the space of
\(V\)-mixtures embeds continuously into \(\Xi^k\); and Schwartz functions are
dense in \(\Xi^k\).

\begin{lemma}[Mixture smoothness and approximation]
\label{L: SmoothModels}
Assume \(f_V\) satisfies Assumption~\ref{A:SmoothBoundedNoise} with smoothness
order \(k\). Then:

\begin{itemize}

    \item[a)] \textbf{Completeness.}
    \((\Xi^k(\mathbb R^d;\mathbb C),\|\cdot\|_{\Xi^k})\) is a Banach space.

    \item[b)] \textbf{\(V\)-mixtures inherit noise regularity.}
    The inclusion $\mathcal A_V(\mathbb R^d)\hookrightarrow \Xi^k(\mathbb R^d;\mathbb C)$ is continuous.

    \item[c)] \textbf{Schwartz approximation.}
    The inclusion $\mathscr S(\mathbb R^d)\hookrightarrow \Xi^k(\mathbb R^d;\mathbb C)$ is continuous, and \(\mathscr S(\mathbb R^d)\) is dense in
    \((\Xi^k(\mathbb R^d;\mathbb C),\|\cdot\|_{\Xi^k})\).

\end{itemize}
\end{lemma}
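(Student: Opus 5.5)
The three parts are independent and I will prove them in order.

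For (a), I take a Cauchy sequence \((\psi_n)\) in \(\Xi^k\). For each \(|\alpha|\le k\), the derivatives \(\partial^\alpha\psi_n\) are Cauchy in sup norm, so they converge uniformly to limits \(h_\alpha\in C_0(\mathbb R^d;\mathbb C)\), since \(C_0\) is closed under uniform limits. Separately, \(\psi_n\to\psi\) in \(L^1\). Two identifications are then needed.
\begin{itemize}
\item The \(L^1\) limit agrees with the uniform limit: a subsequence converges a.e. to \(\psi\), hence \(\psi=h_0\) a.e.
\item The \(h_\alpha\) are the classical derivatives of \(h_0\): this is the standard theorem that uniform convergence of a function together with its partial derivatives preserves differentiability. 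I apply it inductively in \(|\alpha|\), writing \(h(x+te_j)-h(x)=\lim_n\int_0^t\partial_j\psi_n(x+se_j)\,ds\).
\end{itemize}
Together these give \(h_0\in\Xi^k\) and \(\|\psi_n-h_0\|_{\Xi^k}\to0\).

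For (b), I write \(f=f_V*\mu\). Differentiating under the integral is justified because the \(\partial^\alpha f_V\) are bounded and continuous and \(\mu\) is finite. This gives \(\partial^\alpha f=(\partial^\alpha f_V)*\mu\). I then check three properties.
\begin{itemize}
\item Continuity: by dominated convergence against \(|\mu|\).
\item Vanishing at infinity: for each \(x\), \(\partial^\alpha f_V(y-x)\to0\) as \(|y|\to\infty\), and the integrand is dominated by \(\|\partial^\alpha f_V\|_\infty\in L^1(|\mu|)\).
\item Bounds: \(\|\partial^\alpha f\|_\infty\le\|\partial^\alpha f_V\|_\infty\|\mu\|_{TV}\) and \(\|f\|_1\le\|\mu\|_{TV}\).
\end{itemize}
Hence \(\|f\|_{\Xi^k}\le C\|f\|_{\mathcal A_V}\) with \(C=\max_{|\alpha|\le k}\|\partial^\alpha f_V\|_\infty+1\).

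For (c), continuity of the inclusion is immediate. Each sup-norm term is one of the Schwartz seminorms, and \(\|\psi\|_1\le C_d\sup_x(1+|x|)^{d+1}|\psi(x)|\), which is a finite combination of Schwartz seminorms.

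Density is the main step. I use truncation followed by mollification. First, fix a cutoff \(\chi_R(x)=\chi(x/R)\) with \(\chi\in C_c^\infty\), \(\chi=1\) on the unit ball. For \(\psi\in\Xi^k\), the Leibniz rule gives \(\partial^\alpha(\chi_R\psi)-\partial^\alpha\psi=(\chi_R-1)\partial^\alpha\psi+\sum_{\beta<\alpha}c\,R^{|\beta|-|\alpha|}(\partial^{\alpha-\beta}\chi)(x/R)\,\partial^\beta\psi\). The first term is small in sup norm because \(\partial^\alpha\psi\) vanishes at infinity outside the ball of radius \(R\). The remaining terms carry a factor \(R^{-1}\) times bounded quantities. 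The \(L^1\) error is at most \(\int_{|x|>R}|\psi|\to0\). So compactly supported \(C^k\) functions are dense.

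Second, I convolve with a mollifier \(\rho_\varepsilon\), obtaining \(C_c^\infty\subset\mathscr S\). Since \(\partial^\alpha(\rho_\varepsilon*\phi)=\rho_\varepsilon*\partial^\alpha\phi\), uniform continuity of the compactly supported \(\partial^\alpha\phi\) gives sup-norm convergence, and the standard \(L^1\) approximate-identity result gives \(L^1\) convergence.

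The only delicate point is the Leibniz bookkeeping in the truncation step, which needs the fact that every \(\partial^\beta\psi\) with \(|\beta|\le k\) is bounded. That holds by the definition of \(\Xi^k\).
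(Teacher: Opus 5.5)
Your proof is correct and follows essentially the same route as the paper's. Part (a) identifies the sup-norm and $L^1$ limits via an a.e.\ subsequence and recovers derivatives by the fundamental theorem of calculus; part (b) differentiates under the integral against $|\mu|$ with $\|\partial^\alpha f\|_\infty\le\|\partial^\alpha f_V\|_\infty\|\mu\|_{TV}$; and part (c) uses cutoff followed by mollification. The only cosmetic differences are that you obtain vanishing at infinity in (b) by dominated convergence along sequences $|y_n|\to\infty$ instead of the paper's compact-set splitting of $|\mu|$, and you bound the Leibniz cross terms by $R^{-1}$ times sup norms instead of using decay on the annulus $R\le\|x\|_2\le 2R$; both variants are valid.
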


\subsection{A general strategy for computing Tweedie representations}

I now state the main result of this section, and the main computational result
of the paper.

\begin{theorem}[Computation of Tweedie functionals]
\label{T:master-fourier-identification}
Fix \(y\in\mathbb R^d\) and let \(g:\mathbb R^d\to\mathbb R\) be measurable. Suppose that \(V\) satisfies Assumptions~\ref{A: Identification}
and~\ref{A:SmoothBoundedNoise} for some \(k\ge 0\). Let $x \mapsto g(x)f_V(y-x) \eqqcolon\lambda_{g,V,y}(x)\in  C_0(\mathbb{R}^d;\mathbb{C})$. Suppose there exists a measurable function $\mathcal Q_{g,V,y}:\mathbb{R}^d \to \mathbb{C}$ such that

\begin{enumerate}
\item[(i)] \textbf{Temperedness.}
There exists \(N\in\mathbb N_0\) such that
\[
\int_{\mathbb R^d}
\frac{|\mathcal Q_{g,V,y}(\omega)|}{(1+\|\omega\|_2)^N}\,d\omega
<\infty.
\]

\item[(ii)] \textbf{\(\Xi^k\)-continuity.}
There exists \(C>0\) such that, for every \(\psi\in\mathscr S(\mathbb R^d)\),
\[
\bigl|\mathcal F^{-1}\{\mathcal I_{\mathcal Q_{g,V,y}}\}[\psi]\bigr|
\le C\|\psi\|_{\Xi^k}.
\]

\item[(iii)] \textbf{Fourier-domain representer identity}

\[
\mathcal F^{-1}
\left\{
\mathcal I_{ \omega  \; \mapsto  \; \mathcal{Q}_{g,V,y}(\omega)\varphi_V(-\omega)}
\right\}
=
\mathcal I_{\lambda_{g,V,y}}
\qquad
\text{in }\mathscr S'(\mathbb R^d)
\]

\end{enumerate}

Then \(\mathcal F^{-1}\{\mathcal I_{\mathcal Q_{g,V,y}}\}\) extends uniquely
to a continuous linear functional on \(\Xi^k(\mathbb R^d;\mathbb C)\), and its
restriction to \(\mathcal A_V(\mathbb R^d)\) is the Tweedie functional
\(\mathcal T_{g,V,y}\). Equivalently, for every
\(f\in \mathcal A_V(\mathbb R^d)\) and every sequence
\((f_n)_{n=1}^\infty\subset \mathscr S(\mathbb R^d)\) with
\(\|f_n-f\|_{\Xi^k}\to 0\),
\[
\mathcal T_{g,V,y}[f]
=
\lim_{n\to\infty}
\mathcal F^{-1}\{\mathcal I_{\mathcal Q_{g,V,y}}\}[f_n],
\]
and the limit is independent of the approximating sequence.
\end{theorem}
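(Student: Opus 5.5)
Write $S\coloneq\mathcal F^{-1}\{\mathcal I_{\mathcal Q_{g,V,y}}\}$. By (i) and Lemma~\ref{L:weighted-integrability-temperedness}, $\mathcal I_{\mathcal Q_{g,V,y}}\in\mathscr S'(\mathbb R^d)$, so $S\in\mathscr S'(\mathbb R^d)$. The plan has three steps: extend $S$ to $\Xi^k$ by density, restrict to $\mathcal A_V$, and then use the uniqueness in Theorem~\ref{T: Representation} (equivalently Corollary~\ref{C: TweedieExistence}) to identify the restriction with $\mathcal T_{g,V,y}$.

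\emph{Extension.} By (ii), $S$ is a linear functional on $\mathscr S(\mathbb R^d)$ that is bounded for $\|\cdot\|_{\Xi^k}$. By Lemma~\ref{L: SmoothModels}(a),(c), $\mathscr S$ is dense in the Banach space $\Xi^k$. The standard bounded-linear-extension theorem then gives a unique $\bar S\in(\Xi^k)'$ with $\bar S|_{\mathscr S}=S$ and $\|\bar S\|\le C$. Concretely, $\bar S[f]=\lim_n S[f_n]$ for any Schwartz sequence $f_n\to f$ in $\Xi^k$. The limit exists because $(S[f_n])$ is Cauchy, and it does not depend on the sequence because interlacing two such sequences gives a single convergent sequence. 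This settles the "equivalently" clause. By Lemma~\ref{L: SmoothModels}(b), the restriction $\bar S|_{\mathcal A_V}$ is continuous for $\|\cdot\|_{\mathcal A_V}$, so it lies in $\mathcal A_V'$.

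\emph{Identification.} By uniqueness in Corollary~\ref{C: TweedieExistence}, it suffices to show $\bar S[f_V*\mu]=\int\lambda_{g,V,y}\,d\mu$ for every $\mu\in\mathcal M(\mathbb R^d)$.

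First I treat Schwartz data. For $\psi\in\mathscr S$, I apply (iii) to the test function $\psi$ and unfold the definitions, with care about reflection conventions. On the left I need
\[
S[f_V*\psi]=\int \mathcal Q(\omega)\,(f_V*\psi)^\sharp(\omega)\,d\omega,
\qquad (f_V*\psi)^\sharp(\omega)=(2\pi)^d\,\varphi_V(-\omega)\,\psi^\sharp(\omega),
\]
so the left side becomes $\mathcal F^{-1}\{\mathcal I_{\mathcal Q\varphi_V(-\cdot)}\}[\psi]=\int\lambda_{g,V,y}\psi$. One technicality is that $f_V*\psi$ need not be Schwartz. I would handle this by applying $\bar S$ and approximating $\varphi_V(-\cdot)\psi^\sharp$ by Schwartz functions, or more directly by using a mollified truncation $f_{V,n}\in\mathscr S$ with $f_{V,n}\to f_V$ in $\Xi^k$. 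Then $f_{V,n}*\psi\to f_V*\psi$ in $\Xi^k$, and on the Fourier side $\varphi_{V,n}\to\varphi_V$ boundedly and pointwise. Dominated convergence, using (i) together with the rapid decay of $\psi^\sharp$, then yields
\[
\bar S[f_V*\psi]=\int\lambda_{g,V,y}(x)\psi(x)\,dx
\quad\text{for all }\psi\in\mathscr S.
\]

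Next I pass to general measures. For $\mu\in\mathcal M$, let $\psi_n\in\mathscr S$ be obtained by convolving $\mu$ with a Gaussian approximate identity $\rho_n$ and multiplying by smooth cutoffs. Then $\|\psi_n\|_1\le\|\mu\|_{TV}$ up to a small error, and $\psi_n\,dx\to\mu$ weakly* against $C_0$, indeed narrowly. Since $\lambda_{g,V,y}\in C_0$, we get $\int\lambda\psi_n\to\int\lambda\,d\mu$. On the other side, I need $f_V*\psi_n\to f_V*\mu$ in $\Xi^k$. This holds because $f_V*\rho_n*\mu\to f_V*\mu$ in $\Xi^k$: by Assumption~\ref{A:SmoothBoundedNoise}, $\partial^\alpha f_V$ is uniformly continuous and in $C_0$, and $f_V\in L^1$, so $\|\partial^\alpha f_V*\rho_n-\partial^\alpha f_V\|_\infty\to0$ and $\|f_V*\rho_n-f_V\|_1\to0$. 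Each of these norms is then multiplied by at most $\|\mu\|_{TV}$. The cutoff error is also controlled by $\|\cdot\|_{TV}$ times $\|f_V\|_{\Xi^k}$. Continuity of $\bar S$ then gives the desired identity.

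\emph{Main obstacle.} I expect the hardest part to be the Schwartz-level unpacking of (iii): making the Fourier-convention bookkeeping exact, including the sign in $\varphi_V(-\omega)$ and the $(2\pi)^d$ factors, and justifying the evaluation of $S$ on $f_V*\psi$ even though that function is not Schwartz. That justification uses (i) together with dominated convergence as described above. The final approximation of measures is routine by comparison.
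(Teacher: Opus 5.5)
Your proposal is correct, and its first two stages match the paper's proof. The density extension from $\mathscr S(\mathbb R^d)$ to $\Xi^k$ is the same. So is the Schwartz-level identity $\bar S[f_V*\eta]=\int\lambda_{g,V,y}\eta$, which the paper also obtains by approximating $f_V$ by Schwartz functions $u_n$ in $\Xi^k$, using $u_n*\eta\in\mathscr S(\mathbb R^d)$, and applying dominated convergence with (i). There is one bookkeeping slip: under the paper's convention $\widetilde{f*\psi}=\widetilde f\,\widetilde\psi$ with no normalizing constant, so $(f_V*\psi)^\sharp(\omega)=\varphi_V(-\omega)\psi^\sharp(\omega)$ without the factor $(2\pi)^d$. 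Your next line already uses the correct identity, so nothing downstream is affected.

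You genuinely diverge in the passage from Schwartz densities to an arbitrary $\mu\in\mathcal M(\mathbb R^d)$. The paper defines $\Psi(x)\coloneq\overline L_Q[\tau_x f_V]$ and uses strong continuity of translations on $\Xi^k$ together with the Bochner representation $f_V*\mu=\int\tau_x f_V\,d\mu(x)$ to get $\int\Psi\eta=\overline L_Q[f_V*\eta]$. It concludes $\Psi=\lambda_{g,V,y}$ as distributions, hence everywhere by continuity, and then commutes $\overline L_Q$ with the Bochner integral against $\mu_f$. You instead approximate $\mu$ by $C_c^\infty$ densities $\chi_R(\rho_n*\mu)$. You use only the estimate $\|\zeta*\nu\|_{\Xi^k}\le\|\zeta\|_{\Xi^k}\|\nu\|_{TV}$, uniform continuity of $\partial^\alpha f_V$, and weak-$\star$ convergence tested against $\lambda_{g,V,y}\in C_0$.

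Your route is more elementary: it needs no vector-valued integration and no translation-group argument. The paper's route yields the structural identity $\overline L_Q[\tau_x f_V]=\lambda_{g,V,y}(x)$ for every $x$, and it handles all measures at once by linearity of the Bochner integral. That same Bochner machinery reappears in the construction of the Tweedie operator. Both arguments use the hypothesis $\lambda_{g,V,y}\in C_0$ at the same point, which is needed because $\mu$ may be singular. The paper uses it to upgrade a.e. equality $\Psi=\lambda_{g,V,y}$ to pointwise equality; you use it to get $\int\lambda_{g,V,y}\psi_n\to\int\lambda_{g,V,y}\,d\mu$.
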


Theorem~\ref{T:master-fourier-identification} turns the abstract existence of
the Tweedie functional into a closed-form formula. The construction proceeds by identifying a complex-valued function \(\mathcal Q_{g,V,y}\) with three properties. Condition~(i) is an integrability condition. It ensures that \(\mathcal Q_{g,V,y}\) defines a tempered distribution by integration. Its inverse Fourier transform is therefore well defined as a tempered distribution, and hence as a linear functional on the Schwartz space. Condition~$(ii)$ is where the ambient space \(\Xi^k(\mathbb R^d;\mathbb C)\) enters. It requires this Schwartz-level functional to be continuous with respect to the \(\Xi^k\)-norm, and therefore to extend continuously to \(\Xi^k(\mathbb R^d;\mathbb C)\). 

Conditions~$(i)$ and~$(ii)$ together ensure that \(\mathcal Q_{g,V,y}\) induces a well-defined continuous functional on \(\Xi^k(\mathbb R^d;\mathbb C)\). They do not, by themselves, identify this functional with the Tweedie functional. Identification comes from Condition~$(iii)$, the Fourier-domain representer identity: after accounting for the noise, the functional generated by \(\mathcal Q_{g,V,y}\) must agree with the functional induced by integration against \(\lambda_{g,V,y}\). Any function \(\mathcal Q_{g,V,y}\) satisfying this identity is called a \textit{Fourier-domain representer} for \(\lambda_{g,V,y}\). When Conditions~$(i)$--$(iii)$ all hold, the theorem shows that the Tweedie functional can be computed by evaluating the inverse Fourier transform of \(\mathcal I_{\mathcal Q_{g,V,y}}\) on Schwartz approximations to any \(V\)-admissible mixture and passing to the \(\Xi^k\)-limit.

\begin{remark}
Suppose further that 
\(\lambda_{g,V,y}\in L^1(\mathbb R^d;\mathbb C)\). Under
Assumption~\ref{A: Identification}, one candidate Fourier-domain representer is
\[
\mathcal Q_{g,V,y}(\omega)
=
\frac{\widetilde{\lambda}_{g,V,y}(\omega)}
{\varphi_V(-\omega)}
\mathbbm 1\{\varphi_V(-\omega)\neq 0\}.
\]

Indeed, for every \(\psi\in\mathscr S(\mathbb R^d)\), Fourier duality gives
\[
\begin{aligned}
\mathcal F^{-1}
\left\{
\mathcal I_{\omega\mapsto \mathcal Q_{g,V,y}(\omega)\varphi_V(-\omega)}
\right\}[\psi]=
\int_{\mathbb R^d}
\widetilde{\lambda}_{g,V,y}(\omega)\psi^\sharp(\omega)\,d\omega =
\int_{\mathbb R^d}
\lambda_{g,V,y}(\omega)\psi(\omega)\,d\omega =
\mathcal I_{\lambda_{g,V,y}}[\psi].
\end{aligned}
\]
Thus Condition~\textup{(iii)} of Theorem~\ref{T:master-fourier-identification} holds. 
\end{remark}

The preceding remark shows that, when \(\lambda_{g,V,y}\) is integrable, a Fourier-domain representer \textit{is always available} and can be computed directly. Moreover, this integrability requirement is often mild. Under Gaussian noise, for example, the exponential decay of the noise density accommodates functions \(g\) with polynomial growth of arbitrary degree. In such cases, because a Fourier-domain representer is already known, applying Theorem~\ref{T:master-fourier-identification} only requires verifying the two analytic conditions, Conditions~\textup{(i)} and~\textup{(ii)}, for the resulting \(\mathcal Q_{g,V,y}\). When \(\lambda_{g,V,y}\) is not integrable, a Fourier-domain representer may still be obtained by direct inspection of the Fourier-domain representer identity.\footnote{See for instance the Cauchy posterior mean example in Appendix \ref{A:TableEntries}.}

Pragmatically, the theorem thus gives a three-step strategy for computing Tweedie functionals:

\begin{enumerate}[label=\textbf{Step \arabic*.}, leftmargin=*, align=left]
    \item Find a Fourier-domain representer
    \(\mathcal Q_{g,V,y}(\omega)\) and verify it defines a tempered distribution by integration.

    \item Compute the inverse Fourier transform
    \(\mathcal F^{-1}\{\mathcal I_{\mathcal Q_{g,V,y}}\}\) on Schwartz space
    \(\mathscr S(\mathbb R^d)\).

 \item Prove \(\Xi^k\)-continuity and compute
the Tweedie functional evaluated at a $V-$mixture by taking the \(\Xi^k\)-limit along
a Schwartz approximating sequence.
\end{enumerate}

I now apply this strategy to recover the classical Tweedie formula.

\begin{example}[Original Tweedie's formula.]
Suppose \(d=1\), \(Y=X+V\), \(X\indep V\), and
\(V\sim\mathcal N(0,\sigma^2)\). Let \(g(x)=x\), fix \(y\in\mathbb R\), and
take \(k=1\). Since the Gaussian density belongs to \(C_0^\infty(\mathbb R)\),
the smoothness assumption holds. First, let's compute a Fourier-domain representer. Here
\(x \mapsto\lambda_{g,V,y}(x)=x f_V(y-x)\in\Xi^0(\mathbb R;\mathbb C)\), and
\[
\widetilde{\lambda}_{g,V,y}(\omega)
=
\bigl(y+i\sigma^2\omega\bigr)e^{i\omega y}
e^{-\sigma^2\omega^2/2}.
\]
Since \(\varphi_V(-\omega)=e^{-\sigma^2\omega^2/2}\), the remark gives a Fourier-domain representer via
\[
\mathcal Q_{g,V,y}(\omega)=\dfrac{\widetilde{\lambda}_{g,V,y}(\omega)}{\varphi_V(-\omega)}
=
\bigl(y+i\sigma^2\omega\bigr)e^{i\omega y}.
\]
Second, since \(\mathcal Q_{g,V,y}\) grows at most linearly, it defines a
tempered distribution by integration, and the standard transform identities
\citep[Appendix 2]{kammler2007first} yield
\[
\mathcal F^{-1}\{\mathcal I_{\mathcal Q_{g,V,y}}\}[\psi]
=
y\psi(y)+\sigma^2\psi'(y),
\qquad \psi\in\mathscr S(\mathbb R).
\]
Third,
\[
\left|y\psi(y)+\sigma^2\psi'(y)\right|
\le
\bigl(|y|+\sigma^2\bigr)\|\psi\|_{\Xi^1},
\]
so the \(\Xi^1\)-continuity condition holds. Therefore
Theorem~\ref{T:master-fourier-identification} implies that, for any
\((f_n)_{n\ge1}\subset\mathscr S(\mathbb R)\) with
\(\|f_n-f_Y\|_{\Xi^1}\to0\),
\[
\mathcal T_{g,V,y}[f_Y]
=
\lim_{n\to\infty}
\{y f_n(y)+\sigma^2 f_n'(y)\}
=
y f_Y(y)+\sigma^2 f_Y'(y),
\]
since \(f_n\to f_Y\) in \(\Xi^1\), implies both \(f_n(y)\to f_Y(y)\) and \(f_n'(y)\to f_Y'(y)\). Dividing by
\(f_Y(y)\), whenever \(f_Y(y)>0\), gives
\[
\mathbb E[X\mid Y=y]
=
y+\sigma^2\frac{f_Y'(y)}{f_Y(y)}.
\]    
\end{example}

\subsection{Tweedie formulas via approximation by continuous functions} \label{SS: Approximation}

Theorem~\ref{T:master-fourier-identification} gives a systematic way to
compute Tweedie representations when the Fourier-domain representer
\(\mathcal Q_{g,V,y}\) defines a tempered distribution. Some 
functionals fall outside this setting. For example, the function \(g\)
defining the estimand may be discontinuous. In such cases, we can still apply the strategy by
approximating the original problem with instances for which the tools apply. The next
result gives one such approximation argument.

\begin{proposition}[Tweedie functionals via approximation]
\label{P:tweedie-approximation}
Let $(Y,X,V)$ satisfy Assumptions~\ref{A: Identification} and~\ref{A: Density}. Fix \(y\in\mathbb R^d\) and a measurable function
\(g:\mathbb R^d\to\mathbb R\). Let \((g_n)_{n\ge1}\) be a sequence of
measurable functions. Define $\lambda_{n,g,V,y}(x)=g_n(x)f_V(y-x)$ and $\lambda_{g,V,y}(x)=g(x)f_V(y-x)$. Assume that $\lambda_{n,g,V,y} \in C_0(\mathbb{R}^d;\mathbb C)$ for every $n \geq 1$, that $\lambda_{n,g,V,y}(x) \to \lambda_{g,V,y}(x)$ for all $x\in\mathbb R^d$, and that $\sup_{n\ge1}\|\lambda_{n,g,V,y}\|_\infty<\infty$. Then, for each \(n\ge1\), there exists a unique functional $\mathcal T_{n,g,V,y}\in \mathcal A_V'(\mathbb R^d)$ such that
\[
\mathcal T_{n,g,V,y}[f_V*\mu]
=
\int_{\mathbb R^d}\lambda_{n,g,V,y}(x)\,d\mu(x),
\qquad
\mu\in\mathcal M(\mathbb R^d).
\]

Moreover, there exists a unique continuous linear functional $\mathcal T_{g,V,y}\in \mathcal A_V'(\mathbb R^d)$ such that
\[
\mathcal T_{g,V,y}[f_V*\mu]
=
\int_{\mathbb R^d}\lambda_{g,V,y}(x)\,d\mu(x),
\qquad
\mu\in\mathcal M(\mathbb R^d).
\]
In fact, for every \(f\in\mathcal A_V(\mathbb R^d)\), this map is given as a pointwise limit
\[
\mathcal T_{g,V,y}[f]
=
\lim_{n\to\infty}
\mathcal T_{n,g,V,y}[f].
\]

In particular, if \((Y,X,V)\) also satisfies Assumption~\ref{A: independence}, then
\(f_Y=f_V*P_X\), and whenever \(f_Y(y)>0\),
\[
\mathbb E[g(X)\mid Y=y]
=
\frac{1}{f_Y(y)}
\lim_{n\to\infty}
\mathcal T_{n,g,V,y}[f_Y].
\]
\end{proposition}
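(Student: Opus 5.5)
The plan is to work entirely through the isometric isomorphism $\mathcal K_V:(\mathcal M(\mathbb R^d),\|\cdot\|_{TV})\to(\mathcal A_V(\mathbb R^d),\|\cdot\|_{\mathcal A_V})$ supplied by Proposition~\ref{P: bijection}. Under it, a functional on $\mathcal A_V$ is just a functional on measures. The first claim, existence and uniqueness of each $\mathcal T_{n,g,V,y}$, is then immediate from Theorem~\ref{T: Representation} applied with $\lambda=\lambda_{n,g,V,y}\in C_0(\mathbb R^d;\mathbb C)$. Note that Assumption~\ref{A: independence} is not needed for this part.

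For the limit functional I would not invoke Theorem~\ref{T: Representation}, since $\lambda_{g,V,y}$ need not be continuous. Instead I construct it directly.
\begin{enumerate}
\item The pointwise limit $\lambda_{g,V,y}$ of measurable functions is measurable. It is also bounded, with $\|\lambda_{g,V,y}\|_\infty\le M\coloneq\sup_n\|\lambda_{n,g,V,y}\|_\infty<\infty$.
\item Hence $\Lambda[\mu]\coloneq\int\lambda_{g,V,y}\,d\mu$ is a well-defined linear functional on $\mathcal M(\mathbb R^d)$. Writing $|\mu|$ for the total variation measure, it satisfies $|\Lambda[\mu]|\le M|\mu|(\mathbb R^d)$.
\item Here I use the standard fact that $|\mu|(\mathbb R^d)$ coincides with the $C_0$-duality norm $\|\mu\|_{TV}$ defined in the paper; this is the Riesz representation theorem, \citep[Theorem 6.19]{rudin1987real}. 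It gives $|\Lambda[\mu]|\le M\|\mu\|_{TV}$.
\item Define $\mathcal T_{g,V,y}\coloneq\Lambda\circ\mathcal K_V^{-1}$. It is linear, and by the isometry $|\mathcal T_{g,V,y}[f]|\le M\|f\|_{\mathcal A_V}$, so $\mathcal T_{g,V,y}\in\mathcal A_V'(\mathbb R^d)$. By construction it satisfies the required identity on every $f_V*\mu$.
\end{enumerate}
Uniqueness holds because every $f\in\mathcal A_V$ has the form $f_V*\mu$ for a unique $\mu$, so any functional satisfying the identity is determined on all of $\mathcal A_V$.

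For the pointwise limit, fix $f\in\mathcal A_V$ and let $\mu=\mathcal K_V^{-1}[f]$. Then
\[
\mathcal T_{n,g,V,y}[f]=\int\lambda_{n,g,V,y}\,d\mu .
\]
By the Radon--Nikodym polar decomposition $d\mu=h\,d|\mu|$ with $|h|=1$, the integrands are dominated by the constant $M$, which is integrable against the finite measure $|\mu|$. Dominated convergence, together with pointwise convergence $\lambda_{n,g,V,y}\to\lambda_{g,V,y}$, then gives $\mathcal T_{n,g,V,y}[f]\to\int\lambda_{g,V,y}\,d\mu=\mathcal T_{g,V,y}[f]$.

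For the final claim, Proposition~\ref{P: convolution} gives $f_Y=f_V*P_X=\mathcal K_V[P_X]$. Hence
\[
\lim_{n\to\infty}\mathcal T_{n,g,V,y}[f_Y]=\int g(x)f_V(y-x)\,dP_X(x).
\]
By Bayes' formula \eqref{Eq: ratio}, this integral equals $f_Y(y)\,\mathbb E[g(X)\mid Y=y]$ whenever $f_Y(y)>0$. The integral is finite because $\lambda_{g,V,y}$ is bounded.

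The only delicate point is the last step: identifying $\mathbb E[g(X)\mid Y=y]$ with the Bayes ratio at a fixed point $y$. This is valid once we adopt, as in Corollary~\ref{C: TweedieExistence}, the version of the conditional expectation defined by the ratio formula with the convolution version of $f_Y$. Otherwise the argument is routine; the main technical care goes into the domination step and the identification $\|\mu\|_{TV}=|\mu|(\mathbb R^d)$.
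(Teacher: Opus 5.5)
Your proposal is correct and follows essentially the same route as the paper. The steps match: Theorem~\ref{T: Representation} for each $\mathcal T_{n,g,V,y}$; the direct construction $\mathcal T_{g,V,y}=\Lambda\circ\mathcal K_V^{-1}$ with bound $M\|f\|_{\mathcal A_V}$; uniqueness from the bijection $\mathcal K_V$; dominated convergence against $|\mu_f|$; and the ratio formula applied to $f_Y=\mathcal K_V[P_X]$. Your explicit remarks on $\|\mu\|_{TV}=|\mu|(\mathbb R^d)$ and on fixing the version of the conditional expectation make precise two points the paper uses implicitly.
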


The next example illustrates how this approximation argument produces a
Tweedie representation beyond the direct scope of
Theorem~\ref{T:master-fourier-identification}

\begin{example}[Posterior distribution function under Gaussian noise.]
 Suppose \(d=1\), \(Y=X+V\), \(X\indep V\), and
\(V\sim \mathcal N(0,\sigma^2)\).  We seek a Tweedie-type representation for
the posterior distribution function, $\mathbb P[X\le x\mid Y=y]$, which corresponds to the measurable function $g_x(u)=\mathbbm 1\{u\le x\}$, $x\in\mathbb R$.  For \(n\ge1\),  define the
smooth approximation
\[
g_{n,x}(u)
=
\Phi\left(\frac{x+n^{-1/2}-u}{n^{-1}}\right),
\qquad u\in\mathbb R.
\]

It is straightforward to check that \((g_{n,x})_{n\ge 1}\) satisfies the
conditions of Proposition~\ref{P:tweedie-approximation}. Moreover, each \(g_{n,x}\) gives

\begin{align*} \mathcal T_{n,g_x,V,y}[f] &= f(y)\Phi\left(\frac{x+n^{-1/2}-y}{\sqrt{\sigma^2+n^{-2}}}\right) +\mathsf S_n[f], \quad f \in \mathcal{A}_V(\mathbb{R}),\\ 
\mathsf S_n[f] &\coloneq \sum_{k=1}^{\infty} \frac{(-1)^k}{k!} \left( \frac{\sigma^2}{\sqrt{\sigma^2+n^{-2}}} \right)^k \Phi^{(k)} \left( \frac{x+n^{-1/2}-y}{\sqrt{\sigma^2+n^{-2}}} \right) f^{(k)}(y). 
\end{align*}

where $\Phi$ is the cumulative distribution function of a standard normal variable. For each fixed \(n\), this series converges absolutely.\footnote{Details of
this calculation are given in Lemma~\ref{L:SeriesRepresentation}.} Therefore,
\begin{align*}
\mathbb P[X\le x\mid Y=y]
&=\frac{1}{f_Y(y)}
\lim_{n\to\infty}
\mathcal T_{n,g_x,V,y}[f_Y]\\
&=
\Phi\left(\frac{x-y}{\sigma}\right)
+
\frac{1}{f_Y(y)}
\lim_{n\to\infty}\mathsf S_n[f_Y]
.
\end{align*}   
\end{example}

To the best of my knowledge, this representation is new. It also has a simple
interpretation. The first term,
\(\Phi((x-y)/\sigma)\), is the posterior distribution function obtained under a
flat, or locally uninformative, prior on \(X\). The remaining terms correct this
flat-prior approximation by using information in the marginal density \(f_Y\)
and its derivatives.

The formula also shows why posterior distribution functions are harder to
recover than posterior means. In the Gaussian case, the posterior mean depends
only on the first derivative of \(f_Y\), whereas the posterior cdf involves
derivatives of \(f_Y\) of all orders. In special cases, moreover, the expression
simplifies. For example, if \(X\sim\mathcal N(m,\tau^2)\), then $X\mid Y=y
\sim
\mathcal{N}\left(
\frac{\sigma^2m+\tau^2y}{\sigma^2+\tau^2},
\frac{\sigma^2\tau^2}{\sigma^2+\tau^2}
\right)$. In this case, the derivatives of \(f_Y\) are controlled, and the limit in
\(n\) can be interchanged with the series. Mehler's formula \citep{mehler1866ueber} then confirms the closed-form series expansion
\[
\frac{1}{f_Y(y)}
\sum_{k=0}^{\infty}
\frac{(-1)^k \sigma^k}{k!}
\Phi^{(k)}
\left(
\frac{x-y}{\sigma}
\right)
f_Y^{(k)}(y)
=
\Phi\left(
\frac{
x-\frac{\sigma^2m+\tau^2y}{\sigma^2+\tau^2}
}{
\sqrt{\frac{\sigma^2\tau^2}{\sigma^2+\tau^2}}
}
\right)=\mathbb P[X\le x\mid Y=y].
\]

\section{Applications of Tweedie Calculus} \label{S: Applications}

This section illustrates the range of Tweedie calculus through four applications.
First, I derive posterior mean formulas for several non-Gaussian additive-noise
models. Second, I identify cases in which Tweedie functionals can be written
as expectations of explicit functions of the observed variable. Third, I study
posterior functionals under the conventional Laplace mechanism used in
differential privacy. Fourth, I extend the calculus to the heteroskedastic Gaussian
sequence model by conditioning on the potentially random noise covariance structure. The detailed 
calculations supporting these applications are relegated to
Appendix~\ref{A: DetailedComp}.

\subsection{Posterior means beyond Gaussian  noise}

The first application concerns posterior means under additive noise laws other
than the Gaussian. As noted before, the three-step strategy from Section~\ref{S: Computation}
can be applied case by case. Alternatively, the same tools can also identify entire families of
Tweedie functionals from the structure of their Fourier-domain representers.
The next proposition gives one such characterization. It generalizes the
original Gaussian calculation by showing that certain factorizations of
\(\mathcal Q_{g,V,y}\) yield weighted-derivative representations.

\begin{proposition}[Tweedie representation via factorization of measures]
\label{P:measure-factorization-functional}
Fix \(y\in\mathbb R^d\) and let \(g:\mathbb R^d\to\mathbb R\) be measurable.
Suppose that \((Y,X,V)\) satisfies Assumptions~\ref{A: independence},
\ref{A: Identification}, \ref{A: Density}, and
\ref{A:SmoothBoundedNoise} with smoothness order \(k\in\mathbb N_0\). Assume $\lambda_{g,V,y} \in C_0(\mathbb{R}^d,\mathbb{C})$. Suppose there exists a family of finite complex Borel measures
\(\{\mu_\alpha:|\alpha|\le k\}\subset \mathcal M(\mathbb R^d)\) such that the function 
\[
\mathcal Q_{g,V,y}(\omega)
=
\sum_{|\alpha|\le k}(i\omega)^\alpha\varphi_{\mu_\alpha}(\omega), \qquad \varphi_\mu(\omega)
\coloneq
\int_{\mathbb{R}^d} e^{i\omega^\top x}\,d\mu(x),
\qquad\text{for Lebesgue-a.e. }\omega\in\mathbb R^d.
\]
\noindent is a Fourier-domain representer for $\lambda_{g,V,y}$. Then, for every
\(f\in\mathcal A_V(\mathbb R^d)\),
\[
\mathcal T_{g,V,y}[f]
=
\sum_{|\alpha|\le k}
\int_{\mathbb R^d}\partial^\alpha f(z)\,d\mu_\alpha(z).
\]
In particular, whenever \(f_Y(y)>0\),
\[
\mathbb E[g(X)\mid Y=y]
=
\frac{1}{f_Y(y)}
\sum_{|\alpha|\le k}
\int_{\mathbb R^d}\partial^\alpha f_Y(z)\,d\mu_\alpha(z).
\]
\end{proposition}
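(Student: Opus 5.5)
The plan is to apply Theorem~\ref{T:master-fourier-identification} to the given $\mathcal Q_{g,V,y}$. Condition~(iii) holds by hypothesis, so only Conditions~(i) and~(ii) need checking, together with an explicit computation of $\mathcal F^{-1}\{\mathcal I_{\mathcal Q_{g,V,y}}\}$ on Schwartz functions. The theorem then identifies $\mathcal T_{g,V,y}$ on $\mathcal A_V(\mathbb R^d)$ as the continuous extension of this Schwartz-level functional. It remains to recognize that extension as $f\mapsto\sum_{|\alpha|\le k}\int\partial^\alpha f\,d\mu_\alpha$.

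\textbf{Temperedness.} Characteristic functions of finite measures satisfy $|\varphi_{\mu_\alpha}(\omega)|\le\|\mu_\alpha\|_{TV}$. Hence
\[
|\mathcal Q_{g,V,y}(\omega)|\le\sum_{|\alpha|\le k}\|\mu_\alpha\|_{TV}(1+\|\omega\|_2)^{k},
\]
and Condition~(i) holds with $N=k+d+1$.

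\textbf{Schwartz-level computation.} For $\psi\in\mathscr S(\mathbb R^d)$,
\[
\mathcal F^{-1}\{\mathcal I_{\mathcal Q}\}[\psi]=\sum_{|\alpha|\le k}\int(i\omega)^\alpha\varphi_{\mu_\alpha}(\omega)\psi^\sharp(\omega)\,d\omega.
\]
I insert $\varphi_{\mu_\alpha}(\omega)=\int e^{i\omega^\top z}d\mu_\alpha(z)$ and apply Fubini. This is justified because $(1+\|\omega\|)^k\psi^\sharp\in L^1$ and $\mu_\alpha$ is finite. The result is
\[
\int\Bigl(\int e^{i\omega^\top z}(i\omega)^\alpha\psi^\sharp(\omega)\,d\omega\Bigr)d\mu_\alpha(z).
\]
The inner integral requires a sign-convention check. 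We have $\psi(z)=\widetilde{\psi^\sharp}(z)=\int e^{i\omega^\top z}\psi^\sharp(\omega)\,d\omega$, and differentiating under the integral gives $\partial^\alpha\psi(z)=\int (i\omega)^\alpha e^{i\omega^\top z}\psi^\sharp(\omega)\,d\omega$. Therefore
\[
\mathcal F^{-1}\{\mathcal I_{\mathcal Q}\}[\psi]=\sum_{|\alpha|\le k}\int\partial^\alpha\psi\,d\mu_\alpha .
\]

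\textbf{Continuity and extension.} Condition~(ii) follows from
\[
\Bigl|\sum_\alpha\int\partial^\alpha\psi\,d\mu_\alpha\Bigr|\le\Bigl(\sum_\alpha\|\mu_\alpha\|_{TV}\Bigr)\max_{|\alpha|\le k}\|\partial^\alpha\psi\|_\infty\le C\|\psi\|_{\Xi^k}.
\]
The same bound shows that $L[f]:=\sum_\alpha\int\partial^\alpha f\,d\mu_\alpha$ is a well-defined continuous linear functional on all of $\Xi^k$. Here $\partial^\alpha f$ is bounded and continuous, hence Borel and $\mu_\alpha$-integrable. Since $L$ agrees with $\mathcal F^{-1}\{\mathcal I_{\mathcal Q}\}$ on the dense subspace $\mathscr S$ (Lemma~\ref{L: SmoothModels}(c)), it is the unique continuous extension. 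By Lemma~\ref{L: SmoothModels}(b), $\mathcal A_V\subset\Xi^k$, so Theorem~\ref{T:master-fourier-identification} gives $\mathcal T_{g,V,y}[f]=L[f]$ for $f\in\mathcal A_V$.

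\textbf{Posterior formula.} Apply this to $f_Y=\mathcal K_V[P_X]$ and use Corollary~\ref{C: TweedieExistence} to divide by $f_Y(y)>0$.

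The only delicate step is the Fubini and differentiation-under-the-integral computation, including the sign and $2\pi$ conventions. One point needs care there: the identity $\mathcal Q=\sum(i\omega)^\alpha\varphi_{\mu_\alpha}$ holds only a.e., but that suffices because $\mathcal I_{\mathcal Q}$ depends only on the a.e.\ class of $\mathcal Q$.
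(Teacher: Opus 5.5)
Your proposal is correct and follows the paper's proof essentially step for step. You use the bound $|\mathcal Q_{g,V,y}(\omega)|\le C(1+\|\omega\|_2)^k$ for Condition~(i), then Fubini plus differentiated Fourier inversion to get $\mathcal F^{-1}\{\mathcal I_{\mathcal Q_{g,V,y}}\}[\psi]=\sum_{|\alpha|\le k}\int\partial^\alpha\psi\,d\mu_\alpha$, the total-variation bound for $\Xi^k$-continuity, and finally Theorem~\ref{T:master-fourier-identification}. The only cosmetic difference is that you identify $L$ as the unique continuous extension by density, whereas the paper takes the limit along a Schwartz approximating sequence; these amount to the same thing.
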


Proposition~\ref{P:measure-factorization-functional} covers the case in which
the Fourier-domain representer \(\mathcal Q_{g,V,y}\) factors into powers of
\(\omega\) whose coefficients are Fourier--Stieltjes transforms of finitely
many measures. In this case, the Tweedie functional is a finite linear
combination of weighted derivatives of \(f_Y\).  The original 
Tweedie formula is one instance of this factorization. We now turn to a
non-Gaussian example and show that the same method applies just as naturally
under Laplace noise.

\begin{example}[Posterior mean under Laplace noise.]
Suppose \(d=1\), \(Y=X+V\), $X \indep V$, and
\(V\) has Laplace density \(f_V(u)=(2b)^{-1}e^{-|u|/b}\), with \(b>0\).
Let \(g(x)=x\). Since \(f_V\in C_0(\mathbb R)\), take \(k=0\). For fixed
\(y\in\mathbb R\),
\[
\widetilde{\lambda}_{g,V,y}(\omega)
=
e^{i\omega y}
\left(
\frac{y}{1+b^2\omega^2}
+
\frac{2ib^2\omega}{(1+b^2\omega^2)^2}
\right).
\]
Since \(\varphi_V(-\omega)=(1+b^2\omega^2)^{-1}\), the quotient construction
gives a Fourier-domain representer for $\lambda_{g,V,y}$:
\[
\mathcal Q_{g,V,y}(\omega)=\dfrac{\widetilde{\lambda}_{g,V,y}(\omega)}{\varphi_V(-\omega)}
=
\left(
y+\frac{2ib^2\omega}{1+b^2\omega^2}
\right)e^{i\omega y}.
\]
Now \(e^{i\omega y}=\varphi_{\delta_y}(\omega)\). If \(\nu_y\) denotes the
finite signed measure with density $d\nu_y(z)=\operatorname{sgn}(z-y)e^{-|z-y|/b}\,dz$, then

\[
\varphi_{\nu_y}(\omega)
=
\frac{2ib^2\omega}{1+b^2\omega^2}e^{i\omega y} \qquad \text{ and } \qquad \mathcal Q_{g,V,y}(\omega)
=y\,\varphi_{\delta_y}(\omega)+\varphi_{\nu_y}(\omega)=
\varphi_{y\delta_y+\nu_y}(\omega).
\]

By Proposition~\ref{P:measure-factorization-functional}, with
\(\mu_0=y\delta_y+\nu_y\), whenever \(f_Y(y)>0\),
\[
\mathbb E[X\mid Y=y]
=
y+
\frac{1}{f_Y(y)}
\int_{\mathbb R}
\operatorname{sgn}(z-y)e^{-|z-y|/b}f_Y(z)\,dz.
\]    
\end{example}

Other families are developed using the same techniques in
Appendix~\ref{A:Calculations}. I conclude this subsection by illustrating the flexibility of the method
across several noise distributions. Table~\ref{tab:noise-families} reports
posterior mean representations for these noise laws, with the corresponding
derivations collected in Appendix~\ref{A:TableEntries}.

Several comments are in order. First, despite their analytic differences, the resulting decision rules share a common shrinkage form. Each posterior mean equals the observation, after correcting for the noise location, plus a normalized adjustment that depends only on the observed marginal density $f_Y$. Thus the unknown prior distribution enters only through $f_Y$, and the adjustment can be interpreted as a shrinkage or correction term. The form of this term, however, depends sharply on the noise law.

Second, the table shows that statistical difficulty is distribution-specific. For Gaussian noise, the correction is local: it depends only on $f_Y(y)$ and $f_Y'(y)$. This locality appears to be exceptional. The other entries involve nonlocal functionals of $f_Y$. Notably, these pose a lower estimation difficulty.

\begin{landscape}
\begin{table}[ht]
\centering
\scriptsize                     % was \scriptsize
\setlength{\tabcolsep}{3.5pt}  % was 3.5pt
\renewcommand{\arraystretch}{1.0} % was 1.08

\begin{threeparttable}
\caption{Posterior mean Tweedie representations for several additive-noise models.}
\label{tab:noise-families}

\begin{tabular}{@{}lcc>{$\displaystyle}l<{$}@{}}
\toprule
Noise law
& $f_V(v)$
& $\varphi_V(\omega)$
& \multicolumn{1}{c@{}}{Tweedie representation of $\mathbb E[X\mid Y=y]$} \\
\midrule

\multicolumn{4}{@{}l}{\emph{Two-sided laws on $\mathbb R$}}\\[1pt]

\begin{tabular}[t]{@{}l@{}}
$\mathrm{Normal}(\mu,\sigma^2)$\\[-1pt]
$\mu\in\mathbb R,\ \sigma>0$
\end{tabular}
& $\displaystyle \frac{1}{\sqrt{2\pi}\sigma}\exp\!\left(-\frac{(v-\mu)^2}{2\sigma^2}\right)$
& $\displaystyle e^{i\mu\omega-\sigma^2\omega^2/2}$ 
& y-\mu+\sigma^2\frac{f_Y'(y)}{f_Y(y)}
\\
\addlinespace[\smallskipamount]

\begin{tabular}[t]{@{}l@{}}
$\mathrm{Generalized\ Laplace}(\mu,b,\lambda)$\\[-1pt]
$\mu\in\mathbb R,\ b>0,\ \lambda>\frac12$
\end{tabular}
& $\displaystyle \frac{1}{\sqrt{\pi}\,\Gamma(\lambda)\,b}\Bigl(\frac{|v-\mu|}{2b}\Bigr)^{\lambda-\frac12}
K_{\lambda-\frac12}\!\Bigl(\frac{|v-\mu|}{b}\Bigr)$
& $\displaystyle e^{i\mu\omega}(1+b^2\omega^2)^{-\lambda}$
& \begin{aligned}[t]
y-\mu
&+\frac{\lambda}{f_Y(y)}
\left[
\int_{\mathbb R}\operatorname{sgn}(z-y)e^{-|z-y|/b}f_Y(z)\,dz
\right]
\end{aligned}
\\
\addlinespace[\smallskipamount]

\begin{tabular}[t]{@{}l@{}}
$\mathrm{Laplace}(\mu,b)$\\[-1pt]
$\mu\in\mathbb R,\ b>0$
\end{tabular}
& $\displaystyle \frac{1}{2b}e^{-|v-\mu|/b}$
& $\displaystyle \frac{e^{i\mu\omega}}{1+b^2\omega^2}$
& \begin{aligned}[t]
y-\mu
&+\frac{1}{f_Y(y)}
\left[
\int_{\mathbb R}\operatorname{sgn}(z-y)e^{-|z-y|/b}f_Y(z)\,dz
\right]
\end{aligned}
\\
\addlinespace[\smallskipamount]

\begin{tabular}[t]{@{}l@{}}
$\mathrm{Asymmetric\ Laplace}(\mu,b_-,b_+)$\\[-1pt]
$\mu\in\mathbb R,\ b_->0,\ b_+>0$
\end{tabular}
& $\displaystyle \frac{1}{b_-+b_+} \left(\exp\!\Bigl(\frac{v-\mu}{b_-}\Bigr)\mathbf{1}_{\{v<\mu\}}
+\exp\!\Bigl(-\frac{v-\mu}{b_+}\Bigr)\mathbf{1}_{\{v\ge \mu\}}\right)$
& $\displaystyle \frac{e^{i\mu\omega}}{(1+i b_-\omega)(1-i b_+\omega)}$
& y-\mu+
\frac{1}{f_Y(y)}
\left[
\int_y^\infty e^{-(z-y)/b_-}f_Y(z)\,dz
-
\int_{-\infty}^y e^{(z-y)/b_+}f_Y(z)\,dz
\right]
\\
\addlinespace[\smallskipamount]

\begin{tabular}[t]{@{}l@{}}
$\mathrm{Logistic}(\mu,s)$\\[-1pt]
$\mu\in\mathbb R,\ s>0$
\end{tabular}
& $\displaystyle \frac{e^{-(v-\mu)/s}}{s\bigl(1+e^{-(v-\mu)/s}\bigr)^2}$
& $\displaystyle e^{i\mu\omega}\frac{\pi s\omega}{\sinh(\pi s\omega)}$
& \begin{aligned}[t]
y-\mu
&+\frac{1}{f_Y(y)}
\left[
\int_0^\infty
\frac{f_Y(y+t)-f_Y(y-t)}{e^{t/s}-1}\,dt
\right]
\end{aligned}
\\
\addlinespace[\smallskipamount]

\begin{tabular}[t]{@{}l@{}}
$\mathrm{Gumbel}(\mu,\beta)$\\[-1pt]
$\mu\in\mathbb R,\ \beta>0$
\end{tabular}
& $\displaystyle \frac{1}{\beta}\exp\!\left[-\frac{v-\mu}{\beta}-e^{-(v-\mu)/\beta}\right]$
& $\displaystyle e^{i\mu\omega}\Gamma(1-i\beta\omega)$
& \begin{aligned}[t]
y-\mu-\beta\gamma_E
&+\frac{1}{f_Y(y)}
\left[
\int_0^\infty
\frac{f_Y(y)-f_Y(y-u)}{e^{u/\beta}-1}\,du
\right]
\end{aligned}
\\
\addlinespace[\smallskipamount]

\begin{tabular}[t]{@{}l@{}}
$\mathrm{Cauchy}(\mu,\gamma)$\\[-1pt]
$\mu\in\mathbb R,\ \gamma>0$
\end{tabular}
& $\displaystyle \frac{1}{\pi}\frac{\gamma}{(v-\mu)^2+\gamma^2}$
& $\displaystyle e^{i\mu\omega-\gamma|\omega|}$
& y-\mu-\gamma\,\frac{\mathcal H[f_Y](y)}{f_Y(y)}
\\
\addlinespace[\smallskipamount]

\begin{tabular}[t]{@{}l@{}}
$\mathrm{Hyperbolic\ Secant}(\mu,s)$\\[-1pt]
$\mu\in\mathbb R,\ s>0$
\end{tabular}
& $\displaystyle \frac{1}{2s}\operatorname{sech}\!\left(\frac{\pi(v-\mu)}{2s}\right)$
& $\displaystyle e^{i\mu\omega}\operatorname{sech}(s\omega)$
& \begin{aligned}[t]
y-\mu
&+\frac{1}{f_Y(y)}
\left[
\int_0^\infty
\frac{f_Y(y+t)-f_Y(y-t)}
{2\sinh\!\left(\frac{\pi t}{2s}\right)}\,dt
\right]
\end{aligned}
\\
\addlinespace[\smallskipamount]

\addlinespace[2pt]
\multicolumn{4}{@{}l}{\emph{One-sided laws on $(0,\infty)$}}\\[1pt]

\begin{tabular}[t]{@{}l@{}}
$\mathrm{Gamma}(\alpha,\theta)$\\[-1pt]
$\alpha>1,\ \theta>0$
\end{tabular}
& $\displaystyle \frac{1}{\Gamma(\alpha)\theta^\alpha}v^{\alpha-1}e^{-v/\theta}\mathbf{1}_{\{v>0\}}$
& $\displaystyle (1-i\theta\omega)^{-\alpha}$
& \begin{aligned}[t]
y
&-\frac{\alpha}{f_Y(y)}
\left[
\int_{-\infty}^y e^{(z-y)/\theta}f_Y(z)\,dz
\right]
\end{aligned}
\\

\addlinespace[\smallskipamount]

\begin{tabular}[t]{@{}l@{}}
$\mathrm{Noncentral}\ \chi^2_\nu(\delta)$\\[-1pt]
$\nu>2,\ \delta\ge 0$
\end{tabular}
&
$\begin{array}[t]{@{}ll@{}}
\displaystyle
\frac{1}{2}e^{-(v+\delta)/2}
\left(\frac{v}{\delta}\right)^{\nu/4-1/2}
I_{\nu/2-1}\!\bigl(\sqrt{\delta v}\bigr)
\mathbf 1_{\{v>0\}},
& \;\delta>0, \\[7pt]
\displaystyle
\frac{1}{2^{\nu/2}\Gamma(\nu/2)}
v^{\nu/2-1}e^{-v/2}\mathbf 1_{\{v>0\}},
& \;\delta=0
\end{array}$
&
$\displaystyle
(1-2i\omega)^{-\nu/2}
\exp\!\Bigl(\frac{i\delta\omega}{1-2i\omega}\Bigr)$
&
y-
\frac{1}{f_Y(y)}
\left[
\frac{\nu}{2}\int_{-\infty}^y e^{(z-y)/2}f_Y(z)\,dz
+
\frac{\delta}{4}\int_{-\infty}^y (y-z)e^{(z-y)/2}f_Y(z)\,dz
\right]
\\[2pt]

\addlinespace[\smallskipamount]

\begin{tabular}[t]{@{}l@{}}
$\mathrm{Inverse\ Gaussian}(\mu,\lambda)$\\[-1pt]
$\mu>0,\ \lambda>0$
\end{tabular}
& $\displaystyle \left(\frac{\lambda}{2\pi v^3}\right)^{1/2}
\exp\!\left(-\frac{\lambda(v-\mu)^2}{2\mu^2v}\right)\mathbf{1}_{\{v>0\}}$
& $\displaystyle \exp\!\left[\frac{\lambda}{\mu}\left(1-\sqrt{1-\frac{2i\mu^2\omega}{\lambda}}\right)\right]$
& y-
\frac{1}{f_Y(y)}
\left[
\sqrt{\frac{\lambda}{2\pi}}
\int_{-\infty}^y
(y-z)^{-1/2}
\exp\!\left(-\frac{\lambda(y-z)}{2\mu^2}\right)
f_Y(z)\,dz
\right]
\\[2pt]

\bottomrule
\end{tabular}

\begin{tablenotes}[flushleft]
\footnotesize
\item \textit{Notes:} Parameter restrictions are displayed below each distribution and indicate the conditions under which the displayed Tweedie representation is derived in the appendix. Here $\Gamma$ denotes the Euler gamma function, $I_\nu$ and $K_\nu$ the modified Bessel functions of the first and second kind, respectively, $\mathcal H$ the Hilbert transform, $\gamma_E$ the Euler--Mascheroni constant, $\operatorname{sgn}(x)$ the sign function, and $\operatorname{sech}(x)=2/(e^x+e^{-x})$.
\end{tablenotes}

\end{threeparttable}
\end{table}
\end{landscape}

Finally, the table illustrates the value of recasting the problem as Fourier
calculus. The Cauchy entry is especially instructive. Its posterior mean
involves the Hilbert transform of the observed density, a representation that
would be hard to guess without the language of tempered distributions. In the present
approach, it follows from the standard identity identifying the Hilbert
transform with the inverse Fourier transform of the integration map defined by 
\(\omega\mapsto -i\operatorname{sgn}(\omega)\)
\citep[Chapter 5, Equation 5.1.12]{grafakos2008classical}.

\subsection{Tweedie functionals given as an  expectation}
Appendix \ref{A:integrable-fourier-representers} considers Tweedie representations in the case where the Fourier-domain representer is integrable, $\mathcal Q_{g,V,y}\in L^1(\mathbb R^d;\mathbb C)$. In this regime, the Tweedie functional can be written as an expectation of the inverse Fourier transform of $\mathcal Q_{g,V,y}$.

This representation connects the framework to the classical literature on unbiased estimation of functions of latent variables under additive noise. In the Gaussian case, the problem goes back to \cite{kolmogorov1950unbiased}. The present result obtains the representation in closed form from the Fourier-domain representer. Other frameworks, notably \cite{stefanski1989unbiased} and \cite{voinov2012unbiased}, derive related formulas through power-series expansions of analytic functions.

\subsection{Functionals under the conventional Laplace mechanism} \label{SS: Laplace}

Differential privacy is a standard framework for protecting individual privacy in statistical and machine-learning applications. A canonical way to achieve differential privacy is to add noise to the output of a query before release. For vector-valued queries, such as Census counts across geographic or demographic cells, the conventional Laplace mechanism adds independent one-dimensional Laplace noise to each coordinate. This mechanism is defined in \cite[Definition~3.3]{dwork2014algorithmic} and gives standard differential privacy guarantees \citep[Theorem~3.6]{dwork2014algorithmic}.. 

This subsection studies Tweedie representations for posterior functionals after
such a release. Motivated by the Laplace mechanism, consider the model
\[
Y=X+V,
\qquad
X\indep V,
\qquad
V=(V_1,\ldots,V_d),
\qquad
V_j\overset{\mathrm{i.i.d.}}{\sim}\operatorname{Lap}(0,b).
\]

Table~\ref{tab:laplace-tweedie-functionals} collects several Tweedie
representations for the product-Laplace model in dimension one.
Appendix~\ref{A:LaplaceMechanism} provides the derivations and extends the
posterior mean, posterior covariance matrix, and posterior moment generating
function formulas to arbitrary dimensions.

\begin{landscape}
\begin{table}[ht]
\centering
\scriptsize
\setlength{\tabcolsep}{4pt}
\renewcommand{\arraystretch}{1.65}

\begin{threeparttable}
\caption{Examples of Tweedie representations under Laplace noise, \(d=1\).}
\label{tab:laplace-tweedie-functionals}

\begin{tabular}{@{}L{0.18\linewidth}C{0.18\linewidth}C{0.60\linewidth}@{}}
\toprule
Name of functional
&
\(g(x)\)
&
\(\mathbb E[g(X)\mid Y=y]\)
\\
\midrule

\multicolumn{3}{@{}l}{\textbf{Panel A. Tweedie representations for conditional expectations}}\\

Posterior mean
&
\(\displaystyle x\)
&
\wideformula{
y+
\frac{1}{f_Y(y)}
\int_{\mathbb R}
\operatorname{sgn}(u)e^{-|u|/b}f_Y(y+u)\,du
}
\\
\tablesep
\tablesep

Posterior second moment
&
\(\displaystyle x^2\)
&
\wideformula{
y^2
+
\frac{2y}{f_Y(y)}
\int_{\mathbb R}
\operatorname{sgn}(u)e^{-|u|/b}f_Y(y+u)\,du
+
\frac{1}{f_Y(y)}
\int_{\mathbb R}
(2|u|-b)e^{-|u|/b}f_Y(y+u)\,du
}
\\
\tablesep
\tablesep

Posterior moment generating function
&
\(\displaystyle e^{tx},\quad |t|<1/b\)
&
\wideformula{
e^{ty}
+\frac{e^{ty}}{f_Y(y)}
\left[
\int_{\mathbb R}
e^{tu}
\left\{
t\operatorname{sgn}(u)-\frac{bt^2}{2}
\right\}
e^{-|u|/b}f_Y(y+u)\,du
\right]
}
\\
\tablesep
\tablesep

Posterior distribution function at \(a\)
&
\(\displaystyle \mathbf 1\{x\le a\},\quad a\in\mathbb R\)
&
\wideformula{
\begin{cases}
\displaystyle
\frac{
e^{-(y-a)/b}
\left\{
f_Y(a)-bD_+f_Y(a)
\right\}
}{
2f_Y(y)
},
& a\le y,
\\[1.1em]
\displaystyle
1-
\frac{
e^{-(a-y)/b}
\left\{
f_Y(a)+bD_+f_Y(a)
\right\}
}{
2f_Y(y)
},
& a>y.
\end{cases}
}
\\
\tablesep
\tablesep

Posterior squared risk around \(a\)
&
\(\displaystyle (x-a)^2,\quad a\in\mathbb R\)
&
\wideformula{
(y-a)^2
+
\frac{2(y-a)}{f_Y(y)}
\int_{\mathbb R}
\operatorname{sgn}(u)e^{-|u|/b}f_Y(y+u)\,du
+
\frac{1}{f_Y(y)}
\int_{\mathbb R}
(2|u|-b)e^{-|u|/b}f_Y(y+u)\,du
}
\\
\tablesep
\tablesep

Posterior hinge loss around \(a\)
&
\(\displaystyle (x-a)_+,\quad a\in\mathbb R\)
&
\wideformula{
(y-a)_+
+
\frac{1}{f_Y(y)}
\int_{0}^{\infty}
e^{-|u|/b}f_Y(y+u)\,du
-
\frac{1}{f_Y(y)}
\int_{\min(0,a-y)}^{\max(0,a-y)}
e^{-|u|/b}f_Y(y+u)\,du
-
\frac{b e^{-|y-a|/b}f_Y(a)}{2f_Y(y)}
}
\\
\tablesep
\tablesep

Posterior pinball loss around \(a\)
&
\(\displaystyle \rho_\tau(x-a), \; a\in\mathbb R,\ \tau\in(0,1)\)
&
\(\displaystyle
\begin{aligned}
&
\tau(y-a)_+
+
(1-\tau)(a-y)_+
+
\frac{\tau}{f_Y(y)}
\int_{0}^{\infty}
e^{-|u|/b}f_Y(y+u)\,du
+
\frac{1-\tau}{f_Y(y)}
\int_{-\infty}^{0}
e^{-|u|/b}f_Y(y+u)\,du
\\
&\quad
-
\frac{1}{f_Y(y)}
\int_{\min(0,a-y)}^{\max(0,a-y)}
e^{-|u|/b}f_Y(y+u)\,du
-
\frac{b e^{-|y-a|/b}f_Y(a)}{2f_Y(y)}
\end{aligned}
\)
\\
\tablesep
\tablesep

\multicolumn{3}{@{}l}{\textbf{Panel B. Tweedie representations for transformations}}\\

Posterior variance
&
\(\displaystyle \text{--}\)
&
\wideformula{
\frac{1}{f_Y(y)}
\int_{\mathbb R}
(2|u|-b)e^{-|u|/b}f_Y(y+u)\,du
-
\left[
\frac{1}{f_Y(y)}
\int_{\mathbb R}
\operatorname{sgn}(u)e^{-|u|/b}f_Y(y+u)\,du
\right]^2
}
\\
\tablesep

\bottomrule
\end{tabular}

\begin{tablenotes}
\footnotesize
\item \textit{Notes:} Throughout, \(Y=X+V\), \(X\indep V\), \(V\sim\operatorname{Laplace}(0,b)\), \(b>0\), and \(f_Y(y)>0\). The operator \(D_+\) is the right derivative of the marginal density with respect to its scalar argument: \(D_+f_Y(a)=\lim_{h\downarrow0}\{f_Y(a+h)-f_Y(a)\}/h\). Here \((z)_+=\max\{z,0\}\), and \(\rho_\tau(z)=\tau z_+ +(1-\tau)(-z)_+\) for \(\tau\in(0,1)\).
\end{tablenotes}

\end{threeparttable}
\end{table}
\end{landscape}

\subsection{Tweedie representations in the heteroskedastic Gaussian sequence model} \label{S: Heteroskedastic}

The final application considers the heteroskedastic Gaussian sequence model.
Its distinctive feature is that the noise covariance varies across units and
may be statistically related to the latent parameter. This setting arises
naturally in empirical Bayes problems with many noisy estimates, especially in
economics and related social sciences.\footnote{Examples include value-added
modeling \citep{einav2025producing}, the evaluation of place-based policies
\citep{chetty2018impacts}, the measurement of discrimination
\citep{kline2022systemic}, policy targeting \citep{moon2025optimal}, and
research production \citep{abadie2026estimatingvalueevidencebaseddecision}.}
A common interpretation in these settings is that \(Y\) is an estimator of a
latent parameter \(X\), constructed from a sample mean or, more generally, from
an asymptotically linear statistic. The Gaussian specification formalizes the normal approximation implicit in standard large-sample inference for such estimators. Heteroskedasticity arises because sample sizes, sampling designs, or underlying population variability differ across units, making the covariance matrix unit-specific. It is therefore
natural to model \((X,\Sigma)\) as drawn from a joint superpopulation
distribution \(P_{X,\Sigma}\).

Formally, let \(\mathbb S_{++}^d\) denote the set of symmetric positive
definite \(d\times d\) matrices. The model is
\[
Y=X+\Sigma^{1/2}V,
\]
where \(V\sim\mathcal N(0,I_d)\), \((X,\Sigma)\sim P_{X,\Sigma}\), and
\((X,\Sigma)\indep V\). It is common practice to consider $\Sigma$ to be known and equal to a sample-based estimate.

At first glance, heteroskedasticity seems to put the model outside the scope of
the preceding sections, since there is no single fixed noise distribution to
deconvolve. The key observation is that, after conditioning on
\(\Sigma=\Sigma_0\) and standardizing by \(\Sigma_0^{-1/2}\), the model becomes
an additive Gaussian model with identity noise covariance.\footnote{Conditional
statements are understood after fixing a version of \(P_{X\mid\Sigma}\). Since regular conditional laws are
unique up to null sets, the conclusions are unaffected by this choice outside a \(P_\Sigma\)-null set.} The results from
Sections~\ref{S: Existence} and~\ref{S: Computation} can therefore be applied
conditionally, yielding Tweedie representations in terms of the conditional
density \(f_{Y\mid\Sigma}(\cdot\mid\Sigma_0)\).

Critically, this conditional approach does not require an explicit model for
the dependence between the latent parameter \(X\) and the noise covariance
\(\Sigma\).   This distinguishes the approach from the existing empirical Bayes
literature on heteroskedastic normal means, which imposes parametric structure,
or other restrictions, on the joint distribution \(P_{X,\Sigma}\); see  \citep{chamberlain1984,jiang2010empirical,efron2016empirical,Weinstein2018,ignatiadis2019covariate,kline2024discrimination,chen2026empirical}.

For a measurable function \(g:\mathbb R^d\to\mathbb R\), the objects of
interest in this section are conditional posterior functionals of the form
\[
\mathbb E\!\left[g(X)\mid Y=y,\Sigma=\Sigma_0\right].
\]
If \(\Sigma\) were degenerate at \(\Sigma_0\in\mathbb S_{++}^d\), this would
reduce to the usual posterior quantity under Gaussian noise with covariance
\(\Sigma_0\). In the general case, however, \(\Sigma\) is random, so the model
is not initially of the additive-noise form studied in the previous sections.
Conditioning on \(\Sigma=\Sigma_0\) removes this difficulty. To see this,
define
\[
Z\coloneq \Sigma^{-1/2}Y,
\qquad
W\coloneq \Sigma^{-1/2}X.
\]

Then, conditional on \(\Sigma=\Sigma_0\), we have $Z=W+V$, $W\indep V$, $V\sim\mathcal N(0,I_d)$. In particular,
\[
\varphi_{Z\mid \Sigma}(\omega\mid \Sigma_0)
=
\varphi_{W\mid \Sigma}(\omega\mid \Sigma_0)\varphi_V(\omega).
\]

Thus, conditional on \(\Sigma=\Sigma_0\), the standardized model satisfies the
same structural conditions as the additive Gaussian model studied in
Sections~\ref{S: Existence} and~\ref{S: Computation}. The results developed
there therefore apply conditional on \(\Sigma\), and hence pointwise in
\(\Sigma_0\). The next proposition records this implication formally.

\begin{proposition}[Conditional Tweedie representation in the heteroskedastic Gaussian sequence model]
\label{P: conditional-heteroskedastic}
Fix \(y\in\mathbb R^d\), and for \(\Sigma_0\in\mathbb S_{++}^d\), set $z_0\coloneq \Sigma_0^{-1/2}y$.
Let \(g:\mathbb R^d\to\mathbb R\) be measurable. Then the following statements hold for \(P_\Sigma\)-almost every \(\Sigma_0\).

\begin{enumerate}[label=(\alph*)]
\item  \(f_{Z\mid \Sigma}(z_0\mid \Sigma_0)>0\), and whenever the
right-hand side below is well defined,
\[
\mathbb E[g(X)\mid Y=y,\Sigma=\Sigma_0]
=
\frac{
\displaystyle
\int_{\mathbb R^d}
g(\Sigma_0^{1/2}w)\phi(z_0-w)\,
dP_{W\mid\Sigma}(w\mid\Sigma_0)
}{
f_{Z\mid\Sigma}(z_0\mid\Sigma_0)
}.
\]

\item Define $\lambda_{g,y,\Sigma_0}(w)
\coloneq
g(\Sigma_0^{1/2}w)\phi(z_0-w)$. If \(\lambda_{g,y,\Sigma_0}\in C_0(\mathbb R^d;\mathbb C)\), then there exists a unique continuous linear functional $\mathcal T_{g,y,\Sigma_0}\in \mathcal A_V'(\mathbb R^d)$, such that
\[
\mathcal T_{g,y,\Sigma_0}[f]
=
\int_{\mathbb R^d}
\lambda_{g,y,\Sigma_0}(w)\,d\mu(w)
\qquad
\text{for every } f=\phi*\mu \in \mathcal{A}_V(\mathbb R^d).
\]
In particular,
\[
\mathcal T_{g,y,\Sigma_0}
\!\left[f_{Z\mid\Sigma}(\cdot\mid\Sigma_0)\right]
=
\int_{\mathbb R^d}
\lambda_{g,y,\Sigma_0}(w)\,
dP_{W\mid\Sigma}(w\mid\Sigma_0),
\]
and therefore
\[
\mathbb E[g(X)\mid Y=y,\Sigma=\Sigma_0]
=
\frac{
\mathcal T_{g,y,\Sigma_0}
\!\left[f_{Z\mid\Sigma}(\cdot\mid\Sigma_0)\right]
}{
f_{Z\mid\Sigma}(z_0\mid\Sigma_0)
}.
\]

\item Suppose, in addition to the condition in part \textup{(b)}, that there exists a Fourier-domain representer
\(\mathcal Q_{g,y,\Sigma_0}\) for \(\lambda_{g,y,\Sigma_0}\) such that
conditions \textup{(i)} and \textup{(ii)} of
Theorem~\ref{T:master-fourier-identification} hold for some \(N\ge0\) and
\(k\ge0\). Then, for every \(f\in\mathcal A_V(\mathbb R^d)\) and every
sequence \((f_n)_{n\ge1}\subset\mathscr S(\mathbb R^d)\) such that
\(\|f_n-f\|_{\Xi^k}\to0\),
\[
\mathcal T_{g,y,\Sigma_0}[f]
=
\lim_{n\to\infty}
\mathcal F^{-1}
\{\mathcal I_{\mathcal Q_{g,y,\Sigma_0}}\}[f_n],
\]
and the limit is independent of the approximating sequence.
\end{enumerate}
\end{proposition}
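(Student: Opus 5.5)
The plan is to reduce everything to the homoskedastic results by working with a fixed regular conditional law. I fix a version \(P_{X\mid\Sigma}(\cdot\mid\Sigma_0)\) and the induced version \(P_{W\mid\Sigma}(\cdot\mid\Sigma_0)\), the pushforward under \(x\mapsto\Sigma_0^{-1/2}x\). Because \((X,\Sigma)\indep V\), the conditional law of \((W,V)\) given \(\Sigma=\Sigma_0\) is \(P_{W\mid\Sigma}(\cdot\mid\Sigma_0)\otimes\mathcal N(0,I_d)\) for \(P_\Sigma\)-a.e. \(\Sigma_0\). So conditionally on \(\Sigma=\Sigma_0\), the triple \((Z,W,V)\) satisfies Assumption~\ref{A: independence}. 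The standard Gaussian noise satisfies Assumptions~\ref{A: Identification}, \ref{A: Density} and \ref{A:SmoothBoundedNoise} for every \(k\), since \(\varphi_V(\omega)=e^{-\|\omega\|^2/2}\neq0\) and \(\phi\in C_0^\infty\). By Proposition~\ref{P: convolution}, the conditional density of \(Z\) is \(f_{Z\mid\Sigma}(z\mid\Sigma_0)=\int\phi(z-w)\,dP_{W\mid\Sigma}(w\mid\Sigma_0)=\mathcal K_V[P_{W\mid\Sigma}(\cdot\mid\Sigma_0)](z)\).

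For part (a), positivity is immediate: \(\phi>0\) everywhere and \(P_{W\mid\Sigma}(\cdot\mid\Sigma_0)\) is a probability measure, so the integral is strictly positive at \(z_0\). This actually holds for every \(\Sigma_0\) where the version is a probability measure. For the ratio formula, I apply Bayes' rule in the conditional model, exactly as in \eqref{Eq: ratio}. Conditional on \(\Sigma=\Sigma_0\), the kernel \(w\mapsto\phi(z-w)\) is the density of \(Z\) given \(W=w\). Hence \(\mathbb E[h(W)\mid Z=z,\Sigma=\Sigma_0]=\int h(w)\phi(z-w)\,dP_{W\mid\Sigma}/f_{Z\mid\Sigma}(z\mid\Sigma_0)\).

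I then translate back to the original variables. Since \(\Sigma_0^{1/2}\) is a bijection, conditioning on \((Y,\Sigma)=(y,\Sigma_0)\) is the same as conditioning on \((Z,\Sigma)=(z_0,\Sigma_0)\), and \(g(X)=g(\Sigma_0^{1/2}W)\) on \(\{\Sigma=\Sigma_0\}\). Taking \(h=g(\Sigma_0^{1/2}\cdot)\) gives the displayed formula.

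\textbf{Main obstacle.} The delicate point is the measure-theoretic justification that the pointwise ratio is a version of the conditional expectation jointly in \((y,\Sigma_0)\), and that the \(P_\Sigma\)-null exceptional sets come from the choice of regular conditional laws. I would handle this by verifying the defining property directly. For bounded measurable test functions \(\chi(Y,\Sigma)\), I would check \(\mathbb E[g(X)\chi(Y,\Sigma)]=\mathbb E[\text{ratio}\cdot\chi(Y,\Sigma)]\) using Fubini and the change of variables \(y=\Sigma_0^{1/2}z\). The Jacobian \(\det\Sigma_0^{1/2}\) cancels between numerator and denominator, which is why the ratio can be written in the \(z\)-scale.

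For part (b), with \(\lambda_{g,y,\Sigma_0}\in C_0\), I invoke Theorem~\ref{T: Representation} for the Gaussian noise \(V\sim\mathcal N(0,I_d)\). It yields a unique \(\mathcal T_{g,y,\Sigma_0}\in\mathcal A_V'\) with \(\mathcal T[\phi*\mu]=\int\lambda\,d\mu\). Applying it to \(\mu=P_{W\mid\Sigma}(\cdot\mid\Sigma_0)\), whose mixture is \(f_{Z\mid\Sigma}(\cdot\mid\Sigma_0)\) by the first step, gives the second display. Since \(\lambda\in C_0\) is bounded, the numerator in (a) is well defined, so dividing by the positive denominator from (a) gives the final identity.

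For part (c), I apply Theorem~\ref{T:master-fourier-identification} verbatim with \(\lambda_{g,V,y}\) replaced by \(\lambda_{g,y,\Sigma_0}\). Its proof uses only the noise assumptions, which hold here, together with conditions (i)--(iii), where (iii) is the representer hypothesis. This gives the continuous extension to \(\Xi^k\), the identification of its restriction with \(\mathcal T_{g,y,\Sigma_0}\) through uniqueness in (b), and independence of the approximating sequence.
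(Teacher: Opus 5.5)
Your proposal is correct and follows essentially the same route as the paper: fix a regular conditional law of $X$ given $\Sigma$, use $(X,\Sigma)\indep V$ to obtain the product kernel with $\mathcal N(0,I_d)$, verify the Bayes ratio by Fubini (the $\det(\Sigma_0)^{-1/2}$ factors cancel), and then apply Theorem~\ref{T: Representation} (which the paper simply re-derives inline) and Theorem~\ref{T:master-fourier-identification} to the standardized Gaussian model. The one refinement is in the Bayes step: the paper verifies it at the level of the conditional law, on events $\{X\in A,\,Y\in B,\,\Sigma\in C\}$, and then integrates $g$ against that kernel; your check of $\mathbb E[g(X)\chi(Y,\Sigma)]$ presupposes $g(X)\in L^1$, which the hypotheses do not imply (not even $\lambda_{g,y,\Sigma_0}\in C_0$ does), so you should run that check with $g=\mathbf 1_A$ instead.
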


The proposition shows that, after conditioning on \(\Sigma=\Sigma_0\), the heteroskedastic problem can be treated using the same computational strategy developed in Section~\ref{S: Computation}. The existence and computation results from that section therefore apply pointwise in \(\Sigma_0\), with the marginal density \(f_Y\) replaced by the conditional density \(f_{Z\mid\Sigma}(\cdot\mid\Sigma_0)\), or equivalently by \(f_{Y\mid\Sigma}(\cdot\mid\Sigma_0)\) after the change of variables.

\begin{landscape}
\begin{table}[ht]
\centering
\scriptsize
\setlength{\tabcolsep}{4pt}
\renewcommand{\arraystretch}{1.50}

\begin{threeparttable}
\caption{ Examples of  conditional Tweedie representations in the heteroskedastic Gaussian sequence model, \(d=1\).}
\label{tab:conditional-tweedie-functionals}

\begin{tabular}{@{}
>{\raggedright\arraybackslash}p{0.225\linewidth}
>{\centering\arraybackslash}p{0.135\linewidth}
>{\centering\arraybackslash}p{0.615\linewidth}
@{}}
\toprule
Name of functional
&
\(g(x)\)
&
\(\mathbb E[g(X)\mid Y=y,\Sigma=\sigma^2]\)
\\
\midrule

\multicolumn{3}{@{}l}{\textbf{Panel A. Tweedie representations for conditional expectations}}\\

Posterior mean
&
\(\displaystyle x\)
&
\wideformula{
y+\sigma^2
\frac{\partial_y f_{Y\mid\Sigma}(y\mid\sigma^2)}
{f_{Y\mid\Sigma}(y\mid\sigma^2)}
}
\\
\tablesep
\tablesep
Posterior second moment
&
\(\displaystyle x^2\)
&
\wideformula{
y^2+\sigma^2
+2y\sigma^2
\frac{\partial_y f_{Y\mid\Sigma}(y\mid\sigma^2)}
{f_{Y\mid\Sigma}(y\mid\sigma^2)}
+\sigma^4
\frac{\partial_y^2 f_{Y\mid\Sigma}(y\mid\sigma^2)}
{f_{Y\mid\Sigma}(y\mid\sigma^2)}
}
\\
\tablesep
\tablesep
Posterior moment generating function
&
\(\displaystyle e^{tx},\quad t\in\mathbb R\)
&
\wideformula{
\exp\!\left(ty+\frac12\sigma^2t^2\right)
\frac{f_{Y\mid\Sigma}(y+\sigma^2t\mid\sigma^2)}
{f_{Y\mid\Sigma}(y\mid\sigma^2)}
}
\\
\tablesep
\tablesep
Posterior \(k\)-th raw moment
&
\(\displaystyle x^k,\quad k\in\mathbb N\)
&
\wideformula{
\sum_{r=0}^{k}
\binom{k}{r}\sigma^{2r}
\left[
\sum_{j=0}^{\lfloor (k-r)/2\rfloor}
\frac{(k-r)!}{2^j j!(k-r-2j)!}
\sigma^{2j}y^{k-r-2j}
\right]
\frac{\partial_y^r f_{Y\mid\Sigma}(y\mid\sigma^2)}
{f_{Y\mid\Sigma}(y\mid\sigma^2)}
}
\\
\tablesep
\tablesep
Posterior \(2m\)-risk around \(a\)
&
\(\displaystyle |x-a|^{2m},\quad a\in\mathbb R,\ m\in\mathbb N\)
&
\wideformula{
\sum_{r=0}^{2m}
\binom{2m}{r}\sigma^{2r}
\left[
\sum_{j=0}^{\lfloor (2m-r)/2\rfloor}
\frac{(2m-r)!}{2^j j!(2m-r-2j)!}
\sigma^{2j}(y-a)^{2m-r-2j}
\right]
\frac{\partial_y^r f_{Y\mid\Sigma}(y\mid\sigma^2)}
{f_{Y\mid\Sigma}(y\mid\sigma^2)}
}
\\
\tablesep
\tablesep
Posterior distribution function at \(a\)
&
\(\displaystyle \mathbf 1\{x\le a\},\quad a\in\mathbb R\)
&
\wideformula{
\Phi\!\left(\frac{a-y}{\sigma}\right)
+
\frac{1}{f_{Y\mid\Sigma}(y\mid\sigma^2)}
\lim_{n\to\infty}
\sum_{\ell=1}^{\infty}
\frac{(-1)^\ell}{\ell!}
\left(
\frac{\sigma^2}{\sqrt{\sigma^2+n^{-2}}}
\right)^\ell
\Phi^{(\ell)}
\!\left(
\frac{a+n^{-1/2}-y}{\sqrt{\sigma^2+n^{-2}}}
\right)
\partial_y^\ell f_{Y\mid\Sigma}(y\mid\sigma^2)
}
\\
\tablesep
\tablesep

Posterior hinge loss around \(a\)
&
\(\displaystyle (x-a)_+,\quad a\in\mathbb R\)
&
\(\displaystyle
\begin{aligned}
&
\sigma\phi\!\left(\frac{a-y}{\sigma}\right)
+
(y-a)
\left\{
1-\Phi\!\left(\frac{a-y}{\sigma}\right)
\right\}
+
\sigma^2
\left\{
1-\Phi\!\left(\frac{a-y}{\sigma}\right)
\right\}
\frac{\partial_y f_{Y\mid\Sigma}(y\mid\sigma^2)}
{f_{Y\mid\Sigma}(y\mid\sigma^2)}
\\
&\quad+
\frac{1}{f_{Y\mid\Sigma}(y\mid\sigma^2)}
\lim_{n\to\infty}
\sum_{\ell=2}^{\infty}
\frac{(-1)^\ell}{\ell!}
\left(
\frac{\sigma^2}{\sqrt{\sigma^2+n^{-2}}}
\right)^\ell
\sqrt{\sigma^2+n^{-2}}
\Phi^{(\ell-1)}
\!\left(
\frac{a+n^{-1/2}-y}{\sqrt{\sigma^2+n^{-2}}}
\right)
\partial_y^\ell f_{Y\mid\Sigma}(y\mid\sigma^2)
\end{aligned}
\)
\\
\tablesep
\tablesep

Posterior absolute risk around \(a\)
&
\(\displaystyle |x-a|,\quad a\in\mathbb R\)
&
\(\displaystyle
\begin{aligned}
&
2\sigma\phi\!\left(\frac{a-y}{\sigma}\right)
+
(a-y)
\left\{
2\Phi\!\left(\frac{a-y}{\sigma}\right)-1
\right\}
+
\sigma^2
\left\{
1-2\Phi\!\left(\frac{a-y}{\sigma}\right)
\right\}
\frac{\partial_y f_{Y\mid\Sigma}(y\mid\sigma^2)}
{f_{Y\mid\Sigma}(y\mid\sigma^2)}
\\
&\quad+
\frac{2}{f_{Y\mid\Sigma}(y\mid\sigma^2)}
\lim_{n\to\infty}
\sum_{\ell=2}^{\infty}
\frac{(-1)^\ell}{\ell!}
\left(
\frac{\sigma^2}{\sqrt{\sigma^2+n^{-2}}}
\right)^\ell
\sqrt{\sigma^2+n^{-2}}
\Phi^{(\ell-1)}
\!\left(
\frac{a+n^{-1/2}-y}{\sqrt{\sigma^2+n^{-2}}}
\right)
\partial_y^\ell f_{Y\mid\Sigma}(y\mid\sigma^2)
\end{aligned}
\)
\\
\multicolumn{3}{@{}l}{\textbf{Panel B. Tweedie representations for transformations}}\\

Posterior variance
&
\(\displaystyle \text{--}\)
&
\wideformula{
\sigma^2+\sigma^4
\left\{
\frac{\partial_y^2 f_{Y\mid\Sigma}(y\mid\sigma^2)}
{f_{Y\mid\Sigma}(y\mid\sigma^2)}
-
\left(
\frac{\partial_y f_{Y\mid\Sigma}(y\mid\sigma^2)}
{f_{Y\mid\Sigma}(y\mid\sigma^2)}
\right)^2
\right\}
}
\\
\tablesep
\tablesep
Posterior \(k\)-th centered moment

&
\(\displaystyle \text{--}\)
&
\wideformula{
\sum_{r=0}^{k}
\binom{k}{r}\sigma^{2r}
\left[
\sum_{j=0}^{\lfloor (k-r)/2\rfloor}
\frac{(k-r)!}{2^j j!(k-r-2j)!}
\sigma^{2j}
\left(
-\sigma^2
\frac{\partial_y f_{Y\mid\Sigma}(y\mid\sigma^2)}
{f_{Y\mid\Sigma}(y\mid\sigma^2)}
\right)^{k-r-2j}
\right]
\frac{\partial_y^r f_{Y\mid\Sigma}(y\mid\sigma^2)}
{f_{Y\mid\Sigma}(y\mid\sigma^2)}
}\\
\bottomrule
\end{tabular}

\begin{tablenotes}
\footnotesize
\item \textit{Notes:} Throughout, \(\sigma>0\), \(f_{Y\mid\Sigma}(y\mid\sigma^2)>0\), and all derivatives are with respect to \(y\). Also, \(\Phi\) denotes the standard normal distribution function and \(\Phi^{(\ell)}\) its \(\ell\)-th derivative. Here \((z)_+=\max\{z,0\}\).
\end{tablenotes}

\end{threeparttable}
\end{table}
\end{landscape}

I now apply this conditional Tweedie calculus to a range of empirical Bayes
estimands under this model. Table~\ref{tab:conditional-tweedie-functionals} summarizes the
one-dimensional formulas. Appendix~\ref{A:Examples} provides the derivations
and extends the formulas for the posterior mean, posterior covariance, and
posterior moment generating function to arbitrary dimensions.

%----------------------------------------------- Section

\section{Conclusion} \label{S: conclusion}

Under Gaussian noise, Tweedie's formula expresses the posterior mean as a functional of the observed marginal density. This paper develops a
general framework for analogous identities governing conditional expectations
in additive-noise models. The main contribution is twofold. First, I
characterize when posterior functionals admit a representation through a linear
map acting on the observed density---a structure I call the Tweedie
functional---and establish existence, uniqueness, and continuity under general
conditions. Second, I provide a constructive approach based on Fourier analysis
for deriving such representations, showing that the relevant operator can be
obtained by extending the inverse Fourier transform of an explicitly
characterized tempered distribution to the space of admissible observed
densities.

This perspective turns the search for Tweedie-type formulas into a tractable problem in the calculus of tempered distributions. It recovers the classical Gaussian identity, yields new posterior-mean representations for a broad family of noise laws, and, in some cases, delivers expressions that can be unbiasedly estimated from the data. The applications also cover posterior functionals under the Laplace mechanism used in differential privacy. Together, these results show that the same Fourier-distributional framework can handle different noise structures and different posterior functionals.

The heteroskedastic application shows that the method applies to extensions of the standard additive model. By conditioning on the realized covariance matrix and applying the appropriate change of variables, the model regains the structure needed for the theory to apply. This yields Tweedie representations for posterior means and other functionals directly in terms of the observable conditional density, without imposing additional structure on the model.

\newpage

\newpage

\bibliographystyle{apacite}
\bibliography{tweedie}

\newpage
\appendix

\renewcommand\thefigure{\thesection.\arabic{figure}}
\renewcommand{\thetable}{A-\arabic{table}}
\renewcommand{\thefigure}{A-\arabic{figure}}
\setcounter{table}{0}
\setcounter{figure}{0}

\begin{center}
    {\LARGE\bfseries Appendix for:} \\
    \vspace{0.4cm}
    {\LARGE\bfseries \textquote{Tweedie Calculus}}
\end{center}

\renewcommand{\partname}{}
\renewcommand{\thepart}{}

% Theorems/lemmas/etc. reset for the single appendix section.
\makeatletter
\@addtoreset{theorem}{section}
\@addtoreset{lemma}{section}
\@addtoreset{corollary}{section}
\@addtoreset{assumption}{section}
\makeatother

\renewcommand{\thetheorem}{\thesection-\arabic{theorem}}
\renewcommand{\thelemma}{\thesection-\arabic{lemma}}
\renewcommand{\thecorollary}{\thesection-\arabic{corollary}}
\renewcommand{\theassumption}{\thesection-\arabic{assumption}}

\thispagestyle{empty}

\part{} % Start the appendix part

\startcontents[appendix]
\printcontents[appendix]{}{2}{\noindent\textbf{\Large Contents}\vspace{0.5cm}\par}

\newpage

\section{Appendix}
\label{A:single-appendix}

\subsection{Schwartz functions and tempered distributions}
\label{A:schwartz-tempered-topology}

This appendix expands on the theory of tempered distributions used throughout the paper. We first define the topology on the Schwartz space, then describe the induced weak-\(\star\) topology on tempered distributions and the action of the Fourier transform by duality.

\subsubsection{The Schwartz topology}

Let \(\mathscr S(\mathbb R^d)\) denote the Schwartz space of complex-valued smooth functions:
\[
\mathscr S(\mathbb R^d)
=
\left\{
\psi\in C^\infty(\mathbb R^d;\mathbb C):
\sup_{x\in\mathbb R^d}
|x^\alpha \partial^\beta \psi(x)|<\infty
\text{ for all multi-indices } \alpha,\beta
\right\}.
\]
Following Chapter 8.1 of \cite{folland1999real}, define, for \(N\in\mathbb N_0\) and \(\alpha\in\mathbb N_0^d\),
\[
p_{N,\alpha}(\psi)
\coloneq
\sup_{x\in\mathbb R^d}
(1+\|x\|_2)^N
\left|\partial^\alpha\psi(x)\right|.
\]
The standard Schwartz topology is the locally convex topology generated by the seminorms
\[
\left\{
p_{N,\alpha}:
N\in\mathbb N_0,\ \alpha\in\mathbb N_0^d
\right\}.
\]
With this topology, \(\mathscr S(\mathbb R^d)\) is a Fr\'echet space \citep{folland1999real}, Proposition 8.2. Thus
\[
\psi_n\to\psi
\quad\text{in } \mathscr S(\mathbb R^d)
\]
if and only if
\[
p_{N,\alpha}(\psi_n-\psi)\to0
\qquad
\text{for every } N\in\mathbb N_0
\text{ and } \alpha\in\mathbb N_0^d .
\]

It will be useful to work with the equivalent increasing family
\[
q_m(\psi)
\coloneq
\max_{\substack{0\le N\le m\\ |\alpha|\le m}}
p_{N,\alpha}(\psi),
\qquad
m\in\mathbb N_0 .
\]
The seminorms \((q_m)_{m\ge0}\) generate the same topology as
\((p_{N,\alpha})_{N,\alpha}\). Hence
\[
\psi_n\to\psi
\quad\text{in } \mathscr S(\mathbb R^d)
\quad\Longleftrightarrow\quad
q_m(\psi_n-\psi)\to0
\text{ for every } m\in\mathbb N_0 .
\]

\subsubsection{Fourier transform on the Schwartz space}

We use the Fourier transform convention
\[
\widetilde\psi(\omega)
=
\int_{\mathbb R^d} e^{i\omega^\top x}\psi(x)\,dx,
\qquad
\psi\in\mathscr S(\mathbb R^d).
\]
Under this convention, differentiation and multiplication satisfy, for
\(j=1,\ldots,d\),
\[
\partial_{\omega_j}\widetilde\psi(\omega)
=
\widetilde{\,i x_j\psi\,}(\omega),
\qquad
\widetilde{\partial_{x_j}\psi}(\omega)
=
-i\omega_j\widetilde\psi(\omega).
\]
Consequently, each Schwartz seminorm of \(\widetilde\psi\) is controlled by finitely many Schwartz seminorms of \(\psi\). Equivalently, for every \(m\in\mathbb N_0\), there exist constants \(C_m>0\) and \(r_m\in\mathbb N_0\) such that
\[
q_m(\widetilde\psi)
\le
C_m q_{r_m}(\psi),
\qquad
\psi\in\mathscr S(\mathbb R^d).
\]
Thus the Fourier transform is continuous on \(\mathscr S(\mathbb R^d)\). The same statement holds for the inverse Fourier transform, so the Fourier transform is a continuous linear automorphism of \(\mathscr S(\mathbb R^d)\).

\subsubsection{Tempered distributions and weak-\texorpdfstring{\(\star\)}{star} convergence}

The space of tempered distributions is the continuous dual of the Schwartz space:
\[
\mathscr S'(\mathbb R^d)
\coloneq
\bigl(\mathscr S(\mathbb R^d)\bigr)'.
\]
Thus \(T\in\mathscr S'(\mathbb R^d)\) is a continuous linear functional
\[
T:\mathscr S(\mathbb R^d)\to\mathbb C .
\]
Since the seminorms \((q_m)_{m\ge0}\) are increasing and generate the Schwartz topology, \(T\in\mathscr S'(\mathbb R^d)\) if and only if there exist \(C>0\) and \(m\in\mathbb N_0\) such that
\[
|T[\psi]|
\le
C q_m(\psi),
\qquad
\psi\in\mathscr S(\mathbb R^d).
\]
Equivalently, there exist \(C>0\) and a finite set
\(F\subset \mathbb N_0\times\mathbb N_0^d\) such that
\[
|T[\psi]|
\le
C
\sum_{(N,\alpha)\in F}
p_{N,\alpha}(\psi),
\qquad
\psi\in\mathscr S(\mathbb R^d).
\]

The weak-\(\star\) topology on \(\mathscr S'(\mathbb R^d)\) is the topology of pointwise convergence on \(\mathscr S(\mathbb R^d)\). Hence a net
\((T_i)_{i\in I}\subset\mathscr S'(\mathbb R^d)\) converges to
\(T\in\mathscr S'(\mathbb R^d)\) if and only if
\[
T_i[\psi]\to T[\psi]
\qquad
\text{for every } \psi\in\mathscr S(\mathbb R^d).
\]

\subsubsection{Fourier transform on tempered distributions}

The Fourier transform on \(\mathscr S(\mathbb R^d)\) induces a Fourier transform on \(\mathscr S'(\mathbb R^d)\) by duality. With the convention above, define the inverse Fourier transform of
\(T\in\mathscr S'(\mathbb R^d)\) by
\[
\mathcal F^{-1}\{T\}[\psi]
=
T[\psi^\sharp],
\qquad
\psi^\sharp(\omega)
=
(2\pi)^{-d}\widetilde\psi(-\omega).
\]
The map \(\psi\mapsto\psi^\sharp\) is continuous from
\(\mathscr S(\mathbb R^d)\) into itself. Therefore
\(\mathcal F^{-1}\{T\}\in\mathscr S'(\mathbb R^d)\).

The map \(T\mapsto\mathcal F^{-1}\{T\}\) is weak-\(\star\) continuous. Indeed, if
\(T_i\to T\) weak-\(\star\), then, for every
\(\psi\in\mathscr S(\mathbb R^d)\),
\[
\mathcal F^{-1}\{T_i\}[\psi]
=
T_i[\psi^\sharp]
\to
T[\psi^\sharp]
=
\mathcal F^{-1}\{T\}[\psi].
\]
The same argument applies to the Fourier transform. Thus the Fourier transform and its inverse are weak-\(\star\) continuous automorphisms of \(\mathscr S'(\mathbb R^d)\).

\subsubsection{A sufficient condition for temperedness}

The next lemma gives the temperedness condition used in the main text.

\begin{lemma}[Weighted integrability implies temperedness]
\label{L:weighted-integrability-temperedness}
Let \(Q:\mathbb R^d\to\mathbb C\) be measurable. Suppose that, for some \(N\in\mathbb N_0\),
\[
A_N
\coloneq
\int_{\mathbb R^d}
\frac{|Q(\omega)|}{(1+\|\omega\|_2)^N}\,d\omega
<\infty .
\]
Then the map
\[
\mathcal I_Q[\psi]
\coloneq
\int_{\mathbb R^d} Q(\omega)\psi(\omega)\,d\omega,
\qquad
\psi\in\mathscr S(\mathbb R^d),
\]
is well-defined and belongs to \(\mathscr S'(\mathbb R^d)\).
\end{lemma}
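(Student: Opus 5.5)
The plan is to show directly that $\mathcal I_Q$ is well defined, linear, and bounded by a single Schwartz seminorm, namely $q_N$ from the increasing family introduced above. Once such a bound holds, the characterization of $\mathscr S'(\mathbb R^d)$ stated just before the lemma ($T\in\mathscr S'$ if and only if $|T[\psi]|\le C q_m(\psi)$ for some $C$ and $m$) gives the conclusion immediately.

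\textbf{Step 1 (absolute integrability).} For $\psi\in\mathscr S(\mathbb R^d)$ I would write
\[
|Q(\omega)\psi(\omega)|
=
\frac{|Q(\omega)|}{(1+\|\omega\|_2)^N}\,(1+\|\omega\|_2)^N|\psi(\omega)|
\le
\frac{|Q(\omega)|}{(1+\|\omega\|_2)^N}\,p_{N,0}(\psi).
\]
Integrating gives $\int|Q\psi|\,d\omega\le A_N\,p_{N,0}(\psi)<\infty$. Hence the integral defining $\mathcal I_Q[\psi]$ converges absolutely, so it is a well-defined complex number. Measurability of $Q\psi$ is clear, since $Q$ is measurable and $\psi$ is continuous.

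\textbf{Step 2 (linearity and continuity).} Linearity of $\mathcal I_Q$ follows from linearity of the Lebesgue integral on absolutely integrable functions. For continuity, Step 1 gives
\[
|\mathcal I_Q[\psi]|\le A_N\,p_{N,0}(\psi)\le A_N\,q_N(\psi),
\]
because $p_{N,0}$ is one of the seminorms in the maximum defining $q_N$. This is exactly the seminorm bound that characterizes membership in $\mathscr S'(\mathbb R^d)$, with $C=A_N$ (replaced by $1$ if $A_N=0$) and $m=N$.

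If one prefers sequential continuity, note that $\mathscr S$ is Fréchet, so it suffices to check sequences. If $\psi_n\to\psi$ in $\mathscr S(\mathbb R^d)$, then $|\mathcal I_Q[\psi_n]-\mathcal I_Q[\psi]|\le A_N\,p_{N,0}(\psi_n-\psi)\to0$.

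There is no real obstacle here. The only point requiring care is using a weighted sup seminorm with the same power $N$ as in the hypothesis, so that the weight cancels exactly.
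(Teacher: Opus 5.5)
Your proposal is correct and follows the paper's proof essentially line for line: the pointwise bound $|\psi(\omega)|\le p_{N,0}(\psi)(1+\|\omega\|_2)^{-N}$ gives absolute integrability of $Q\psi$ and the estimate $|\mathcal I_Q[\psi]|\le A_N\,p_{N,0}(\psi)$. That single-seminorm bound is exactly what certifies $\mathcal I_Q\in\mathscr S'(\mathbb R^d)$. Your extra remarks (dominating by $q_N$, the case $A_N=0$, sequential continuity) are harmless refinements of the same argument.
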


\begin{proof}
Fix \(\psi\in\mathscr S(\mathbb R^d)\). By definition of \(p_{N,0}\),
\[
|\psi(\omega)|
\le
p_{N,0}(\psi)(1+\|\omega\|_2)^{-N},
\qquad
\omega\in\mathbb R^d .
\]
Therefore
\[
|Q(\omega)\psi(\omega)|
\le
p_{N,0}(\psi)
\frac{|Q(\omega)|}{(1+\|\omega\|_2)^N}.
\]
The right-hand side is integrable by assumption, so
\(\mathcal I_Q[\psi]\) is well-defined. Moreover,
\[
|\mathcal I_Q[\psi]|
\le
A_N p_{N,0}(\psi).
\]
This seminorm bound proves that \(\mathcal I_Q\) is continuous on
\(\mathscr S(\mathbb R^d)\). Hence
\(\mathcal I_Q\in\mathscr S'(\mathbb R^d)\).
\end{proof}

\subsection{Proofs omitted from the main text}

This appendix collects proofs that are used in the main text.

\subsubsection{Proof of Proposition~\ref{P: convolution}}

\begin{proof}
By independence,
\[
P_Y = P_X * P_V .
\]
Let \(A\subseteq\mathbb R^d\) be Borel. Since \(P_V(dv)=f_V(v)\,dv\), Tonelli's theorem gives
\[
P_Y(A)
=
\iint \mathbf 1_A(x+v) f_V(v)\,dv\,dP_X(x).
\]
For each fixed \(x\), the change of variables \(y=x+v\) gives
\[
\int \mathbf 1_A(x+v) f_V(v)\,dv
=
\int_A f_V(y-x)\,dy .
\]
Hence
\[
P_Y(A)
=
\int_A
\left\{
\int_{\mathbb R^d} f_V(y-x)\,dP_X(x)
\right\}
dy .
\]
Therefore \(P_Y\) is absolutely continuous with density
\[
f_Y(y)
=
\int_{\mathbb R^d} f_V(y-x)\,dP_X(x).
\]
\end{proof}

\subsubsection{Proof of Proposition~\ref{P: bijection}}

\begin{proof}
Surjectivity follows from the definition of
\(\mathcal A_V(\mathbb R^d)\):
\[
\mathcal A_V(\mathbb R^d)
=
\mathcal K_V\bigl(\mathcal M(\mathbb R^d)\bigr).
\]

It remains to prove injectivity. Since \(\mathcal K_V\) is linear, it is enough to show that its kernel is trivial. Let
\(\mu\in\mathcal M(\mathbb R^d)\) satisfy
\[
\mathcal K_V[\mu]
=
f_V * \mu
=
0
\qquad
\text{in } L^1(\mathbb R^d;\mathbb C).
\]
Let
\[
\varphi_\mu(\omega)
\coloneq
\int_{\mathbb R^d} e^{i\omega^\top x}\,d\mu(x),
\qquad
\omega\in\mathbb R^d,
\]
be the Fourier--Stieltjes transform of \(\mu\). Since \(\mu\) is finite,
\(\varphi_\mu\) is bounded and continuous, with
\[
|\varphi_\mu(\omega)|
\le
\|\mu\|_{TV},
\qquad
\omega\in\mathbb R^d .
\]

Taking Fourier transforms yields
\[
\widetilde{\mathcal K_V[\mu]}(\omega)
=
\widetilde{(f_V * \mu)}(\omega)
=
\widetilde f_V(\omega)\varphi_\mu(\omega)
=
\varphi_V(\omega)\varphi_\mu(\omega),
\qquad
\omega\in\mathbb R^d .
\]
Here \(\widetilde f_V=\varphi_V\) because \(f_V\) is the density of \(P_V\). Since \(\mathcal K_V[\mu]=0\) in \(L^1(\mathbb R^d;\mathbb C)\), its Fourier transform vanishes identically. Thus
\[
\varphi_V(\omega)\varphi_\mu(\omega)
=
0,
\qquad
\omega\in\mathbb R^d .
\]

Define
\[
D
\coloneq
\{\omega\in\mathbb R^d:\varphi_V(\omega)\neq 0\}.
\]
By Assumption~\ref{A: Identification}, \(D^c\) has Lebesgue measure zero. Hence \(D\) is dense in \(\mathbb R^d\), because no nonempty open ball can be contained in \(D^c\). On \(D\),
\[
\varphi_\mu(\omega)=0.
\]
The continuity of \(\varphi_\mu\) extends this equality to all of
\(\mathbb R^d\):
\[
\varphi_\mu(\omega)=0,
\qquad
\omega\in\mathbb R^d .
\]
By uniqueness of Fourier--Stieltjes transforms for finite complex measures,
\(\mu=0\). Therefore \(\mathcal K_V\) is injective.
\end{proof}

\subsubsection{Proof of Theorem~\ref{T: Representation}}

\begin{proof}
Define the linear functional
\[
I_\lambda:\mathcal M(\mathbb R^d)\to\mathbb C,
\qquad
I_\lambda[\mu]
\coloneq
\int_{\mathbb R^d}\lambda(x)\,d\mu(x).
\]
Since \(\lambda\in C_0(\mathbb R^d;\mathbb C)\), it is bounded. Hence
\[
|I_\lambda[\mu]|
\le
\|\lambda\|_\infty \|\mu\|_{TV},
\qquad
\mu\in\mathcal M(\mathbb R^d),
\]
so \(I_\lambda\) is continuous with respect to the total variation norm. By Proposition~\ref{P: bijection}, the map
\[
\mathcal K_V:
\bigl(\mathcal M(\mathbb R^d),\|\cdot\|_{TV}\bigr)
\to
\bigl(\mathcal A_V(\mathbb R^d),\|\cdot\|_{\mathcal A_V}\bigr)
\]
is an isometric isomorphism. We may therefore define
\[
\mathcal T_{\lambda,V}
\coloneq
I_\lambda\circ \mathcal K_V^{-1}.
\]
Equivalently, if \(f\in\mathcal A_V(\mathbb R^d)\) and
\(\mu_f=\mathcal K_V^{-1}[f]\), then
\[
\mathcal T_{\lambda,V}[f]
=
\int_{\mathbb R^d}\lambda(x)\,d\mu_f(x).
\]
This definition is unambiguous because the representing measure \(\mu_f\) is unique.

The same bound proves continuity. Indeed, for
\(f\in\mathcal A_V(\mathbb R^d)\), let
\(\mu_f=\mathcal K_V^{-1}[f]\). Then
\[
|\mathcal T_{\lambda,V}[f]|
=
|I_\lambda[\mu_f]|
\le
\|\lambda\|_\infty \|\mu_f\|_{TV}
=
\|\lambda\|_\infty \|f\|_{\mathcal A_V}.
\]
Thus
\[
\mathcal T_{\lambda,V}\in\mathcal A'_V(\mathbb R^d).
\]

Now let \(\mu\in\mathcal M(\mathbb R^d)\). Since
\(\mathcal K_V^{-1}[\mathcal K_V[\mu]]=\mu\),
\[
\mathcal T_{\lambda,V}\!\left[\mathcal K_V[\mu]\right]
=
I_\lambda[\mu]
=
\int_{\mathbb R^d}\lambda(x)\,d\mu(x).
\]
This proves the representation formula. Under Assumptions~\ref{A: independence} and~\ref{A: Density},
Proposition~\ref{P: convolution} gives
\[
f_Y
=
f_V * P_X
=
\mathcal K_V[P_X].
\]
Applying the representation formula with \(\mu=P_X\) yields
\[
\mathcal T_{\lambda,V}[f_Y]
=
\mathcal T_{\lambda,V}\!\left[\mathcal K_V[P_X]\right]
=
\int_{\mathbb R^d}\lambda(x)\,dP_X(x)
=
\Lambda_\lambda(P_X).
\]

It remains to prove uniqueness. Let
\(S\in\mathcal A'_V(\mathbb R^d)\) satisfy
\[
S\!\left[\mathcal K_V[\mu]\right]
=
\int_{\mathbb R^d}\lambda(x)\,d\mu(x),
\qquad
\mu\in\mathcal M(\mathbb R^d).
\]
Since \(\mathcal K_V\) is onto \(\mathcal A_V(\mathbb R^d)\), every
\(f\in\mathcal A_V(\mathbb R^d)\) can be written as
\(f=\mathcal K_V[\mu]\) for some \(\mu\in\mathcal M(\mathbb R^d)\). Therefore
\[
S[f]
=
S\!\left[\mathcal K_V[\mu]\right]
=
\int_{\mathbb R^d}\lambda(x)\,d\mu(x)
=
\mathcal T_{\lambda,V}\!\left[\mathcal K_V[\mu]\right]
=
\mathcal T_{\lambda,V}[f].
\]
Hence \(S=\mathcal T_{\lambda,V}\).
\end{proof}

\subsubsection{Proof of Lemma~\ref{L: SmoothModels}}

\begin{proof}
We prove the three assertions separately. 

\smallskip
\noindent\emph{Part (a): completeness of \(\Xi^k\).}
Let \((\psi_n)_{n\ge1}\) be a Cauchy sequence in \(\Xi^k\). Since
\[
\|\psi\|_\infty\le \|\psi\|_{\Xi^k},
\qquad
\|\psi\|_1\le \|\psi\|_{\Xi^k},
\]
the sequence \((\psi_n)\) is Cauchy both in
\((C_0(\mathbb R^d;\mathbb C),\|\cdot\|_\infty)\) and in
\((L^1(\mathbb R^d;\mathbb C),\|\cdot\|_1)\). These spaces are Banach, so there exist
\[
g_0\in C_0(\mathbb R^d;\mathbb C),
\qquad
h\in L^1(\mathbb R^d;\mathbb C),
\]
such that
\[
\|\psi_n-g_0\|_\infty\to0,
\qquad
\|\psi_n-h\|_1\to0.
\]
Uniform convergence gives \(\psi_n(x)\to g_0(x)\) for every \(x\). By
\(L^1\)-convergence, a subsequence converges to \(h\) almost everywhere
\cite{folland1999real}, Theorem~2.30. Hence \(g_0=h\) almost everywhere. We
therefore identify \(h\) with the continuous representative \(g_0\), and obtain
\[
g_0\in L^1(\mathbb R^d;\mathbb C),
\qquad
\|\psi_n-g_0\|_1\to0.
\]

For each multi-index \(\alpha\) with \(|\alpha|\le k\), the sequence
\((\partial^\alpha\psi_n)\) is Cauchy in
\(C_0(\mathbb R^d;\mathbb C)\), because
\[
\|\partial^\alpha\psi_n-\partial^\alpha\psi_m\|_\infty
\le
\|\psi_n-\psi_m\|_{\Xi^k}.
\]
Let \(g_\alpha\in C_0(\mathbb R^d;\mathbb C)\) denote its uniform limit:
\[
\|\partial^\alpha\psi_n-g_\alpha\|_\infty\to0.
\]

It remains to identify \(g_\alpha\) with \(\partial^\alpha g_0\). If \(k=0\),
there is nothing to prove. Suppose \(k\ge1\). Fix a multi-index
\(\alpha\) with \(|\alpha|\le k-1\), a coordinate \(j\in\{1,\ldots,d\}\),
a point \(x\in\mathbb R^d\), and \(t\in\mathbb R\). Here \(e_j\) denotes the
\(j\)-th coordinate vector. For every \(n\), the fundamental theorem of calculus
gives
\[
\partial^\alpha\psi_n(x+t e_j)-\partial^\alpha\psi_n(x)
=
\int_0^t
\partial^{\alpha+e_j}\psi_n(x+s e_j)\,ds .
\]
Letting \(n\to\infty\) and using uniform convergence of the derivatives yields
\[
g_\alpha(x+t e_j)-g_\alpha(x)
=
\int_0^t
g_{\alpha+e_j}(x+s e_j)\,ds .
\]
Since \(g_{\alpha+e_j}\) is continuous, this identity implies
\[
\partial_j g_\alpha(x)=g_{\alpha+e_j}(x).
\]
Applying this argument for all \(|\alpha|\le k-1\) and all \(j\), induction on
\(|\alpha|\) gives
\[
g_0\in C^k(\mathbb R^d;\mathbb C),
\qquad
\partial^\alpha g_0=g_\alpha
\quad\text{for all }|\alpha|\le k.
\]
Since \(g_\alpha\in C_0(\mathbb R^d;\mathbb C)\) for every \(|\alpha|\le k\) and
\(g_0\in L^1(\mathbb R^d;\mathbb C)\), we have \(g_0\in\Xi^k\). Finally,
\[
\|\psi_n-g_0\|_{\Xi^k}
=
\max_{|\alpha|\le k}
\|\partial^\alpha\psi_n-g_\alpha\|_\infty
+
\|\psi_n-g_0\|_1
\to0.
\]
Thus \(\Xi^k\) is complete.

\smallskip
\noindent\emph{Part (b): continuous inclusion of \(\mathcal A_V\) into \(\Xi^k\).}
Let \(f\in\mathcal A_V(\mathbb R^d)\). By definition of
\(\mathcal A_V(\mathbb R^d)\), and using the unique representing measure, write
\[
f=f_V*\mu,
\qquad
\mu\in\mathcal M(\mathbb R^d).
\]

First, \(f\in L^1(\mathbb R^d;\mathbb C)\). Indeed, by Tonelli's theorem applied
to the total variation measure \(|\mu|\),
\[
\begin{aligned}
\|f\|_1
&=
\int_{\mathbb R^d}
\left|
\int_{\mathbb R^d} f_V(x-z)\,d\mu(z)
\right|dx  \\
&\le
\int_{\mathbb R^d}
\int_{\mathbb R^d}
|f_V(x-z)|\,d|\mu|(z)\,dx  \\
&=
\int_{\mathbb R^d}
\left(
\int_{\mathbb R^d}|f_V(x-z)|\,dx
\right)d|\mu|(z)  \\
&=
\|f_V\|_1\|\mu\|_{TV}.
\end{aligned}
\]

Next fix \(\alpha\) with \(|\alpha|\le k\), and define
\[
g_\alpha(x)
\coloneq
\int_{\mathbb R^d}
\partial^\alpha f_V(x-z)\,d\mu(z).
\]
This integral is well-defined because
\(\partial^\alpha f_V\in C_0(\mathbb R^d;\mathbb C)\subset
L^\infty(\mathbb R^d;\mathbb C)\), and
\[
|g_\alpha(x)|
\le
\|\partial^\alpha f_V\|_\infty\|\mu\|_{TV}.
\]

We first show that \(g_\alpha\in C_0(\mathbb R^d;\mathbb C)\). If \(x_n\to x\),
then
\[
\partial^\alpha f_V(x_n-z)\to \partial^\alpha f_V(x-z)
\qquad
\text{for every }z\in\mathbb R^d,
\]
and
\[
|\partial^\alpha f_V(x_n-z)|
\le
\|\partial^\alpha f_V\|_\infty .
\]
The bound is \(|\mu|\)-integrable because \(|\mu|(\mathbb R^d)<\infty\). Dominated
convergence gives \(g_\alpha(x_n)\to g_\alpha(x)\), so \(g_\alpha\) is continuous.

To prove that \(g_\alpha\) vanishes at infinity, set
\[
M_\alpha\coloneq \|\partial^\alpha f_V\|_\infty .
\]
Fix \(\varepsilon>0\). Since \(|\mu|\) is finite, choose a compact set
\(K\subset\mathbb R^d\) such that
\[
|\mu|(K^c)
<
\frac{\varepsilon}{2(1+M_\alpha)}.
\]
Since \(\partial^\alpha f_V\in C_0(\mathbb R^d;\mathbb C)\), choose \(R>0\) such that
\[
\|y\|_2>R
\quad\Longrightarrow\quad
|\partial^\alpha f_V(y)|
<
\frac{\varepsilon}{2(1+|\mu|(K))}.
\]
Because \(K\) is compact, there exists \(M>0\) such that
\[
\|x\|_2>M,\ z\in K
\quad\Longrightarrow\quad
\|x-z\|_2>R.
\]
For \(\|x\|_2>M\),
\[
\begin{aligned}
|g_\alpha(x)|
&\le
\int_K |\partial^\alpha f_V(x-z)|\,d|\mu|(z)
+
\int_{K^c}|\partial^\alpha f_V(x-z)|\,d|\mu|(z) \\
&\le
\frac{\varepsilon}{2(1+|\mu|(K))}\,|\mu|(K)
+
M_\alpha|\mu|(K^c) \\
&<
\varepsilon .
\end{aligned}
\]
Thus \(g_\alpha\in C_0(\mathbb R^d;\mathbb C)\).

It remains to identify \(g_\alpha\) as \(\partial^\alpha f\). We argue by
induction on \(|\alpha|\). For \(\alpha=0\), the identity \(g_0=f\) is the
definition of \(f\). Suppose that, for some \(\alpha\) with \(|\alpha|\le k-1\),
\[
\partial^\alpha f(x)
=
\int_{\mathbb R^d}
\partial^\alpha f_V(x-z)\,d\mu(z).
\]
Fix \(j\in\{1,\ldots,d\}\). For \(t\neq0\),
\[
\frac{\partial^\alpha f(x+t e_j)-\partial^\alpha f(x)}{t}
=
\int_{\mathbb R^d}
\frac{
\partial^\alpha f_V(x+t e_j-z)-\partial^\alpha f_V(x-z)
}{t}
\,d\mu(z).
\]
For each \(z\), the integrand converges to
\(\partial^{\alpha+e_j}f_V(x-z)\) as \(t\to0\). Moreover, the fundamental theorem
of calculus gives
\[
\frac{
\partial^\alpha f_V(x+t e_j-z)-\partial^\alpha f_V(x-z)
}{t}
=
\int_0^1
\partial^{\alpha+e_j}f_V(x+s t e_j-z)\,ds,
\]
and therefore
\[
\left|
\frac{
\partial^\alpha f_V(x+t e_j-z)-\partial^\alpha f_V(x-z)
}{t}
\right|
\le
\|\partial^{\alpha+e_j}f_V\|_\infty .
\]
The right-hand side is \(|\mu|\)-integrable. Dominated convergence yields
\[
\partial^{\alpha+e_j}f(x)
=
\int_{\mathbb R^d}
\partial^{\alpha+e_j}f_V(x-z)\,d\mu(z)
=
g_{\alpha+e_j}(x).
\]
By induction,
\[
\partial^\alpha f=g_\alpha\in C_0(\mathbb R^d;\mathbb C)
\qquad
\text{for all }|\alpha|\le k.
\]
Together with the \(L^1\) bound above, this proves
\[
f\in\Xi^k(\mathbb R^d;\mathbb C).
\]

The same estimates give the continuity of the inclusion. If
\(\mu=\mathcal K_V^{-1}[f]\), then
\[
\|f\|_1
\le
\|f_V\|_1\|\mu\|_{TV},
\]
and, for every \(|\alpha|\le k\),
\[
\|\partial^\alpha f\|_\infty
\le
\|\partial^\alpha f_V\|_\infty\|\mu\|_{TV}.
\]
Hence
\[
\begin{aligned}
\|f\|_{\Xi^k}
&=
\max_{|\alpha|\le k}\|\partial^\alpha f\|_\infty
+
\|f\|_1 \\
&\le
\left(
\max_{|\alpha|\le k}\|\partial^\alpha f_V\|_\infty
+
\|f_V\|_1
\right)\|\mu\|_{TV}.
\end{aligned}
\]
Since \(\|\mu\|_{TV}=\|f\|_{\mathcal A_V}\), we obtain
\[
\|f\|_{\Xi^k}
\le
C_{V,k}\|f\|_{\mathcal A_V},
\qquad
C_{V,k}
\coloneq
\max_{|\alpha|\le k}\|\partial^\alpha f_V\|_\infty
+
\|f_V\|_1 .
\]
Thus the embedding
\[
\mathcal A_V(\mathbb R^d)\hookrightarrow \Xi^k(\mathbb R^d;\mathbb C)
\]
is continuous.

\smallskip
\noindent\emph{Part (c): continuous and dense inclusion of the Schwartz space.}
We first prove continuity of the inclusion
\[
\mathscr S(\mathbb R^d)\hookrightarrow \Xi^k(\mathbb R^d;\mathbb C).
\]
For \(\varphi\in\mathscr S(\mathbb R^d)\), define the Schwartz seminorm
\[
Q_{d,k}(\varphi)
\coloneq
\max_{|\alpha|\le k}
\sup_{x\in\mathbb R^d}
(1+\|x\|_2)^{d+1}|\partial^\alpha\varphi(x)|.
\]
Then
\[
\max_{|\alpha|\le k}\|\partial^\alpha\varphi\|_\infty
\le
Q_{d,k}(\varphi),
\]
and
\[
\begin{aligned}
\|\varphi\|_1
&=
\int_{\mathbb R^d}
(1+\|x\|_2)^{-d-1}
(1+\|x\|_2)^{d+1}
|\varphi(x)|\,dx \\
&\le
\left(
\int_{\mathbb R^d}
(1+\|x\|_2)^{-d-1}\,dx
\right)
Q_{d,k}(\varphi).
\end{aligned}
\]
The integral is finite. Hence
\[
\|\varphi\|_{\Xi^k}
\le
C_{d,k}Q_{d,k}(\varphi),
\]
which proves continuity of the inclusion.

It remains to prove density. Let
\(\psi\in\Xi^k(\mathbb R^d;\mathbb C)\). Choose
\(\chi\in C_c^\infty(\mathbb R^d)\) such that \(0\le\chi\le1\),
\(\chi\equiv1\) on \(B(0,1)\), and \(\chi\equiv0\) outside \(B(0,2)\). For
\(R\ge1\), define
\[
\chi_R(x)\coloneq\chi(x/R),
\qquad
\psi_R(x)\coloneq\chi_R(x)\psi(x).
\]
Then \(\psi_R\in C_c^k(\mathbb R^d;\mathbb C)\).

We first show that \(\psi_R\to\psi\) in \(\Xi^k\). Since
\(\chi_R(x)\to1\) for every \(x\) and
\[
|(\chi_R(x)-1)\psi(x)|\le |\psi(x)|,
\]
dominated convergence gives
\[
\|\psi_R-\psi\|_1\to0.
\]
Now fix \(\alpha\) with \(|\alpha|\le k\). By Leibniz's rule,
\[
\partial^\alpha(\psi_R-\psi)
=
(\chi_R-1)\partial^\alpha\psi
+
\sum_{0<\beta\le\alpha}
{\alpha\choose\beta}
(\partial^\beta\chi_R)\partial^{\alpha-\beta}\psi .
\]
Because \(\chi_R=1\) on \(B(0,R)\) and
\(\partial^\alpha\psi\in C_0(\mathbb R^d;\mathbb C)\),
\[
\|(\chi_R-1)\partial^\alpha\psi\|_\infty\to0.
\]
For \(0<\beta\le\alpha\),
\[
\partial^\beta\chi_R(x)
=
R^{-|\beta|}(\partial^\beta\chi)(x/R),
\]
and
\[
\operatorname{supp}(\partial^\beta\chi_R)
\subset
\{x\in\mathbb R^d:R\le \|x\|_2\le 2R\}.
\]
Thus
\[
\|(\partial^\beta\chi_R)\partial^{\alpha-\beta}\psi\|_\infty
\le
R^{-|\beta|}
\|\partial^\beta\chi\|_\infty
\sup_{\|x\|_2\ge R}
|\partial^{\alpha-\beta}\psi(x)|.
\]
Since \(\partial^{\alpha-\beta}\psi\in C_0(\mathbb R^d;\mathbb C)\), the right-hand
side tends to zero. Therefore
\[
\max_{|\alpha|\le k}
\|\partial^\alpha(\psi_R-\psi)\|_\infty
\to0,
\]
and hence
\[
\|\psi_R-\psi\|_{\Xi^k}\to0.
\]

Now choose a standard compactly supported mollifier
\(\rho\in C_c^\infty(\mathbb R^d)\) with \(\int\rho=1\), and set
\[
\rho_\varepsilon(x)
\coloneq
\varepsilon^{-d}\rho(x/\varepsilon),
\qquad
\psi_{R,\varepsilon}
\coloneq
\rho_\varepsilon*\psi_R .
\]
For fixed \(R\), we have
\[
\psi_{R,\varepsilon}\in C_c^\infty(\mathbb R^d)\subset\mathscr S(\mathbb R^d).
\]
Standard properties of mollifiers give, for every \(|\alpha|\le k\),
\[
\partial^\alpha\psi_{R,\varepsilon}
=
\rho_\varepsilon*\partial^\alpha\psi_R,
\]
and
\[
\|\psi_{R,\varepsilon}-\psi_R\|_1\to0,
\qquad
\|\partial^\alpha\psi_{R,\varepsilon}
      -\partial^\alpha\psi_R\|_\infty\to0
\quad (|\alpha|\le k),
\]
as \(\varepsilon\downarrow0\); see
\cite{folland1999real}, Propositions~8.10 and~8.14. Hence
\[
\|\psi_{R,\varepsilon}-\psi_R\|_{\Xi^k}\to0
\qquad
(\varepsilon\downarrow0).
\]

For each \(n\), choose \(R_n\) such that
\[
\|\psi_{R_n}-\psi\|_{\Xi^k}<\frac1{2n},
\]
and then choose \(\varepsilon_n>0\) such that
\[
\|\psi_{R_n,\varepsilon_n}-\psi_{R_n}\|_{\Xi^k}<\frac1{2n}.
\]
The functions
\[
\varphi_n\coloneq\psi_{R_n,\varepsilon_n}
\]
belong to \(\mathscr S(\mathbb R^d)\), and
\[
\|\varphi_n-\psi\|_{\Xi^k}\le \frac1n.
\]
Thus \(\mathscr S(\mathbb R^d)\) is dense in
\(\Xi^k(\mathbb R^d;\mathbb C)\).
\end{proof}
\subsubsection{Proof of Theorem~\ref{T:master-fourier-identification}}
\label{A:proof-master-fourier-identification}

The proof uses one auxiliary fact about translations on \(\Xi^k\) discussed in the next Lemma:

\begin{lemma}[Translations on \(\Xi^k\) and Bochner representation of convolution]
\label{L:translation-xik}
For \(x\in\mathbb R^d\), define
\[
(\tau_x f)(t)
\coloneq
f(t-x),
\qquad
t\in\mathbb R^d.
\]
Then the following statements hold.

\begin{enumerate}
\item[(a)]
The family \((\tau_x)_{x\in\mathbb R^d}\) is a strongly continuous group of
linear isometries on \(\Xi^k(\mathbb R^d;\mathbb C)\). In particular, for each
\(f\in\Xi^k(\mathbb R^d;\mathbb C)\), the orbit map
\[
x\longmapsto \tau_x f
\]
is bounded and continuous from \(\mathbb R^d\) into
\(\Xi^k(\mathbb R^d;\mathbb C)\).

\item[(b)]
Under Assumption~\ref{A:SmoothBoundedNoise}, for every
\(\mu\in\mathcal M(\mathbb R^d)\), the map
\(x\mapsto \tau_x f_V\) is Bochner integrable with respect to \(\mu\), and
\[
f_V*\mu
=
\int_{\mathbb R^d}\tau_x f_V\,d\mu(x)
\qquad
\text{in } \Xi^k(\mathbb R^d;\mathbb C).
\]
\end{enumerate}
\end{lemma}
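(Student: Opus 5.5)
For part (a), I would first check that each \(\tau_x\) maps \(\Xi^k(\mathbb R^d;\mathbb C)\) into itself and is an isometry. Translation commutes with differentiation, \(\partial^\alpha(\tau_x f)=\tau_x(\partial^\alpha f)\). It also preserves membership in \(C_0\) and in \(L^1\). Finally, it leaves \(\|\cdot\|_\infty\) and \(\|\cdot\|_1\) unchanged, so \(\|\tau_x f\|_{\Xi^k}=\|f\|_{\Xi^k}\). The group law \(\tau_x\tau_{x'}=\tau_{x+x'}\), \(\tau_0=\mathrm{id}\), is immediate.

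For strong continuity, isometry and the group law reduce the claim to continuity at \(x=0\): \(\|\tau_{x+h}f-\tau_x f\|_{\Xi^k}=\|\tau_h f-f\|_{\Xi^k}\). I would then control each piece of the norm separately:
\begin{itemize}
\item For each \(|\alpha|\le k\), \(\partial^\alpha f\in C_0\) is uniformly continuous, so \(\|\tau_h\partial^\alpha f-\partial^\alpha f\|_\infty\to0\).
\item The \(L^1\) part, \(\|\tau_h f-f\|_1\to0\), is the standard continuity of translation in \(L^1\). It can be proved by approximating \(f\) in \(L^1\) by \(C_c\) functions, or taken from Folland.
\end{itemize}
Boundedness of the orbit follows from the isometry property. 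This gives (a).

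For part (b), I would verify Bochner integrability through the Pettis criterion and a norm bound. By (a), the orbit \(x\mapsto\tau_x f_V\) is continuous into the Banach space \(\Xi^k\), which is complete by Lemma~\ref{L: SmoothModels}(a), and \(f_V\in\Xi^k\) by Assumption~\ref{A:SmoothBoundedNoise}. A continuous map is Borel measurable. Its range is separable, being the continuous image of the separable space \(\mathbb R^d\). Hence the map is strongly measurable with respect to \(|\mu|\). Since \(\int\|\tau_x f_V\|_{\Xi^k}\,d|\mu|(x)=\|f_V\|_{\Xi^k}\|\mu\|_{TV}<\infty\), it is Bochner integrable. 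For a complex measure I would use the polar decomposition \(d\mu=h\,d|\mu|\) with \(|h|=1\), and integrate \(h(x)\tau_x f_V\) against \(|\mu|\).

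It remains to identify \(F\coloneq\int\tau_x f_V\,d\mu(x)\) with \(f_V*\mu\). For each \(t\), point evaluation \(\operatorname{ev}_t\) is a bounded linear functional on \(\Xi^k\), since \(|f(t)|\le\|f\|_\infty\le\|f\|_{\Xi^k}\) and elements of \(\Xi^k\) are continuous. Bounded linear functionals commute with Bochner integrals, so
\[
F(t)=\int f_V(t-x)\,d\mu(x)=(f_V*\mu)(t)
\]
for every \(t\). Thus \(F=f_V*\mu\) pointwise, hence as elements of \(\Xi^k\); Lemma~\ref{L: SmoothModels}(b) already places \(f_V*\mu\) in \(\Xi^k\).

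The one step needing care is the measure-theoretic justification of Bochner integrability. This requires strong measurability for the complex measure \(\mu\), which the separable-range argument supplies. Everything else is routine.
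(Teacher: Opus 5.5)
Your proposal is correct and follows essentially the same route as the paper: for (a), the isometry property plus continuity at the origin (uniform continuity of each $\partial^\alpha f$ and $L^1$-continuity of translation); for (b), strong measurability from continuity and separable range, the bound $\|f_V\|_{\Xi^k}\|\mu\|_{TV}$, the polar decomposition of $\mu$, and identification of the integral by passing a bounded linear functional through the Bochner integral. The only cosmetic difference is that you apply $\operatorname{ev}_t$ directly on $\Xi^k$, whereas the paper first passes the derivative maps $J_\alpha:\Xi^k\to C_0$ through the integral and then evaluates; since only the case $\alpha=0$ is needed to identify $F$ with $f_V*\mu$, the two arguments are equivalent.
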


\begin{proof}
We first prove part~(a). Each \(\tau_x\) is linear. Moreover, for every
multi-index \(\alpha\) with \(|\alpha|\le k\),
\[
\partial^\alpha(\tau_x\psi)
=
\tau_x(\partial^\alpha\psi).
\]
Thus translation preserves both the derivative sup norms and the \(L^1\)-norm:
\[
\|\partial^\alpha(\tau_x\psi)\|_\infty
=
\|\partial^\alpha\psi\|_\infty,
\qquad
\|\tau_x\psi\|_1
=
\|\psi\|_1.
\]
Therefore
\[
\|\tau_x\psi\|_{\Xi^k}
=
\|\psi\|_{\Xi^k},
\qquad
\psi\in\Xi^k(\mathbb R^d;\mathbb C).
\]
Each \(\tau_x\) is an isometry. Since
\[
\tau_0=\mathrm{Id},
\qquad
\tau_x\tau_y=\tau_{x+y},
\]
the family \((\tau_x)_{x\in\mathbb R^d}\) is a group.

It remains to prove strong continuity. Fix
\(f\in\Xi^k(\mathbb R^d;\mathbb C)\). At the origin,
\[
\|\tau_x f-f\|_{\Xi^k}
=
\max_{|\alpha|\le k}
\|\tau_x(\partial^\alpha f)-\partial^\alpha f\|_\infty
+
\|\tau_x f-f\|_1.
\]
The \(L^1\)-term tends to zero as \(x\to0\) by continuity of translations in
\(L^1\); see \cite{folland1999real}, Proposition~8.5. For each
\(|\alpha|\le k\), \(\partial^\alpha f\in C_0(\mathbb R^d;\mathbb C)\), hence
\(\partial^\alpha f\) is uniformly continuous. Therefore
\[
\|\tau_x(\partial^\alpha f)-\partial^\alpha f\|_\infty
\to0
\qquad
(x\to0).
\]
Thus
\[
\|\tau_x f-f\|_{\Xi^k}\to0
\qquad
(x\to0).
\]
For any \(x_0\in\mathbb R^d\),
\[
\|\tau_x f-\tau_{x_0}f\|_{\Xi^k}
=
\|\tau_{x-x_0}f-f\|_{\Xi^k}
\to0
\qquad
(x\to x_0),
\]
because \(\tau_{x_0}\) is an isometry. This proves part~(a).

We now prove part~(b). Assumption~\ref{A:SmoothBoundedNoise} implies that
\(\partial^\alpha f_V\in C_0(\mathbb R^d;\mathbb C)\) for all
\(|\alpha|\le k\). Since \(f_V\) is a density, \(f_V\in L^1(\mathbb R^d)\).
Hence
\[
f_V\in\Xi^k(\mathbb R^d;\mathbb C).
\]
Define
\[
\Phi:\mathbb R^d\to\Xi^k(\mathbb R^d;\mathbb C),
\qquad
\Phi(x)\coloneq\tau_x f_V.
\]
By part~(a), \(\Phi\) is bounded and continuous. Since \(\mathbb R^d\) is
separable, \(\Phi\) has separable range and is strongly measurable with respect
to \(|\mu|\). Also,
\[
\int_{\mathbb R^d}\|\Phi(x)\|_{\Xi^k}\,d|\mu|(x)
=
\|f_V\|_{\Xi^k}\,|\mu|(\mathbb R^d)
<\infty.
\]
Thus \(\Phi\) is Bochner integrable with respect to \(|\mu|\).

Since \(\mu\) is a finite complex measure, there exists a measurable function
\(\sigma_\mu\), with \(|\sigma_\mu|=1\) \(|\mu|\)-a.e., such that
\[
d\mu=\sigma_\mu\,d|\mu|;
\]
see \cite{rudin1987real}, Theorem~6.12. We define
\[
\int_{\mathbb R^d}\Phi(x)\,d\mu(x)
\coloneq
\int_{\mathbb R^d}\sigma_\mu(x)\Phi(x)\,d|\mu|(x).
\]
Set
\[
F
\coloneq
\int_{\mathbb R^d}\tau_x f_V\,d\mu(x)
\in
\Xi^k(\mathbb R^d;\mathbb C).
\]

We identify \(F\). For \(|\alpha|\le k\), the map
\[
J_\alpha:\Xi^k(\mathbb R^d;\mathbb C)\to C_0(\mathbb R^d;\mathbb C),
\qquad
J_\alpha(h)\coloneq \partial^\alpha h,
\]
is bounded and linear. Bounded linear maps commute with Bochner integration;
see \cite{Hsing2015}, Theorem~3.1.7. Hence
\[
\partial^\alpha F
=
\int_{\mathbb R^d}\tau_x(\partial^\alpha f_V)\,d\mu(x)
\qquad
\text{in } C_0(\mathbb R^d;\mathbb C).
\]
Evaluating at \(t\in\mathbb R^d\) gives
\[
\partial^\alpha F(t)
=
\int_{\mathbb R^d}
\partial^\alpha f_V(t-x)\,d\mu(x).
\]
Taking \(\alpha=0\), we obtain
\[
F(t)
=
\int_{\mathbb R^d}f_V(t-x)\,d\mu(x)
=
(f_V*\mu)(t).
\]
Therefore
\[
f_V*\mu
=
\int_{\mathbb R^d}\tau_x f_V\,d\mu(x)
\qquad
\text{in } \Xi^k(\mathbb R^d;\mathbb C).
\]
This proves part~(b).
\end{proof}

\begin{proof}[Proof of Theorem~\ref{T:master-fourier-identification}]
Set
\[
\lambda\coloneq \lambda_{g,V,y},
\qquad
Q\coloneq \mathcal Q_{g,V,y},
\qquad
L_Q\coloneq \mathcal F^{-1}\{\mathcal I_Q\}.
\]
By assumption~\textup{(i)} and
Lemma~\ref{L:weighted-integrability-temperedness},
\[
\mathcal I_Q\in\mathscr S'(\mathbb R^d).
\]
Hence
\[
L_Q\in\mathscr S'(\mathbb R^d).
\]
Assumption~\textup{(ii)} gives
\begin{equation}
\label{eq:master-fourier-bound}
|L_Q[\psi]|
\le
C\|\psi\|_{\Xi^k},
\qquad
\psi\in\mathscr S(\mathbb R^d).
\end{equation}
Thus \(L_Q\) is continuous on \(\mathscr S(\mathbb R^d)\) when
\(\mathscr S(\mathbb R^d)\) is equipped with the \(\Xi^k\)-norm.

By Lemma~\ref{L: SmoothModels}(c), \(\mathscr S(\mathbb R^d)\) is dense in
\(\Xi^k(\mathbb R^d;\mathbb C)\). Therefore \eqref{eq:master-fourier-bound}
extends \(L_Q\) uniquely to a continuous linear functional
\[
\overline L_Q\in
\bigl(\Xi^k(\mathbb R^d;\mathbb C)\bigr)'.
\]
By Lemma~\ref{L: SmoothModels}(b),
\[
\mathcal A_V(\mathbb R^d)
\subset
\Xi^k(\mathbb R^d;\mathbb C),
\]
so the restriction
\[
\overline L_Q|_{\mathcal A_V(\mathbb R^d)}
\]
is well-defined. We prove that this restriction is
\(\mathcal T_{g,V,y}\).

For \(\psi\in\mathscr S(\mathbb R^d)\), the definition of the inverse Fourier
transform on tempered distributions gives
\begin{equation}
\label{eq:master-fourier-action}
\begin{aligned}
L_Q[\psi]
&=
\mathcal I_Q[\psi^\sharp]  \\
&=
\int_{\mathbb R^d}Q(\omega)\psi^\sharp(\omega)\,d\omega  \\
&=
\frac{1}{(2\pi)^d}
\int_{\mathbb R^d}Q(\omega)\widetilde\psi(-\omega)\,d\omega,
\end{aligned}
\end{equation}
where
\[
\psi^\sharp(\omega)
=
(2\pi)^{-d}\widetilde\psi(-\omega).
\]

Define
\[
\Psi(x)
\coloneq
\overline L_Q[\tau_x f_V],
\qquad
x\in\mathbb R^d.
\]
By Lemma~\ref{L:translation-xik}(a), the map
\(x\mapsto\tau_x f_V\) is bounded and continuous into
\(\Xi^k(\mathbb R^d;\mathbb C)\). Since \(\overline L_Q\) is continuous,
\(\Psi\) is bounded and continuous.

We next identify \(\Psi\). Let \(\eta\in\mathscr S(\mathbb R^d)\), and let
\(\mu_\eta\in\mathcal M(\mathbb R^d)\) be the finite complex measure
\[
d\mu_\eta(x)=\eta(x)\,dx.
\]
By Lemma~\ref{L:translation-xik}(b),
\[
f_V*\eta
=
\int_{\mathbb R^d}\eta(x)\tau_x f_V\,dx
\qquad
\text{in } \Xi^k(\mathbb R^d;\mathbb C).
\]
Applying \(\overline L_Q\) and using again
\cite{Hsing2015}, Theorem~3.1.7, gives
\begin{equation}
\label{eq:master-pairing-Psi}
\int_{\mathbb R^d}\Psi(x)\eta(x)\,dx
=
\overline L_Q[f_V*\eta].
\end{equation}

Choose \((u_n)_{n\ge1}\subset\mathscr S(\mathbb R^d)\) such that
\[
u_n\to f_V
\qquad
\text{in } \Xi^k(\mathbb R^d;\mathbb C),
\]
which is possible by Lemma~\ref{L: SmoothModels}(c). We use the elementary
bound
\begin{equation}
\label{eq:master-convolution-estimate}
\|\zeta*\eta\|_{\Xi^k}
\le
\|\zeta\|_{\Xi^k}\|\eta\|_1,
\qquad
\zeta\in\Xi^k(\mathbb R^d;\mathbb C),\quad
\eta\in L^1(\mathbb R^d;\mathbb C).
\end{equation}
Indeed, for every \(|\alpha|\le k\),
\[
\partial^\alpha(\zeta*\eta)
=
(\partial^\alpha\zeta)*\eta,
\]
and therefore
\[
\|\partial^\alpha(\zeta*\eta)\|_\infty
\le
\|\partial^\alpha\zeta\|_\infty\|\eta\|_1.
\]
Also,
\[
\|\zeta*\eta\|_1
\le
\|\zeta\|_1\|\eta\|_1.
\]
Combining these inequalities proves \eqref{eq:master-convolution-estimate}.

Applying \eqref{eq:master-convolution-estimate} to
\(\zeta=u_n-f_V\) gives
\[
u_n*\eta\to f_V*\eta
\qquad
\text{in } \Xi^k(\mathbb R^d;\mathbb C).
\]
Since \(\overline L_Q\) is continuous,
\begin{equation}
\label{eq:master-limit-extension}
\overline L_Q[f_V*\eta]
=
\lim_{n\to\infty}\overline L_Q[u_n*\eta].
\end{equation}
Moreover, \(u_n*\eta\in\mathscr S(\mathbb R^d)\), because the Schwartz space is
closed under convolution; see \cite{stein2003fourier}, Chapter~5,
Proposition~1.11. Hence \(\overline L_Q[u_n*\eta]=L_Q[u_n*\eta]\). By
\eqref{eq:master-fourier-action},
\begin{equation}
\label{eq:master-convolution-fourier}
L_Q[u_n*\eta]
=
\frac{1}{(2\pi)^d}
\int_{\mathbb R^d}
Q(\omega)\widetilde u_n(-\omega)\widetilde\eta(-\omega)\,d\omega.
\end{equation}

Since \(u_n\to f_V\) in \(L^1(\mathbb R^d;\mathbb C)\),
\[
\sup_{\omega\in\mathbb R^d}
|\widetilde u_n(\omega)-\varphi_V(\omega)|
\le
\|u_n-f_V\|_1
\to0.
\]
Thus \(\widetilde u_n\to\varphi_V\) uniformly on \(\mathbb R^d\). Also,
\[
\sup_{n\ge1}\|\widetilde u_n\|_\infty
\le
\sup_{n\ge1}\|u_n\|_1
<\infty.
\]

Let \(N\) be the integer from assumption~\textup{(i)}. Since
\(\widetilde\eta\in\mathscr S(\mathbb R^d)\), there exists
\(C_{\eta,N}>0\) such that
\[
|\widetilde\eta(-\omega)|
\le
C_{\eta,N}(1+\|\omega\|_2)^{-N}.
\]
Hence
\[
|Q(\omega)\widetilde\eta(-\omega)|
\le
C_{\eta,N}
\frac{|Q(\omega)|}{(1+\|\omega\|_2)^N},
\]
and the right-hand side is integrable by assumption~\textup{(i)}. Therefore
\[
|Q(\omega)\widetilde u_n(-\omega)\widetilde\eta(-\omega)|
\le
\left(\sup_{m\ge1}\|\widetilde u_m\|_\infty\right)
|Q(\omega)\widetilde\eta(-\omega)|.
\]
Dominated convergence in \eqref{eq:master-convolution-fourier} yields
\begin{equation}
\label{eq:master-fVeta-fourier}
\overline L_Q[f_V*\eta]
=
\frac{1}{(2\pi)^d}
\int_{\mathbb R^d}
Q(\omega)\varphi_V(-\omega)\widetilde\eta(-\omega)\,d\omega.
\end{equation}

Since \(|\varphi_V|\le1\), assumption~\textup{(i)} and
Lemma~\ref{L:weighted-integrability-temperedness} imply that
\[
\mathcal I_{\omega\mapsto Q(\omega)\varphi_V(-\omega)}
\in
\mathscr S'(\mathbb R^d).
\]
By \eqref{eq:master-fVeta-fourier} and the definition of the inverse Fourier
transform on \(\mathscr S'(\mathbb R^d)\),
\[
\overline L_Q[f_V*\eta]
=
\mathcal F^{-1}
\left\{
\mathcal I_{\omega\mapsto Q(\omega)\varphi_V(-\omega)}
\right\}[\eta].
\]
Assumption~\textup{(iii)} gives
\[
\mathcal F^{-1}
\left\{
\mathcal I_{\omega\mapsto Q(\omega)\varphi_V(-\omega)}
\right\}
=
\mathcal I_\lambda
\qquad
\text{in } \mathscr S'(\mathbb R^d).
\]
Therefore
\[
\overline L_Q[f_V*\eta]
=
\mathcal I_\lambda[\eta]
=
\int_{\mathbb R^d}\lambda(x)\eta(x)\,dx,
\qquad
\eta\in\mathscr S(\mathbb R^d).
\]

Combining this identity with \eqref{eq:master-pairing-Psi}, we obtain
\[
\int_{\mathbb R^d}\Psi(x)\eta(x)\,dx
=
\int_{\mathbb R^d}\lambda(x)\eta(x)\,dx,
\qquad
\eta\in\mathscr S(\mathbb R^d).
\]
Thus \(\Psi-\lambda=0\) in \(\mathscr S'(\mathbb R^d)\). Since \(\Psi\) is
continuous and \(\lambda\in C_0(\mathbb R^d;\mathbb C)\), both functions are
locally integrable. The distributional equality implies
\(\Psi=\lambda\) Lebesgue-a.e. Since both functions are continuous,
\[
\Psi(x)=\lambda(x),
\qquad
x\in\mathbb R^d.
\]

We now identify the restriction of \(\overline L_Q\) to
\(\mathcal A_V(\mathbb R^d)\). Let \(f\in\mathcal A_V(\mathbb R^d)\), and set
\[
\mu_f\coloneq \mathcal K_V^{-1}[f].
\]
Then
\[
f=f_V*\mu_f.
\]
By Lemma~\ref{L:translation-xik}(b),
\[
f
=
\int_{\mathbb R^d}\tau_x f_V\,d\mu_f(x)
\qquad
\text{in } \Xi^k(\mathbb R^d;\mathbb C).
\]
Applying \(\overline L_Q\) and using Bochner integration once more,
\[
\begin{aligned}
\overline L_Q[f]
&=
\int_{\mathbb R^d}
\overline L_Q[\tau_x f_V]\,d\mu_f(x)  \\
&=
\int_{\mathbb R^d}\Psi(x)\,d\mu_f(x)  \\
&=
\int_{\mathbb R^d}\lambda(x)\,d\mu_f(x)  \\
&=
\mathcal T_{g,V,y}[f].
\end{aligned}
\]
Hence
\[
\overline L_Q|_{\mathcal A_V(\mathbb R^d)}
=
\mathcal T_{g,V,y}.
\]

Finally, let \(f\in\mathcal A_V(\mathbb R^d)\), and let
\((f_n)_{n\ge1}\subset\mathscr S(\mathbb R^d)\) satisfy
\[
\|f_n-f\|_{\Xi^k}\to0.
\]
Since \(\overline L_Q\) extends \(L_Q\) and is continuous on \(\Xi^k\),
\[
L_Q[f_n]
=
\overline L_Q[f_n]
\to
\overline L_Q[f]
=
\mathcal T_{g,V,y}[f].
\]
Using \eqref{eq:master-fourier-action} with \(\psi=f_n\), this is the same as
\[
\frac{1}{(2\pi)^d}
\int_{\mathbb R^d}
Q(\omega)\widetilde f_n(-\omega)\,d\omega
\to
\mathcal T_{g,V,y}[f].
\]
The limit is therefore independent of the approximating sequence.
\end{proof}

\subsubsection{Proof of Proposition~\ref{P:tweedie-approximation}}

\begin{proof}
Write
\[
\lambda_n\coloneq \lambda_{n,g,V,y},
\qquad
\lambda\coloneq \lambda_{g,V,y},
\qquad
\mathcal T_n\coloneq \mathcal T_{n,g,V,y}.
\]
For each \(n\), \(\lambda_n\in C_0(\mathbb R^d;\mathbb C)\). Hence
Theorem~\ref{T: Representation} gives a unique continuous linear functional
\[
\mathcal T_n\in\mathcal A_V'(\mathbb R^d)
\]
such that
\[
\mathcal T_n[f_V*\mu]
=
\int_{\mathbb R^d}\lambda_n(x)\,d\mu(x),
\qquad
\mu\in\mathcal M(\mathbb R^d).
\]

We now construct the limiting functional. By assumption,
\[
\lambda_n(x)\to\lambda(x),
\qquad
x\in\mathbb R^d,
\]
and
\[
M_\lambda
\coloneq
\sup_{n\ge1}\|\lambda_n\|_\infty
<\infty .
\]
Since \(\lambda\) is the pointwise limit of continuous functions, it is Borel measurable. Moreover,
\[
\|\lambda\|_\infty\le M_\lambda .
\]

For \(f\in\mathcal A_V(\mathbb R^d)\), let
\[
\mu_f\coloneq \mathcal K_V^{-1}[f].
\]
The measure \(\mu_f\) is well-defined and unique by
Proposition~\ref{P: bijection}. Define
\[
\mathcal T_{g,V,y}[f]
\coloneq
\int_{\mathbb R^d}\lambda(x)\,d\mu_f(x).
\]
This map is linear because \(f\mapsto\mu_f\) is linear. It is also continuous:
\[
|\mathcal T_{g,V,y}[f]|
\le
\|\lambda\|_\infty\|\mu_f\|_{TV}
=
\|\lambda\|_\infty\|f\|_{\mathcal A_V}.
\]
Therefore
\[
\mathcal T_{g,V,y}\in\mathcal A_V'(\mathbb R^d).
\]

This functional is the unique continuous linear functional with the displayed integral representation. Indeed, if
\(S\in\mathcal A_V'(\mathbb R^d)\) satisfies
\[
S[f_V*\mu]
=
\int_{\mathbb R^d}\lambda(x)\,d\mu(x),
\qquad
\mu\in\mathcal M(\mathbb R^d),
\]
then, for any \(f\in\mathcal A_V(\mathbb R^d)\),
\[
S[f]
=
S[f_V*\mu_f]
=
\int_{\mathbb R^d}\lambda(x)\,d\mu_f(x)
=
\mathcal T_{g,V,y}[f].
\]
Hence \(S=\mathcal T_{g,V,y}\).

It remains to prove convergence. Fix \(f\in\mathcal A_V(\mathbb R^d)\), and write
\(\mu_f=\mathcal K_V^{-1}[f]\). Then
\[
\mathcal T_n[f]
=
\int_{\mathbb R^d}\lambda_n(x)\,d\mu_f(x),
\qquad
\mathcal T_{g,V,y}[f]
=
\int_{\mathbb R^d}\lambda(x)\,d\mu_f(x).
\]
Since
\[
\lambda_n(x)-\lambda(x)\to0
\qquad
\text{for every }x\in\mathbb R^d,
\]
and
\[
|\lambda_n(x)-\lambda(x)|
\le
2M_\lambda,
\qquad
x\in\mathbb R^d,
\]
the dominated convergence theorem, applied with respect to the finite measure
\(|\mu_f|\), gives
\[
\int_{\mathbb R^d}
|\lambda_n(x)-\lambda(x)|\,d|\mu_f|(x)
\to0.
\]
Consequently,
\[
\begin{aligned}
|\mathcal T_n[f]-\mathcal T_{g,V,y}[f]|
&=
\left|
\int_{\mathbb R^d}
\{\lambda_n(x)-\lambda(x)\}\,d\mu_f(x)
\right|  \\
&\le
\int_{\mathbb R^d}
|\lambda_n(x)-\lambda(x)|\,d|\mu_f|(x)
\to0 .
\end{aligned}
\]
Thus
\[
\mathcal T_n[f]\to\mathcal T_{g,V,y}[f],
\qquad
f\in\mathcal A_V(\mathbb R^d).
\]

Now suppose that \((Y,X,V)\) also satisfies
Assumption~\ref{A: independence}. By Proposition~\ref{P: convolution},
\[
f_Y
=
f_V*P_X
=
\mathcal K_V[P_X],
\]
so \(f_Y\in\mathcal A_V(\mathbb R^d)\). Applying the preceding convergence result
with \(f=f_Y\) yields
\[
\mathcal T_{g,V,y}[f_Y]
=
\lim_{n\to\infty}\mathcal T_{n,g,V,y}[f_Y].
\]

Finally, by the definition of \(\lambda_{g,V,y}\),
\[
\mathcal T_{g,V,y}[f_Y]
=
\int_{\mathbb R^d}\lambda_{g,V,y}(x)\,dP_X(x)
=
\int_{\mathbb R^d}g(x)f_V(y-x)\,dP_X(x).
\]
Therefore, whenever \(f_Y(y)>0\), the ratio formula \eqref{Eq: ratio} gives
\[
\mathbb E[g(X)\mid Y=y]
=
\frac{
\int_{\mathbb R^d}g(x)f_V(y-x)\,dP_X(x)
}{
f_Y(y)
}
=
\frac{\mathcal T_{g,V,y}[f_Y]}{f_Y(y)}
=
\lim_{n\to\infty}
\frac{\mathcal T_{n,g,V,y}[f_Y]}{f_Y(y)}.
\]
This proves the approximation formula.
\end{proof}

\subsubsection{Proof of Proposition~\ref{P:measure-factorization-functional}}

\begin{proof}
Set
\[
Q(\omega)\coloneq \mathcal Q_{g,V,y}(\omega),
\qquad
R(\omega)\coloneq
\sum_{|\alpha|\le k}(i\omega)^\alpha
\varphi_{\mu_\alpha}(\omega).
\]
By assumption, \(Q=R\) Lebesgue-a.e. Hence
\[
\mathcal I_Q=\mathcal I_R
\qquad
\text{on } \mathscr S(\mathbb R^d).
\]

\smallskip
\noindent\emph{Step 1: temperedness.}
For each \(|\alpha|\le k\), the Fourier--Stieltjes transform of
\(\mu_\alpha\) is bounded:
\[
|\varphi_{\mu_\alpha}(\omega)|
=
\left|
\int_{\mathbb R^d}e^{i\omega^\top z}\,d\mu_\alpha(z)
\right|
\le
\|\mu_\alpha\|_{TV},
\qquad
\omega\in\mathbb R^d.
\]
Therefore there is a finite constant \(C>0\) such that
\[
|R(\omega)|
\le
\sum_{|\alpha|\le k}
|\omega^\alpha|\|\mu_\alpha\|_{TV}
\le
C(1+\|\omega\|_2)^k .
\]
Since \(Q=R\) Lebesgue-a.e., the same polynomial bound holds for \(Q\) after
changing \(Q\) on a null set. Thus, for any integer \(N>k+d\),
\[
\int_{\mathbb R^d}
\frac{|Q(\omega)|}{(1+\|\omega\|_2)^N}\,d\omega
\le
C
\int_{\mathbb R^d}
(1+\|\omega\|_2)^{k-N}\,d\omega
<\infty.
\]
Lemma~\ref{L:weighted-integrability-temperedness} gives
\[
\mathcal I_Q\in\mathscr S'(\mathbb R^d).
\]

\smallskip
\noindent\emph{Step 2: inverse Fourier transform on the Schwartz class.}
Let \(\psi\in\mathscr S(\mathbb R^d)\). Since
\[
\psi^\sharp(\omega)
=
(2\pi)^{-d}\widetilde\psi(-\omega)
\]
belongs to \(\mathscr S(\mathbb R^d)\), we have
\[
\int_{\mathbb R^d}
|\omega^\alpha\psi^\sharp(\omega)|\,d\omega
<\infty
\qquad
(|\alpha|\le k).
\]
Because each \(\mu_\alpha\) is finite, Fubini's theorem applies:
\[
\int_{\mathbb R^d}
\int_{\mathbb R^d}
|\omega^\alpha\psi^\sharp(\omega)|
\,d\omega\,d|\mu_\alpha|(z)
<\infty .
\]
Using \(Q=R\) a.e.,
\[
\begin{aligned}
\mathcal F^{-1}\{\mathcal I_Q\}[\psi]
&=
\mathcal I_Q[\psi^\sharp]  \\
&=
\int_{\mathbb R^d}Q(\omega)\psi^\sharp(\omega)\,d\omega  \\
&=
\sum_{|\alpha|\le k}
\int_{\mathbb R^d}
(i\omega)^\alpha
\varphi_{\mu_\alpha}(\omega)
\psi^\sharp(\omega)\,d\omega  \\
&=
\sum_{|\alpha|\le k}
\int_{\mathbb R^d}
\left[
\int_{\mathbb R^d}
(i\omega)^\alpha
e^{i\omega^\top z}
\psi^\sharp(\omega)\,d\omega
\right]d\mu_\alpha(z).
\end{aligned}
\]
By Fourier inversion under our convention,
\[
\psi(z)
=
\int_{\mathbb R^d}
e^{i\omega^\top z}\psi^\sharp(\omega)\,d\omega .
\]
Differentiating under the integral sign gives, for every \(|\alpha|\le k\),
\[
\partial^\alpha\psi(z)
=
\int_{\mathbb R^d}
(i\omega)^\alpha
e^{i\omega^\top z}
\psi^\sharp(\omega)\,d\omega .
\]
Therefore
\[
\mathcal F^{-1}\{\mathcal I_Q\}[\psi]
=
\sum_{|\alpha|\le k}
\int_{\mathbb R^d}
\partial^\alpha\psi(z)\,d\mu_\alpha(z),
\qquad
\psi\in\mathscr S(\mathbb R^d).
\]

\smallskip
\noindent\emph{Step 3: \(\Xi^k\)-continuity.}
Define, for \(\psi\in\Xi^k(\mathbb R^d;\mathbb C)\),
\[
L[\psi]
\coloneq
\sum_{|\alpha|\le k}
\int_{\mathbb R^d}
\partial^\alpha\psi(z)\,d\mu_\alpha(z).
\]
This is well-defined because
\(\partial^\alpha\psi\in C_0(\mathbb R^d;\mathbb C)\) for every
\(|\alpha|\le k\). Moreover,
\[
\begin{aligned}
|L[\psi]|
&\le
\sum_{|\alpha|\le k}
\left|
\int_{\mathbb R^d}
\partial^\alpha\psi(z)\,d\mu_\alpha(z)
\right|  \\
&\le
\sum_{|\alpha|\le k}
\|\partial^\alpha\psi\|_\infty
\|\mu_\alpha\|_{TV}  \\
&\le
\left(
\sum_{|\alpha|\le k}
\|\mu_\alpha\|_{TV}
\right)
\|\psi\|_{\Xi^k}.
\end{aligned}
\]
Thus \(L\) is a continuous linear functional on
\(\Xi^k(\mathbb R^d;\mathbb C)\). Step 2 shows that
\[
\mathcal F^{-1}\{\mathcal I_Q\}[\psi]
=
L[\psi],
\qquad
\psi\in\mathscr S(\mathbb R^d).
\]
Hence condition~\textup{(ii)} of
Theorem~\ref{T:master-fourier-identification} holds.

The assumptions of the proposition give the standing hypotheses of
Theorem~\ref{T:master-fourier-identification}. Moreover, Step 1 verifies
condition~\textup{(i)}, Step 3 verifies condition~\textup{(ii)}, and the
assumption that \(Q\) is a Fourier-domain representer for
\(\lambda_{g,V,y}\) gives condition~\textup{(iii)}. Applying
Theorem~\ref{T:master-fourier-identification}, we obtain, for every
\(f\in\mathcal A_V(\mathbb R^d)\),
\[
\mathcal T_{g,V,y}[f]
=
\lim_{n\to\infty}
\mathcal F^{-1}\{\mathcal I_Q\}[f_n],
\]
whenever \((f_n)_{n\ge1}\subset\mathscr S(\mathbb R^d)\) satisfies
\[
\|f_n-f\|_{\Xi^k}\to0.
\]
Since \(L\) is continuous on \(\Xi^k(\mathbb R^d;\mathbb C)\), Step 2 gives
\[
\lim_{n\to\infty}
\mathcal F^{-1}\{\mathcal I_Q\}[f_n]
=
\lim_{n\to\infty}L[f_n]
=
L[f].
\]
Therefore
\[
\mathcal T_{g,V,y}[f]
=
\sum_{|\alpha|\le k}
\int_{\mathbb R^d}
\partial^\alpha f(z)\,d\mu_\alpha(z),
\qquad
f\in\mathcal A_V(\mathbb R^d).
\]

Finally, under Assumption~\ref{A: independence},
Proposition~\ref{P: convolution} gives
\[
f_Y
=
f_V*P_X
\in
\mathcal A_V(\mathbb R^d).
\]
Taking \(f=f_Y\) in the preceding display yields
\[
\mathcal T_{g,V,y}[f_Y]
=
\sum_{|\alpha|\le k}
\int_{\mathbb R^d}
\partial^\alpha f_Y(z)\,d\mu_\alpha(z).
\]
By the ratio formula \eqref{Eq: ratio}, whenever \(f_Y(y)>0\),
\[
\mathbb E[g(X)\mid Y=y]
=
\frac{\mathcal T_{g,V,y}[f_Y]}{f_Y(y)}
=
\frac{
\displaystyle
\sum_{|\alpha|\le k}
\int_{\mathbb R^d}
\partial^\alpha f_Y(z)\,d\mu_\alpha(z)
}{
f_Y(y)
}.
\]
This is the stated posterior representation.
\end{proof}

\subsubsection{Proof of Proposition~\ref{P: conditional-heteroskedastic}}

\begin{proof}
We prove the three assertions separately. Throughout, \(\phi\) denotes the
standard \(d\)-dimensional Gaussian density.

\smallskip
\noindent\emph{Part (a): conditional Bayes formula.}
Since \(\mathbb R^d\) and \(\mathbb S_{++}^d\) are standard Borel spaces, there
exists a regular conditional law
\[
K(\Sigma_0,A)
\coloneq
P_{X\mid\Sigma}(A\mid \Sigma_0),
\qquad
A\in\mathcal B(\mathbb R^d).
\]
Thus \(K\) is a probability kernel from \(\mathbb S_{++}^d\) to
\(\mathbb R^d\), and
\[
\mathbb P(X\in A,\Sigma\in C)
=
\int_C K(\Sigma_0,A)\,dP_\Sigma(\Sigma_0)
\]
for all \(A\in\mathcal B(\mathbb R^d)\) and
\(C\in\mathcal B(\mathbb S_{++}^d)\).

Because \(V\indep (X,\Sigma)\), the product kernel
\[
K(\Sigma_0,\cdot)\otimes P_V
\]
is a version of \(P_{(X,V)\mid\Sigma}(\cdot\mid\Sigma_0)\). Indeed, for
\(A,B\in\mathcal B(\mathbb R^d)\) and
\(C\in\mathcal B(\mathbb S_{++}^d)\),
\[
\begin{aligned}
\mathbb P(X\in A,V\in B,\Sigma\in C)
&=
P_V(B)\mathbb P(X\in A,\Sigma\in C)  \\
&=
\int_C K(\Sigma_0,A)P_V(B)\,dP_\Sigma(\Sigma_0).
\end{aligned}
\]
The equality on rectangles determines the product kernel. For \(\Sigma_0\in\mathbb S_{++}^d\), define
\[
P^W_{\Sigma_0}(B)
\coloneq
P_{W\mid\Sigma}(B\mid\Sigma_0)
\coloneq
K(\Sigma_0,\Sigma_0^{1/2}B),
\qquad
B\in\mathcal B(\mathbb R^d),
\]
where
\[
\Sigma_0^{1/2}B
=
\{\Sigma_0^{1/2}w:w\in B\}.
\]
This is well-defined because \(w\mapsto\Sigma_0^{1/2}w\) is a homeomorphism of
\(\mathbb R^d\). It is also a probability kernel. For fixed \(B\),
\[
P^W_{\Sigma_0}(B)
=
\int_{\mathbb R^d}
\mathbf 1_B(\Sigma_0^{-1/2}x)\,K(\Sigma_0,dx),
\]
and the integrand is Borel in \((\Sigma_0,x)\).

Under the chosen conditional law given \(\Sigma=\Sigma_0\),
\[
W=\Sigma_0^{-1/2}X,
\qquad
Z=W+V,
\qquad
W\indep V,
\qquad
V\sim\mathcal N(0,I_d).
\]
Hence the conditional density of \(Z\) given \(\Sigma=\Sigma_0\) is
\[
f_{Z\mid\Sigma}(z\mid\Sigma_0)
=
\int_{\mathbb R^d}
\phi(z-w)\,dP^W_{\Sigma_0}(w).
\]
This density is strictly positive for every \(z\in\mathbb R^d\).

Now define the Gaussian density with covariance \(\Sigma_0\):
\[
\phi_{\Sigma_0}(u)
\coloneq
\det(\Sigma_0)^{-1/2}
\phi(\Sigma_0^{-1/2}u).
\]
Then the conditional density of \(Y\) given \(\Sigma=\Sigma_0\) is
\[
f_{Y\mid\Sigma}(y\mid\Sigma_0)
=
\int_{\mathbb R^d}
\phi_{\Sigma_0}(y-x)\,K(\Sigma_0,dx).
\]
Equivalently, with
\[
z_0\coloneq \Sigma_0^{-1/2}y,
\]
we have
\[
f_{Y\mid\Sigma}(y\mid\Sigma_0)
=
\det(\Sigma_0)^{-1/2}
f_{Z\mid\Sigma}(z_0\mid\Sigma_0).
\]
Therefore
\[
f_{Y\mid\Sigma}(y\mid\Sigma_0)>0
\qquad
\text{for all }(y,\Sigma_0)\in\mathbb R^d\times\mathbb S_{++}^d .
\]

For \(A\in\mathcal B(\mathbb R^d)\), define
\[
\Pi_{\Sigma_0,y}(A)
\coloneq
\frac{
\displaystyle
\int_A
\phi_{\Sigma_0}(y-x)\,K(\Sigma_0,dx)
}{
f_{Y\mid\Sigma}(y\mid\Sigma_0)
}.
\]
Then \(\Pi_{\Sigma_0,y}\) is a probability measure. For fixed \(A\), the map
\[
(\Sigma_0,y,x)
\mapsto
\mathbf 1_A(x)\phi_{\Sigma_0}(y-x)
\]
is nonnegative and Borel. Hence integration against the kernel
\(K(\Sigma_0,dx)\) shows that
\[
(\Sigma_0,y)\mapsto\Pi_{\Sigma_0,y}(A)
\]
is Borel measurable. Thus \(\Pi\) is a probability kernel from
\(\mathbb S_{++}^d\times\mathbb R^d\) to \(\mathbb R^d\).

We now verify that \(\Pi\) is a version of
\(P_{X\mid Y,\Sigma}\). Let
\(A,B\in\mathcal B(\mathbb R^d)\) and
\(C\in\mathcal B(\mathbb S_{++}^d)\). Then
\[
\begin{aligned}
&\int_C\int_B
\Pi_{\Sigma_0,y}(A)
f_{Y\mid\Sigma}(y\mid\Sigma_0)\,dy\,dP_\Sigma(\Sigma_0)  \\
&\qquad =
\int_C\int_B
\int_A
\phi_{\Sigma_0}(y-x)\,K(\Sigma_0,dx)\,dy\,dP_\Sigma(\Sigma_0)  \\
&\qquad =
\int_C
\int_{\mathbb R^d}
\int_{\mathbb R^d}
\mathbf 1_A(x)\mathbf 1_B(y)
\phi_{\Sigma_0}(y-x)\,dy\,
K(\Sigma_0,dx)\,dP_\Sigma(\Sigma_0)  \\
&\qquad =
\mathbb P(X\in A,Y\in B,\Sigma\in C).
\end{aligned}
\]
Therefore \(\Pi_{\Sigma_0,y}\) is a Bayes version of
\(P_{X\mid Y,\Sigma}(\cdot\mid y,\Sigma_0)\).

In the rest of the proof, the expression
\[
\mathbb E[g(X)\mid Y=y,\Sigma=\Sigma_0]
\]
is understood with respect to this version. If
\[
\int_{\mathbb R^d}
|g(x)|\phi_{\Sigma_0}(y-x)\,K(\Sigma_0,dx)<\infty,
\]
then
\[
\mathbb E[g(X)\mid Y=y,\Sigma=\Sigma_0]
=
\frac{
\displaystyle
\int_{\mathbb R^d}
g(x)\phi_{\Sigma_0}(y-x)\,K(\Sigma_0,dx)
}{
f_{Y\mid\Sigma}(y\mid\Sigma_0)
}.
\]
Using the definition of \(P^W_{\Sigma_0}\), the numerator becomes
\[
\int_{\mathbb R^d}
g(x)\phi_{\Sigma_0}(y-x)\,K(\Sigma_0,dx)
=
\det(\Sigma_0)^{-1/2}
\int_{\mathbb R^d}
g(\Sigma_0^{1/2}w)\phi(z_0-w)\,dP^W_{\Sigma_0}(w).
\]
The same determinant factor appears in
\[
f_{Y\mid\Sigma}(y\mid\Sigma_0)
=
\det(\Sigma_0)^{-1/2}
f_{Z\mid\Sigma}(z_0\mid\Sigma_0).
\]
After cancellation,
\[
\mathbb E[g(X)\mid Y=y,\Sigma=\Sigma_0]
=
\frac{
\displaystyle
\int_{\mathbb R^d}
g(\Sigma_0^{1/2}w)\phi(z_0-w)\,dP^W_{\Sigma_0}(w)
}{
f_{Z\mid\Sigma}(z_0\mid\Sigma_0)
}.
\]
This proves part~\textup{(a)}.

\smallskip
\noindent\emph{Part (b): standardized Tweedie functional.}
Fix \(\Sigma_0\in\mathbb S_{++}^d\) and \(y\in\mathbb R^d\), and set
\[
z_0=\Sigma_0^{-1/2}y.
\]
Define
\[
\lambda_{g,y,\Sigma_0}(w)
\coloneq
g(\Sigma_0^{1/2}w)\phi(z_0-w).
\]
Assume that
\[
\lambda_{g,y,\Sigma_0}\in C_0(\mathbb R^d;\mathbb C).
\]

For the standardized Gaussian noise, \(f_V=\phi\) and
\[
\varphi_V(\omega)=e^{-\|\omega\|_2^2/2}.
\]
The characteristic function is nowhere zero, so the convolution operator
\(\mu\mapsto \phi*\mu\) is injective on
\(\mathcal M(\mathbb R^d)\). Therefore, for
\(f=\phi*\mu\in\mathcal A_V(\mathbb R^d)\), the measure \(\mu\) is unique.

Define
\[
\mathcal T_{g,y,\Sigma_0}[f]
\coloneq
\int_{\mathbb R^d}
\lambda_{g,y,\Sigma_0}(w)\,d\mu(w),
\qquad
f=\phi*\mu.
\]
This is well-defined and linear. Moreover,
\[
\begin{aligned}
|\mathcal T_{g,y,\Sigma_0}[\phi*\mu]|
&\le
\|\lambda_{g,y,\Sigma_0}\|_\infty
\|\mu\|_{TV}  \\
&=
\|\lambda_{g,y,\Sigma_0}\|_\infty
\|\phi*\mu\|_{\mathcal A_V}.
\end{aligned}
\]
Thus
\[
\mathcal T_{g,y,\Sigma_0}\in\mathcal A_V'(\mathbb R^d).
\]
Uniqueness follows from the same injectivity argument: any continuous linear
functional with this integral representation must agree with
\(\mathcal T_{g,y,\Sigma_0}\) on every element of
\(\mathcal A_V(\mathbb R^d)\).

Since
\[
f_{Z\mid\Sigma}(\cdot\mid\Sigma_0)
=
\phi*P^W_{\Sigma_0},
\]
we have
\[
\mathcal T_{g,y,\Sigma_0}
\bigl[f_{Z\mid\Sigma}(\cdot\mid\Sigma_0)\bigr]
=
\int_{\mathbb R^d}
g(\Sigma_0^{1/2}w)\phi(z_0-w)\,dP^W_{\Sigma_0}(w).
\]
Combining this identity with part~\textup{(a)} gives
\[
\mathbb E[g(X)\mid Y=y,\Sigma=\Sigma_0]
=
\frac{
\mathcal T_{g,y,\Sigma_0}
\bigl[f_{Z\mid\Sigma}(\cdot\mid\Sigma_0)\bigr]
}{
f_{Z\mid\Sigma}(z_0\mid\Sigma_0)
}.
\]
This proves part~\textup{(b)}.

\smallskip
\noindent\emph{Part (c): Fourier representation.}
Fix \(\Sigma_0\in\mathbb S_{++}^d\) and \(y\in\mathbb R^d\), and keep
\[
z_0=\Sigma_0^{-1/2}y.
\]
Consider the standardized model
\[
W\sim P^W_{\Sigma_0},
\qquad
V\sim\mathcal N(0,I_d),
\qquad
W\indep V,
\qquad
Z=W+V.
\]
In this model, the noise density is \(\phi\), the characteristic function
\[
\varphi_V(\omega)=e^{-\|\omega\|_2^2/2}
\]
is nowhere zero, and \(\phi\in C_0^k(\mathbb R^d)\) for every \(k\ge0\).

Assume the hypotheses in part~\textup{(c)}. In particular,
\[
\lambda_{g,y,\Sigma_0}\in C_0(\mathbb R^d;\mathbb C),
\]
conditions~\textup{(i)} and~\textup{(ii)} of
Theorem~\ref{T:master-fourier-identification} hold for
\(\mathcal Q_{g,y,\Sigma_0}\), and \(\mathcal Q_{g,y,\Sigma_0}\) is a
Fourier-domain representer for \(\lambda_{g,y,\Sigma_0}\). Therefore
Theorem~\ref{T:master-fourier-identification} applies to the standardized
Gaussian model.

Set
\[
Q_{\Sigma_0,y}
\coloneq
\mathcal Q_{g,y,\Sigma_0}.
\]
The theorem implies that
\[
\mathcal F^{-1}\{\mathcal I_{Q_{\Sigma_0,y}}\}
\]
extends continuously to \(\Xi^k(\mathbb R^d;\mathbb C)\), and that its
restriction to \(\mathcal A_V(\mathbb R^d)\) equals
\(\mathcal T_{g,y,\Sigma_0}\). Hence, for every
\(f\in\mathcal A_V(\mathbb R^d)\) and every sequence
\((f_n)_{n\ge1}\subset\mathscr S(\mathbb R^d)\) such that
\[
\|f_n-f\|_{\Xi^k}\to0,
\]
we have
\[
\mathcal T_{g,y,\Sigma_0}[f]
=
\lim_{n\to\infty}
\mathcal F^{-1}\{\mathcal I_{Q_{\Sigma_0,y}}\}[f_n].
\]
Equivalently, using the Fourier convention fixed above,
\[
\mathcal T_{g,y,\Sigma_0}[f]
=
\lim_{n\to\infty}
\frac{1}{(2\pi)^d}
\int_{\mathbb R^d}
Q_{\Sigma_0,y}(\omega)\widetilde f_n(-\omega)\,d\omega .
\]
The limit is independent of the approximating sequence.

Taking
\[
f=f_{Z\mid\Sigma}(\cdot\mid\Sigma_0)
=
\phi*P^W_{\Sigma_0}
\]
gives
\[
\mathcal T_{g,y,\Sigma_0}
\bigl[f_{Z\mid\Sigma}(\cdot\mid\Sigma_0)\bigr]
=
\lim_{n\to\infty}
\frac{1}{(2\pi)^d}
\int_{\mathbb R^d}
Q_{\Sigma_0,y}(\omega)\widetilde f_n(-\omega)\,d\omega ,
\]
for any \((f_n)_{n\ge1}\subset\mathscr S(\mathbb R^d)\) satisfying
\[
\|f_n-f_{Z\mid\Sigma}(\cdot\mid\Sigma_0)\|_{\Xi^k}\to0.
\]
Combining this with the ratio from part~\textup{(b)} yields
\[
\mathbb E[g(X)\mid Y=y,\Sigma=\Sigma_0]
=
\frac{1}{f_{Z\mid\Sigma}(z_0\mid\Sigma_0)}
\lim_{n\to\infty}
\frac{1}{(2\pi)^d}
\int_{\mathbb R^d}
Q_{\Sigma_0,y}(\omega)\widetilde f_n(-\omega)\,d\omega .
\]
This proves part~\textup{(c)}.
\end{proof}

\subsection{Integrable Fourier representers}
\label{A:integrable-fourier-representers}

This section records a simple class of Tweedie functionals for which the
Fourier-domain representer is integrable. In this case, the inverse Fourier
transform is an ordinary bounded continuous function, and the Tweedie functional
can be written as an expectation with respect to the observed density.

\begin{proposition}[Expectation representation]
\label{P:UnbiasedExpectation}
Assume Assumptions~\ref{A: Identification} and~\ref{A:SmoothBoundedNoise} hold
with smoothness order \(k=0\). Fix \(y\in\mathbb R^d\), and let
\(g:\mathbb R^d\to\mathbb R\) be measurable. Suppose that
\[
\lambda_{g,V,y}\in C_0(\mathbb R^d;\mathbb C),
\]
and let $Q\coloneq \mathcal Q_{g,V,y}$ be a Fourier-domain representer for \(\lambda_{g,V,y}\). If
\[
Q\in L^1(\mathbb R^d;\mathbb C),
\]
then, for every \(f\in\mathcal A_V(\mathbb R^d)\),
\[
\mathcal T_{g,V,y}[f]
=
\int_{\mathbb R^d} Q^\sharp(z)f(z)\,dz .
\]

If, in addition, Assumptions~\ref{A: independence} and~\ref{A: Density} hold,
then, whenever \(f_Y(y)>0\),
\[
\mathbb E[g(X)\mid Y=y]
=
\frac{1}{f_Y(y)}
\int_{\mathbb R^d} Q^\sharp(z)f_Y(z)\,dz
=
\frac{\mathbb E[Q^\sharp(Y)]}{f_Y(y)} .
\]
\end{proposition}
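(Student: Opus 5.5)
The plan is to apply Theorem~\ref{T:master-fourier-identification} with $k=0$, after showing that $Q\in L^1$ makes Conditions~(i) and~(ii) automatic. Condition~(iii) holds because $Q$ is assumed to be a Fourier-domain representer.

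\emph{Temperedness.} Take $N=0$. Then $\int|Q|(1+\|\omega\|_2)^{-0}\,d\omega=\|Q\|_1<\infty$, so Condition~(i) holds.

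\emph{Identification of the inverse transform on Schwartz space.} Since $Q\in L^1$, the function
\[
Q^\sharp(z)=(2\pi)^{-d}\int e^{-i\omega^\top z}Q(\omega)\,d\omega
\]
is bounded and continuous, with $\|Q^\sharp\|_\infty\le(2\pi)^{-d}\|Q\|_1$. For $\psi\in\mathscr S(\mathbb R^d)$,
\[
\mathcal F^{-1}\{\mathcal I_Q\}[\psi]
=\int Q(\omega)\,\psi^\sharp(\omega)\,d\omega
=(2\pi)^{-d}\iint Q(\omega)\,e^{-i\omega^\top z}\,\psi(z)\,dz\,d\omega .
\]
The double integral is absolutely convergent, since it is bounded by $\|Q\|_1\|\psi\|_1$. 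Fubini therefore gives
\[
\mathcal F^{-1}\{\mathcal I_Q\}[\psi]=\int Q^\sharp(z)\,\psi(z)\,dz .
\]
This is the only computation needed, and the sign conventions must be checked carefully against $\psi^\sharp(\omega)=(2\pi)^{-d}\widetilde\psi(-\omega)$.

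\emph{$\Xi^0$-continuity.} Define $L[\psi]=\int Q^\sharp\,\psi$ for $\psi\in\Xi^0$. It satisfies
\[
|L[\psi]|\le\|Q^\sharp\|_\infty\|\psi\|_1\le(2\pi)^{-d}\|Q\|_1\,\|\psi\|_{\Xi^0}.
\]
So Condition~(ii) holds with $C=(2\pi)^{-d}\|Q\|_1$. Moreover, $L$ is already a continuous functional on all of $\Xi^0$ that agrees with $\mathcal F^{-1}\{\mathcal I_Q\}$ on Schwartz functions. By density (Lemma~\ref{L: SmoothModels}(c)) and uniqueness of the extension, $L$ is that extension.

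\emph{Conclusion.} Theorem~\ref{T:master-fourier-identification} says the restriction of this extension to $\mathcal A_V$ is $\mathcal T_{g,V,y}$. Hence
\[
\mathcal T_{g,V,y}[f]=\int Q^\sharp f \qquad\text{for every } f\in\mathcal A_V(\mathbb R^d),
\]
using $\mathcal A_V\subset\Xi^0$ from Lemma~\ref{L: SmoothModels}(b). For the second claim, Proposition~\ref{P: convolution} gives $f_Y=\mathcal K_V[P_X]\in\mathcal A_V$. Corollary~\ref{C: TweedieExistence}, through the ratio formula \eqref{Eq: ratio}, gives $\mathbb E[g(X)\mid Y=y]=\mathcal T_{g,V,y}[f_Y]/f_Y(y)$ when $f_Y(y)>0$. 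Finally, $\int Q^\sharp f_Y=\mathbb E[Q^\sharp(Y)]$ because $Q^\sharp$ is bounded and $f_Y$ is the density of $Y$.

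I expect no real obstacle. The only delicate point is checking the Fourier sign conventions in the Fubini step. Note also that $Q^\sharp$ may be complex-valued, but the identity still holds for the real posterior expectation.
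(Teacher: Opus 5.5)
Your proposal is correct and matches the paper's proof essentially step for step. Both arguments run as follows: temperedness from \(Q\in L^1\); Fubini to identify \(\mathcal F^{-1}\{\mathcal I_Q\}\) on \(\mathscr S(\mathbb R^d)\) with integration against \(Q^\sharp\); the bound \(\|Q^\sharp\|_\infty\|\psi\|_1\) to obtain \(\Xi^0\)-continuity; and finally Theorem~\ref{T:master-fourier-identification} combined with the ratio formula \eqref{Eq: ratio} applied to \(f_Y\in\mathcal A_V(\mathbb R^d)\).
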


The proposition says that an \(L^1\) Fourier-domain representer turns the
Tweedie numerator into an ordinary expectation under the observed law. Thus a
sample average of \(Q^\sharp(Y)\) estimates the numerator directly.

\begin{example}[A Gaussian expectation representation]
\label{E:GaussianExpectationRepresentation}
Suppose \(d=1\),
\[
Y=X+V,
\qquad
X\indep V,
\qquad
V\sim\mathcal N(0,1).
\]
Fix \(a>1\), and define
\[
g(x)
=
\frac{1}{a}
\exp\!\left(
\frac{a^2-1}{2a^2}x^2
\right).
\]
For fixed \(y\in\mathbb R\),
\[
\lambda_{g,V,y}(x)
=
g(x)f_V(y-x).
\]
Since
\[
f_V(u)
=
\frac{1}{\sqrt{2\pi}}e^{-u^2/2},
\]
completion of the square gives
\[
\lambda_{g,V,y}(x)
=
e^{(a^2-1)y^2/2}
\frac{1}{\sqrt{2\pi}\,a}
\exp\!\left(
-\frac{(x-a^2y)^2}{2a^2}
\right).
\]
Therefore
\[
\widetilde\lambda_{g,V,y}(\omega)
=
\exp\!\left(
\frac{(a^2-1)y^2}{2}
+i\omega a^2y
-\frac{a^2\omega^2}{2}
\right).
\]
Since
\[
\varphi_V(\omega)=e^{-\omega^2/2},
\]
a Fourier-domain representer is
\[
Q(\omega)
=
\frac{\widetilde\lambda_{g,V,y}(\omega)}{\varphi_V(-\omega)}
=
\exp\!\left(
\frac{(a^2-1)y^2}{2}
+i\omega a^2y
-\frac{(a^2-1)\omega^2}{2}
\right).
\]
Because \(a>1\), we have \(Q\in L^1(\mathbb R)\). Its inverse Fourier transform is
\[
Q^\sharp(z)
=
e^{(a^2-1)y^2/2}
\frac{1}{\sqrt{2\pi(a^2-1)}}
\exp\!\left(
-\frac{(z-a^2y)^2}{2(a^2-1)}
\right).
\]
Proposition~\ref{P:UnbiasedExpectation} gives
\[
\mathcal T_{g,V,y}[f_Y]
=
\mathbb E\!\left[Q^\sharp(Y)\right].
\]
Consequently, whenever \(f_Y(y)>0\),
\[
\mathbb E[g(X)\mid Y=y]
=
\frac{
\mathbb E\!\left[Q^\sharp(Y)\right]
}{
f_Y(y)
}.
\]
\end{example}

\subsubsection{Proof of Proposition~\ref{P:UnbiasedExpectation}}

\begin{proof}
Write
\[
Q\coloneq \mathcal Q_{g,V,y}.
\]

\smallskip
\noindent\emph{Step 1: temperedness.}
Since \(Q\in L^1(\mathbb R^d;\mathbb C)\), the functional
\[
\mathcal I_Q[\psi]
=
\int_{\mathbb R^d}Q(\omega)\psi(\omega)\,d\omega,
\qquad
\psi\in\mathscr S(\mathbb R^d),
\]
is well-defined. Moreover,
\[
|\mathcal I_Q[\psi]|
\le
\|Q\|_1\|\psi\|_\infty .
\]
Thus
\[
\mathcal I_Q\in\mathscr S'(\mathbb R^d).
\]

\smallskip
\noindent\emph{Step 2: inverse Fourier transform on the Schwartz class.}
Define
\[
Q^\sharp(z)
\coloneq
\frac{1}{(2\pi)^d}
\int_{\mathbb R^d}e^{-i\omega^\top z}Q(\omega)\,d\omega .
\]
By the Riemann--Lebesgue lemma,
\[
Q^\sharp\in C_0(\mathbb R^d;\mathbb C),
\]
and
\[
\|Q^\sharp\|_\infty
\le
(2\pi)^{-d}\|Q\|_1 .
\]

Let \(\psi\in\mathscr S(\mathbb R^d)\). Since
\[
\int_{\mathbb R^d}\int_{\mathbb R^d}
|Q(\omega)\psi(z)|\,d\omega\,dz
=
\|Q\|_1\|\psi\|_1
<\infty,
\]
Fubini's theorem applies. Therefore
\[
\begin{aligned}
\int_{\mathbb R^d}Q^\sharp(z)\psi(z)\,dz
&=
\frac{1}{(2\pi)^d}
\int_{\mathbb R^d}
Q(\omega)
\left[
\int_{\mathbb R^d}e^{-i\omega^\top z}\psi(z)\,dz
\right]d\omega  \\
&=
\frac{1}{(2\pi)^d}
\int_{\mathbb R^d}
Q(\omega)\widetilde\psi(-\omega)\,d\omega  \\
&=
\int_{\mathbb R^d}
Q(\omega)\psi^\sharp(\omega)\,d\omega  \\
&=
\mathcal I_Q[\psi^\sharp]  \\
&=
\mathcal F^{-1}\{\mathcal I_Q\}[\psi].
\end{aligned}
\]
Hence
\[
\mathcal F^{-1}\{\mathcal I_Q\}[\psi]
=
\int_{\mathbb R^d}Q^\sharp(z)\psi(z)\,dz,
\qquad
\psi\in\mathscr S(\mathbb R^d).
\]

\smallskip
\noindent\emph{Step 3: extension to \(\Xi^0\) and application of the master theorem.}
Define
\[
L[f]
\coloneq
\int_{\mathbb R^d}Q^\sharp(z)f(z)\,dz,
\qquad
f\in L^1(\mathbb R^d;\mathbb C).
\]
Since \(Q^\sharp\in L^\infty(\mathbb R^d;\mathbb C)\),
\[
|L[f]|
\le
\|Q^\sharp\|_\infty\|f\|_1.
\]
In particular, for \(f\in\Xi^0(\mathbb R^d;\mathbb C)\),
\[
|L[f]|
\le
\|Q^\sharp\|_\infty\|f\|_{\Xi^0}.
\]
Thus \(L\) is continuous on \(\Xi^0(\mathbb R^d;\mathbb C)\). Step 2 shows that
\(L\) extends \(\mathcal F^{-1}\{\mathcal I_Q\}\) from
\(\mathscr S(\mathbb R^d)\) to \(\Xi^0(\mathbb R^d;\mathbb C)\).

The assumptions of the proposition give the standing hypotheses of
Theorem~\ref{T:master-fourier-identification}. Moreover, Step 1 verifies
condition~\textup{(i)}, the preceding bound verifies condition~\textup{(ii)}
with \(k=0\), and the assumption that \(Q\) is a Fourier-domain representer for
\(\lambda_{g,V,y}\) gives condition~\textup{(iii)}. Therefore
Theorem~\ref{T:master-fourier-identification} gives, for every
\(f\in\mathcal A_V(\mathbb R^d)\),

\[
\mathcal T_{g,V,y}[f]
=
L[f]
=
\int_{\mathbb R^d}Q^\sharp(z)f(z)\,dz .
\]

Now assume Assumptions~\ref{A: independence} and~\ref{A: Density}. By
Proposition~\ref{P: convolution},
\[
f_Y
=
f_V*P_X
\in
\mathcal A_V(\mathbb R^d).
\]
Taking \(f=f_Y\) in the preceding display gives
\[
\mathcal T_{g,V,y}[f_Y]
=
\int_{\mathbb R^d}Q^\sharp(z)f_Y(z)\,dz.
\]
Since \(Q^\sharp\) is bounded, the right-hand side equals
\[
\mathbb E[Q^\sharp(Y)].
\]
Finally, the ratio formula \eqref{Eq: ratio} yields, whenever \(f_Y(y)>0\),
\[
\mathbb E[g(X)\mid Y=y]
=
\frac{\mathcal T_{g,V,y}[f_Y]}{f_Y(y)}
=
\frac{\mathbb E[Q^\sharp(Y)]}{f_Y(y)}.
\]
\end{proof}

\subsection*{Additional supporting material}
\label{A:supplement-introduction}

This supplement collects material that supports the main results but is not
needed in the main line of the paper. The first part extends the pointwise
Tweedie functional to a global operator acting on the class of mixtures for
which \(g(X)\) is integrable. This gives an operator-level version of the
posterior numerator and yields a smoothness statement for the posterior mean.

The second part records additional families of Fourier-domain representations.
These include  Hilbert-transform representers,
and one-sided compensated kernels. These results are useful for identifying
closed-form Tweedie functionals beyond the cases stated in the main text.

The final part gives the detailed computations behind the examples and tables.
It derives the posterior mean formulas for several one-dimensional noise laws,
works out functionals under the conventional Laplace mechanism, and records
formulas for the heteroskedastic Gaussian sequence model. These computations are
included to make the table entries and applications reproducible.

\subsection{The Tweedie operator}
\label{A:TweedieOperator}

This section passes from the pointwise identity at a fixed \(y\) to the
posterior mean as an integrable \(\sigma(Y)\)-measurable random variable. The
main step is to restrict attention to priors for which \(g(X)\) is integrable.

For a measurable function \(g:\mathbb R^d\to\mathbb R\), define
\[
\mathcal M_g(\mathbb R^d)
\coloneq
\left\{
\mu\in\mathcal M(\mathbb R^d):
\int_{\mathbb R^d}|g(x)|\,d|\mu|(x)<\infty
\right\},
\]
and
\[
\mathcal A_{V,g}(\mathbb R^d)
\coloneq
\mathcal K_V\bigl(\mathcal M_g(\mathbb R^d)\bigr).
\]
Thus \(\mathcal A_{V,g}(\mathbb R^d)\) is the class of \(V\)-mixtures whose
mixing measures integrate \(|g|\). The next lemma identifies this class as the
global domain on which the posterior mean exists in the usual \(L^1\) sense.

\begin{lemma}[Global domain]
\label{L:GlobalCEDomain}
Let \(X\sim P_X\), and let \(g:\mathbb R^d\to\mathbb R\) be measurable. Then
\[
\mathbb E[g(X)\mid Y]
\]
exists as an integrable \(\sigma(Y)\)-measurable random variable if and only if
\[
P_X\in\mathcal M_g(\mathbb R^d).
\]
\end{lemma}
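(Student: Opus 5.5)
The plan is to reduce the statement to the identity
\[
\int_{\mathbb R^d}|g(x)|\,d|P_X|(x)=\int_{\mathbb R^d}|g(x)|\,dP_X(x)=\mathbb E|g(X)|,
\]
which holds because \(P_X\) is a probability measure, so \(|P_X|=P_X\). Thus \(P_X\in\mathcal M_g(\mathbb R^d)\) is the same as \(g(X)\in L^1(\mathbb P)\). The lemma then becomes the statement that the posterior mean is a well-defined integrable \(\sigma(Y)\)-measurable random variable exactly when \(g(X)\) is integrable. I would make this concrete through the Bayes kernel rather than leave it as a bare appeal to the definition of conditional expectation.

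For the explicit version, I would use the posterior kernel, as in the proof of Proposition~\ref{P: conditional-heteroskedastic}(a) but without \(\Sigma\):
\[
\Pi_y(A)=\frac{\int_A f_V(y-x)\,dP_X(x)}{f_Y(y)}\quad\text{on }\{f_Y>0\},
\]
with an arbitrary fixed probability measure on the \(P_Y\)-null set \(\{f_Y=0\}\). By Proposition~\ref{P: convolution} and Tonelli's theorem, \(\Pi\) is a regular version of \(P_{X\mid Y}\). Tonelli then gives, for the nonnegative function \(|g|\),
\[
\mathbb E\Bigl[\int|g|\,d\Pi_Y\Bigr]=\int\!\!\int |g(x)|f_V(y-x)\,dP_X(x)\,dy=\int|g|\,dP_X,
\]
where the last step uses \(\int f_V(y-x)\,dy=1\) for every \(x\).

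For sufficiency, suppose \(P_X\in\mathcal M_g\). The display shows that \(\int|g|\,d\Pi_Y<\infty\) almost surely, so \(h(Y)\coloneq\int g\,d\Pi_Y\) is well defined almost surely and satisfies \(|h(Y)|\le\int|g|\,d\Pi_Y\), hence \(h(Y)\in L^1\). The defining property \(\mathbb E[g(X)\mathbf 1_B(Y)]=\mathbb E[h(Y)\mathbf 1_B(Y)]\) follows from Fubini, applied to \(g^\pm\) separately, each of which is now integrable.

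For necessity, the conditional expectation in the usual \(L^1\) sense is defined only for integrable \(g(X)\). I would make this explicit by taking \(B=\mathbb R^d\) in the defining identity, or more robustly by applying it to \(g^\pm\). If \(\mathbb E|g(X)|=\infty\), at least one of \(\mathbb E[g^+(X)]\), \(\mathbb E[g^-(X)]\) is infinite. The tower identity above, applied to \(g^\pm\), then shows that the corresponding conditional expectation has infinite mean, so no integrable version satisfying the defining relations exists.

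The main obstacle is the converse direction: making precise what ``exists'' means when \(g\) is signed and non-integrable. The positive and negative parts could in principle produce a finite difference pointwise while each has infinite mean. I would resolve this by adopting the standard convention, under which \(\mathbb E[g(X)\mid Y]\) in the \(L^1\) sense presupposes \(g(X)\in L^1\), and I would state that convention explicitly. The rest is routine Tonelli/Fubini bookkeeping with the kernel \(\Pi\).
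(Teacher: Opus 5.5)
Your proposal is correct, and its core is the paper's proof: because $|P_X|=P_X$, the condition $P_X\in\mathcal M_g(\mathbb R^d)$ is exactly $\mathbb E|g(X)|<\infty$, and the paper then invokes the standard fact (your ``convention'') that an integrable version of $\mathbb E[g(X)\mid Y]$ exists if and only if $g(X)\in L^1$. Your posterior-kernel construction is a valid but unnecessary check of sufficiency, and it uses the additive-noise structure of Assumptions~\ref{A: independence} and~\ref{A: Density}, which the paper's argument does not need; for necessity, what excludes an integrable version is that definitional requirement (equivalently, $B=\mathbb R^d$ in the defining identity), not the tower identity applied to $g^\pm$, since the conditional expectations of $g^\pm$ can both have infinite mean while their difference is integrable.
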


\begin{proof}
Set
\[
Z\coloneq g(X).
\]
The conditional expectation \(\mathbb E[Z\mid Y]\) exists as an integrable
\(\sigma(Y)\)-measurable random variable if and only if \(Z\in L^1\). Since the
marginal law of \(X\) is \(P_X\),
\[
\mathbb E|Z|
=
\int_{\mathbb R^d}|g(x)|\,dP_X(x).
\]
Thus \(Z\in L^1\) if and only if
\[
\int_{\mathbb R^d}|g(x)|\,dP_X(x)<\infty .
\]
Because \(P_X\) is a probability measure, \(|P_X|=P_X\), so this condition is
equivalent to
\[
P_X\in\mathcal M_g(\mathbb R^d).
\]
\end{proof}

The pointwise functionals
\(\{\mathcal T_{g,V,y}:y\in\mathbb R^d\}\) can now be assembled into a single
operator. Instead of fixing \(y\), the operator returns the numerator function
\[
y\longmapsto
\int_{\mathbb R^d}g(x)f_V(y-x)\,d\mu(x).
\]
Pointwise evaluation then recovers the scalar Tweedie functional:
\[
(\mathscr T_{g,V}[f])(y)
=
\mathcal T_{g,V,y}[f],
\]
whenever the latter is defined.

\begin{proposition}[The Tweedie operator]
\label{P:operator-Tweedie}
Assume Assumptions~\ref{A: Identification} and~\ref{A:SmoothBoundedNoise} hold
for some \(k\ge0\). Let \(g:\mathbb R^d\to\mathbb R\) be measurable. Then there
exists a unique linear operator
\[
\mathscr T_{g,V}:
\mathcal A_{V,g}(\mathbb R^d)
\to
\Xi^k(\mathbb R^d;\mathbb C)
\]
such that, for every \(\mu\in\mathcal M_g(\mathbb R^d)\),
\[
\mathscr T_{g,V}\bigl[\mathcal K_V[\mu]\bigr]
=
\int_{\mathbb R^d}g(x)\tau_x f_V\,d\mu(x)
\qquad
\text{in } \Xi^k(\mathbb R^d;\mathbb C),
\]
where the integral is understood in the Bochner sense with respect to the
finite complex measure \(\mu\).

Moreover, if \(f=\mathcal K_V[\mu]\in\mathcal A_{V,g}(\mathbb R^d)\), then
\[
(\mathscr T_{g,V}[f])(y)
=
\int_{\mathbb R^d}g(x)f_V(y-x)\,d\mu(x),
\qquad
y\in\mathbb R^d.
\]
If, in addition,
\[
\lambda_{g,V,y}\in C_0(\mathbb R^d;\mathbb C),
\]
then
\[
(\mathscr T_{g,V}[f])(y)
=
\mathcal T_{g,V,y}[f].
\]

Finally, suppose that \(P_X\in\mathcal M_g(\mathbb R^d)\) and that
Assumptions~\ref{A: independence} and~\ref{A: Density} hold. Let
\[
f_Y
=
\mathcal K_V[P_X]
\in
\mathcal A_{V,g}(\mathbb R^d).
\]
Then
\[
\mathbb E[g(X)\mid Y]
=
\frac{\mathscr T_{g,V}[f_Y](Y)}{f_Y(Y)}
\qquad
\mathbb P\text{-almost surely}.
\]
\end{proposition}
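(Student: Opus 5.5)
The operator will be built directly as a Bochner integral in $\Xi^k$, following the pattern of Lemma~\ref{L:translation-xik}(b). Fix $\mu\in\mathcal M_g(\mathbb R^d)$. The weighted measure $d\nu_\mu\coloneq g\,d\mu$ is a finite complex measure, since $\int|g|\,d|\mu|<\infty$. Lemma~\ref{L:translation-xik}(a) shows that $x\mapsto\tau_x f_V$ is bounded and continuous into $\Xi^k$, so it is strongly measurable. Moreover
\[
\int\|g(x)\tau_x f_V\|_{\Xi^k}\,d|\mu|(x)=\|f_V\|_{\Xi^k}\int|g|\,d|\mu|<\infty.
\]
Hence $\int g(x)\tau_xf_V\,d\mu(x)=\int\tau_xf_V\,d\nu_\mu(x)$ is a well-defined element of $\Xi^k$. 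Lemma~\ref{L:translation-xik}(b) applied to $\nu_\mu$ identifies this element with $f_V*\nu_\mu$.

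Next I will show that the operator is well defined on $\mathcal A_{V,g}$. By Proposition~\ref{P: bijection}, each $f\in\mathcal A_{V,g}$ equals $\mathcal K_V[\mu]$ for a unique $\mu\in\mathcal M(\mathbb R^d)$, and this $\mu$ lies in $\mathcal M_g$. So I set $\mathscr T_{g,V}[f]\coloneq f_V*\nu_\mu$. Linearity follows from the linearity of $f\mapsto\mu$ and of the Bochner integral. Uniqueness is immediate, because the defining identity prescribes the value on every element of the domain. For the pointwise formula, evaluation at $y$ is a bounded linear functional on $\Xi^k$, since $|h(y)|\le\|h\|_{\Xi^k}$. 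It therefore commutes with the Bochner integral, exactly as in the proof of Lemma~\ref{L:translation-xik}(b), and gives
\[
(\mathscr T_{g,V}[f])(y)=\int g(x)f_V(y-x)\,d\mu(x).
\]
When $\lambda_{g,V,y}\in C_0$, Corollary~\ref{C: TweedieExistence} (equivalently Theorem~\ref{T: Representation}) gives $\mathcal T_{g,V,y}[f]=\int\lambda_{g,V,y}\,d\mu$, which is the same number.

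For the final claim, take $\mu=P_X$, so that $f_Y=\mathcal K_V[P_X]\in\mathcal A_{V,g}$. Set $h(y)\coloneq\mathscr T_{g,V}[f_Y](y)/f_Y(y)$ on $\{f_Y>0\}$ and $h\coloneq0$ elsewhere. This $h$ is Borel because both functions are continuous, being elements of $\Xi^k$. I then need to verify the defining property of conditional expectation: for every Borel set $B$,
\[
\mathbb E[g(X)\mathbf 1_B(Y)]=\mathbb E[h(Y)\mathbf 1_B(Y)].
\]
Using Assumption~\ref{A: independence} and Fubini on the right-hand side gives
\[
\int_B h f_Y\,dy=\int_{B\cap\{f_Y>0\}}\int g(x)f_V(y-x)\,dP_X(x)\,dy.
\]
This equals $\iint g(x)\mathbf 1_B(x+v)f_V(v)\,dv\,dP_X(x)=\mathbb E[g(X)\mathbf 1_B(Y)]$. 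The switch of integration order is justified because $\iint|g(x)|f_V(y-x)\,dy\,dP_X=\int|g|\,dP_X<\infty$. Integrability of $h(Y)$ follows from the same bound, and Lemma~\ref{L:GlobalCEDomain} ensures the conditional expectation exists.

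The main obstacle is the region $\{f_Y=0\}$: the intersection above silently drops the part of the integral over $B\cap\{f_Y=0\}$. I will handle it by observing that $f_Y(y)=\int f_V(y-x)\,dP_X=0$ with $f_V\ge0$ forces $f_V(y-x)=0$ for $P_X$-a.e.\ $x$. Hence the numerator $\int g(x)f_V(y-x)\,dP_X(x)$ also vanishes on that set, and the restriction loses nothing. The statement ``$\mathbb P$-almost surely'' then holds because $P_Y(\{f_Y=0\})=0$.
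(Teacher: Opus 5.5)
Your proof is correct. The construction of $\mathscr T_{g,V}$ and the pointwise-evaluation step match the paper's: a Bochner integral of $x\mapsto g(x)\tau_x f_V$, the norm bound $|g(x)|\,\|f_V\|_{\Xi^k}$, and commuting $\operatorname{ev}_y$ with the integral. Two things differ.

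\textbf{The form of the operator.} You also identify the integral as $f_V*(g\,\mu)$, by applying Lemma~\ref{L:translation-xik}(b) to $\nu_\mu=g\,\mu$. This shows that $\mathscr T_{g,V}=\mathcal K_V\circ(\text{multiplication by } g)\circ\mathcal K_V^{-1}$, so its range actually lies in $\mathcal A_V(\mathbb R^d)$. The paper does not record this. The equality $\int g\,\tau_x f_V\,d\mu=\int \tau_x f_V\,d\nu_\mu$, which you assert without comment, is easiest to check by evaluating both sides pointwise, since point evaluations separate $\Xi^k$.

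\textbf{The almost-sure claim.} The paper first notes that the numerator is finite because $f_V$ is bounded. It then invokes the Bayes ratio formula \eqref{Eq: ratio} at every $y$ with $f_Y(y)>0$, and observes that $P_Y(\{f_Y=0\})=0$. You instead verify the defining property $\mathbb E[g(X)\mathbf 1_B(Y)]=\mathbb E[h(Y)\mathbf 1_B(Y)]$ directly by Fubini. Your route is longer but self-contained: it proves that the ratio is a version of $\mathbb E[g(X)\mid Y]$ rather than taking that as given. It also handles correctly the fact that $\{f_Y=0\}$ can have positive Lebesgue measure, for example with compactly supported noise and prior. On that set one needs your observation that the numerator vanishes, not merely that the set is $P_Y$-null.
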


\begin{proof}
Let
\[
f=\mathcal K_V[\mu]\in\mathcal A_{V,g}(\mathbb R^d).
\]
By Assumption~\ref{A: Identification}, the representing measure \(\mu\) is
unique.

We first show that the Bochner integral is well-defined. By
Assumption~\ref{A:SmoothBoundedNoise},
\[
f_V\in\Xi^k(\mathbb R^d;\mathbb C).
\]
By Lemma~\ref{L:translation-xik}, the map
\[
x\longmapsto \tau_x f_V
\]
is continuous from \(\mathbb R^d\) into \(\Xi^k(\mathbb R^d;\mathbb C)\).
Since \(g\) is measurable and scalar multiplication is continuous, the map
\[
x\longmapsto g(x)\tau_x f_V
\]
is strongly measurable. Translations are isometries on \(\Xi^k\), so
\[
\|g(x)\tau_x f_V\|_{\Xi^k}
=
|g(x)|\|f_V\|_{\Xi^k}.
\]
Therefore
\[
\int_{\mathbb R^d}
\|g(x)\tau_x f_V\|_{\Xi^k}\,d|\mu|(x)
=
\|f_V\|_{\Xi^k}
\int_{\mathbb R^d}|g(x)|\,d|\mu|(x)
<\infty,
\]
because \(\mu\in\mathcal M_g(\mathbb R^d)\). Hence
\(x\mapsto g(x)\tau_x f_V\) is Bochner integrable with respect to \(\mu\).

Define
\[
\mathscr T_{g,V}[f]
\coloneq
\int_{\mathbb R^d}g(x)\tau_x f_V\,d\mu(x).
\]
The definition is unambiguous because \(\mu=\mathcal K_V^{-1}[f]\) is unique.
Linearity follows from the linearity of \(\mathcal K_V^{-1}\) and of the
Bochner integral. This proves existence and uniqueness of the operator with the
displayed representation.

We now identify its pointwise values. For each \(y\in\mathbb R^d\), evaluation
at \(y\) is a continuous linear functional on \(\Xi^k(\mathbb R^d;\mathbb C)\),
because
\[
|h(y)|\le \|h\|_\infty\le \|h\|_{\Xi^k}.
\]
Thus evaluation commutes with the Bochner integral:
\[
\begin{aligned}
(\mathscr T_{g,V}[f])(y)
&=
\int_{\mathbb R^d}
g(x)(\tau_x f_V)(y)\,d\mu(x)  \\
&=
\int_{\mathbb R^d}
g(x)f_V(y-x)\,d\mu(x).
\end{aligned}
\]
If \(\lambda_{g,V,y}\in C_0(\mathbb R^d;\mathbb C)\), then
Theorem~\ref{T: Representation} gives
\[
\mathcal T_{g,V,y}[f]
=
\int_{\mathbb R^d}
g(x)f_V(y-x)\,d\mu(x).
\]
Hence
\[
(\mathscr T_{g,V}[f])(y)
=
\mathcal T_{g,V,y}[f].
\]

It remains to prove the almost-sure posterior representation. Suppose that
\(P_X\in\mathcal M_g(\mathbb R^d)\) and that
Assumptions~\ref{A: independence} and~\ref{A: Density} hold. By
Proposition~\ref{P: convolution},
\[
f_Y
=
f_V*P_X
=
\mathcal K_V[P_X],
\]
so
\[
f_Y\in\mathcal A_{V,g}(\mathbb R^d).
\]
Moreover, since \(f_V\) is bounded under Assumption~\ref{A:SmoothBoundedNoise},
\[
\int_{\mathbb R^d}
|g(x)|f_V(y-x)\,dP_X(x)
\le
\|f_V\|_\infty
\int_{\mathbb R^d}|g(x)|\,dP_X(x)
<\infty.
\]
Thus the pointwise ratio formula applies for every \(y\) such that
\(f_Y(y)>0\):
\[
\mathbb E[g(X)\mid Y=y]
=
\frac{
\displaystyle
\int_{\mathbb R^d}g(x)f_V(y-x)\,dP_X(x)
}{
f_Y(y)
}.
\]
Using the pointwise identity above with \(\mu=P_X\), we obtain
\[
\mathbb E[g(X)\mid Y=y]
=
\frac{(\mathscr T_{g,V}[f_Y])(y)}{f_Y(y)},
\qquad
f_Y(y)>0.
\]

Finally,
\[
\mathbb P\{f_Y(Y)=0\}
=
P_Y\{y:f_Y(y)=0\}
=
\int_{\{y:f_Y(y)=0\}}f_Y(y)\,dy
=
0.
\]
Hence the ratio is defined \(\mathbb P\)-almost surely, with arbitrary values on
\(\{f_Y(Y)=0\}\). Therefore
\[
\mathbb E[g(X)\mid Y]
=
\frac{\mathscr T_{g,V}[f_Y](Y)}{f_Y(Y)}
\qquad
\mathbb P\text{-almost surely}.
\]
\end{proof}

Proposition~\ref{P:operator-Tweedie} factors the pointwise Tweedie functional as
\[
\mathcal T_{g,V,y}
=
\operatorname{ev}_y\circ \mathscr T_{g,V}.
\]
Thus the dependence on \(y\) enters only through evaluation after the global
operator \(\mathscr T_{g,V}\) has acted on \(f_Y\). This factorization gives a
direct regularity statement for the posterior mean.

\begin{corollary}[Smoothness of the posterior mean]
\label{C:PosteriorMeanSmoothness}
Assume the hypotheses of the final assertion of
Proposition~\ref{P:operator-Tweedie}. Then \(\mathbb E[g(X)\mid Y]\) admits the
version
\[
\theta_V^g(y)
=
\frac{(\mathscr T_{g,V}[f_Y])(y)}{f_Y(y)},
\qquad
f_Y(y)>0.
\]
This version is \(k\)-times continuously differentiable on the open set
\[
\mathcal O_Y
\coloneq
\{y\in\mathbb R^d:f_Y(y)>0\}.
\]
\end{corollary}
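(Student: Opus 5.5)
The plan is to combine two facts: Proposition~\ref{P:operator-Tweedie} places the numerator in $\Xi^k$, and $f_Y$ is itself $C^k$ by Lemma~\ref{L: SmoothModels}(b). The corollary is then a statement about a quotient of two $C^k$ functions on the open set where the denominator is positive.

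\textbf{Step 1 (version and well-posedness).} Under the hypotheses of the final assertion of Proposition~\ref{P:operator-Tweedie}, we have $P_X\in\mathcal M_g(\mathbb R^d)$ and $f_Y=\mathcal K_V[P_X]\in\mathcal A_{V,g}(\mathbb R^d)$. That proposition already gives
\[
\mathbb E[g(X)\mid Y]=\frac{\mathscr T_{g,V}[f_Y](Y)}{f_Y(Y)}
\qquad\mathbb P\text{-a.s.},
\]
so $\theta_V^g$, defined on $\mathcal O_Y$, is a version of the conditional expectation. The set $\{f_Y=0\}$ is $P_Y$-null, so how $\theta_V^g$ is extended there does not matter.

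\textbf{Step 2 (regularity of numerator and denominator).} By construction, $N\coloneq\mathscr T_{g,V}[f_Y]$ lies in $\Xi^k(\mathbb R^d;\mathbb C)$, hence in $C^k$. Its values are real because $g$ and $f_V$ are real and $P_X$ is a positive measure; this follows from the pointwise formula in Proposition~\ref{P:operator-Tweedie}. By Lemma~\ref{L: SmoothModels}(b), $f_Y\in\Xi^k$, so $f_Y\in C^k$.

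\textbf{Step 3 (openness of $\mathcal O_Y$).} The function $f_Y$ is continuous: it lies in $C_0$ even when $k=0$. Therefore $\mathcal O_Y=f_Y^{-1}((0,\infty))$ is open.

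\textbf{Step 4 (the quotient is $C^k$).} On $\mathcal O_Y$ the map $t\mapsto 1/t$ is $C^\infty$ on $(0,\infty)$, so $1/f_Y\in C^k(\mathcal O_Y)$ by the chain rule. The product $N\cdot(1/f_Y)$ is then $C^k$ by the Leibniz rule. Explicit derivatives can be written with the Faà di Bruno formula, but they are not needed.

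The only delicate point is that Step 2 uses the $C^k$ regularity of the global numerator $\mathscr T_{g,V}[f_Y]$, not merely the pointwise values $\mathcal T_{g,V,y}[f_Y]$. This is exactly what the operator-level construction supplies, since the Bochner integral converges in $\Xi^k$. Once this is in hand, the remaining steps are routine calculus.
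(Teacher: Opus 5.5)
Your proposal is correct and follows essentially the same route as the paper: the numerator $\mathscr T_{g,V}[f_Y]$ lies in $\Xi^k$ by Proposition~\ref{P:operator-Tweedie}, and $f_Y\in\Xi^k$ by Lemma~\ref{L: SmoothModels}(b); $\mathcal O_Y$ is open because $f_Y$ is continuous, and the quotient is $C^k$ there by the chain and Leibniz rules. Your observation that the numerator is real-valued is a harmless addition that the paper's argument does not need.
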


\begin{proof}
By Proposition~\ref{P:operator-Tweedie},
\[
N_g
\coloneq
\mathscr T_{g,V}[f_Y]
\in
\Xi^k(\mathbb R^d;\mathbb C).
\]
By Lemma~\ref{L: SmoothModels}(b),
\[
f_Y=\mathcal K_V[P_X]\in\Xi^k(\mathbb R^d;\mathbb C).
\]
Hence \(N_g\) and \(f_Y\) both belong to \(C^k(\mathbb R^d;\mathbb C)\).

Since \(f_Y\) is continuous,
\[
\mathcal O_Y
=
\{y\in\mathbb R^d:f_Y(y)>0\}
\]
is open. On \(\mathcal O_Y\), the denominator does not vanish. Therefore the
quotient rule gives
\[
\frac{N_g}{f_Y}
\in
C^k(\mathcal O_Y;\mathbb C).
\]
By Proposition~\ref{P:operator-Tweedie}, this quotient is a version of
\(\mathbb E[g(X)\mid Y]\) on \(\mathcal O_Y\).
\end{proof}

\subsection{Detailed computations for the table formulas} \label{A: DetailedComp}

\subsubsection{Posterior means under one-dimensional noise laws}
\label{A:Calculations}

This section records the Fourier calculations underlying the entries in
Table~\ref{tab:noise-families}. We specialize to the posterior mean case
\(g(x)=x\) in dimension \(d=1\). Thus
\[
\lambda_{g,V,y}(x)
=
x f_V(y-x).
\]
The calculation has two steps. First, we find a Fourier-domain representer
\(\mathcal Q_{g,V,y}\) using
Theorem~\ref{T:master-fourier-identification}. Second, we identify the inverse
Fourier transform
\[
\mathcal F^{-1}\{\mathcal I_{\mathcal Q_{g,V,y}}\}
\]
using the distributional identities developed in Section~\ref{S: Computation}.

\paragraph{A universal one-dimensional symbol}
\label{A:AuxiliaryResults}

The next lemma isolates the calculation used in the examples below. The same
identity appears in \cite{raphan2011least}, Equation~6.4.

\begin{lemma}[Universal Fourier-domain representer for posterior means]
\label{L:posterior-mean-symbol-1d}
Assume \(d=1\), let \(g(x)=x\), and fix \(y\in\mathbb R\). Define
\[
\lambda_{g,V,y}(x)
\coloneq
x f_V(y-x).
\]
Suppose that
\[
\lambda_{g,V,y}\in \Xi^0(\mathbb R;\mathbb C).
\]
Let
\[
Z_V
\coloneq
\{\omega\in\mathbb R:\varphi_V(-\omega)=0\}.
\]
If \(Z_V\) has Lebesgue measure zero, then the function
\[
\mathcal Q_{g,V,y}(\omega)
\coloneq
\begin{cases}
\displaystyle
e^{i\omega y}
\left[
y
-
i
\frac{\frac{d}{d\omega}\varphi_V(-\omega)}
{\varphi_V(-\omega)}
\right],
&
\omega\notin Z_V, \\[2.0ex]
0,
&
\omega\in Z_V,
\end{cases}
\]
is a Fourier-domain representer for \(\lambda_{g,V,y}\).
\end{lemma}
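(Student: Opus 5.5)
The plan is to reduce the lemma to the Remark following Theorem~\ref{T:master-fourier-identification}. Since $\lambda_{g,V,y}\in\Xi^0(\mathbb R;\mathbb C)\subset L^1(\mathbb R;\mathbb C)$, that Remark shows that
\[
\omega\mapsto \frac{\widetilde\lambda_{g,V,y}(\omega)}{\varphi_V(-\omega)}\,\mathbbm 1\{\varphi_V(-\omega)\neq 0\}
\]
is a Fourier-domain representer for $\lambda_{g,V,y}$. It therefore suffices to show that this quotient coincides with the stated $\mathcal Q_{g,V,y}$ off $Z_V$. The two functions then agree Lebesgue-a.e., because $Z_V$ is null. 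Hence $\mathcal Q_{g,V,y}(\omega)\varphi_V(-\omega)=\widetilde\lambda_{g,V,y}(\omega)$ a.e., and condition~(iii) follows by the same duality computation as in the Remark. That computation uses Fubini with $\lambda_{g,V,y}\in L^1$ and $\psi^\sharp\in\mathscr S(\mathbb R)$, together with $\widetilde{\psi^\sharp}=\psi$.

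The core step is an explicit formula for $\widetilde\lambda_{g,V,y}$. I will substitute $x=y-v$ to obtain
\[
\widetilde\lambda_{g,V,y}(\omega)=\int_{\mathbb R}e^{i\omega x}x f_V(y-x)\,dx
= e^{i\omega y}\Bigl[\,y\,\varphi_V(-\omega)-\int_{\mathbb R} v\,e^{-i\omega v}f_V(v)\,dv\Bigr],
\]
using $\int e^{-i\omega v}f_V(v)\,dv=\varphi_V(-\omega)$. The splitting of the integral is legitimate because both pieces are absolutely integrable, as shown in the next paragraph. I will then identify the remaining integral as a derivative. Differentiating $\omega\mapsto\varphi_V(-\omega)=\int e^{-i\omega v}f_V(v)\,dv$ under the integral sign gives
\[
\frac{d}{d\omega}\varphi_V(-\omega)=-i\int_{\mathbb R} v\,e^{-i\omega v}f_V(v)\,dv ,
\]
so that $\int v e^{-i\omega v}f_V(v)\,dv=i\,\frac{d}{d\omega}\varphi_V(-\omega)$. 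Substituting this yields
\[
\widetilde\lambda_{g,V,y}(\omega)=e^{i\omega y}\Bigl[y\,\varphi_V(-\omega)-i\tfrac{d}{d\omega}\varphi_V(-\omega)\Bigr].
\]
Dividing by $\varphi_V(-\omega)$ for $\omega\notin Z_V$ gives exactly the stated formula.

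The main obstacle is justifying the differentiation, which requires $\mathbb E|V|<\infty$. This is not an explicit hypothesis, but it is implicit in $\lambda_{g,V,y}\in L^1$. Indeed, $\int|x|f_V(y-x)\,dx=\int|y-v|f_V(v)\,dv<\infty$, and since $|v|\le|y|+|y-v|$, this gives $\int|v|f_V(v)\,dv<\infty$. With this moment bound, the difference quotients $\bigl(e^{-i(\omega+h)v}-e^{-i\omega v}\bigr)/h$ are dominated by $|v|$ via the mean value bound $|e^{-ihv}-1|\le|hv|$. Dominated convergence then gives differentiability of $\omega\mapsto\varphi_V(-\omega)$ on all of $\mathbb R$ with the displayed derivative, and also legitimizes the split in the substitution step.

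The other technical details are routine. Measurability of $\mathcal Q_{g,V,y}$ is immediate. The product $\mathcal Q_{g,V,y}(\omega)\varphi_V(-\omega)$ equals the bounded function $\widetilde\lambda_{g,V,y}$ a.e., so it is tempered by Lemma~\ref{L:weighted-integrability-temperedness}.
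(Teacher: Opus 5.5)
Your proposal is correct and follows essentially the same route as the paper. Both arguments use the change of variables $u=y-x$ and extract $\int|u|f_V(u)\,du<\infty$ from $\lambda_{g,V,y}\in L^1$ via the triangle inequality. Both then use dominated convergence to identify $\int u\,e^{-i\omega u}f_V(u)\,du=i\frac{d}{d\omega}\varphi_V(-\omega)$ and divide by $\varphi_V(-\omega)$ off the null set $Z_V$. Your explicit appeal to the Remark's duality computation, together with the temperedness check, only spells out the step from a.e.\ equality $\mathcal Q_{g,V,y}\varphi_V(-\cdot)=\widetilde\lambda_{g,V,y}$ to condition (iii), which the paper leaves implicit.
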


\begin{proof}
By definition,
\[
\widetilde\lambda_{g,V,y}(\omega)
=
\int_{\mathbb R}
e^{i\omega x}x f_V(y-x)\,dx .
\]
Make the change of variables \(u=y-x\). Then \(x=y-u\), and
\[
\widetilde\lambda_{g,V,y}(\omega)
=
\int_{\mathbb R}
e^{i\omega(y-u)}(y-u)f_V(u)\,du .
\]
Therefore
\[
\widetilde\lambda_{g,V,y}(\omega)
=
e^{i\omega y}
\left[
y
\int_{\mathbb R}e^{-i\omega u}f_V(u)\,du
-
\int_{\mathbb R}u e^{-i\omega u}f_V(u)\,du
\right].
\]
The first integral is \(\varphi_V(-\omega)\).

It remains to identify the second integral. Since
\(\lambda_{g,V,y}\in L^1(\mathbb R)\), the same change of variables gives
\[
\int_{\mathbb R}|y-u|f_V(u)\,du<\infty .
\]
Because \(f_V\) is a probability density,
\[
\int_{\mathbb R}|u|f_V(u)\,du
\le
\int_{\mathbb R}|y-u|f_V(u)\,du
+
|y|\int_{\mathbb R}f_V(u)\,du
<\infty .
\]
Thus \(u\mapsto u f_V(u)\) is integrable. By dominated convergence,
\[
\frac{d}{d\omega}\varphi_V(-\omega)
=
\frac{d}{d\omega}
\int_{\mathbb R}e^{-i\omega u}f_V(u)\,du
=
\int_{\mathbb R}(-iu)e^{-i\omega u}f_V(u)\,du .
\]
Hence
\[
\int_{\mathbb R}u e^{-i\omega u}f_V(u)\,du
=
i\frac{d}{d\omega}\varphi_V(-\omega).
\]
Substituting this identity into the expression for
\(\widetilde\lambda_{g,V,y}\) gives
\[
\widetilde\lambda_{g,V,y}(\omega)
=
e^{i\omega y}
\left[
y\varphi_V(-\omega)
-
i\frac{d}{d\omega}\varphi_V(-\omega)
\right].
\]

On \(Z_V^c\), division by \(\varphi_V(-\omega)\) gives
\[
\frac{\widetilde\lambda_{g,V,y}(\omega)}
{\varphi_V(-\omega)}
=
e^{i\omega y}
\left[
y
-
i
\frac{\frac{d}{d\omega}\varphi_V(-\omega)}
{\varphi_V(-\omega)}
\right].
\]
On \(Z_V\), we set \(\mathcal Q_{g,V,y}=0\). Since \(Z_V\) is Lebesgue null,
\[
\mathcal Q_{g,V,y}(\omega)\varphi_V(-\omega)
=
\widetilde\lambda_{g,V,y}(\omega)
\qquad
\text{for Lebesgue-a.e. }\omega .
\]
Thus \(\mathcal Q_{g,V,y}\) is a Fourier-domain representer.
\end{proof}

\paragraph{Hilbert-transform representers}
\label{A:hilbert-transform-representers}

This subsection covers Fourier-domain representers with a singular
Hilbert-transform component. We use the convention
\[
\mathcal H[h](y)
\coloneq
\frac{1}{\pi}
\operatorname{p.v.}
\int_{\mathbb R}
\frac{h(z)}{y-z}\,dz
=
\frac{1}{\pi}
\lim_{\varepsilon\downarrow0}
\int_{|t|>\varepsilon}
\frac{h(y-t)}{t}\,dt .
\]
For functions in \(\Xi^1(\mathbb R;\mathbb C)\), the principal value below is
well-defined by the estimates in the proof.

\begin{proposition}[Tweedie representation via the Hilbert transform]
\label{P:pv-kernel-functional}
Assume \(d=1\), Assumption~\ref{A: Identification} holds, and
Assumption~\ref{A:SmoothBoundedNoise} holds with smoothness order \(1\). Fix
\(y\in\mathbb R\), and let \(g:\mathbb R\to\mathbb R\) be measurable. Assume
\[
\lambda_{g,V,y}\in C_0(\mathbb R;\mathbb C),
\]
and let \(\mathcal Q_{g,V,y}\) be a Fourier-domain representer for
\(\lambda_{g,V,y}\).

Suppose there exist \(c_{g,V,y}\in\mathbb C\) and a measurable function
\(k_{g,V,y}:(0,\infty)\to\mathbb C\) such that
\[
\int_0^1 t\,|k_{g,V,y}(t)|\,dt<\infty,
\qquad
\int_1^\infty |k_{g,V,y}(t)|\,dt<\infty,
\]
and
\[
\mathcal Q_{g,V,y}(\omega)
=
e^{i\omega y}
\left[
-i\pi c_{g,V,y}\operatorname{sgn}(\omega)
+
\int_0^\infty
\bigl(e^{-i\omega t}-e^{i\omega t}\bigr)
k_{g,V,y}(t)\,dt
\right]
\]
for Lebesgue-a.e. \(\omega\in\mathbb R\). Then, for every
\(f\in\mathcal A_V(\mathbb R)\),
\[
\mathcal T_{g,V,y}[f]
=
\pi c_{g,V,y}\mathcal H[f](y)
+
\int_0^\infty
\bigl(f(y-t)-f(y+t)\bigr)k_{g,V,y}(t)\,dt .
\]

If, in addition, Assumptions~\ref{A: independence} and~\ref{A: Density} hold,
then, whenever \(f_Y(y)>0\),
\[
\mathbb E[g(X)\mid Y=y]
=
\frac{1}{f_Y(y)}
\left\{
\pi c_{g,V,y}\mathcal H[f_Y](y)
+
\int_0^\infty
\bigl(f_Y(y-t)-f_Y(y+t)\bigr)k_{g,V,y}(t)\,dt
\right\}.
\]
\end{proposition}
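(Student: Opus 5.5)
The plan is to apply Theorem~\ref{T:master-fourier-identification} with \(k=1\), in the same pattern as the proof of Proposition~\ref{P:measure-factorization-functional}. Condition~(iii) holds by hypothesis, so three things remain: verify condition~(i) for \(Q\coloneq\mathcal Q_{g,V,y}\); compute \(\mathcal F^{-1}\{\mathcal I_Q\}\) on \(\mathscr S(\mathbb R)\) and identify it with the functional
\[
L[\psi]\coloneq \pi c_{g,V,y}\mathcal H[\psi](y)+\int_0^\infty\bigl(\psi(y-t)-\psi(y+t)\bigr)k_{g,V,y}(t)\,dt;
\]
and show that \(L\) is bounded on \(\Xi^1\). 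Once \(L\) is known to extend continuously to \(\Xi^1\), the theorem gives \(\mathcal T_{g,V,y}[f]=\lim_n L[f_n]=L[f]\) for \(f\in\mathcal A_V(\mathbb R)\). The posterior statement then follows from Proposition~\ref{P: convolution} and the ratio formula \eqref{Eq: ratio}.

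\textbf{Temperedness.} The first term of \(Q\) is bounded. For the second term, write \(e^{-i\omega t}-e^{i\omega t}=-2i\sin(\omega t)\). On \(t\le1\) use \(|\sin(\omega t)|\le|\omega|t\), and on \(t>1\) use \(|\sin(\omega t)|\le1\). This gives
\[
|Q(\omega)|\le \pi|c_{g,V,y}|+2|\omega|\int_0^1 t|k_{g,V,y}|\,dt+2\int_1^\infty|k_{g,V,y}|\,dt\lesssim 1+|\omega|,
\]
so condition~(i) holds with \(N=3\).

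\textbf{Inverse transform on Schwartz functions.} Since \(\mathcal F^{-1}\{\mathcal I_Q\}[\psi]=\int Q\,\psi^\sharp\), I treat the two pieces separately.

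For the kernel piece, Fubini is justified by the \(t\)-integrability bounds above together with \(|\omega|\,|\psi^\sharp(\omega)|\in L^1\). Fourier inversion, \(\int e^{i\omega z}\psi^\sharp(\omega)\,d\omega=\psi(z)\), then turns it into \(\int_0^\infty(\psi(y-t)-\psi(y+t))k_{g,V,y}(t)\,dt\).

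For the sign piece, I invoke the standard identity cited in the main text \citep[Eq.~5.1.12]{grafakos2008classical}: \(\mathcal H\) is the Fourier multiplier \(-i\operatorname{sgn}\). After matching it to the paper's sign convention, it gives
\[
\int e^{i\omega y}(-i\operatorname{sgn}\omega)\,\psi^\sharp(\omega)\,d\omega=\mathcal H[\psi](y).
\]
To be safe about conventions, I would verify this directly. Truncate the principal value and write \(\frac1\pi\int_{\varepsilon<|t|<R}\psi(y-t)\,\frac{dt}{t}\) via Fubini as \(\int e^{i\omega y}\psi^\sharp(\omega)\,m_{\varepsilon,R}(\omega)\,d\omega\), where
\[
m_{\varepsilon,R}(\omega)=\frac{-2i}{\pi}\int_\varepsilon^R\frac{\sin(\omega t)}{t}\,dt .
\]
The multipliers \(m_{\varepsilon,R}\) are uniformly bounded and converge to \(-i\operatorname{sgn}\omega\); dominated convergence then finishes the identification. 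Multiplying by \(\pi c_{g,V,y}\) yields the first term of \(L\).

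\textbf{\(\Xi^1\)-continuity.} This is the main obstacle, because the principal value must be controlled by sup norms and the \(L^1\) norm only. Write
\[
\pi\mathcal H[\psi](y)=\int_0^1\frac{\psi(y-t)-\psi(y+t)}{t}\,dt+\int_{|t|>1}\frac{\psi(y-t)}{t}\,dt .
\]
The first integral is bounded by \(2\|\psi'\|_\infty\) by the mean value theorem. The second is bounded by \(\|\psi\|_1\). Hence \(|\pi\mathcal H[\psi](y)|\le 2\|\psi\|_{\Xi^1}\), and the same estimate shows the principal value exists for all \(\psi\in\Xi^1\).

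For the kernel term, the same split gives the bound
\[
2\|\psi'\|_\infty\int_0^1 t|k_{g,V,y}|\,dt+2\|\psi\|_\infty\int_1^\infty|k_{g,V,y}|\,dt .
\]
Thus \(L\) is a bounded functional on \(\Xi^1\). It agrees with \(\mathcal F^{-1}\{\mathcal I_Q\}\) on \(\mathscr S\), so condition~(ii) holds. Since \(L\) is continuous on \(\Xi^1\), the limit provided by the theorem equals \(L[f]\) for every \(f\in\mathcal A_V(\mathbb R)\), which completes the plan.
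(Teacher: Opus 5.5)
Your proposal is correct and follows the paper's proof essentially step for step. You bound $|Q(\omega)|\lesssim 1+|\omega|$ for temperedness, handle the kernel term by Fubini and Fourier inversion and the sign term by the Hilbert-transform multiplier identity, and split at $t=1$ for $\Xi^1$-continuity before invoking Theorem~\ref{T:master-fourier-identification}. Your optional direct verification of the multiplier via the truncated $m_{\varepsilon,R}$ is a self-contained substitute for the citation, and it gets the sign convention right.
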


\begin{proof}
Write
\[
c\coloneq c_{g,V,y},
\qquad
\kappa\coloneq k_{g,V,y},
\qquad
Q\coloneq \mathcal Q_{g,V,y}.
\]

\smallskip
\noindent\emph{Step 1: temperedness.}
By assumption, \(Q\) agrees Lebesgue-a.e. with
\[
Q_*(\omega)
\coloneq
e^{i\omega y}
\left[
-i\pi c\,\operatorname{sgn}(\omega)
+
q_\kappa(\omega)
\right],
\]
where
\[
q_\kappa(\omega)
\coloneq
\int_0^\infty
\bigl(e^{-i\omega t}-e^{i\omega t}\bigr)\kappa(t)\,dt .
\]
The integrability assumptions on \(\kappa\) imply that \(q_\kappa\) is
well-defined. Indeed,
\[
|e^{-i\omega t}-e^{i\omega t}|
=
2|\sin(\omega t)|
\le
2\min\{|\omega|t,1\}.
\]
Therefore
\[
|q_\kappa(\omega)|
\le
2|\omega|\int_0^1 t|\kappa(t)|\,dt
+
2\int_1^\infty |\kappa(t)|\,dt .
\]
Hence there is a finite constant \(C>0\) such that
\[
|Q_*(\omega)|
\le
C(1+|\omega|),
\qquad
\omega\in\mathbb R .
\]
Since \(Q=Q_*\) Lebesgue-a.e., the function \(Q\) may be replaced by
\(Q_*\) for distributional purposes. For any integer \(N>2\),
\[
\int_{\mathbb R}
\frac{|Q(\omega)|}{(1+|\omega|)^N}\,d\omega
<\infty .
\]
Lemma~\ref{L:weighted-integrability-temperedness} gives
\[
\mathcal I_Q\in\mathscr S'(\mathbb R).
\]

\smallskip
\noindent\emph{Step 2: inverse Fourier transform on the Schwartz class.}
Let \(\psi\in\mathscr S(\mathbb R)\), and write
\[
\psi^\sharp(\omega)
=
(2\pi)^{-1}\widetilde\psi(-\omega).
\]
Since \(\psi^\sharp\in\mathscr S(\mathbb R)\),
\[
\int_{\mathbb R}|\psi^\sharp(\omega)|\,d\omega<\infty,
\qquad
\int_{\mathbb R}|\omega|\,|\psi^\sharp(\omega)|\,d\omega<\infty .
\]
Moreover,
\[
\begin{aligned}
&\int_0^\infty
\int_{\mathbb R}
\left|
e^{i\omega y}
\bigl(e^{-i\omega t}-e^{i\omega t}\bigr)
\kappa(t)\psi^\sharp(\omega)
\right|
\,d\omega\,dt  \\
&\qquad\le
2\int_0^1 t|\kappa(t)|\,dt
\int_{\mathbb R}|\omega|\,|\psi^\sharp(\omega)|\,d\omega
+
2\int_1^\infty |\kappa(t)|\,dt
\int_{\mathbb R}|\psi^\sharp(\omega)|\,d\omega
<\infty .
\end{aligned}
\]
Thus Fubini's theorem applies to the \(q_\kappa\)-term. Fourier inversion gives
\[
\int_{\mathbb R}
e^{i\omega(y-t)}\psi^\sharp(\omega)\,d\omega
=
\psi(y-t),
\qquad
\int_{\mathbb R}
e^{i\omega(y+t)}\psi^\sharp(\omega)\,d\omega
=
\psi(y+t).
\]
Hence
\[
\int_{\mathbb R}
e^{i\omega y}q_\kappa(\omega)\psi^\sharp(\omega)\,d\omega
=
\int_0^\infty
\bigl(\psi(y-t)-\psi(y+t)\bigr)\kappa(t)\,dt .
\]

It remains to handle the singular term. Under the Fourier convention used in
this paper, the identity in \cite{grafakos2008classical}, Chapter~5,
Equation~5.1.12, gives
\[
\mathcal F^{-1}
\left\{
\mathcal I_{-i\pi\operatorname{sgn}}
\right\}
=
-\operatorname{p.v.}\!\left(\frac{1}{\,\cdot\,}\right).
\]
Multiplication by \(e^{i\omega y}\) shifts the inverse transform. Therefore
\[
\begin{aligned}
\mathcal F^{-1}
\left\{
\mathcal I_{e^{i\omega y}(-i\pi c\operatorname{sgn}(\omega))}
\right\}[\psi]
&=
c\operatorname{p.v.}
\int_{\mathbb R}
\frac{\psi(z)}{y-z}\,dz  \\
&=
\pi c\,\mathcal H[\psi](y).
\end{aligned}
\]
Combining the regular and singular parts gives, for every
\(\psi\in\mathscr S(\mathbb R)\),
\[
\mathcal F^{-1}\{\mathcal I_Q\}[\psi]
=
\pi c\,\mathcal H[\psi](y)
+
\int_0^\infty
\bigl(\psi(y-t)-\psi(y+t)\bigr)\kappa(t)\,dt .
\]

\smallskip
\noindent\emph{Step 3: \(\Xi^1\)-continuity.}
For \(\psi\in\Xi^1(\mathbb R;\mathbb C)\), define
\[
L[\psi]
\coloneq
\pi c\,\mathcal H[\psi](y)
+
\int_0^\infty
\bigl(\psi(y-t)-\psi(y+t)\bigr)\kappa(t)\,dt .
\]
We show that \(L\) is well-defined and continuous on
\(\Xi^1(\mathbb R;\mathbb C)\).

First consider the regular integral. For \(0<t\le1\),
\[
\psi(y-t)-\psi(y+t)
=
-\int_{-t}^t \psi'(y+s)\,ds,
\]
so
\[
|\psi(y-t)-\psi(y+t)|
\le
2t\|\psi'\|_\infty .
\]
For \(t\ge1\),
\[
|\psi(y-t)-\psi(y+t)|
\le
2\|\psi\|_\infty .
\]
Therefore
\[
\int_0^\infty
|\psi(y-t)-\psi(y+t)|\,|\kappa(t)|\,dt
\le
2\|\psi'\|_\infty\int_0^1 t|\kappa(t)|\,dt
+
2\|\psi\|_\infty\int_1^\infty|\kappa(t)|\,dt .
\]

Now consider the Hilbert-transform term. For \(\varepsilon>0\),
\[
\int_{|t|>\varepsilon}
\frac{\psi(y-t)}{t}\,dt
=
\int_\varepsilon^\infty
\frac{\psi(y-t)-\psi(y+t)}{t}\,dt .
\]
On \(0<t\le1\),
\[
\frac{|\psi(y-t)-\psi(y+t)|}{t}
\le
2\|\psi'\|_\infty .
\]
On \(t\ge1\),
\[
\begin{aligned}
\int_1^\infty \frac{|\psi(y-t)|}{t}\,dt
&=
\int_{-\infty}^{y-1}
\frac{|\psi(u)|}{y-u}\,du
\le
\|\psi\|_1, \\
\int_1^\infty \frac{|\psi(y+t)|}{t}\,dt
&=
\int_{y+1}^{\infty}
\frac{|\psi(u)|}{u-y}\,du
\le
\|\psi\|_1 .
\end{aligned}
\]
Thus the principal value defining \(\mathcal H[\psi](y)\) exists, and
\[
|\mathcal H[\psi](y)|
\le
\frac{1}{\pi}
\left(
2\|\psi'\|_\infty+2\|\psi\|_1
\right).
\]
Combining the bounds gives a finite constant \(C_{g,V,y}\) such that
\[
|L[\psi]|
\le
C_{g,V,y}\|\psi\|_{\Xi^1},
\qquad
\psi\in\Xi^1(\mathbb R;\mathbb C).
\]
Hence
\[
L\in\bigl(\Xi^1(\mathbb R;\mathbb C)\bigr)'.
\]
Step 2 shows that
\[
L[\psi]
=
\mathcal F^{-1}\{\mathcal I_Q\}[\psi],
\qquad
\psi\in\mathscr S(\mathbb R).
\]

\smallskip
\noindent\emph{Step 4: passage to mixtures.}
The assumptions of the proposition give the standing hypotheses of
Theorem~\ref{T:master-fourier-identification}. Moreover, Step 1 verifies
condition~\textup{(i)}, Step 3 verifies condition~\textup{(ii)} with smoothness
order \(1\), and the assumption that \(Q\) is a Fourier-domain representer for
\(\lambda_{g,V,y}\) gives condition~\textup{(iii)}. Therefore
Theorem~\ref{T:master-fourier-identification} applies.

Let \(f\in\mathcal A_V(\mathbb R)\), and choose
\((f_n)_{n\ge1}\subset\mathscr S(\mathbb R)\) such that
\[
\|f_n-f\|_{\Xi^1}\to0 .
\]
Then
\[
\mathcal T_{g,V,y}[f]
=
\lim_{n\to\infty}
\mathcal F^{-1}\{\mathcal I_Q\}[f_n].
\]
Using Step 2 and the continuity of \(L\),
\[
\lim_{n\to\infty}
\mathcal F^{-1}\{\mathcal I_Q\}[f_n]
=
\lim_{n\to\infty}L[f_n]
=
L[f].
\]
Thus
\[
\mathcal T_{g,V,y}[f]
=
\pi c\,\mathcal H[f](y)
+
\int_0^\infty
\bigl(f(y-t)-f(y+t)\bigr)\kappa(t)\,dt .
\]
Restoring \(c=c_{g,V,y}\) and \(\kappa=k_{g,V,y}\) proves the functional
identity.

Finally, assume Assumptions~\ref{A: independence} and~\ref{A: Density}. By
Proposition~\ref{P: convolution},
\[
f_Y
=
f_V*P_X
\in
\mathcal A_V(\mathbb R).
\]
Taking \(f=f_Y\) in the functional identity and applying the ratio formula
\eqref{Eq: ratio} gives, whenever \(f_Y(y)>0\),
\[
\mathbb E[g(X)\mid Y=y]
=
\frac{1}{f_Y(y)}
\left\{
\pi c_{g,V,y}\mathcal H[f_Y](y)
+
\int_0^\infty
\bigl(f_Y(y-t)-f_Y(y+t)\bigr)k_{g,V,y}(t)\,dt
\right\}.
\]
\end{proof}

\paragraph{One-sided compensated kernels}
\label{A:one-sided-compensated-kernels}

The next family covers Fourier-domain representers whose inverse transform is a
one-sided compensated kernel.

\begin{proposition}[Tweedie representation via one-sided compensated kernels]
\label{P:onesided-kernel-functional}
Assume \(d=1\), Assumption~\ref{A: Identification} holds, and
Assumption~\ref{A:SmoothBoundedNoise} holds with smoothness order \(1\). Fix
\(y\in\mathbb R\), and let \(g:\mathbb R\to\mathbb R\) be measurable. Assume
\[
\lambda_{g,V,y}\in C_0(\mathbb R;\mathbb C),
\]
and let \(\mathcal Q_{g,V,y}\) be a Fourier-domain representer for
\(\lambda_{g,V,y}\).

Suppose there exist \(c_{g,V,y}\in\mathbb C\) and a measurable function
\(k_{g,V,y}:(0,\infty)\to\mathbb C\) such that
\[
\int_0^1 t\,|k_{g,V,y}(t)|\,dt<\infty,
\qquad
\int_1^\infty |k_{g,V,y}(t)|\,dt<\infty,
\]
and
\[
\mathcal Q_{g,V,y}(\omega)
=
e^{i\omega y}
\left[
c_{g,V,y}
+
\int_0^\infty
\bigl(1-e^{-i\omega t}\bigr)k_{g,V,y}(t)\,dt
\right]
\]
for Lebesgue-a.e. \(\omega\in\mathbb R\). Then, for every
\(f\in\mathcal A_V(\mathbb R)\),
\[
\mathcal T_{g,V,y}[f]
=
c_{g,V,y}f(y)
+
\int_0^\infty
\bigl(f(y)-f(y-t)\bigr)k_{g,V,y}(t)\,dt .
\]

If, in addition, Assumptions~\ref{A: independence} and~\ref{A: Density} hold,
then, whenever \(f_Y(y)>0\),
\[
\mathbb E[g(X)\mid Y=y]
=
\frac{1}{f_Y(y)}
\left\{
c_{g,V,y}f_Y(y)
+
\int_0^\infty
\bigl(f_Y(y)-f_Y(y-t)\bigr)k_{g,V,y}(t)\,dt
\right\}.
\]
\end{proposition}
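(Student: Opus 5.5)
The plan mirrors the proof of Proposition~\ref{P:pv-kernel-functional}: verify conditions (i)--(iii) of Theorem~\ref{T:master-fourier-identification} for $Q\coloneq\mathcal Q_{g,V,y}$, identify $\mathcal F^{-1}\{\mathcal I_Q\}$ on $\mathscr S(\mathbb R)$ explicitly, and pass to mixtures. Condition (iii) is the hypothesis that $Q$ is a Fourier-domain representer. Write $c\coloneq c_{g,V,y}$, $\kappa\coloneq k_{g,V,y}$, and $q_\kappa(\omega)\coloneq\int_0^\infty(1-e^{-i\omega t})\kappa(t)\,dt$.

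\emph{Step 1: temperedness.} Use the bound $|1-e^{-i\omega t}|=2|\sin(\omega t/2)|\le\min\{|\omega|t,2\}$. It gives
\[
|q_\kappa(\omega)|\le |\omega|\int_0^1 t|\kappa(t)|\,dt+2\int_1^\infty|\kappa(t)|\,dt .
\]
Hence $q_\kappa$ is well defined and $|Q(\omega)|\le C(1+|\omega|)$ a.e. Any $N>2$ then satisfies condition (i), and Lemma~\ref{L:weighted-integrability-temperedness} applies.

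\emph{Step 2: inverse transform on $\mathscr S(\mathbb R)$.} Fix $\psi\in\mathscr S(\mathbb R)$ and use $\psi(z)=\int e^{i\omega z}\psi^\sharp(\omega)\,d\omega$.
\begin{itemize}
\item The constant part gives $\int e^{i\omega y}c\,\psi^\sharp(\omega)\,d\omega=c\psi(y)$.
\item For the kernel part, check that the double integral of $|e^{i\omega y}(1-e^{-i\omega t})\kappa(t)\psi^\sharp(\omega)|$ is finite. Split at $t=1$ using the bound above together with $\int(1+|\omega|)|\psi^\sharp|<\infty$.
\item Fubini then applies, and Fourier inversion at $y$ and $y-t$ yields $\int_0^\infty(\psi(y)-\psi(y-t))\kappa(t)\,dt$.
\end{itemize}
So $\mathcal F^{-1}\{\mathcal I_Q\}[\psi]=L[\psi]$, where $L[\psi]\coloneq c\psi(y)+\int_0^\infty(\psi(y)-\psi(y-t))\kappa(t)\,dt$.

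\emph{Step 3: $\Xi^1$-continuity.} This is the only step needing care, since $\kappa$ may be non-integrable near $0$. Show $L$ is defined and bounded on all of $\Xi^1(\mathbb R;\mathbb C)$.
\begin{itemize}
\item For $0<t\le1$, the mean value theorem gives $|\psi(y)-\psi(y-t)|\le t\|\psi'\|_\infty$.
\item For $t>1$, the difference is at most $2\|\psi\|_\infty$.
\end{itemize}
Therefore
\[
|L[\psi]|\le\Bigl(|c|+2\int_1^\infty|\kappa|+\int_0^1 t|\kappa|\Bigr)\|\psi\|_{\Xi^1}.
\]
Condition (ii) then holds with $k=1$.

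\emph{Step 4: passage to mixtures.} Let $f\in\mathcal A_V(\mathbb R)$. By Lemma~\ref{L: SmoothModels}(c), choose Schwartz $f_n\to f$ in $\Xi^1$. Theorem~\ref{T:master-fourier-identification} gives $\mathcal T_{g,V,y}[f]=\lim_n L[f_n]$, and this equals $L[f]$ by continuity of $L$ on $\Xi^1$; this is the functional identity. Finally, take $f=f_Y\in\mathcal A_V(\mathbb R)$ via Proposition~\ref{P: convolution} and divide by $f_Y(y)>0$ using \eqref{Eq: ratio} to get the posterior formula.
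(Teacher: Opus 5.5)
Your proposal is correct and follows the paper's proof essentially step for step. It uses the same linear-growth bound from $|1-e^{-i\omega t}|\le\min\{2,|\omega|t\}$ for condition~(i), Fubini plus Fourier inversion at $y$ and $y-t$ on $\mathscr S(\mathbb R)$, the split at $t=1$ for $\Xi^1$-continuity, and the passage to mixtures via Theorem~\ref{T:master-fourier-identification}. One small point: for complex-valued $\psi$, the bound $|\psi(y)-\psi(y-t)|\le t\|\psi'\|_\infty$ is the mean value inequality, e.g.\ via $\psi(y)-\psi(y-t)=\int_{y-t}^{y}\psi'(s)\,ds$, not the mean value theorem itself.
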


\begin{proof}
Write
\[
c\coloneq c_{g,V,y},
\qquad
\kappa\coloneq k_{g,V,y},
\qquad
Q\coloneq \mathcal Q_{g,V,y}.
\]

\smallskip
\noindent\emph{Step 1: temperedness.}
By assumption, \(Q\) agrees Lebesgue-a.e. with
\[
Q_*(\omega)
\coloneq
e^{i\omega y}
\bigl(c+q_\kappa(\omega)\bigr),
\]
where
\[
q_\kappa(\omega)
\coloneq
\int_0^\infty
\bigl(1-e^{-i\omega t}\bigr)\kappa(t)\,dt .
\]
The integrability assumptions on \(\kappa\) imply that \(q_\kappa\) is
well-defined. Indeed,
\[
|1-e^{-i\omega t}|
\le
\min\{2,|\omega|t\}.
\]
Therefore
\[
|q_\kappa(\omega)|
\le
|\omega|\int_0^1 t|\kappa(t)|\,dt
+
2\int_1^\infty |\kappa(t)|\,dt .
\]
Hence there is a finite constant \(C>0\) such that
\[
|Q_*(\omega)|
\le
C(1+|\omega|),
\qquad
\omega\in\mathbb R .
\]
Since \(Q=Q_*\) Lebesgue-a.e., the two functions induce the same distribution
on \(\mathscr S(\mathbb R)\). For any integer \(N>2\),
\[
\int_{\mathbb R}
\frac{|Q(\omega)|}{(1+|\omega|)^N}\,d\omega
<\infty,
\]
after replacing \(Q\) by \(Q_*\) on a null set. Lemma~\ref{L:weighted-integrability-temperedness} gives
\[
\mathcal I_Q\in\mathscr S'(\mathbb R).
\]

\smallskip
\noindent\emph{Step 2: inverse Fourier transform on the Schwartz class.}
Let \(\psi\in\mathscr S(\mathbb R)\), and write
\[
\psi^\sharp(\omega)
=
(2\pi)^{-1}\widetilde\psi(-\omega).
\]
Since \(\psi^\sharp\in\mathscr S(\mathbb R)\),
\[
\int_{\mathbb R}
(1+|\omega|)|\psi^\sharp(\omega)|\,d\omega
<\infty .
\]
Moreover,
\[
\begin{aligned}
&\int_0^\infty
\int_{\mathbb R}
\left|
e^{i\omega y}
\bigl(1-e^{-i\omega t}\bigr)
\kappa(t)\psi^\sharp(\omega)
\right|
\,d\omega\,dt  \\
&\qquad\le
\int_0^1 t|\kappa(t)|\,dt
\int_{\mathbb R}|\omega|\,|\psi^\sharp(\omega)|\,d\omega
+
2\int_1^\infty |\kappa(t)|\,dt
\int_{\mathbb R}|\psi^\sharp(\omega)|\,d\omega
<\infty .
\end{aligned}
\]
Thus Fubini's theorem applies. Using \(Q=Q_*\) a.e.,
\[
\begin{aligned}
\mathcal F^{-1}\{\mathcal I_Q\}[\psi]
&=
\mathcal I_Q[\psi^\sharp] \\
&=
\int_{\mathbb R}Q(\omega)\psi^\sharp(\omega)\,d\omega  \\
&=
c
\int_{\mathbb R}e^{i\omega y}\psi^\sharp(\omega)\,d\omega  \\
&\qquad
+
\int_0^\infty
\left[
\int_{\mathbb R}
e^{i\omega y}
\bigl(1-e^{-i\omega t}\bigr)
\psi^\sharp(\omega)\,d\omega
\right]\kappa(t)\,dt .
\end{aligned}
\]
Fourier inversion on \(\mathscr S(\mathbb R)\) gives
\[
\int_{\mathbb R}e^{i\omega y}\psi^\sharp(\omega)\,d\omega
=
\psi(y),
\qquad
\int_{\mathbb R}e^{i\omega(y-t)}\psi^\sharp(\omega)\,d\omega
=
\psi(y-t).
\]
Therefore
\[
\mathcal F^{-1}\{\mathcal I_Q\}[\psi]
=
c\psi(y)
+
\int_0^\infty
\bigl(\psi(y)-\psi(y-t)\bigr)\kappa(t)\,dt .
\]

\smallskip
\noindent\emph{Step 3: \(\Xi^1\)-continuity.}
For \(\psi\in\Xi^1(\mathbb R;\mathbb C)\), define
\[
L[\psi]
\coloneq
c\psi(y)
+
\int_0^\infty
\bigl(\psi(y)-\psi(y-t)\bigr)\kappa(t)\,dt .
\]
This is well-defined. For \(0<t\le1\),
\[
|\psi(y)-\psi(y-t)|
\le
t\|\psi'\|_\infty,
\]
while for \(t\ge1\),
\[
|\psi(y)-\psi(y-t)|
\le
2\|\psi\|_\infty .
\]
Hence
\[
\begin{aligned}
|L[\psi]|
&\le
|c|\,\|\psi\|_\infty
+
\|\psi'\|_\infty\int_0^1 t|\kappa(t)|\,dt
+
2\|\psi\|_\infty\int_1^\infty |\kappa(t)|\,dt  \\
&\le
C_{g,V,y}\|\psi\|_{\Xi^1}
\end{aligned}
\]
for some finite constant \(C_{g,V,y}\). Thus
\[
L\in \bigl(\Xi^1(\mathbb R;\mathbb C)\bigr)'.
\]
Step 2 shows that
\[
L[\psi]
=
\mathcal F^{-1}\{\mathcal I_Q\}[\psi],
\qquad
\psi\in\mathscr S(\mathbb R).
\]
Therefore condition~\textup{(ii)} of
Theorem~\ref{T:master-fourier-identification} holds with smoothness order \(1\).

\smallskip
\noindent\emph{Step 4: passage to mixtures.}
The assumptions of the proposition give the standing hypotheses of
Theorem~\ref{T:master-fourier-identification}. Moreover, Step 1 verifies
condition~\textup{(i)}, Step 3 verifies condition~\textup{(ii)} with smoothness
order \(1\), and the assumption that \(Q\) is a Fourier-domain representer for
\(\lambda_{g,V,y}\) gives condition~\textup{(iii)}. Therefore
Theorem~\ref{T:master-fourier-identification} applies.

Let \(f\in\mathcal A_V(\mathbb R)\), and choose
\((f_n)_{n\ge1}\subset\mathscr S(\mathbb R)\) such that
\[
\|f_n-f\|_{\Xi^1}\to0 .
\]
Then
\[
\mathcal T_{g,V,y}[f]
=
\lim_{n\to\infty}
\mathcal F^{-1}\{\mathcal I_Q\}[f_n].
\]
Using Step 2 and the continuity of \(L\),
\[
\lim_{n\to\infty}
\mathcal F^{-1}\{\mathcal I_Q\}[f_n]
=
\lim_{n\to\infty}L[f_n]
=
L[f].
\]
Therefore
\[
\mathcal T_{g,V,y}[f]
=
c f(y)
+
\int_0^\infty
\bigl(f(y)-f(y-t)\bigr)\kappa(t)\,dt .
\]
Restoring \(c=c_{g,V,y}\) and \(\kappa=k_{g,V,y}\) proves the functional
identity.

Finally, assume Assumptions~\ref{A: independence} and~\ref{A: Density}. By
Proposition~\ref{P: convolution},
\[
f_Y
=
f_V*P_X
\in
\mathcal A_V(\mathbb R).
\]
Taking \(f=f_Y\) in the functional identity and applying the ratio formula
\eqref{Eq: ratio} gives, whenever \(f_Y(y)>0\),
\[
\mathbb E[g(X)\mid Y=y]
=
\frac{1}{f_Y(y)}
\left\{
c_{g,V,y}f_Y(y)
+
\int_0^\infty
\bigl(f_Y(y)-f_Y(y-t)\bigr)k_{g,V,y}(t)\,dt
\right\}.
\]
\end{proof}

\subsubsection{Computation of Table~\ref{tab:noise-families} entries}
\label{A:TableEntries}

This subsection derives the formulas reported in
Table~\ref{tab:noise-families}. Throughout, we work in dimension \(d=1\) and
take
\[
g(x)=x,
\qquad
\lambda_{g,V,y}(x)=x f_V(y-x).
\]
All posterior-mean formulas below are stated for points \(y\) such that
\(f_Y(y)>0\).

The calculations use Lemma~\ref{L:posterior-mean-symbol-1d}. After computing
the Fourier-domain representer \(\mathcal Q_{g,V,y}\), we identify its
real-space action using one of the representation results from the preceding
sections.

\begin{example}[Normal noise]
\label{E:TableNormal}
Let
\[
V\sim \mathrm{Normal}(\mu,\sigma^2).
\]
Then
\[
\varphi_V(-\omega)
=
e^{-i\mu\omega-\sigma^2\omega^2/2}.
\]
Lemma~\ref{L:posterior-mean-symbol-1d} gives
\[
\mathcal Q_{g,V,y}(\omega)
=
e^{i\omega y}
\left[
y-\mu+i\sigma^2\omega
\right].
\]
Equivalently,
\[
\mathcal Q_{g,V,y}(\omega)
=
(y-\mu)\varphi_{\delta_y}(\omega)
+
(i\omega)\varphi_{\sigma^2\delta_y}(\omega).
\]
Thus Proposition~\ref{P:measure-factorization-functional} applies with
smoothness order \(k=1\),
\[
\mu_0=(y-\mu)\delta_y,
\qquad
\mu_1=\sigma^2\delta_y.
\]
Hence
\[
\mathcal T_{g,V,y}[f_Y]
=
(y-\mu)f_Y(y)+\sigma^2 f_Y'(y).
\]
Therefore
\[
\boxed{
\mathbb E[X\mid Y=y]
=
y-\mu+\sigma^2\frac{f_Y'(y)}{f_Y(y)}.
}
\]
\end{example}

\begin{example}[Generalized Laplace noise]
\label{E:TableGeneralizedLaplace}
Let
\[
V\sim \mathrm{Generalized\ Laplace}(\mu,b,\lambda),
\qquad
\lambda>\frac12.
\]
The condition \(\lambda>1/2\) ensures \(f_V\in C_0(\mathbb R)\). The
characteristic function satisfies
\[
\varphi_V(-\omega)
=
e^{-i\mu\omega}(1+b^2\omega^2)^{-\lambda}.
\]
Lemma~\ref{L:posterior-mean-symbol-1d} gives
\[
\mathcal Q_{g,V,y}(\omega)
=
e^{i\omega y}
\left[
y-\mu+
\frac{2i\lambda b^2\omega}{1+b^2\omega^2}
\right].
\]

Let \(\nu_y\) be the finite signed measure with density
\[
d\nu_y(z)
=
\operatorname{sgn}(z-y)e^{-|z-y|/b}\,dz.
\]
A direct computation gives
\[
\varphi_{\nu_y}(\omega)
=
\frac{2ib^2\omega}{1+b^2\omega^2}e^{i\omega y}.
\]
Therefore
\[
\mathcal Q_{g,V,y}(\omega)
=
(y-\mu)\varphi_{\delta_y}(\omega)
+
\lambda\varphi_{\nu_y}(\omega).
\]
Proposition~\ref{P:measure-factorization-functional} applies with
smoothness order \(k=0\) and
\[
\mu_0=(y-\mu)\delta_y+\lambda\nu_y.
\]
Thus
\[
\mathcal T_{g,V,y}[f_Y]
=
(y-\mu)f_Y(y)
+
\lambda
\int_{\mathbb R}
\operatorname{sgn}(z-y)e^{-|z-y|/b}f_Y(z)\,dz.
\]
Therefore
\[
\boxed{
\mathbb E[X\mid Y=y]
=
y-\mu+
\frac{\lambda}{f_Y(y)}
\int_{\mathbb R}
\operatorname{sgn}(z-y)e^{-|z-y|/b}f_Y(z)\,dz.
}
\]
\end{example}

\begin{example}[Laplace noise]
\label{E:TableLaplace}
Let
\[
V\sim \mathrm{Laplace}(\mu,b).
\]
This is the case \(\lambda=1\) of
Example~\ref{E:TableGeneralizedLaplace}. Hence
\[
\mathcal T_{g,V,y}[f_Y]
=
(y-\mu)f_Y(y)
+
\int_{\mathbb R}
\operatorname{sgn}(z-y)e^{-|z-y|/b}f_Y(z)\,dz,
\]
and therefore
\[
\boxed{
\mathbb E[X\mid Y=y]
=
y-\mu+
\frac{1}{f_Y(y)}
\int_{\mathbb R}
\operatorname{sgn}(z-y)e^{-|z-y|/b}f_Y(z)\,dz.
}
\]
\end{example}

\begin{example}[Asymmetric Laplace noise]
\label{E:TableAsymmetricLaplace}
Let
\[
V\sim \mathrm{Asymmetric\ Laplace}(\mu,b_-,b_+).
\]
Then
\[
\varphi_V(-\omega)
=
\frac{e^{-i\mu\omega}}
{(1-i b_-\omega)(1+i b_+\omega)}.
\]
Lemma~\ref{L:posterior-mean-symbol-1d} gives
\[
\mathcal Q_{g,V,y}(\omega)
=
e^{i\omega y}
\left[
y-\mu+
\frac{b_-}{1-i b_-\omega}
-
\frac{b_+}{1+i b_+\omega}
\right].
\]

Let \(\nu_y\) be the finite signed measure with density
\[
d\nu_y(z)
=
e^{-(z-y)/b_-}\mathbf 1_{\{z\ge y\}}\,dz
-
e^{(z-y)/b_+}\mathbf 1_{\{z<y\}}\,dz.
\]
Then
\[
\varphi_{\nu_y}(\omega)
=
e^{i\omega y}
\left[
\frac{b_-}{1-i b_-\omega}
-
\frac{b_+}{1+i b_+\omega}
\right].
\]
Hence
\[
\mathcal Q_{g,V,y}(\omega)
=
(y-\mu)\varphi_{\delta_y}(\omega)
+
\varphi_{\nu_y}(\omega).
\]
Proposition~\ref{P:measure-factorization-functional} applies with
smoothness order \(k=0\) and
\[
\mu_0=(y-\mu)\delta_y+\nu_y.
\]
Thus
\[
\mathcal T_{g,V,y}[f_Y]
=
(y-\mu)f_Y(y)
+
\int_y^\infty e^{-(z-y)/b_-}f_Y(z)\,dz
-
\int_{-\infty}^y e^{(z-y)/b_+}f_Y(z)\,dz.
\]
Therefore
\[
\boxed{
\mathbb E[X\mid Y=y]
=
y-\mu+
\frac{1}{f_Y(y)}
\left[
\int_y^\infty e^{-(z-y)/b_-}f_Y(z)\,dz
-
\int_{-\infty}^y e^{(z-y)/b_+}f_Y(z)\,dz
\right].
}
\]
\end{example}

\begin{example}[Logistic noise]
\label{E:TableLogistic}
Let
\[
V\sim \mathrm{Logistic}(\mu,s).
\]
Then
\[
\varphi_V(-\omega)
=
e^{-i\mu\omega}\frac{\pi s\omega}{\sinh(\pi s\omega)}.
\]
Lemma~\ref{L:posterior-mean-symbol-1d} gives
\[
\mathcal Q_{g,V,y}(\omega)
=
e^{i\omega y}
\left[
y-\mu
+
i\left\{
\pi s\,\coth(\pi s\omega)-\frac1\omega
\right\}
\right].
\]

Using \cite{gradshteyn2014table}, Formula~3.912.2,
\[
\pi s\,\coth(\pi s\omega)-\frac1\omega
=
2\int_0^\infty
\frac{\sin(\omega t)}{e^{t/s}-1}\,dt.
\]
Therefore
\[
i\left\{
\pi s\,\coth(\pi s\omega)-\frac1\omega
\right\}
=
-
\int_0^\infty
\bigl(e^{-i\omega t}-e^{i\omega t}\bigr)
\frac{dt}{e^{t/s}-1}.
\]
Hence
\[
\mathcal Q_{g,V,y}(\omega)
=
(y-\mu)\varphi_{\delta_y}(\omega)
-
e^{i\omega y}
\int_0^\infty
\bigl(e^{-i\omega t}-e^{i\omega t}\bigr)
\frac{dt}{e^{t/s}-1}.
\]

The first term is handled by
Proposition~\ref{P:measure-factorization-functional}. The second term is
handled by Proposition~\ref{P:pv-kernel-functional} with
\[
c_{g,V,y}=0,
\qquad
k_{g,V,y}(t)
=
-\frac{1}{e^{t/s}-1}.
\]
Thus
\[
\mathcal T_{g,V,y}[f_Y]
=
(y-\mu)f_Y(y)
-
\int_0^\infty
\frac{f_Y(y-t)-f_Y(y+t)}{e^{t/s}-1}\,dt.
\]
Equivalently,
\[
\boxed{
\mathbb E[X\mid Y=y]
=
y-\mu+
\frac{1}{f_Y(y)}
\int_0^\infty
\frac{f_Y(y+t)-f_Y(y-t)}{e^{t/s}-1}\,dt.
}
\]
\end{example}

\begin{example}[Gumbel noise]
\label{E:TableGumbel}
Let
\[
V\sim \mathrm{Gumbel}(\mu,\beta).
\]
Then
\[
\varphi_V(-\omega)
=
e^{-i\mu\omega}\Gamma(1+i\beta\omega).
\]
Lemma~\ref{L:posterior-mean-symbol-1d} gives
\[
\mathcal Q_{g,V,y}(\omega)
=
e^{i\omega y}
\left[
y-\mu+\beta\psi_{\mathrm{DG}}(1+i\beta\omega)
\right],
\]
where
\[
\psi_{\mathrm{DG}}=\frac{\Gamma'}{\Gamma}
\]
is the digamma function. Let \(\gamma_E\) denote the Euler--Mascheroni constant.
Using \cite{abramowitz1972handbook}, Formula~6.3.22,
\[
\psi_{\mathrm{DG}}(1+z)
=
-\gamma_E
+
\int_0^\infty
\frac{1-e^{-zt}}{e^t-1}\,dt,
\qquad
\Re(z)>-1.
\]
With \(z=i\beta\omega\), and after the change of variables \(u=\beta t\),
\[
\beta\psi_{\mathrm{DG}}(1+i\beta\omega)
=
-\beta\gamma_E
+
\int_0^\infty
\frac{1-e^{-i\omega u}}{e^{u/\beta}-1}\,du.
\]
Therefore
\[
\mathcal Q_{g,V,y}(\omega)
=
e^{i\omega y}
\left[
y-\mu-\beta\gamma_E
+
\int_0^\infty
\bigl(1-e^{-i\omega u}\bigr)
\frac{du}{e^{u/\beta}-1}
\right].
\]

Proposition~\ref{P:onesided-kernel-functional} applies with
\[
c_{g,V,y}=y-\mu-\beta\gamma_E,
\qquad
k_{g,V,y}(u)=\frac{1}{e^{u/\beta}-1}.
\]
It follows that
\[
\mathcal T_{g,V,y}[f_Y]
=
(y-\mu-\beta\gamma_E)f_Y(y)
+
\int_0^\infty
\frac{f_Y(y)-f_Y(y-u)}{e^{u/\beta}-1}\,du.
\]
Therefore
\[
\boxed{
\mathbb E[X\mid Y=y]
=
y-\mu-\beta\gamma_E
+
\frac{1}{f_Y(y)}
\int_0^\infty
\frac{f_Y(y)-f_Y(y-u)}{e^{u/\beta}-1}\,du.
}
\]
\end{example}

\begin{example}[Cauchy noise]
\label{E:TableCauchy}
Let
\[
V\sim \mathrm{Cauchy}(\mu,\gamma),
\qquad
\gamma>0.
\]
We again take \(g(x)=x\). In this case the universal symbol lemma does not
apply. Indeed,
\[
\lambda_{g,V,y}(x)
=
x f_V(y-x)
=
\frac{\gamma x}
{\pi\{(y-\mu-x)^2+\gamma^2\}}
\]
belongs to \(C_0(\mathbb R;\mathbb C)\), but
\[
x f_V(y-x)
\sim
\frac{\gamma}{\pi x}
\qquad
(|x|\to\infty).
\]
Thus
\[
\lambda_{g,V,y}\notin L^1(\mathbb R),
\qquad
\lambda_{g,V,y}\notin \Xi^0(\mathbb R;\mathbb C),
\]
so Lemma~\ref{L:posterior-mean-symbol-1d} cannot be invoked.

We verify the Fourier-domain representer identity directly. Set
\[
a\coloneq y-\mu,
\qquad
h_y(x)\coloneq f_V(y-x).
\]
Then \(h_y\in L^1(\mathbb R)\), and
\[
\widetilde h_y(\omega)
=
e^{i\omega y}\varphi_V(-\omega)
=
e^{i\omega(y-\mu)-\gamma|\omega|}
=
e^{ia\omega-\gamma|\omega|}.
\]
Although \(\lambda_{g,V,y}(x)=x h_y(x)\) is not integrable, it is bounded and
belongs to \(C_0(\mathbb R;\mathbb C)\). Hence it defines a tempered
distribution by integration against Schwartz functions.

Let \(\psi\in\mathscr S(\mathbb R)\). Using the distributional identity
\[
\mathcal F\{x h_y\}
=
-i\frac{d}{d\omega}\widetilde h_y,
\]
we obtain
\[
\mathcal F\{\mathcal I_{\lambda_{g,V,y}}\}[\psi]
=
i\int_{\mathbb R}
e^{ia\omega-\gamma|\omega|}
\psi'(\omega)\,d\omega .
\]
The function \(\omega\mapsto e^{ia\omega-\gamma|\omega|}\) is absolutely
continuous on \((-\infty,0)\) and \((0,\infty)\), and is continuous at \(0\).
Thus integration by parts gives no boundary term at the origin, and
\[
\begin{aligned}
i\int_{\mathbb R}
e^{ia\omega-\gamma|\omega|}
\psi'(\omega)\,d\omega
&=
-i\int_{\mathbb R}
\frac{d}{d\omega}
\left(e^{ia\omega-\gamma|\omega|}\right)
\psi(\omega)\,d\omega  \\
&=
\int_{\mathbb R}
e^{ia\omega-\gamma|\omega|}
\left(a+i\gamma\operatorname{sgn}(\omega)\right)
\psi(\omega)\,d\omega .
\end{aligned}
\]
Therefore
\[
\mathcal F\{\mathcal I_{\lambda_{g,V,y}}\}
=
\mathcal I_{\omega\mapsto
e^{ia\omega-\gamma|\omega|}
\left(a+i\gamma\operatorname{sgn}(\omega)\right)}
\qquad
\text{in } \mathscr S'(\mathbb R).
\]

Since
\[
\varphi_V(-\omega)
=
e^{-i\mu\omega-\gamma|\omega|},
\]
the function
\[
\mathcal Q_{g,V,y}(\omega)
\coloneq
e^{i\omega y}
\left[
y-\mu+i\gamma\operatorname{sgn}(\omega)
\right]
\]
satisfies
\[
\mathcal Q_{g,V,y}(\omega)\varphi_V(-\omega)
=
e^{ia\omega-\gamma|\omega|}
\left(a+i\gamma\operatorname{sgn}(\omega)\right)
\]
for Lebesgue-a.e. \(\omega\). Hence
\(\mathcal Q_{g,V,y}\) is a Fourier-domain representer for
\(\lambda_{g,V,y}\).

We now decompose the representer as
\[
\mathcal Q_{g,V,y}(\omega)
=
(y-\mu)\varphi_{\delta_y}(\omega)
-
i\pi\left(-\frac{\gamma}{\pi}\right)
e^{i\omega y}\operatorname{sgn}(\omega).
\]
The first term is a measure-factorization term with
\[
\mu_0=(y-\mu)\delta_y.
\]
The second term is the Hilbert-transform term in
Proposition~\ref{P:pv-kernel-functional}, with
\[
c_{g,V,y}=-\frac{\gamma}{\pi},
\qquad
k_{g,V,y}\equiv0.
\]
Combining Proposition~\ref{P:measure-factorization-functional} with
Proposition~\ref{P:pv-kernel-functional} gives
\[
\mathcal T_{g,V,y}[f_Y]
=
(y-\mu)f_Y(y)
-
\gamma\,\mathcal H[f_Y](y).
\]
Therefore, whenever \(f_Y(y)>0\),
\[
\boxed{
\mathbb E[X\mid Y=y]
=
y-\mu
-
\gamma\frac{\mathcal H[f_Y](y)}{f_Y(y)}.
}
\]
\end{example}

\begin{example}[Hyperbolic secant noise]
\label{E:TableHyperbolicSecant}
Let
\[
V\sim \mathrm{Hyperbolic\ Secant}(\mu,s),
\qquad
s>0.
\]
Then
\[
\varphi_V(-\omega)
=
e^{-i\mu\omega}\operatorname{sech}(s\omega).
\]
Lemma~\ref{L:posterior-mean-symbol-1d} gives
\[
\mathcal Q_{g,V,y}(\omega)
=
e^{i\omega y}
\left[
y-\mu+i s\,\tanh(s\omega)
\right].
\]

Using \cite{gradshteyn2014table}, Formula~3.981.1,
\[
\tanh(s\omega)
=
\frac{1}{s}
\int_0^\infty
\frac{\sin(\omega t)}
{\sinh\!\left(\frac{\pi t}{2s}\right)}\,dt.
\]
Hence
\[
is\,\tanh(s\omega)
=
-
\int_0^\infty
\frac{e^{-i\omega t}-e^{i\omega t}}
{2\sinh\!\left(\frac{\pi t}{2s}\right)}\,dt.
\]
Therefore
\[
\mathcal Q_{g,V,y}(\omega)
=
(y-\mu)\varphi_{\delta_y}(\omega)
-
e^{i\omega y}
\int_0^\infty
\frac{e^{-i\omega t}-e^{i\omega t}}
{2\sinh\!\left(\frac{\pi t}{2s}\right)}\,dt.
\]

The first term is handled by
Proposition~\ref{P:measure-factorization-functional}. The second term is
handled by Proposition~\ref{P:pv-kernel-functional} with
\[
c_{g,V,y}=0,
\qquad
k_{g,V,y}(t)
=
-\frac{1}{2\sinh\!\left(\frac{\pi t}{2s}\right)}.
\]
Thus
\[
\mathcal T_{g,V,y}[f_Y]
=
(y-\mu)f_Y(y)
-
\int_0^\infty
\frac{f_Y(y-t)-f_Y(y+t)}
{2\sinh\!\left(\frac{\pi t}{2s}\right)}\,dt.
\]
Therefore
\[
\boxed{
\mathbb E[X\mid Y=y]
=
y-\mu
+
\frac{1}{f_Y(y)}
\int_0^\infty
\frac{f_Y(y+t)-f_Y(y-t)}
{2\sinh\!\left(\frac{\pi t}{2s}\right)}\,dt .
}
\]
\end{example}

\begin{example}[Gamma noise]
\label{E:TableGamma}
Let
\[
V\sim \mathrm{Gamma}(\alpha,\theta),
\qquad
\alpha>1,\quad \theta>0,
\]
and extend its density by zero off \((0,\infty)\). The condition
\(\alpha>1\) ensures that the extended density belongs to
\(C_0(\mathbb R)\). Since
\[
\varphi_V(-\omega)
=
(1+i\theta\omega)^{-\alpha},
\]
Lemma~\ref{L:posterior-mean-symbol-1d} gives
\[
\mathcal Q_{g,V,y}(\omega)
=
e^{i\omega y}
\left[
y-\frac{\alpha\theta}{1+i\theta\omega}
\right].
\]

Let \(\nu_y\) be the finite measure with density
\[
d\nu_y(z)
=
\frac{1}{\theta}
e^{(z-y)/\theta}
\mathbf 1_{\{z<y\}}\,dz.
\]
A direct computation gives
\[
\varphi_{\nu_y}(\omega)
=
\frac{e^{i\omega y}}{1+i\theta\omega}.
\]
Therefore
\[
\mathcal Q_{g,V,y}(\omega)
=
y\,\varphi_{\delta_y}(\omega)
-
\alpha\theta\,\varphi_{\nu_y}(\omega).
\]
Proposition~\ref{P:measure-factorization-functional} applies with
smoothness order \(k=0\) and
\[
\mu_0
=
y\delta_y-\alpha\theta\nu_y.
\]
Hence
\[
\mathcal T_{g,V,y}[f_Y]
=
y f_Y(y)
-
\alpha
\int_{-\infty}^y
e^{(z-y)/\theta}f_Y(z)\,dz.
\]
Therefore
\[
\boxed{
\mathbb E[X\mid Y=y]
=
y
-
\frac{\alpha}{f_Y(y)}
\int_{-\infty}^y
e^{(z-y)/\theta}f_Y(z)\,dz .
}
\]
\end{example}

\begin{example}[Noncentral chi-square noise]
\label{E:TableNoncentralChiSquare}
Let
\[
V\sim \mathrm{Noncentral}\ \chi^2_\nu(\delta),
\qquad
\nu>2,\quad \delta\ge0,
\]
and extend its density by zero off \((0,\infty)\). The condition
\(\nu>2\) ensures that the extended density belongs to \(C_0(\mathbb R)\).
Since
\[
\varphi_V(-\omega)
=
(1+2i\omega)^{-\nu/2}
\exp\!\left(
-\frac{i\delta\omega}{1+2i\omega}
\right),
\]
we have
\[
\frac{\frac{d}{d\omega}\varphi_V(-\omega)}
{\varphi_V(-\omega)}
=
-\frac{i\nu}{1+2i\omega}
-
\frac{i\delta}{(1+2i\omega)^2}.
\]
Lemma~\ref{L:posterior-mean-symbol-1d} therefore gives
\[
\mathcal Q_{g,V,y}(\omega)
=
e^{i\omega y}
\left[
y
-
\frac{\nu}{1+2i\omega}
-
\frac{\delta}{(1+2i\omega)^2}
\right].
\]

Let \(\eta_{1,y}\) and \(\eta_{2,y}\) be the finite measures with densities
\[
d\eta_{1,y}(z)
=
\frac12 e^{(z-y)/2}\mathbf 1_{\{z<y\}}\,dz,
\]
and
\[
d\eta_{2,y}(z)
=
\frac{y-z}{4}e^{(z-y)/2}\mathbf 1_{\{z<y\}}\,dz.
\]
A direct computation gives
\[
\varphi_{\eta_{1,y}}(\omega)
=
\frac{e^{i\omega y}}{1+2i\omega},
\qquad
\varphi_{\eta_{2,y}}(\omega)
=
\frac{e^{i\omega y}}{(1+2i\omega)^2}.
\]
Hence
\[
\mathcal Q_{g,V,y}(\omega)
=
y\,\varphi_{\delta_y}(\omega)
-
\nu\,\varphi_{\eta_{1,y}}(\omega)
-
\delta\,\varphi_{\eta_{2,y}}(\omega).
\]
Proposition~\ref{P:measure-factorization-functional} applies with
smoothness order \(k=0\) and
\[
\mu_0
=
y\delta_y-\nu\eta_{1,y}-\delta\eta_{2,y}.
\]
Thus
\[
\mathcal T_{g,V,y}[f_Y]
=
y f_Y(y)
-
\frac{\nu}{2}
\int_{-\infty}^y
e^{(z-y)/2}f_Y(z)\,dz
-
\frac{\delta}{4}
\int_{-\infty}^y
(y-z)e^{(z-y)/2}f_Y(z)\,dz.
\]
Therefore
\[
\boxed{
\mathbb E[X\mid Y=y]
=
y
-
\frac{1}{f_Y(y)}
\left[
\frac{\nu}{2}
\int_{-\infty}^y
e^{(z-y)/2}f_Y(z)\,dz
+
\frac{\delta}{4}
\int_{-\infty}^y
(y-z)e^{(z-y)/2}f_Y(z)\,dz
\right].
}
\]
\end{example}

\begin{example}[Inverse Gaussian noise]
\label{E:TableInverseGaussian}
Let
\[
V\sim \mathrm{Inverse\ Gaussian}(\mu,\lambda),
\qquad
\mu>0,\quad \lambda>0.
\]
Then
\[
\varphi_V(-\omega)
=
\exp\!\left[
\frac{\lambda}{\mu}
\left(
1-\sqrt{1+\frac{2i\mu^2\omega}{\lambda}}
\right)
\right].
\]
Moreover,
\[
\frac{\frac{d}{d\omega}\varphi_V(-\omega)}
{\varphi_V(-\omega)}
=
-\frac{i\mu}
{\sqrt{1+\frac{2i\mu^2\omega}{\lambda}}}.
\]
Lemma~\ref{L:posterior-mean-symbol-1d} gives
\[
\mathcal Q_{g,V,y}(\omega)
=
e^{i\omega y}
\left[
y
-
\frac{\mu}
{\sqrt{1+\frac{2i\mu^2\omega}{\lambda}}}
\right].
\]

Let \(\nu_y\) be the finite measure with density
\[
d\nu_y(z)
=
\sqrt{\frac{\lambda}{2\pi\mu^2}}\,
(y-z)^{-1/2}
\exp\!\left(
-\frac{\lambda(y-z)}{2\mu^2}
\right)
\mathbf 1_{\{z<y\}}\,dz.
\]
A direct computation gives
\[
\varphi_{\nu_y}(\omega)
=
\frac{e^{i\omega y}}
{\sqrt{1+\frac{2i\mu^2\omega}{\lambda}}}.
\]
Therefore
\[
\mathcal Q_{g,V,y}(\omega)
=
y\,\varphi_{\delta_y}(\omega)
-
\mu\,\varphi_{\nu_y}(\omega).
\]
Proposition~\ref{P:measure-factorization-functional} applies with
smoothness order \(k=0\) and
\[
\mu_0
=
y\delta_y-\mu\nu_y.
\]
Hence
\[
\mathcal T_{g,V,y}[f_Y]
=
y f_Y(y)
-
\sqrt{\frac{\lambda}{2\pi}}
\int_{-\infty}^y
(y-z)^{-1/2}
\exp\!\left(
-\frac{\lambda(y-z)}{2\mu^2}
\right)
f_Y(z)\,dz.
\]
Therefore
\[
\boxed{
\mathbb E[X\mid Y=y]
=
y
-
\frac{1}{f_Y(y)}
\sqrt{\frac{\lambda}{2\pi}}
\int_{-\infty}^y
(y-z)^{-1/2}
\exp\!\left(
-\frac{\lambda(y-z)}{2\mu^2}
\right)
f_Y(z)\,dz .
}
\]
\end{example}

\subsubsection{Functionals under the conventional Laplace mechanism}
\label{A:LaplaceMechanism}

This section records the formulas used for the conventional Laplace mechanism
from Section~\ref{SS: Laplace}. The noise density is the product Laplace density
\[
f_V(v)
=
\prod_{j=1}^d
\frac{1}{2b}\exp\!\left(-\frac{|v_j|}{b}\right)
=
\frac{1}{(2b)^d}
\exp\!\left(-\frac{\|v\|_1}{b}\right),
\qquad
v\in\mathbb R^d.
\]
Its characteristic function is
\[
\varphi_V(\omega)
=
\prod_{j=1}^d
\frac{1}{1+b^2\omega_j^2}.
\]

We first define the kernels that appear in the formulas. For a measurable
function \(f:\mathbb R^d\to\mathbb C\), set
\[
(\mathsf D_{b,j}f)(y)
\coloneq
\int_{\mathbb R}
\operatorname{sgn}(u)e^{-|u|/b}
f(y+u e_j)\,du,
\qquad
j=1,\ldots,d.
\]
Let
\[
(\mathsf D_b f)(y)
\coloneq
\bigl(
(\mathsf D_{b,1}f)(y),\ldots,
(\mathsf D_{b,d}f)(y)
\bigr)^\top .
\]
Next define the matrix-valued operator \(\mathsf G_b\) entrywise by
\[
(\mathsf G_{b,jj}f)(y)
\coloneq
\int_{\mathbb R}
(2|u|-b)e^{-|u|/b}
f(y+u e_j)\,du,
\]
and, for \(j\neq k\),
\[
(\mathsf G_{b,jk}f)(y)
\coloneq
\int_{\mathbb R}\int_{\mathbb R}
\operatorname{sgn}(u)\operatorname{sgn}(v)
e^{-(|u|+|v|)/b}
f(y+u e_j+v e_k)\,du\,dv .
\]
Thus
\[
(\mathsf G_b f)(y)
=
\bigl[
(\mathsf G_{b,jk}f)(y)
\bigr]_{j,k=1}^d .
\]

\paragraph{Loss functionals in the multi-dimensional Laplace mechanism}
\label{A:LaplaceMechanismLosses}

\begin{example}[Posterior mean under product Laplace noise]
\label{E:LaplaceMechanismMean}
Let
\[
g(x)=x.
\]
For the product Laplace law, the Fourier-domain representer factorizes
coordinatewise. For the \(j\)-th coordinate,
\[
\mathcal Q_{j,V,y}(\omega)
=
e^{i\omega^\top y}
\left[
y_j+
\frac{2ib^2\omega_j}{1+b^2\omega_j^2}
\right].
\]
Let \(\nu_{b,y}^{(j)}\) be the finite signed measure defined by
\[
\int_{\mathbb R^d} f(z)\,d\nu_{b,y}^{(j)}(z)
=
\int_{\mathbb R}
\operatorname{sgn}(u)e^{-|u|/b}
f(y+u e_j)\,du .
\]
A direct computation gives
\[
\varphi_{\nu_{b,y}^{(j)}}(\omega)
=
e^{i\omega^\top y}
\frac{2ib^2\omega_j}{1+b^2\omega_j^2}.
\]
Therefore
\[
\mathcal Q_{j,V,y}(\omega)
=
y_j\varphi_{\delta_y}(\omega)
+
\varphi_{\nu_{b,y}^{(j)}}(\omega).
\]
Applying Proposition~\ref{P:measure-factorization-functional} entrywise gives
\[
\mathcal T_{j,V,y}[f]
=
y_j f(y)+(\mathsf D_{b,j}f)(y),
\qquad
f\in\mathcal A_V(\mathbb R^d).
\]
Hence, whenever \(f_Y(y)>0\),
\[
\boxed{
\mathbb E[X\mid Y=y]
=
y+
\frac{(\mathsf D_b f_Y)(y)}{f_Y(y)}.
}
\]
Equivalently, for \(j=1,\ldots,d\),
\[
\mathbb E[X_j\mid Y=y]
=
y_j+
\frac{
\displaystyle
\int_{\mathbb R}
\operatorname{sgn}(u)e^{-|u|/b}
f_Y(y+u e_j)\,du
}{
f_Y(y)
}.
\]
\end{example}

\begin{example}[Posterior second moment and covariance under product Laplace noise]
\label{E:LaplaceMechanismCovariance}
Let
\[
g(x)=xx^\top .
\]
We apply the scalar representation entrywise to
\(g_{jk}(x)=x_jx_k\). Define
\[
a_j(\omega)
\coloneq
\frac{2ib^2\omega_j}{1+b^2\omega_j^2},
\qquad
a(\omega)
\coloneq
\bigl(a_1(\omega),\ldots,a_d(\omega)\bigr)^\top .
\]
The Fourier-domain representer for the second moment is
\[
\mathcal Q_{2,V,y}(\omega)
=
e^{i\omega^\top y}
\left[
yy^\top
+
y a(\omega)^\top
+
a(\omega)y^\top
+
H(\omega)
\right],
\]
where
\[
H_{jj}(\omega)
=
\frac{2b^2(1-3b^2\omega_j^2)}
{(1+b^2\omega_j^2)^2},
\qquad
H_{jk}(\omega)
=
a_j(\omega)a_k(\omega)
\quad (j\neq k).
\]

The first-order terms are represented by the signed measures
\(\nu_{b,y}^{(j)}\) from Example~\ref{E:LaplaceMechanismMean}. For the
second-order terms, define signed measures \(\eta_{b,y}^{jk}\) by
\[
\int_{\mathbb R^d} f(z)\,d\eta_{b,y}^{jj}(z)
=
\int_{\mathbb R}
(2|u|-b)e^{-|u|/b}
f(y+u e_j)\,du,
\]
and, for \(j\neq k\),
\[
\int_{\mathbb R^d} f(z)\,d\eta_{b,y}^{jk}(z)
=
\int_{\mathbb R}\int_{\mathbb R}
\operatorname{sgn}(u)\operatorname{sgn}(v)
e^{-(|u|+|v|)/b}
f(y+u e_j+v e_k)\,du\,dv .
\]
These measures satisfy
\[
\varphi_{\eta_{b,y}^{jk}}(\omega)
=
e^{i\omega^\top y}H_{jk}(\omega),
\qquad
1\le j,k\le d.
\]
Thus Proposition~\ref{P:measure-factorization-functional} applies entrywise.
For every \(f\in\mathcal A_V(\mathbb R^d)\),
\[
\mathcal T_{2,V,y}[f]
=
yy^\top f(y)
+
y(\mathsf D_b f)(y)^\top
+
(\mathsf D_b f)(y)y^\top
+
(\mathsf G_b f)(y).
\]
Taking \(f=f_Y\) and dividing by \(f_Y(y)\) gives the posterior second moment:
\[
\mathbb E[XX^\top\mid Y=y]
=
yy^\top
+
y\left(\frac{(\mathsf D_b f_Y)(y)}{f_Y(y)}\right)^\top
+
\left(\frac{(\mathsf D_b f_Y)(y)}{f_Y(y)}\right)y^\top
+
\frac{(\mathsf G_b f_Y)(y)}{f_Y(y)}.
\]
Combining this formula with
Example~\ref{E:LaplaceMechanismMean}, we obtain the covariance formula
\[
\boxed{
\operatorname{Cov}(X\mid Y=y)
=
\frac{(\mathsf G_b f_Y)(y)}{f_Y(y)}
-
\frac{
(\mathsf D_b f_Y)(y)(\mathsf D_b f_Y)(y)^\top
}{
f_Y(y)^2
}.
}
\]
Equivalently, coordinatewise,
\[
\operatorname{Cov}(X_j,X_k\mid Y=y)
=
\frac{(\mathsf G_{b,jk}f_Y)(y)}{f_Y(y)}
-
\frac{
(\mathsf D_{b,j}f_Y)(y)(\mathsf D_{b,k}f_Y)(y)
}{
f_Y(y)^2
}.
\]
\end{example}

\begin{example}[Posterior moment generating function under product Laplace noise]
\label{E:LaplaceMechanismMGF}
Fix \(t\in\mathbb R^d\) with
\[
\|t\|_\infty<\frac1b,
\]
and let
\[
g_t(x)=e^{t^\top x}.
\]
The restriction on \(t\) ensures that all kernels below are finite signed
measures. Since the coordinates of \(V\) are independent Laplace random
variables, the Fourier-domain representer factorizes coordinatewise:
\[
\mathcal Q_{g_t,V,y}(\omega)
=
e^{t^\top y}e^{i\omega^\top y}
\prod_{j=1}^d
\frac{1+b^2\omega_j^2}
{1-b^2(t_j+i\omega_j)^2}.
\]

For \(r\in(-1/b,1/b)\), define the finite signed measure \(\rho_{b,r}\) on
\(\mathbb R\) by
\[
d\rho_{b,r}(u)
=
e^{ru}
\left[
r\operatorname{sgn}(u)-\frac{b r^2}{2}
\right]
e^{-|u|/b}\,du .
\]
A direct calculation gives
\[
\int_{\mathbb R}e^{i\omega u}\,d\rho_{b,r}(u)
=
\frac{1+b^2\omega^2}{1-b^2(r+i\omega)^2}-1.
\]
Therefore, for each coordinate \(j\),
\[
\frac{1+b^2\omega_j^2}
{1-b^2(t_j+i\omega_j)^2}
=
1+
\int_{\mathbb R}e^{i\omega_j u}\,d\rho_{b,t_j}(u).
\]

For a measurable function \(f:\mathbb R^d\to\mathbb C\), define
\[
(\mathsf M_{b,t}f)(y)
\coloneq
\sum_{S\subseteq\{1,\ldots,d\}}
\int_{\mathbb R^{|S|}}
f\!\left(y+\sum_{j\in S}u_j e_j\right)
\prod_{j\in S}d\rho_{b,t_j}(u_j),
\]
where the term for \(S=\varnothing\) is interpreted as \(f(y)\). Expanding the
coordinatewise product and applying
Proposition~\ref{P:measure-factorization-functional} gives
\[
\mathcal T_{g_t,V,y}[f]
=
e^{t^\top y}(\mathsf M_{b,t}f)(y),
\qquad
f\in\mathcal A_V(\mathbb R^d).
\]
Hence, whenever \(f_Y(y)>0\),
\[
\boxed{
\mathbb E[e^{t^\top X}\mid Y=y]
=
e^{t^\top y}
\frac{(\mathsf M_{b,t}f_Y)(y)}{f_Y(y)} .
}
\]

Equivalently, in expanded form,
\[
\mathbb E[e^{t^\top X}\mid Y=y]
=
\frac{e^{t^\top y}}{f_Y(y)}
\sum_{S\subseteq\{1,\ldots,d\}}
\int_{\mathbb R^{|S|}}
f_Y\!\left(y+\sum_{j\in S}u_j e_j\right)
\prod_{j\in S}d\rho_{b,t_j}(u_j).
\]
\end{example}

\paragraph{Loss functionals in the one-dimensional Laplace mechanism}
\label{A:LaplaceMechanismLossesOneDim}

We now specialize to \(d=1\) and
\[
V\sim \mathrm{Laplace}(0,b).
\]

For the  loss formulas below, it is useful to introduce
\[
(\mathsf K_b f)(y)
\coloneq
\int_{\mathbb R}e^{-|u|/b}f(y+u)\,du,
\]
\[
(\mathsf K_b^+ f)(y)
\coloneq
\int_0^\infty e^{-u/b}f(y+u)\,du,
\qquad
(\mathsf K_b^- f)(y)
\coloneq
\int_{-\infty}^0 e^{u/b}f(y+u)\,du.
\]
For \(a,y\in\mathbb R\), define the interval correction
\[
(\mathsf J_{b,a}f)(y)
\coloneq
\int_{\min(0,a-y)}^{\max(0,a-y)}
e^{-|u|/b}f(y+u)\,du,
\]
and the boundary correction
\[
(\mathsf R_{b,a}f)(y)
\coloneq
b e^{-|y-a|/b}f(a).
\]

\begin{example}[Posterior distribution function under one-dimensional Laplace noise]
\label{E:LaplaceMechanismCDF}
Assume \(d=1\), and let
\[
g_a(x)=\mathbf 1_{\{x\le a\}},
\qquad
a\in\mathbb R .
\]
This functional is easier to compute directly than through Fourier inversion.
Define
\[
A(a)
\coloneq
\int_{x\le a} f_V(a-x)\,dP_X(x),
\qquad
B(a)
\coloneq
\int_{x>a} f_V(a-x)\,dP_X(x).
\]
Then
\[
f_Y(a)=A(a)+B(a).
\]

The Laplace density is not differentiable at the origin, so we use the right
derivative
\[
D_+f_Y(a)
\coloneq
\lim_{h\downarrow0}
\frac{f_Y(a+h)-f_Y(a)}{h}.
\]
This derivative exists. Indeed, the Laplace density is Lipschitz, and its right
derivative at \(a-x\) is
\[
D_+ f_V(a-x)
=
\begin{cases}
-\dfrac1b f_V(a-x), & x\le a, \\[0.8em]
\phantom{-}\dfrac1b f_V(a-x), & x>a .
\end{cases}
\]
Dominated convergence therefore gives
\[
D_+f_Y(a)
=
-\frac1b A(a)+\frac1b B(a).
\]
Solving the two equations
\[
f_Y(a)=A(a)+B(a),
\qquad
D_+f_Y(a)=-\frac1b A(a)+\frac1b B(a),
\]
yields
\[
A(a)
=
\frac12\{f_Y(a)-bD_+f_Y(a)\},
\qquad
B(a)
=
\frac12\{f_Y(a)+bD_+f_Y(a)\}.
\]

Define the lower and upper Laplace tail functionals
\[
L_b^-(a)
\coloneq
\frac12\{f_Y(a)-bD_+f_Y(a)\},
\qquad
L_b^+(a)
\coloneq
\frac12\{f_Y(a)+bD_+f_Y(a)\}.
\]
Thus \(L_b^-(a)=A(a)\) and \(L_b^+(a)=B(a)\).

Now fix \(y\) with \(f_Y(y)>0\). If \(a\le y\), then for \(x\le a\),
\[
f_V(y-x)
=
e^{-(y-a)/b}f_V(a-x).
\]
Hence
\[
\int_{x\le a} f_V(y-x)\,dP_X(x)
=
e^{-(y-a)/b}L_b^-(a).
\]
If \(a>y\), it is shorter to compute the upper tail. For \(x>a\),
\[
f_V(y-x)
=
e^{-(a-y)/b}f_V(a-x),
\]
so
\[
\int_{x>a} f_V(y-x)\,dP_X(x)
=
e^{-(a-y)/b}L_b^+(a).
\]
Therefore
\[
\boxed{
\mathbb P(X\le a\mid Y=y)
=
\begin{cases}
\displaystyle
\dfrac{e^{-(y-a)/b}L_b^-(a)}{f_Y(y)},
& a\le y, \\[1.2em]
\displaystyle
1-
\dfrac{e^{-(a-y)/b}L_b^+(a)}{f_Y(y)},
& a>y .
\end{cases}
}
\]
Equivalently,
\[
L_b^-(a)=\frac12\{f_Y(a)-bD_+f_Y(a)\},
\qquad
L_b^+(a)=\frac12\{f_Y(a)+bD_+f_Y(a)\}.
\]
\end{example}

\begin{example}[Posterior squared risk under Laplace noise]
\label{E:LaplaceMechanismSquaredRisk}
Fix \(a\in\mathbb R\). Expanding the square gives
\[
\mathbb E[(X-a)^2\mid Y=y]
=
\mathbb E[X^2\mid Y=y]
-
2a\,\mathbb E[X\mid Y=y]
+
a^2.
\]
Substituting the posterior first and second moment formulas yields
\[
\boxed{
\mathbb E[(X-a)^2\mid Y=y]
=
(y-a)^2
+
2(y-a)\frac{(\mathsf D_b f_Y)(y)}{f_Y(y)}
+
\frac{(\mathsf G_b f_Y)(y)}{f_Y(y)}.
}
\]
Equivalently,
\[
\begin{aligned}
\mathbb E[(X-a)^2\mid Y=y]
&=
(y-a)^2  \\
&\quad+
\frac{2(y-a)}{f_Y(y)}
\int_{\mathbb R}
\operatorname{sgn}(u)e^{-|u|/b}f_Y(y+u)\,du  \\
&\quad+
\frac{1}{f_Y(y)}
\int_{\mathbb R}
(2|u|-b)e^{-|u|/b}f_Y(y+u)\,du .
\end{aligned}
\]
\end{example}

\begin{example}[Posterior absolute risk under Laplace noise]
\label{E:LaplaceMechanismAbsoluteRisk}
Fix \(a\in\mathbb R\), and let
\[
F_y(t)
\coloneq
\mathbb P(X\le t\mid Y=y).
\]
The identity
\[
\mathbb E[|X-a|\mid Y=y]
=
\int_{-\infty}^{a}F_y(t)\,dt
+
\int_a^\infty \{1-F_y(t)\}\,dt
\]
combined with the posterior distribution formula from
Example~\ref{E:LaplaceMechanismCDF} gives
\[
\mathbb E[|X-a|\mid Y=y]
=
|y-a|
+
\frac{1}{f_Y(y)}
\left[
\int_{\mathbb R}
\operatorname{sgn}(u)\operatorname{sgn}(y+u-a)
e^{-|u|/b}f_Y(y+u)\,du
-
(\mathsf R_{b,a}f_Y)(y)
\right].
\]
Since
\[
\operatorname{sgn}(u)\operatorname{sgn}(u-(a-y))
=
1
\]
outside the interval between \(0\) and \(a-y\), and equals \(-1\) inside that
interval, this becomes
\[
\boxed{
\mathbb E[|X-a|\mid Y=y]
=
|y-a|
+
\frac{
(\mathsf K_b f_Y)(y)
-
2(\mathsf J_{b,a}f_Y)(y)
-
(\mathsf R_{b,a}f_Y)(y)
}{
f_Y(y)
}.
}
\]
In expanded form,
\[
\begin{aligned}
\mathbb E[|X-a|\mid Y=y]
&=
|y-a|
+
\frac{1}{f_Y(y)}
\bigg[
\int_{\mathbb R} e^{-|u|/b}f_Y(y+u)\,du  \\
&\qquad
-
2
\int_{\min(0,a-y)}^{\max(0,a-y)}
e^{-|u|/b}f_Y(y+u)\,du
-
b e^{-|y-a|/b}f_Y(a)
\bigg].
\end{aligned}
\]
\end{example}

\begin{example}[Posterior hinge loss under Laplace noise]
\label{E:LaplaceMechanismHingeLoss}
Fix \(a\in\mathbb R\). Since
\[
(x-a)_+
=
\frac12|x-a|+\frac12(x-a),
\]
we have
\[
\mathbb E[(X-a)_+\mid Y=y]
=
\frac12\mathbb E[|X-a|\mid Y=y]
+
\frac12\{\mathbb E[X\mid Y=y]-a\}.
\]
Substituting the posterior mean and absolute-risk formulas gives
\[
\boxed{
\mathbb E[(X-a)_+\mid Y=y]
=
(y-a)_+
+
\frac{
(\mathsf K_b^+ f_Y)(y)
-
(\mathsf J_{b,a}f_Y)(y)
-
\frac12(\mathsf R_{b,a}f_Y)(y)
}{
f_Y(y)
}.
}
\]
Equivalently,
\[
\begin{aligned}
\mathbb E[(X-a)_+\mid Y=y]
&=
(y-a)_+
+
\frac{1}{f_Y(y)}
\int_0^\infty e^{-u/b}f_Y(y+u)\,du  \\
&\quad
-
\frac{1}{f_Y(y)}
\int_{\min(0,a-y)}^{\max(0,a-y)}
e^{-|u|/b}f_Y(y+u)\,du  \\
&\quad
-
\frac{b e^{-|y-a|/b}f_Y(a)}{2f_Y(y)}.
\end{aligned}
\]
\end{example}

\begin{example}[Posterior pinball loss under Laplace noise]
\label{E:LaplaceMechanismPinballLoss}
Fix \(a\in\mathbb R\) and \(\tau\in(0,1)\). Let
\[
\rho_\tau(z)
=
\tau z_+
+
(1-\tau)(-z)_+ .
\]
Then
\[
\rho_\tau(X-a)
=
\tau(X-a)_+
+
(1-\tau)(a-X)_+.
\]
Using the hinge formula for both \((X-a)_+\) and \((a-X)_+\) gives
\[
\boxed{
\begin{aligned}
\mathbb E[\rho_\tau(X-a)\mid Y=y]
&=
\tau(y-a)_+
+
(1-\tau)(a-y)_+  \\
&\quad+
\frac{
\tau(\mathsf K_b^+ f_Y)(y)
+
(1-\tau)(\mathsf K_b^- f_Y)(y)
-
(\mathsf J_{b,a}f_Y)(y)
-
\frac12(\mathsf R_{b,a}f_Y)(y)
}{
f_Y(y)
}.
\end{aligned}
}
\]
In expanded form,
\[
\begin{aligned}
\mathbb E[\rho_\tau(X-a)\mid Y=y]
&=
\tau(y-a)_+
+
(1-\tau)(a-y)_+  \\
&\quad+
\frac{\tau}{f_Y(y)}
\int_0^\infty e^{-u/b}f_Y(y+u)\,du  \\
&\quad+
\frac{1-\tau}{f_Y(y)}
\int_{-\infty}^0 e^{u/b}f_Y(y+u)\,du  \\
&\quad-
\frac{1}{f_Y(y)}
\int_{\min(0,a-y)}^{\max(0,a-y)}
e^{-|u|/b}f_Y(y+u)\,du  \\
&\quad-
\frac{b e^{-|y-a|/b}f_Y(a)}{2f_Y(y)} .
\end{aligned}
\]
\end{example}

\subsubsection{Gaussian approximation to posterior distribution functions}
\label{A:GaussianCdfApproximation}

The next lemma gives an explicit derivative expansion for a smooth
approximation to the posterior distribution function.

\begin{lemma}[Explicit form of the approximating functional for the posterior cdf]
\label{L:SeriesRepresentation}
Fix \(n\ge1\), \(x,y\in\mathbb R\), and \(\sigma>0\). Suppose \(d=1\) and
\[
V\sim \mathcal N(0,\sigma^2).
\]
Define
\[
g_{n,x}(u)
\coloneq
\Phi\left(
\frac{x+n^{-1/2}-u}{n^{-1}}
\right),
\qquad
u\in\mathbb R.
\]
Set
\[
s_n\coloneq \sqrt{\sigma^2+n^{-2}},
\qquad
a_n\coloneq \frac{x+n^{-1/2}-y}{s_n}.
\]
Then, for every \(f\in\mathcal A_V(\mathbb R)\),
\[
\mathcal T_{g_{n,x},V,y}[f]
=
\sum_{k=0}^{\infty}
\frac{(-1)^k}{k!}
\left(\frac{\sigma^2}{s_n}\right)^k
\Phi^{(k)}(a_n) f^{(k)}(y).
\]
For each fixed \(n\), the series converges absolutely.
\end{lemma}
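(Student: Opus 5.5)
The plan is to bypass Theorem~\ref{T:master-fourier-identification} and work directly from the defining property in Theorem~\ref{T: Representation}. I do not use the Fourier route because the quotient $\widetilde\lambda/\varphi_V(-\omega)$ here appears to grow like a Gaussian in $\omega$, so condition~(i) should fail.

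Write $c_n\coloneq x+n^{-1/2}$ and $h\coloneq n^{-1}$, and let $\phi_\sigma$ be the $\mathcal N(0,\sigma^2)$ density. The integrand is $\lambda_n(u)=\Phi((c_n-u)/h)\,\phi_\sigma(y-u)$. It is bounded by $\phi_\sigma(y-u)$, so $\lambda_n\in C_0(\mathbb R)$.

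For $f=\phi_\sigma*\mu\in\mathcal A_V(\mathbb R)$, Theorem~\ref{T: Representation} gives $\mathcal T_{g_{n,x},V,y}[f]=\int\lambda_n\,d\mu$. The proof of Lemma~\ref{L: SmoothModels}(b), applied with $k$ arbitrary since $\phi_\sigma\in C_0^\infty$, gives $f^{(k)}(y)=\int\phi_\sigma^{(k)}(y-u)\,d\mu(u)$ for every $k$. The statement therefore reduces to two claims.
\begin{enumerate}
\item[(1)] The pointwise kernel identity
\[
\Phi\!\left(\tfrac{c_n-u}{h}\right)\phi_\sigma(y-u)=\sum_{k\ge0}\frac{(-1)^k}{k!}\left(\frac{\sigma^2}{s_n}\right)^k\Phi^{(k)}(a_n)\,\phi_\sigma^{(k)}(y-u)
\]
holds for every $u\in\mathbb R$.
\item[(2)] Summation and $\mu$-integration may be interchanged.
\end{enumerate}

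To prove (1), I would write $\Phi((c_n-u)/h)=\mathbb P(u+hZ\le c_n)$ with $Z\sim\mathcal N(0,1)$, and complete the square in the Gaussian product $\phi_\sigma(y-u)\,\phi((w-u)/h)$. This expresses $\lambda_n(u)$ as a superposition $\int K_n(t)\,\phi_\sigma(y+t-u)\,dt$ over shifts $t$ of the noise density. The kernel $K_n$ is a Gaussian-type weight whose "moments" are $\int t^kK_n(t)\,dt=(-\sigma^2/s_n)^k\Phi^{(k)}(a_n)$.

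If a genuine positive kernel is not available, I would use an equivalent generating-function check. Take the Hermite representation $\phi_\sigma^{(k)}(v)=(-1)^k\sigma^{-k}\mathrm{He}_k(v/\sigma)\phi_\sigma(v)$ and $\Phi^{(k)}(a)=(-1)^{k-1}\mathrm{He}_{k-1}(a)\phi(a)$ for $k\ge1$. Then differentiate both sides of (1) in $u$: both sides agree at $u\to+\infty$ (both tend to zero), and their $u$-derivatives match by Mehler's formula. This is the same identity the paper invokes for the Gaussian-prior check. After that, Taylor-expand $t\mapsto\phi_\sigma(y+t-u)$, which is entire, and integrate term by term against $K_n$. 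This produces exactly the series in (1).

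For (2), I would bound $\sup_v|\phi_\sigma^{(k)}(v)|\le C\sigma^{-k-1}\sqrt{k!}$ using Cramér's inequality for Hermite functions, and similarly $|\Phi^{(k)}(a_n)|\le C\sqrt{(k-1)!}$. The $k$-th term of the series is then bounded uniformly in $u$ by $C(\sigma/s_n)^k\sqrt{k!}\sqrt{(k-1)!}/k!\le C(\sigma/s_n)^k$. Since $s_n=\sqrt{\sigma^2+n^{-2}}>\sigma$, this is a convergent geometric series. Hence $\sum_k\int|\text{term}_k|\,d|\mu|\le C\|\mu\|_{TV}\sum_k(\sigma/s_n)^k<\infty$, and Fubini gives both the interchange and the absolute convergence. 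The $k=0$ term is $\Phi(a_n)f(y)$, which recovers the display in the main text.

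The main obstacle is establishing (1) exactly, with the right constant $\sigma^2/s_n$ and the argument $a_n$, rather than only formally. I would first attempt the Mehler/Hermite generating-function route, since it reduces (1) to a known bilinear Hermite identity, and fall back on the complete-the-square kernel construction only if that bookkeeping fails. The strict inequality $\sigma/s_n<1$ is exactly what makes step (2) work, which explains why the smoothing $h=n^{-1}>0$ is needed.
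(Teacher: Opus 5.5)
Your reduction is sound, and in fact more direct than the paper's. Since $\mathcal T_{g_{n,x},V,y}[f]=\int\lambda_n\,d\mu$ and $f^{(k)}(y)=\int\phi_\sigma^{(k)}(y-u)\,d\mu(u)$, the pointwise identity (1) plus your Cram\'er bound of order $(\sigma/s_n)^k/\sqrt{k}$ finishes the proof by Fubini. That is the same estimate the paper uses, and it avoids the paper's detour through Hermite truncations, Proposition~\ref{P:tweedie-approximation} and Proposition~\ref{P:measure-factorization-functional}. Your observation that condition (i) fails is also correct. The gap is in (1) itself, which neither of your routes establishes.

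\textbf{The superposition kernel cannot exist.} Suppose $\lambda_n(u)=\int K_n(t)\phi_\sigma(y+t-u)\,dt$ with $K_n$ integrable, or even a finite signed measure. Then $\widetilde\lambda_n(\omega)=e^{i\omega y}\varphi_V(-\omega)\widetilde K_n(\omega)$, so $\widetilde\lambda_n(\omega)/\varphi_V(-\omega)$ is bounded by $\|K_n\|_1$. This contradicts exactly the Gaussian growth you noticed: the quotient equals $e^{i\omega y}\Phi(a_n-i\sigma^2\omega/s_n)$, with $\Phi$ continued analytically. So positivity is not the issue. Your fallback ends by integrating a Taylor series against $K_n$, so it inherits the same problem.

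\textbf{The derivative check in $u$ does not close either.} The $u$-derivative of the left side of (1) is
$-h^{-1}\phi((c_n-u)/h)\phi_\sigma(y-u)+\sigma^{-2}(y-u)\Phi((c_n-u)/h)\phi_\sigma(y-u)$.
This still contains the unknown factor $\Phi((c_n-u)/h)$, so matching it against the derivative of the right side presupposes (1). Read instead as a linear first-order ODE in $u$, its homogeneous solutions $C\phi_\sigma(y-u)$ vanish as $u\to\pm\infty$. The boundary condition you cite therefore determines nothing.

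\textbf{The missing idea} is to apply Mehler's formula in the cutoff variable and to expand $g_{n,x}$ alone, not the product. Set $z=(y-u)/\sigma$ and $\rho_n=\sigma/s_n<1$, and let $\phi_{-\rho}$ be the standard bivariate normal density with correlation $-\rho$. One checks
$g_{n,x}(u)=\Phi\bigl((a_n+\rho_n z)/\sqrt{1-\rho_n^2}\bigr)=\phi(z)^{-1}\int_{-\infty}^{a_n}\phi_{-\rho_n}(w,z)\,dw$.
Now insert Mehler's expansion $\phi_{-\rho}(w,z)=\phi(w)\phi(z)\sum_k(-\rho)^k\mathrm{He}_k(w)\mathrm{He}_k(z)/k!$ and integrate termwise in $w$. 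This is justified because, for fixed $z$, Cram\'er's bound makes the $k$-th term $O(\rho_n^k e^{-w^2/4})$. Using $\int_{-\infty}^{a}\mathrm{He}_k(w)\phi(w)\,dw=(-1)^k\Phi^{(k)}(a)$ then gives $g_{n,x}(u)=\sum_k\rho_n^k\Phi^{(k)}(a_n)\mathrm{He}_k(z)/k!$.

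Equivalently, differentiate in $a$ rather than in $u$: the two sides' derivatives match by Mehler, and the boundary condition $a\to-\infty$ fixes the constant. Finally, multiply by $\phi_\sigma(y-u)$ and use $\mathrm{He}_k(z)\phi_\sigma(y-u)=(-1)^k\sigma^k\phi_\sigma^{(k)}(y-u)$. This yields (1) at once, with no kernel and no Taylor expansion, and is exactly the first step of the paper's proof.
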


\begin{proof}
Let
\[
\rho_n\coloneq \frac{\sigma}{s_n}.
\]
Since \(n\ge1\), we have \(0<\rho_n<1\). We use the probabilists' Hermite
polynomials
\[
\operatorname{He}_k(z)
=
(-1)^k e^{z^2/2}
\frac{d^k}{dz^k}e^{-z^2/2},
\qquad z\in\mathbb R.
\]

\smallskip
\noindent\emph{Step 1: Hermite expansion of the smooth cdf.}
For \(u\in\mathbb R\), set
\[
z=\frac{y-u}{\sigma}.
\]
Then
\[
\frac{x+n^{-1/2}-u}{n^{-1}}
=
\frac{a_n+\rho_n z}{\sqrt{1-\rho_n^2}}.
\]
Mehler's formula gives, for \(|\rho|<1\),
\[
\frac{\phi_{-\rho}(w,z)}{\phi(w)\phi(z)}
=
\sum_{k=0}^{\infty}
\frac{(-\rho)^k}{k!}
\operatorname{He}_k(w)\operatorname{He}_k(z),
\]
where \(\phi_{-\rho}\) is the standard bivariate normal density with correlation
\(-\rho\); see \cite{ismail2009classical}, Theorem~4.7.2. Therefore
\[
\int_{-\infty}^{a}
\frac{\phi_{-\rho}(w,z)}{\phi(z)}\,dw
=
\Phi\left(\frac{a+\rho z}{\sqrt{1-\rho^2}}\right).
\]

We justify termwise integration. By Cramer's bound for Hermite polynomials,
there is a constant \(C>0\) such that
\[
|\operatorname{He}_k(r)|
\le
C e^{r^2/4}\sqrt{k!},
\qquad r\in\mathbb R,\quad k\ge0;
\]
see \cite{abramowitz1972handbook}, Formula~22.14.17. Hence, for fixed
\(z\in\mathbb R\),
\[
\phi(w)
\left|
\frac{(-\rho)^k}{k!}
\operatorname{He}_k(w)\operatorname{He}_k(z)
\right|
\le
C^2 e^{z^2/4}\rho^k e^{-w^2/4}.
\]
Since \(\sum_{k\ge0}\rho^k<\infty\) and \(e^{-w^2/4}\) is integrable on
\((-\infty,a]\), Fubini's theorem gives
\[
\int_{-\infty}^{a}
\frac{\phi_{-\rho}(w,z)}{\phi(z)}\,dw
=
\sum_{k=0}^{\infty}
\frac{(-\rho)^k}{k!}
\operatorname{He}_k(z)
\int_{-\infty}^{a}
\operatorname{He}_k(w)\phi(w)\,dw .
\]

For \(k\ge1\), Rodrigues' formula implies
\[
\operatorname{He}_k(w)\phi(w)
=
(-1)^k\phi^{(k)}(w).
\]
Thus
\[
\int_{-\infty}^{a}
\operatorname{He}_k(w)\phi(w)\,dw
=
(-1)^k\phi^{(k-1)}(a)
=
(-1)^k\Phi^{(k)}(a).
\]
The same identity holds for \(k=0\), with \(\Phi^{(0)}=\Phi\). Therefore
\[
\Phi\left(\frac{a+\rho z}{\sqrt{1-\rho^2}}\right)
=
\sum_{k=0}^{\infty}
\frac{\rho^k}{k!}
\Phi^{(k)}(a)\operatorname{He}_k(z).
\]
Taking \(a=a_n\), \(\rho=\rho_n\), and \(z=(y-u)/\sigma\), we obtain
\[
g_{n,x}(u)
=
\sum_{k=0}^{\infty}
\frac{\rho_n^k}{k!}
\Phi^{(k)}(a_n)
\operatorname{He}_k\!\left(\frac{y-u}{\sigma}\right).
\]

\smallskip
\noindent\emph{Step 2: finite approximants.}
For \(M\ge0\), define
\[
g_{n,M,x,y}(u)
\coloneq
\sum_{k=0}^{M}
\frac{\rho_n^k}{k!}
\Phi^{(k)}(a_n)
\operatorname{He}_k\!\left(\frac{y-u}{\sigma}\right).
\]
Let
\[
\lambda_{n,M,x,y}(u)
\coloneq
g_{n,M,x,y}(u)f_V(y-u),
\qquad
\lambda_{n,x,y}(u)
\coloneq
g_{n,x}(u)f_V(y-u).
\]
We now show that
\[
\lambda_{n,M,x,y}\to \lambda_{n,x,y}
\qquad
\text{uniformly on }\mathbb R .
\]

Since
\[
f_V(y-u)
=
\frac1{\sigma}\phi\!\left(\frac{y-u}{\sigma}\right),
\]
it is enough to bound the terms
\[
\frac1{\sigma}\phi(z)
\frac{\rho_n^k}{k!}
\Phi^{(k)}(a_n)\operatorname{He}_k(z).
\]
For \(k\ge1\),
\[
\Phi^{(k)}(a_n)
=
\phi^{(k-1)}(a_n)
=
(-1)^{k-1}
\operatorname{He}_{k-1}(a_n)\phi(a_n).
\]
Cramer's bound gives
\[
|\Phi^{(k)}(a_n)|
\le
\frac{C}{\sqrt{2\pi}}
e^{-a_n^2/4}\sqrt{(k-1)!}.
\]
Hence, uniformly in \(z\in\mathbb R\),
\[
\frac1{\sigma}\phi(z)
\frac{\rho_n^k}{k!}
|\Phi^{(k)}(a_n)|
|\operatorname{He}_k(z)|
\le
\frac{C^2}{2\pi\sigma}
e^{-a_n^2/4}
\frac{\rho_n^k}{\sqrt{k}}.
\]
Since
\[
\sum_{k=1}^{\infty}\frac{\rho_n^k}{\sqrt{k}}<\infty,
\]
the Weierstrass \(M\)-test gives uniform absolute convergence; see
\cite{rudin1976principles}, Theorem~7.10. Thus
\[
\lambda_{n,M,x,y}\to \lambda_{n,x,y}
\quad
\text{uniformly,}
\]
and
\[
\sup_{M\ge0}
\|\lambda_{n,M,x,y}\|_\infty
<\infty .
\]
Moreover, each \(\lambda_{n,M,x,y}\) is a finite sum of polynomials times a
Gaussian density, and therefore belongs to \(C_0(\mathbb R;\mathbb C)\).

By Proposition~\ref{P:tweedie-approximation}, applied with \(M\) as the
approximation index,
\[
\mathcal T_{g_{n,x},V,y}[f]
=
\lim_{M\to\infty}
\mathcal T_{n,M,x,y}[f],
\qquad
f\in\mathcal A_V(\mathbb R),
\]
where \(\mathcal T_{n,M,x,y}\) denotes the Tweedie functional generated by
\(g_{n,M,x,y}\) at the point \(y\).

\smallskip
\noindent\emph{Step 3: explicit calculation for the finite approximants.}
For the Gaussian density,
\[
\frac{d^k}{du^k}f_V(y-u)
=
\frac{1}{\sigma^k}
\operatorname{He}_k\!\left(\frac{y-u}{\sigma}\right)
f_V(y-u).
\]
Therefore
\[
\operatorname{He}_k\!\left(\frac{y-u}{\sigma}\right)f_V(y-u)
=
\sigma^k\frac{d^k}{du^k}f_V(y-u).
\]
Using the Fourier convention of the paper,
\[
\widetilde{\frac{d^k}{du^k}f_V(y-\cdot)}(\omega)
=
(-i\omega)^k e^{i\omega y}e^{-\sigma^2\omega^2/2}.
\]
Since \(\varphi_V(-\omega)=e^{-\sigma^2\omega^2/2}\), the Fourier-domain
representer for \(\lambda_{n,M,x,y}\) is
\[
\mathcal Q_{n,M,x,y}(\omega)
=
\sum_{k=0}^{M}
\frac{1}{k!}
\left(
-\frac{\sigma^2}{s_n}
\right)^k
\Phi^{(k)}(a_n)
(i\omega)^k e^{i\omega y}.
\]
Thus Proposition~\ref{P:measure-factorization-functional}, applied with
smoothness order \(M\), gives
\[
\mathcal T_{n,M,x,y}[f]
=
\sum_{k=0}^{M}
\frac{1}{k!}
\left(
-\frac{\sigma^2}{s_n}
\right)^k
\Phi^{(k)}(a_n)
f^{(k)}(y).
\]

Letting \(M\to\infty\) yields
\[
\mathcal T_{g_{n,x},V,y}[f]
=
\sum_{k=0}^{\infty}
\frac{(-1)^k}{k!}
\left(\frac{\sigma^2}{s_n}\right)^k
\Phi^{(k)}(a_n)
f^{(k)}(y),
\]
provided the series is absolutely convergent. We prove this next.

\smallskip
\noindent\emph{Step 4: absolute convergence.}
Write
\[
f=f_V*\mu,
\qquad
\mu\in\mathcal M(\mathbb R).
\]
Since all derivatives of the Gaussian density are bounded and continuous,
differentiation under the integral gives
\[
f^{(k)}(y)
=
\int_{\mathbb R}
f_V^{(k)}(y-u)\,d\mu(u).
\]
By Rodrigues' formula,
\[
f_V^{(k)}(y-u)
=
\frac{(-1)^k}{\sigma^k}
\operatorname{He}_k\!\left(\frac{y-u}{\sigma}\right)
f_V(y-u).
\]
Thus
\[
f^{(k)}(y)
=
\frac{(-1)^k}{\sigma^k}
\int_{\mathbb R}
\operatorname{He}_k\!\left(\frac{y-u}{\sigma}\right)
f_V(y-u)\,d\mu(u).
\]
Consequently,
\[
\begin{aligned}
&\left|
\frac{(-1)^k}{k!}
\left(\frac{\sigma^2}{s_n}\right)^k
\Phi^{(k)}(a_n)f^{(k)}(y)
\right|  \\
&\qquad\le
\int_{\mathbb R}
\frac1{\sigma}
\phi\!\left(\frac{y-u}{\sigma}\right)
\frac{\rho_n^k}{k!}
|\Phi^{(k)}(a_n)|
\left|
\operatorname{He}_k\!\left(\frac{y-u}{\sigma}\right)
\right|
\,d|\mu|(u).
\end{aligned}
\]
The uniform bound from Step 2 gives, for \(k\ge1\),
\[
\left|
\frac{(-1)^k}{k!}
\left(\frac{\sigma^2}{s_n}\right)^k
\Phi^{(k)}(a_n)f^{(k)}(y)
\right|
\le
C_n\frac{\rho_n^k}{\sqrt{k}}\|\mu\|_{TV},
\]
where \(C_n<\infty\) does not depend on \(k\). Since
\[
\sum_{k=1}^{\infty}\frac{\rho_n^k}{\sqrt{k}}<\infty,
\]
the derivative series converges absolutely for each fixed \(n\). The \(k=0\)
term is finite, so the proof is complete.
\end{proof}

\subsubsection{Functionals under the heteroskedastic Gaussian sequence model}
\label{A:HeteroskedasticGaussianFunctionals}
\label{A:Examples}

This section records several Tweedie formulas for the heteroskedastic Gaussian
sequence model. 

\paragraph{Functionals in the multi-dimensional Gaussian model}
\label{A:HeteroGaussianNonsmooth}

Fix
\[
\Sigma_0\in\mathbb S_{++}^d,
\qquad
y\in\mathbb R^d,
\]
and write
\[
B\coloneq \Sigma_0^{1/2},
\qquad
z_0\coloneq B^{-1}y.
\]
We work in the standardized model
\[
Z=W+V,
\qquad
V\sim\mathcal N(0,I_d),
\]
from Proposition~\ref{P: conditional-heteroskedastic}. Let
\[
f_Z(z)
\coloneq
f_{Z\mid\Sigma}(z\mid\Sigma_0),
\qquad
f_{\Sigma_0}(u)
\coloneq
f_{Y\mid\Sigma}(u\mid\Sigma_0).
\]
Then
\[
f_Z(z)
=
\det(B)\,f_{\Sigma_0}(Bz).
\]

We use the following elementary Gaussian Fourier identities. Let
\[
h_{z_0}(w)\coloneq \phi(z_0-w).
\]
Since
\[
\widetilde h_{z_0}(\omega)
=
e^{i\omega^\top z_0}e^{-\|\omega\|_2^2/2},
\]
and
\[
\varphi_V(-\omega)=e^{-\|\omega\|_2^2/2},
\]
we have
\[
\frac{\widetilde{B w h_{z_0}}(\omega)}
{\varphi_V(-\omega)}
=
e^{i\omega^\top z_0}(y+iB\omega),
\]
and
\[
\frac{\widetilde{Bww^\top B h_{z_0}}(\omega)}
{\varphi_V(-\omega)}
=
e^{i\omega^\top z_0}
B\{(z_0+i\omega)(z_0+i\omega)^\top+I_d\}B .
\]
Equivalently,
\[
e^{i\omega^\top z_0}
B\{(z_0+i\omega)(z_0+i\omega)^\top+I_d\}B
=
e^{i\omega^\top z_0}
\left[
yy^\top+\Sigma_0
+i\,y(B\omega)^\top
+i\,(B\omega)y^\top
-B\omega\omega^\top B
\right].
\]

\begin{example}[Posterior mean]
\label{E:HeteroGaussianMean}
Let
\[
g(x)=x.
\]
The Fourier-domain representer is
\[
\mathcal Q_{g,y,\Sigma_0}(\omega)
=
e^{i\omega^\top z_0}(y+iB\omega).
\]
This is a finite linear combination of
\[
\varphi_{\delta_{z_0}}(\omega)
\quad\text{and}\quad
(i\omega_\ell)\varphi_{\delta_{z_0}}(\omega),
\qquad
\ell=1,\ldots,d.
\]
Therefore Proposition~\ref{P:measure-factorization-functional} applies
coordinatewise with smoothness order \(1\), and gives
\[
\mathcal T_{g,y,\Sigma_0}[f]
=
y f(z_0)+B\nabla f(z_0),
\qquad
f\in\mathcal A_V(\mathbb R^d).
\]
Taking \(f=f_Z\) yields
\[
\mathbb E[X\mid Y=y,\Sigma=\Sigma_0]
=
y+B\nabla_z\log f_Z(z_0).
\]
Since
\[
\nabla_z\log f_Z(z_0)
=
B\nabla_y\log f_{\Sigma_0}(y),
\]
we obtain the usual heteroskedastic Tweedie formula:
\[
\boxed{
\mathbb E[X\mid Y=y,\Sigma=\Sigma_0]
=
y+\Sigma_0\nabla_y\log f_{\Sigma_0}(y).
}
\]
\end{example}

\begin{example}[Posterior covariance]
\label{E:HeteroGaussianCovariance}
Let
\[
g(x)=xx^\top.
\]
The Fourier-domain representer is
\[
\mathcal Q_{g,y,\Sigma_0}(\omega)
=
e^{i\omega^\top z_0}
\left[
yy^\top+\Sigma_0
+i\,y(B\omega)^\top
+i\,(B\omega)y^\top
-B\omega\omega^\top B
\right].
\]
This symbol is a finite linear combination of
\[
\varphi_{\delta_{z_0}}(\omega),
\qquad
(i\omega_\ell)\varphi_{\delta_{z_0}}(\omega),
\qquad
(i\omega_\ell)(i\omega_m)\varphi_{\delta_{z_0}}(\omega).
\]
Thus Proposition~\ref{P:measure-factorization-functional} applies entrywise
with smoothness order \(2\), and gives
\[
\begin{aligned}
\mathcal T_{g,y,\Sigma_0}[f]
&=
(yy^\top+\Sigma_0)f(z_0)  \\
&\quad
+
y\{B\nabla f(z_0)\}^\top
+
\{B\nabla f(z_0)\}y^\top
+
B\nabla^2 f(z_0)B .
\end{aligned}
\]
Taking \(f=f_Z\), dividing by \(f_Z(z_0)\), and subtracting the outer product
of the posterior mean from Example~\ref{E:HeteroGaussianMean} gives
\[
\operatorname{Var}(X\mid Y=y,\Sigma=\Sigma_0)
=
\Sigma_0
+
B\nabla_z^2\log f_Z(z_0)B.
\]
Since
\[
\nabla_z^2\log f_Z(z_0)
=
B\nabla_y^2\log f_{\Sigma_0}(y)B,
\]
we obtain
\[
\boxed{
\operatorname{Var}(X\mid Y=y,\Sigma=\Sigma_0)
=
\Sigma_0
+
\Sigma_0\nabla_y^2\log f_{\Sigma_0}(y)\Sigma_0 .
}
\]
\end{example}

\begin{example}[Posterior moment generating function]
\label{E:HeteroGaussianMGF}
Fix \(t\in\mathbb R^d\), and let
\[
g_t(x)=e^{t^\top x}.
\]
Then
\[
\lambda_{g_t,y,\Sigma_0}(w)
=
e^{t^\top Bw}\phi(z_0-w).
\]
Completing the square gives
\[
e^{t^\top Bw}\phi(z_0-w)
=
\exp\!\left(t^\top y+\frac12 t^\top\Sigma_0 t\right)
\phi(z_0+Bt-w).
\]
Therefore
\[
\mathcal Q_{g_t,y,\Sigma_0}(\omega)
=
\exp\!\left(t^\top y+\frac12 t^\top\Sigma_0 t\right)
e^{i\omega^\top(z_0+Bt)}.
\]
Proposition~\ref{P:measure-factorization-functional} applies with smoothness
order \(0\), represented by the measure
\[
\mu_0
=
\exp\!\left(t^\top y+\frac12 t^\top\Sigma_0 t\right)
\delta_{z_0+Bt}.
\]
Thus
\[
\mathcal T_{g_t,y,\Sigma_0}[f]
=
\exp\!\left(t^\top y+\frac12 t^\top\Sigma_0 t\right)
f(z_0+Bt).
\]
Taking \(f=f_Z\) gives
\[
\mathbb E[e^{t^\top X}\mid Y=y,\Sigma=\Sigma_0]
=
\exp\!\left(t^\top y+\frac12 t^\top\Sigma_0 t\right)
\frac{f_Z(z_0+Bt)}{f_Z(z_0)}.
\]
Changing back to the original \(Y\)-scale,
\[
\boxed{
\mathbb E[e^{t^\top X}\mid Y=y,\Sigma=\Sigma_0]
=
\exp\!\left(t^\top y+\frac12 t^\top\Sigma_0 t\right)
\frac{f_{\Sigma_0}(y+\Sigma_0 t)}{f_{\Sigma_0}(y)}.
}
\]
\end{example}

\paragraph{Functionals in the one-dimensional Gaussian model}
\label{A:HeteroGaussianNonsmoothOneDim}

Assume \(d=1\), write
\[
\Sigma_0=\sigma^2,
\qquad
\sigma>0,
\]
and set
\[
f_\sigma(u)
\coloneq
f_{Y\mid\Sigma}(u\mid\sigma^2),
\qquad
f_\sigma^{(k)}(u)
\coloneq
\partial_u^k f_{Y\mid\Sigma}(u\mid\sigma^2).
\]
All formulas below are stated for points \(y\) such that
\[
f_\sigma(y)>0.
\]

For \(n\ge1\) and \(c\in\mathbb R\), define
\[
s_n
\coloneq
\sqrt{\sigma^2+n^{-2}},
\qquad
q_{n,c}
\coloneq
\frac{c+n^{-1/2}-y}{s_n},
\qquad
q_c
\coloneq
\frac{c-y}{\sigma}.
\]
We use the convention \(\Phi^{(0)}=\Phi\). For
\(r\in\{0,1\}\) and \(m\ge r\), define the Gaussian series operator
\[
\mathsf S_{n,c}^{(r,m)}[f](y)
\coloneq
\sum_{k=m}^{\infty}
\frac{(-1)^k}{k!}
\left(\frac{\sigma^2}{s_n}\right)^k
s_n^r
\Phi^{(k-r)}(q_{n,c})
f^{(k)}(y).
\]
For each fixed \(n\), the series is absolutely convergent in the cases used
below by Lemma~\ref{L:SeriesRepresentation}. The parameter \(r\) records how
many times the smoothed cdf has been integrated: \(r=0\) gives the cdf series,
while \(r=1\) gives the integrated series used for hinge losses.

We also define
\[
\mathsf A_{n,c}(y)
\coloneq
s_n\{q_{n,c}\Phi(q_{n,c})+\phi(q_{n,c})\}.
\]
This is the zeroth-order term that appears when the smoothed cdf is integrated
over \((-\infty,c]\).

\begin{example}[Posterior moments in dimension one]
\label{E:HeteroGaussianMoments}
Assume \(d=1\), and write
\[
\Sigma_0=\sigma^2,
\qquad
\sigma>0.
\]
Let
\[
f_\sigma(u)
\coloneq
f_{Y\mid\Sigma}(u\mid\sigma^2).
\]
By Example~\ref{E:HeteroGaussianMGF},
\[
\mathbb E[e^{tX}\mid Y=y,\Sigma=\sigma^2]
=
\exp\!\left(ty+\frac12\sigma^2t^2\right)
\frac{f_\sigma(y+\sigma^2t)}{f_\sigma(y)}.
\]

For \(m\ge0\), define
\[
\mathsf H_m(a,\sigma^2)
\coloneq
\left.
\frac{d^m}{dt^m}
\exp\!\left(at+\frac12\sigma^2t^2\right)
\right|_{t=0}.
\]
Equivalently,
\[
\mathsf H_m(a,\sigma^2)
=
\sum_{j=0}^{\lfloor m/2\rfloor}
\frac{m!}{2^j j!(m-2j)!}
\sigma^{2j}a^{m-2j}.
\]
Differentiating the posterior moment generating function at \(t=0\) gives
\[
\boxed{
\mathbb E[X^m\mid Y=y,\Sigma=\sigma^2]
=
\sum_{r=0}^{m}
\binom{m}{r}
\sigma^{2r}
\mathsf H_{m-r}(y,\sigma^2)
\frac{f_\sigma^{(r)}(y)}{f_\sigma(y)}.
}
\]

Now let
\[
\kappa(y,\sigma^2)
\coloneq
\mathbb E[X\mid Y=y,\Sigma=\sigma^2]
=
y+\sigma^2\frac{f_\sigma'(y)}{f_\sigma(y)}.
\]
The centered posterior moment follows by replacing \(y\) with
\(y-\kappa(y,\sigma^2)\) in the exponential factor:
\[
\boxed{
\mathbb E\!\left[
\{X-\kappa(y,\sigma^2)\}^m
\mid Y=y,\Sigma=\sigma^2
\right]
=
\sum_{r=0}^{m}
\binom{m}{r}
\sigma^{2r}
\mathsf H_{m-r}(y-\kappa(y,\sigma^2),\sigma^2)
\frac{f_\sigma^{(r)}(y)}{f_\sigma(y)}.
}
\]
\end{example}

\begin{example}[Posterior even-power risk in dimension one]
\label{E:HeteroGaussianEvenRisk}
Assume \(d=1\), write
\[
\Sigma_0=\sigma^2,
\qquad
\sigma>0,
\]
and fix \(a\in\mathbb R\). Let \(q=2m\), where \(m\in\mathbb N\). Since
\[
|X-a|^{2m}=(X-a)^{2m},
\]
we differentiate the shifted posterior moment generating function
\[
\mathbb E[e^{t(X-a)}\mid Y=y,\Sigma=\sigma^2]
=
\exp\!\left((y-a)t+\frac12\sigma^2t^2\right)
\frac{f_\sigma(y+\sigma^2t)}{f_\sigma(y)}.
\]
Using the polynomial \(\mathsf H_m\) from
Example~\ref{E:HeteroGaussianMoments}, we obtain
\[
\boxed{
\mathbb E[|X-a|^{2m}\mid Y=y,\Sigma=\sigma^2]
=
\sum_{r=0}^{2m}
\binom{2m}{r}
\sigma^{2r}
\mathsf H_{2m-r}(y-a,\sigma^2)
\frac{f_\sigma^{(r)}(y)}{f_\sigma(y)}.
}
\]
\end{example}

\begin{example}[Posterior distribution function]
\label{E:HeteroGaussianCDF}
Fix \(x\in\mathbb R\). For \(n\ge1\), define
\[
g_{n,x}(u)
\coloneq
\Phi\left(
\frac{x+n^{-1/2}-u}{n^{-1}}
\right).
\]
Then
\[
g_{n,x}(u)\to \mathbf 1_{\{u\le x\}}
\qquad
\text{for every }u\in\mathbb R.
\]
By Lemma~\ref{L:SeriesRepresentation},
\[
\mathcal T_{g_{n,x},V,y}[f_\sigma]
=
\mathsf S_{n,x}^{(0,0)}[f_\sigma](y).
\]
Separating the zeroth term gives
\[
\mathsf S_{n,x}^{(0,0)}[f_\sigma](y)
=
\Phi(q_{n,x})f_\sigma(y)
+
\mathsf S_{n,x}^{(0,1)}[f_\sigma](y).
\]
Since \(0\le g_{n,x}\le1\), dominated convergence gives
\[
\boxed{
\mathbb P(X\le x\mid Y=y,\Sigma=\sigma^2)
=
\Phi(q_x)
+
\frac{1}{f_\sigma(y)}
\lim_{n\to\infty}
\mathsf S_{n,x}^{(0,1)}[f_\sigma](y).
}
\]
Equivalently,
\[
\mathbb P(X\le x\mid Y=y,\Sigma=\sigma^2)
=
\Phi\left(\frac{x-y}{\sigma}\right)
+
\frac{1}{f_\sigma(y)}
\lim_{n\to\infty}
\sum_{k=1}^{\infty}
\frac{(-1)^k}{k!}
\left(\frac{\sigma^2}{s_n}\right)^k
\Phi^{(k)}(q_{n,x})
f_\sigma^{(k)}(y).
\]
\end{example}

\begin{example}[Posterior left hinge]
\label{E:HeteroGaussianLeftHinge}
Fix \(a\in\mathbb R\). We first compute the left hinge
\[
(a-X)_+.
\]
For \(n\ge1\), define
\[
\ell_{n,a}(u)
\coloneq
\int_{-\infty}^{a}
\Phi\left(
\frac{t+n^{-1/2}-u}{n^{-1}}
\right)dt .
\]
Then
\[
\ell_{n,a}(u)\to(a-u)_+
\qquad
\text{for every }u\in\mathbb R.
\]
Moreover, \(\ell_{n,a}(u)\phi_\sigma(y-u)\) is continuous, vanishes at
infinity, and is uniformly bounded in \(n\). Hence
Proposition~\ref{P:tweedie-approximation} applies.

Integrating the expansion in Lemma~\ref{L:SeriesRepresentation} over
\(t\le a\) gives
\[
\mathcal T_{\ell_{n,a},V,y}[f_\sigma]
=
\mathsf A_{n,a}(y)f_\sigma(y)
+
\mathsf S_{n,a}^{(1,1)}[f_\sigma](y).
\]
Since
\[
\mathsf S_{n,a}^{(1,1)}[f_\sigma](y)
=
-\sigma^2\Phi(q_{n,a})f_\sigma'(y)
+
\mathsf S_{n,a}^{(1,2)}[f_\sigma](y),
\]
we obtain
\[
\mathcal T_{\ell_{n,a},V,y}[f_\sigma]
=
\mathsf A_{n,a}(y)f_\sigma(y)
-
\sigma^2\Phi(q_{n,a})f_\sigma'(y)
+
\mathsf S_{n,a}^{(1,2)}[f_\sigma](y).
\]
Letting \(n\to\infty\) yields
\[
\boxed{
\mathbb E[(a-X)_+\mid Y=y,\Sigma=\sigma^2]
=
\sigma\{q_a\Phi(q_a)+\phi(q_a)\}
-
\sigma^2\Phi(q_a)
\frac{f_\sigma'(y)}{f_\sigma(y)}
+
\frac{1}{f_\sigma(y)}
\lim_{n\to\infty}
\mathsf S_{n,a}^{(1,2)}[f_\sigma](y).
}
\]
\end{example}

\begin{example}[Posterior right hinge]
\label{E:HeteroGaussianRightHinge}
Fix \(a\in\mathbb R\). Since
\[
(X-a)_+
=
X-a+(a-X)_+,
\]
we combine Example~\ref{E:HeteroGaussianLeftHinge} with the Gaussian Tweedie
formula
\[
\mathbb E[X\mid Y=y,\Sigma=\sigma^2]
=
y+\sigma^2\frac{f_\sigma'(y)}{f_\sigma(y)}.
\]
This gives
\[
\boxed{
\begin{aligned}
\mathbb E[(X-a)_+\mid Y=y,\Sigma=\sigma^2]
&=
\sigma\phi(q_a)
+
(y-a)\{1-\Phi(q_a)\}  \\
&\quad
+
\sigma^2\{1-\Phi(q_a)\}
\frac{f_\sigma'(y)}{f_\sigma(y)}
+
\frac{1}{f_\sigma(y)}
\lim_{n\to\infty}
\mathsf S_{n,a}^{(1,2)}[f_\sigma](y).
\end{aligned}
}
\]
\end{example}

\begin{example}[Posterior absolute risk]
\label{E:HeteroGaussianAbsoluteRisk}
Fix \(a\in\mathbb R\). Since
\[
|X-a|
=
2(X-a)_+-(X-a),
\]
Example~\ref{E:HeteroGaussianRightHinge} gives
\[
\boxed{
\begin{aligned}
\mathbb E[|X-a|\mid Y=y,\Sigma=\sigma^2]
&=
2\sigma\phi(q_a)
+
(a-y)\{2\Phi(q_a)-1\}  \\
&\quad
+
\sigma^2\{1-2\Phi(q_a)\}
\frac{f_\sigma'(y)}{f_\sigma(y)}
+
\frac{2}{f_\sigma(y)}
\lim_{n\to\infty}
\mathsf S_{n,a}^{(1,2)}[f_\sigma](y).
\end{aligned}
}
\]
\end{example}

\begin{example}[Posterior pinball loss]
\label{E:HeteroGaussianPinballLoss}
Fix \(a\in\mathbb R\) and \(\tau\in(0,1)\). Let
\[
\rho_\tau(z)
=
\tau z_+
+
(1-\tau)(-z)_+.
\]
Then
\[
\rho_\tau(X-a)
=
\tau(X-a)_+
+
(1-\tau)(a-X)_+.
\]
Combining Examples~\ref{E:HeteroGaussianLeftHinge} and
\ref{E:HeteroGaussianRightHinge} yields
\[
\boxed{
\begin{aligned}
\mathbb E[\rho_\tau(X-a)\mid Y=y,\Sigma=\sigma^2]
&=
\sigma\phi(q_a)
+
\tau(y-a)\{1-\Phi(q_a)\}  \\
&\quad
+
(1-\tau)(a-y)\Phi(q_a)  \\
&\quad
+
\sigma^2\{\tau-\Phi(q_a)\}
\frac{f_\sigma'(y)}{f_\sigma(y)}
+
\frac{1}{f_\sigma(y)}
\lim_{n\to\infty}
\mathsf S_{n,a}^{(1,2)}[f_\sigma](y).
\end{aligned}
}
\]
\end{example}

\end{document}